%% file: main.tex
\documentclass[reqno]{amsart}
\usepackage[utf8]{inputenc}
\usepackage{amssymb,amsmath,amsthm,amscd,epsf,latexsym,verbatim,graphicx,amsfonts,hyperref,epstopdf,xcolor,wasysym, cleveref}
\usepackage[margin=1in]{geometry}
\usepackage{mathtools}
\usepackage{wrapfig}
\usepackage{subcaption}
\input{insbox}
\usepackage{mathrsfs}
\usepackage[normalem]{ulem}

\usepackage{todonotes}
\usepackage{thmtools} 
\usepackage{thm-restate}
\usepackage{blkarray}

\usepackage{cutwin}
\usepackage{tikz}
\usepackage{tkz-graph}
\usepackage{caption}
\usepackage{float}
\usetikzlibrary{shapes,snakes}
\usetikzlibrary{arrows}
\usetikzlibrary{shapes.misc}

\usepackage{array}  
\usepackage{longtable} 
\usepackage{booktabs}
\newcolumntype{C}{>{$}c<{$}} 
\input{insbox}

\tikzset{cross/.style={cross out, draw=black, fill=none, minimum size=2*(#1-\pgflinewidth), inner sep=0pt, outer sep=0pt}, cross/.default={2pt}}

\newtheorem{theorem}{Theorem}[section]

\newtheorem{lemma}[theorem]{Lemma}
\newtheorem{proposition}[theorem]{Proposition}
\newtheorem{corollary}[theorem]{Corollary}

\theoremstyle{definition}

\theoremstyle{definition}

\theoremstyle{definition}
\newtheorem{remark}[theorem]{Remark}
\theoremstyle{definition}
\newtheorem{construction}[theorem]{Construction}
\theoremstyle{definition}
\newtheorem{example}[theorem]{Example}
\theoremstyle{definition}
\newtheorem{definition}[theorem]{Definition}
\theoremstyle{definition}

\theoremstyle{definition}

\theoremstyle{definition}

\newcommand{\HH}{\mathbb{H}}

\newcommand{\NN}{\mathbb{N}}

\newcommand{\cl}{\mathrm{cl}}
\newcommand{\tl}{\mathrm{tl}}
\newcommand{\td}{\mathrm{td}}
\newcommand{\calA}{\mathcal{A}}

\newcommand{\prim}{\mathrm{prim}}
\newcommand{\nclprim}{N_{\cl,\prim}}
\newcommand{\ncl}{N_{\cl}}
\newcommand{\ntd}{N_{\td}}
\newcommand{\nwlupper}{N^{\mathrm{upper}}_{\mathrm{wl}}}
\newcommand{\nwllower}{N^{\mathrm{lower}}_{\mathrm{wl}}}

\newcommand{\gd}{\mathrm{gd}}

\newcommand{\calB}{\mathcal{B}}

\title{Complexity of Billiards in Polygons Associated to Hyperbolic $(p,q)$-Tilings}
\author{Sunrose T. Shrestha and Jane Wang}
\date{\today}

\begin{document}

\maketitle
\begin{abstract}
The complexity of the billiard language of regular polygons in the hyperbolic plane with $p$ sides and $2\pi/q$ internal angles is known to grow exponentially and the exponential growth rate is known to equal the topological entropy of the billiard system. In this paper we compute these exponential growth rates explicitly when $q$ is even and give bounds when $q$ is odd. Additionally, for the $q$ even case, we give complete grammar rules that establish when a word (finite, infinite or bi-infinite) in $p$ letters is realized by a billiard path. This latter result is roughly stated and not rigorously proved in \cite{GiannoniUllmo}. In this paper, we provide a precise statement and a complete proof using new methods relating to minimal tiling paths. 
   
\end{abstract}
\input{intro.tex}

\input{tiling_growth_rates.tex}

\input{language_complexity.tex}

\input{word_characterization.tex}

\input{computations.tex}

\newpage
\bibliographystyle{alpha}
\bibliography{references}

\input{appendix.tex}

\end{document}

%% file: intro.tex
\section{Introduction and Preliminaries}

In the setting of polygonal billiards, a point mass representing a billiard ball travels along a unit speed geodesic trajectory in a polygonal table. Upon hitting a side of the table, the trajectory bounces off the side with angle of reflection equal to the angle of incidence. Much is known about the dynamics of polygonal billiards in the Euclidean case, especially when the angles of the polygons are rational multiples of $2\pi$, thus allowing the polygonal tables to unfold nicely to \textit{translation surfaces}. In comparison, much less is known about dynamics of hyperbolic polygonal billiards, which occur on tables of constant negative curvature.

\textit{Language complexity} is a notion of the complexity of a polygonal billiard system. While there are broad results known about language complexity in both the Euclidean and hyperbolic billiard case, we know of only a few cases where the language complexity has been computed explicitly in the Euclidean case (for certain tables that tile the plane in \cite{CassaigneHubertTroubetzkoy}, and for regular polygonal tables in \cite{AthreyaHubertTroubetzkoy}). In the hyperbolic setting, we are only aware of computations for the language complexity in the case of ideal polygons. One of the major goals of this paper is to compute the language complexity growth for regular hyperbolic $p$-gons whose internal angles are $2\pi/q$, which provides a hyperbolic analog to the result of \cite{AthreyaHubertTroubetzkoy}. We will find the explicit growth rate when $q$ is even and will provide upper and lower bounds when $q$ is odd.

Following Cassaigne, Hubert, and Troubetzkoy (\cite{CassaigneHubertTroubetzkoy}), we first give some definitions. Given $P$, a polygonal billiard table, we can label the sides of $P$ using unique letters from an alphabet, where the number of letters in the alphabet is equal to the number of sides of $P$. Then, any oriented, finite, infinite or bi-infinite billiard trajectory gives a word with letters from the alphabet corresponding to the sequence of sides that the billiard trajectory hits. We will refer to this word as the \textbf{bounce sequence} or the \textbf{(billiard) word} of the billiard trajectory. 

\begin{definition}[Language complexity] Given a polygonal billiard table $P$, let $\mathcal{L}(n)$ denote the set of possible $n$-letter words that arise as bounce sequences of a finite billiard trajectory. Then, let 
\begin{equation} 
\label{eq:p(n)}
p(n) = \#\mathcal{L}(n)
\end{equation}
be the \textbf{complexity function} of the language $\mathcal{L}(n)$. 
\end{definition}

The \emph{language complexity} of a polygonal billiard system is the growth rate of the complexity function $p(n)$. Katok proved in \cite{katok} that for a general Euclidean polygonal billiard, $p(n)$ grows slower than any exponential.  On other other hand, Troubetzkoy proved in \cite{Trou} that $p(n)$ grows at least quadratically for any Euclidean polygon.

In \cite{CassaigneHubertTroubetzkoy}, Cassaigne, Hubert, and Troubetzkoy make $p(n)$ easier to understand for Euclidean billiard tables by relating it to the number of \textit{generalized diagonals} up to some \emph{combinatorial length}, which have historically been easier to count. 

\begin{definition}[Generalized diagonals and combinatorial length]\label{def:combinatorial_length} A \textbf{generalized diagonal} in a polygonal billiard table is a billiard trajectory that begins and ends at vertices, without hitting any vertices in between. Edges of the polygon do not count as generalized diagonals. If a generalized diagonal $\gamma$ minus its endpoints makes $k$ edge hits, then we say that the \textbf{combinatorial length} of this generalized diagonal is 
\begin{equation}
\label{eq:cl(gamma)}
    \cl(\gamma) = k+1,
\end{equation}
which can be thought of as the number of straight segments that the generalized diagonal breaks up into, with a break at each edge hit. 
\end{definition}

Alternatively, if we were to \textit{unfold} the billiard trajectory (i.e. reflecting the polygon over edges that the billiard trajectory passes through while keeping the trajectory straight), we can visualize a generalized diagonal $\gamma$ as a straight trajectory from vertex to vertex in the unfolding that does not intersect any vertices except at the endpoints of $\gamma$. See Figure~\ref{fig:unfolding} for an illustration.

\begin{figure}
\centering
\includegraphics[width=10cm]{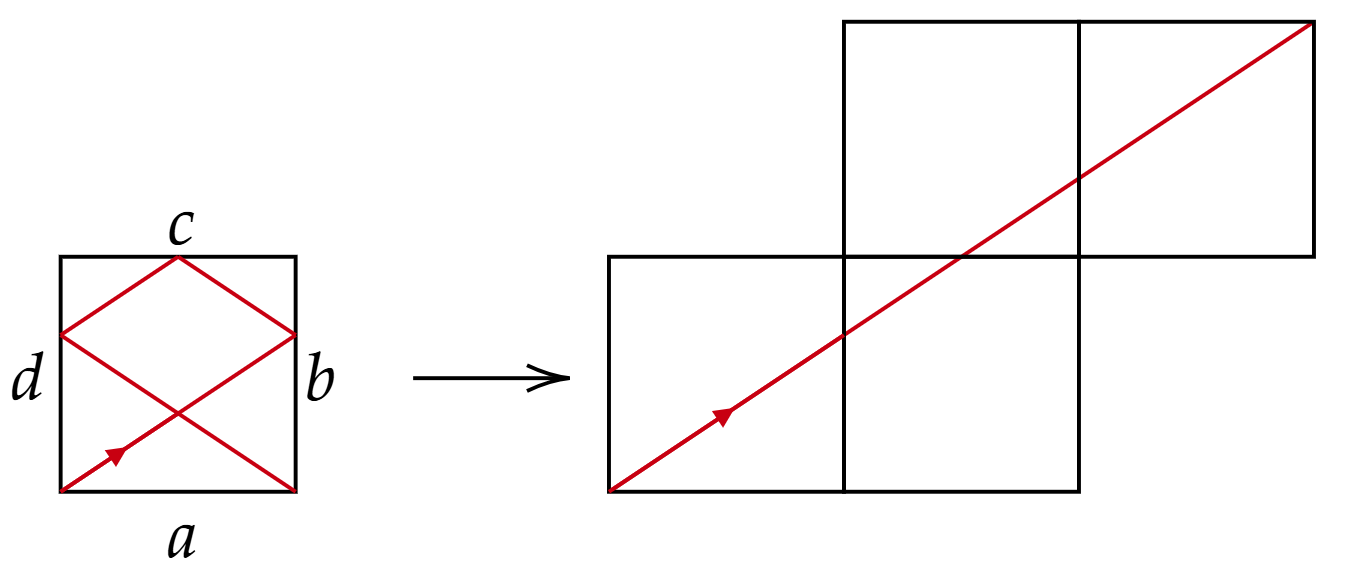}
\caption{On the left, a generalized diagonal with combinatorial length $4$ and billiard word $bcd$. On the right, the unfolding of this generalized diagonal.}
\label{fig:unfolding}
\end{figure}

\begin{definition}[Generalized diagonal counts] For any $j \geq 0$, we let $\gd(j)$ denote the number of generalized diagonals of combinatorial length $j$, where $\gd(0)$ is defined to be the number of vertices of the polygonal table and edges are not counted in $\gd(1)$. 
\end{definition}

In \cite{CassaigneHubertTroubetzkoy}, Cassaigne, Hubert, and Troubetzkoy prove the following general theorem in the Euclidean case. We note that the notation we use is different than the notation in \cite{CassaigneHubertTroubetzkoy} where they use $N_c(t)$ for the number of generalized diagonals of combinatorial length \emph{at most} $t$. 

\begin{theorem} [\cite{CassaigneHubertTroubetzkoy}] 
\label{thm:CassaigneHubertTroubetzkoy} 
For any Euclidean, convex, polygonal billiard table, 
$$p(n) = \sum_{k=0}^{n-1} \sum_{j=0}^k \gd(j).$$
\end{theorem}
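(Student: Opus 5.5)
The plan is to prove the equivalent second-difference identity
\[
p(n+1) - 2p(n) + p(n-1) = \gd(n) \qquad (n \geq 1),
\]
and then sum twice. The initial conditions should be $p(0)=1$ (the empty word) and $p(1)$ equal to the number of sides, which in turn equals $\gd(0)$. Summing once gives $p(n+1)-p(n) = \sum_{j=0}^{n}\gd(j)$. The case $n=0$ here, namely $p(1)-p(0)=\gd(0)$, needs the convention about the empty word adjusted; I would reconcile it by checking small $n$ directly. Summing a second time gives the stated double sum.

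The main tool is to study, for each word $w \in \mathcal{L}(n)$, the set $\Sigma(w)$ of phase-space points (a point on the boundary together with an inward direction) whose forward orbit realizes $w$. After unfolding along $w$, $\Sigma(w)$ is the set of straight lines crossing a fixed corridor of $n$ consecutive edges in a sequence of reflected copies of the polygon. Because the table is convex and Euclidean, this set is a convex polygon in line space, using coordinates (direction, intercept), and in particular it is connected.

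The count comes from right special words. For $w \in \mathcal{L}(n)$, let $s(w) = \#\{a : wa \in \mathcal{L}(n+1)\} - 1$, so that
\[
p(n+1) - p(n) = \sum_{w \in \mathcal{L}(n)} s(w).
\]
The extensions $wa$ partition $\Sigma(w)$ into pieces. These pieces are separated exactly by the trajectories in $\Sigma(w)$ that hit a vertex right after the last bounce of $w$.

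Since $\Sigma(w)$ is a connected convex region, I would argue that these separating curves form $s(w)$ arcs, one per vertex on the exit side. Each arc is a one-parameter family of trajectories ending at a fixed vertex $v$. Now follow each arc backward. Either the trajectories through $v$ stay inside $\Sigma(w)$ for the whole arc, or the arc ends at a trajectory that also hits a vertex earlier.

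Take the second-difference identity by the same accounting one level up. $s(w)$ changes from $\mathcal{L}(n-1)$ to $\mathcal{L}(n)$ exactly when a vertex-arc for the suffix is cut by a new vertex further back. Such events are exactly trajectories passing through two vertices with $n-1$ edge hits in between, that is, generalized diagonals of combinatorial length $n$. Concretely, I would show
\[
\sum_{w\in\mathcal{L}(n)} s(w) - \sum_{u\in\mathcal{L}(n-1)} s(u) = \gd(n).
\]
To do this, I would group the words $w$ by their suffix $u$ of length $n-1$, and observe that each extension arc for $u$ splits into (one plus the number of generalized diagonals that cut it) arcs for the various $w$.

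The main obstacle is this last bijective step. I need that each generalized diagonal cuts exactly one arc exactly once. I also need that no arc disappears or is counted twice; convexity of the table ensures every corridor of lines is a single convex cell, so arcs are intervals. Degenerate cases need care: trajectories through two vertices simultaneously, and the distinction between the first and last letters. For these I would rely on the Euclidean unfolding being a union of convex polygons glued along edges, so that lines through a vertex form a pencil that meets each cell in an interval.
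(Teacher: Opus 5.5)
The paper quotes this theorem from \cite{CassaigneHubertTroubetzkoy} without proof, so there is no in-paper argument to compare against. Your argument is correct and is essentially the Cassaigne--Hubert--Troubetzkoy proof with different bookkeeping. They write $s(n)-s(n-1)$ as a sum over $u\in\mathcal{L}(n-1)$ of bilateral multiplicities. They then show each multiplicity equals the number $D(u)$ of generalized diagonals coded by $u$, by counting the cells $\Sigma(aub)$ that the backward-vertex and forward-vertex pencils cut out of $\Sigma(u)$. You count the same arrangement through the pieces of the forward pencils instead. The facts you need all hold:
\begin{enumerate}
\item Use slope--intercept coordinates, after rotating so that the lines of the cell are non-vertical and co-oriented; angle coordinates need not give convexity. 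Then $\Sigma(u)$ is an intersection of open half-planes, and each vertex pencil is a straight chord.
\item A line of $\Sigma(u)$ meets the first and last copies of the table in segments with one endpoint in the interior of a corridor edge. So it passes through at most one backward and at most one forward vertex, and the forward chords are pairwise disjoint.
\item A forward and a backward pencil meet at most once, and such a meeting point is exactly a generalized diagonal with code $u$.
\item Each $\Sigma(au)$ is convex, so a forward chord meets it in at most one piece.
\end{enumerate}
Together these give $\sum_a s(au)=s(u)+D(u)$, and summing over $u$ gives $\gd(n)$.

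The one thing to fix is the base case. The cell argument needs $u$ nonempty, so it proves $p(n+1)-2p(n)+p(n-1)=\gd(n)$ only for $n\geq 2$. For the empty word $\varepsilon$, the space of all trajectories is an annulus, not a convex cell. It carries $m$ forward-vertex arcs (one per vertex of the $m$-gon) rather than $s(\varepsilon)=m-1$. Correspondingly, with your convention $p(0)=1$ you get $p(2)-2p(1)+p(0)=\gd(1)+1$, so the identity as you state it fails at $n=1$; the stated formula implicitly has $p(0)=0$. Replace the low cases by the direct checks $p(1)=m=\gd(0)$ and $p(2)=m(m-1)=2\gd(0)+\gd(1)$. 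These use that every ordered pair of distinct sides of a convex polygon is realized, and that $\gd(1)=m(m-3)$. This check also shows that $\gd$ must count \emph{oriented} generalized diagonals. That is what your bijection counts, since a diagonal appears in $\Sigma(u)$ only for its code read in the direction of travel.
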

We note that this result was extended to the non-convex case by Bedaride in \cite{bedaride}.
Results of Masur (\cite{masur88, masur90}) imply that on Euclidean polygons with angles that are all rational multiples of $\pi$, the quantity $\sum_{j=0}^k \gd(j)$ has quadratic upper and lower bounds. We call such polygons \textbf{rational polygons}. Cassaigne, Hubert, and Troubetzkoy then obtain the following natural corollary to their theorem.

\begin{corollary}[\cite{CassaigneHubertTroubetzkoy}] For any rational, Euclidean, convex, polygonal billiard, there exist constants $D_1$ and $D_2$ for which $$D_1 < \frac{p(n)}{n^3} < D_2$$ for all positive integers $n$. 
\end{corollary}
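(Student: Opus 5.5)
The plan is to combine Theorem~\ref{thm:CassaigneHubertTroubetzkoy} with Masur's quadratic bounds. Write $G(k) = \sum_{j=0}^k \gd(j)$. Theorem~\ref{thm:CassaigneHubertTroubetzkoy} then reads $p(n) = \sum_{k=0}^{n-1} G(k)$. So it suffices to show there are constants $c_1, c_2 > 0$ with
\[
c_1 (k+1)^2 \le G(k) \le c_2 (k+1)^2 \quad \text{for all } k \ge 0,
\]
and then sum over $k$.

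The first step is to obtain this quadratic bound on $G(k)$ from Masur. Unfolding a rational polygon gives a compact translation surface $X$. Generalized diagonals of $P$ correspond (finitely-to-one and boundedly, by the unfolding group's size) to saddle connections on $X$. Masur's theorems give $a T^2 \le N(T) \le b T^2$ for saddle connections of Euclidean length at most $T$, for $T$ large.

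The key intermediate step is to compare combinatorial length with Euclidean length.
\begin{itemize}
\item \emph{Upper bound on Euclidean length.} A straight segment inside the convex polygon has length at most $\mathrm{diam}(P)$, so $\mathrm{length}(\gamma) \le \mathrm{diam}(P)\,\cl(\gamma)$.
\item \emph{Lower bound on Euclidean length.} For rational polygons a trajectory segment cannot hit many sides in a very short distance except near vertices, where the number of consecutive hits is bounded by the angle. Hence there are constants $\alpha, \beta$ with $\cl(\gamma) \le \alpha\,\mathrm{length}(\gamma) + \beta$.
\end{itemize}
These two inequalities translate Masur's bounds in $T$ into bounds in $k$. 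Small $k$ is handled by enlarging the constants, using $G(0) = \#\text{vertices} \ge 3 > 0$. I expect this comparison of lengths to be the main obstacle, and specifically the lower bound on Euclidean length in terms of bounces near vertices. The first thing I would try is a compactness argument: the minimal length of a billiard segment making $N+1$ consecutive bounces is positive once $N$ exceeds the maximal number of bounces possible in a corner wedge, which is $\lceil \pi/\theta \rceil$.

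Finally, sum the bounds. Since $\sum_{k=0}^{n-1} (k+1)^2 = n(n+1)(2n+1)/6$ lies between $n^3/3$ and $n^3$ for all $n \ge 1$, we get
\[
\frac{c_1}{3} n^3 \le p(n) \le c_2 n^3 .
\]
Taking $D_1 < c_1/3$ and $D_2 > c_2$ gives the strict inequalities claimed in the corollary.
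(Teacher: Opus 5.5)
Your proposal is correct and follows the same route as the paper: insert the quadratic upper and lower bounds on $\sum_{j=0}^k \gd(j)$ that come from Masur into the Cassaigne--Hubert--Troubetzkoy formula $p(n)=\sum_{k=0}^{n-1}\sum_{j=0}^k \gd(j)$, then sum to get cubic bounds. The only difference is that you spell out how Masur's bounds, which are stated in Euclidean length, become bounds in combinatorial length; you do this with the diameter bound and the bounded number of consecutive bounces in a corner wedge, whereas the paper leaves this step implicit when it says Masur's results ``imply'' the quadratic bounds.
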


The authors then compute exact cubic asymptotics $\lim_{n\rightarrow\infty} \frac{p(n)}{n^3}$ for the Euclidean square, equilateral triangle, and isosceles triangle using combinatorial techniques and the fact that these polygons unfold to tilings of the Euclidean plane. Using counting techniques for saddle connections of lattice translation surfaces, Athreya, Hubert, and Troubetzkoy compute the exact cubic asymptotics for the language complexity of regular Euclidean $n$-gons (\cite{AthreyaHubertTroubetzkoy}).

In general, it appears difficult to compute the exact asymptotics of the language complexity function $p(n)$ for general polygonal tables, although some general bounds exist. In \cite{scheglov2}, Scheglov proved that for a typical triangle (i.e. for a full measure set of triangles) and any $\epsilon > 0$, there exists a constant $C$ such $\sum_{j=0}^k \gd(j) < Ce^{k^\epsilon}$. Using Theorem~\ref{thm:CassaigneHubertTroubetzkoy}, this implies an explicit sub-exponential upper bound for the language complexity of typical triangles. Along similar vein, in \cite{KruNogTrou}, the authors show that for a typical polygon (i.e for a dense $G_\delta$ set), $p(n)$ grows cubically. This is the first case where cubic complexity for irrational billiards has been established.

\subsection{Complexity for hyperbolic polygonal billiards}

There has also been interest in the complexity of polygonal billiards in non-Euclidean geometries. In \cite{GutTab}, Gutkin and Tabachnikov study the asympototics of the language complexity of polygonal billiards in spherical, Euclidean, and hyperbolic geometries by relating the language complexity to the complexity of a related family of \textit{2D piecewise convex transformations}. In particular, they prove the following theorem. 

\begin{theorem}[Gutkin-Tabachnikov \cite{GutTab}] 
\label{thm:GutTab-general}
Let $P$ be a polygonal billiard table. 
\begin{enumerate}
    \item If $P$ is a rational, Euclidean table, there exists a constant $c$ such that $p(n) \leq cn^3$. 
    \item If $P$ is spherical, then $p(n)$ grows subexponentially. 
    \item If $P$ is hyperbolic, let $h_{top}$ denote the topological entropy of the billiard map on $P$. Then, $h_{top} > 0$, and there exists a subexponential function $s(n)$ for which $p(n) = s(n)e^{h_{top}n}$. That is, the exponent of exponential growth of $p(n)$ is exactly the topological entropy, $h_{top}$. 
\end{enumerate}
\end{theorem}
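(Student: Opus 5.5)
We treat the three parts separately. In each, the link is the standard dictionary between words in $\mathcal{L}(n)$ and the nonempty cells of the partition of the billiard phase space $X$ (pairs: boundary point, inward direction) cut out by the first $n$ iterates of the billiard map $T$. The discontinuity set of $T^n$ is exactly the set of phase points whose orbit segment runs into a vertex within $n$ steps. So $p(n)$ counts the connected regions into which these singular curves divide $X$. New cells appear only where singular curves cross, and every crossing point or endpoint is a generalized diagonal.

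\textbf{Part (1).} We use Theorem~\ref{thm:CassaigneHubertTroubetzkoy} directly. For a rational polygon, Masur's quadratic bound gives $\sum_{j\le k}\gd(j)\le Ck^2$, and summing over $k<n$ gives $p(n)\le cn^3$. For non-convex tables we would instead invoke Bedaride's extension.

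\textbf{Part (2).} Here we follow Katok's argument for Euclidean polygons. Constant curvature $+1$ means the billiard map is piecewise an isometry of a model, and nearby trajectories separate at most linearly rather than exponentially. One shows that the singular curves of $T^n$ have total length growing subexponentially, and that the number of cells of the $n$-step partition is bounded by the number of intersection points of these curves plus a linear term. The key input is that the unfolding has no exponential divergence, so the entropy of $T$ with respect to every invariant measure is zero. A Katok-type "cells vs.\ entropy" comparison then gives $\lim \frac1n\log p(n)=0$.

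\textbf{Part (3).} Three steps.
\begin{enumerate}
\item \emph{The limit exists.} Every word of length $m+n$ splits into a word of length $m$ followed by one of length $n$, so $p(m+n)\le p(m)p(n)$. Fekete's lemma then gives $h:=\lim\frac1n\log p(n)$. Setting $s(n)=p(n)e^{-hn}$ gives a subexponential factor: the limit statement says exactly that $\frac1n\log s(n)\to 0$.
\item \emph{$h=h_{top}$.} In curvature $-1$, two distinct geodesics in the unfolding diverge, so two distinct bi-infinite trajectories have distinct codes. Hence the side partition is generating (expansive coding), and topological entropy can be computed from the growth of its refinements: $h_{top}=\lim\frac1n\log\#\{\text{nonempty cells}\}=h$. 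Because $T$ has discontinuities, this must be done either on the closure of the symbolic coding, viewed as a subshift, or by checking that the variational principle survives the singular set. The singular set is a countable union of curves, and no invariant measure of positive entropy charges it.
\item \emph{$h>0$.} Exponential growth of $p(n)$ comes from exponential growth of generalized diagonals. In the unfolding, the number of distinct reflected copies of $P$ meeting a hyperbolic ball of radius $R$ grows like $e^{R}$ by area. A positive proportion of directions from a vertex reach a vertex of some copy within combinatorial length proportional to $R$. Alternatively, and more simply, one can exhibit two transverse periodic orbits and build a horseshoe, giving a subshift of positive entropy embedded in the coding.
\end{enumerate}

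\textbf{Main obstacle.} We expect the main difficulty to be step (ii) of part (3): identifying the exponential growth rate of cylinder counts with $h_{top}$ despite the discontinuities of $T$. Standard expansive-map results assume continuity, so we would first try Katok's strategy of working on the compact symbolic closure and showing that the added limit points contribute no extra entropy.
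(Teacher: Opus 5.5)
The paper does not prove this theorem. It quotes it from Gutkin--Tabachnikov as background, so there is no in-paper argument to compare with, and your proposal has to stand on its own.

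Part (1) is fine. The one step to make explicit is that combinatorial length at most $k$ forces length at most $k\,\mathrm{diam}(P)$, which turns Masur's length-based quadratic bound into a bound on $\sum_{j\le k}\gd(j)$. Step (i) of part (3) is also fine: subwords of realized words are realized, so $p$ is submultiplicative and Fekete gives $h=\lim_{n\to\infty}\frac1n\log p(n)$. The gaps are in the remaining steps.

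\textbf{Step (ii).} You flag this step but do not resolve it, and it is the real content of part (3). The generator theorem you quote is for expansive homeomorphisms of compact spaces. Here $T$ is discontinuous on a non-compact domain, and you never fix what $h_{top}$ means.

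If $h_{top}$ means the entropy of the shift on the closure $\overline{\Sigma}$ of the set $\Sigma$ of codes of bi-infinite nonsingular orbits, then (ii) is immediate. The language of $\overline{\Sigma}$ is exactly $\mathcal{L}$, because every realized word has a cell with nonempty interior, which therefore contains points of $\Sigma$. All the content then moves to (iii).

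If $h_{top}$ is meant intrinsically, say as $\sup_\mu h_\mu(T)$, one inequality is easy. Distinct orbits have distinct codes, so the side partition $\xi$ is generating and $h_\mu(T)=h_\mu(T,\xi)\le h$. The reverse inequality is what is missing. You must show that shift-invariant measures carried by $\overline{\Sigma}\setminus\Sigma$ have zero entropy, so that an ergodic measure of maximal entropy lives on $\Sigma$ and pulls back to $T$.

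Your remark that the singular set is a countable union of curves does not give this, since curves can carry positive-entropy measures in general. The needed input is geometric:
\begin{enumerate}
\item A nonsingular approximant crosses the limiting geodesic (which runs through vertices) at most once. So the side on which an added code resolves its successive vertex passages switches at most once.
\item By shift invariance, a typical added code therefore resolves every passage on the same side. It is then determined by a phase point whose orbit meets at least two vertices.
\item There are only countably many such phase points, which forces these measures onto periodic orbits, hence zero entropy.
\end{enumerate}

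\textbf{Part (2).} This rests on the same unproved variational step. There the coding is not even injective, since nearby great circles can shadow each other indefinitely, so transferring measures from the symbolic closure needs Katok's full argument, not a citation. Your claim that the singular curves have subexponential total length does not help either: length does not bound the number of their intersection points.

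\textbf{Step (iii).} This fails as written.
\begin{itemize}
\item For a general hyperbolic polygon the side reflections generate a non-discrete group. For example, if some angle is an irrational multiple of $\pi$, infinitely many copies of $P$ contain that vertex. So there is no finite count of copies meeting a ball, and the area argument has nothing to count.
\item Even for tiling polygons, the directions from a vertex that hit another vertex form a countable set, so no positive proportion of directions does so.
\item The horseshoe alternative presupposes nonsingular periodic orbits that you have not produced.
\end{itemize}

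A route that works:
\begin{enumerate}
\item By compactness there are $N$ and $\epsilon>0$ such that any $N$ consecutive bounces take length at least $\epsilon$, since near a vertex of angle $\theta$ a trajectory bounces at most about $\pi/\theta$ times.
\item Hence every trajectory in the cell of a word of length $n$ reaches, in the common unfolding along that word, a fixed copy $P_n$ at distance at least $cn-O(1)$ from its starting point.
\item Seen from that starting point, $P_n$ subtends an angle $O(e^{-cn})$, so each cell has Liouville measure $O(e^{-cn})$.
\item The cells cover the phase space up to measure zero, so $p(n)\ge C^{-1}e^{cn}$.
\end{enumerate}
Equivalently, the Liouville entropy is positive by Pesin's formula in the Katok--Strelcyn form for maps with singularities, and it is at most $h$ by the generator inequality above.
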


Note that the conclusion in case of hyperbolic polygonal billiards in Theorem~\ref{thm:GutTab-general} can be restated as 

$$\lim_{n\rightarrow \infty} \frac{\log_{h_{top}}(p(n))}{n} = 1.$$

Additionally, in this case, Gutkin and Tabachnikov prove this theorem by relating the language complexity $p(n)$ to counts of generalized diagonals $\gd(k)$ of combinatorial length $k$. This is the hyperbolic analogue of Theorem~\ref{thm:CassaigneHubertTroubetzkoy} of Cassaigne, Hubert, and Troubetzkoy.

\begin{theorem}[Gutkin-Tabachnikov\cite{GutTab}] \label{thm:GutTab}Let $P \subset \HH$ be a convex geodesic polygon. Then, 
\begin{equation}\label{eq:ComplexityFormula}
    p(n) = 2p(1) + n(p(2)-p(1)) + \sum_{k=3}^n \sum_{j=3}^k \gd(j)
\end{equation}
\end{theorem}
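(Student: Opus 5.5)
We prove \eqref{eq:ComplexityFormula} by studying how $p(n)$ changes with $n$. Set $s(n)=p(n+1)-p(n)$ and $t(n) = s(n+1)-s(n)$. It suffices to show that $s(1)=p(2)-p(1)$ and that $t(n)=\sum_{j=3}^{n+2}\gd(j)$ for $n\ge 1$. Indeed, telescoping then gives $p(n)=p(1)+\sum_{m=1}^{n-1}s(m)$ and $s(m)=s(1)+\sum_{k=3}^{m+1}\sum_{j=3}^{k}\gd(j)$. Summing and reindexing turns this into the stated double sum, with constant term $p(1)+(n-1)(p(2)-p(1)) = 2p(1)+n(p(2)-p(1))-p(2)$ up to the bookkeeping of the $n=1,2$ terms. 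I expect that matching the constant term exactly (the $2p(1)$ versus $p(1)+p(2)$ discrepancy) will force me to recheck the base cases and the indexing of the $k$-sum. I would first verify $n=1,2,3$ directly and fix the indices accordingly.

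The core step follows the Cassaigne--Hubert--Troubetzkoy approach via phase space. For each word $w\in\mathcal L(n)$, let $C(w)$ be the set of unit tangent vectors based on the boundary of $P$ (pointing inward) whose forward trajectory realizes $w$ as its next $n$ bounces. These cells partition the phase space, and each is a curvilinear polygon. Passing from length $n$ to $n+1$ refines the partition by cutting the cells along curves. These curves are the phase-space loci of trajectories that hit a vertex exactly at the $(n+1)$-st bounce. The number of new words equals the number of cells created, so $s(n)$ is the number of extensions with more than one continuation. Convexity of $P$ together with negative curvature implies that two geodesics cross at most once. Consequently, within a single cell $C(w)$ the vertex-hitting curves are arcs that meet each other only at points corresponding to generalized diagonals. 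I would then count via an Euler characteristic argument: a cell cut by $r$ pairwise non-crossing arcs splits into $r+1$ pieces. Hence $s(n)$ counts pairs (word $w$ of length $n$, vertex $v$) such that some trajectory with word $w$ reaches $v$ next.

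Next I compare $s(n+1)$ with $s(n)$. Such a pair (word, vertex) at level $n$ propagates to level $n+1$ by continuing the one-parameter family of trajectories through $v$. This continuation succeeds unless the family of trajectories starting at a vertex (a fan of directions) is itself split by another vertex. The additional branchings that appear are exactly the generalized diagonals: each generalized diagonal of combinatorial length $j\le n+2$ increases the count by one at every subsequent level. This accounts for the cumulative sum $\sum_{j\le n+2}\gd(j)$ in $t(n)$. Diagonals of combinatorial length $1$ and $2$ are excluded because convexity means they do not create new branchings in this way; they are absorbed into the initial data $p(1),p(2)$.

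The main obstacle is this step: proving rigorously that the vertex fans split exactly once per generalized diagonal, with no degenerate coincidences. In the Euclidean setting, coincidences such as parallel families occur. In the hyperbolic setting, I would rule them out using uniqueness of geodesics in the unfolding to $\HH$. Two distinct vertices in the unfolding determine a unique geodesic, so each generalized diagonal corresponds to exactly one splitting point of one fan. Convexity guarantees that the unfolded trajectory is a single geodesic that never re-enters a reflected copy.
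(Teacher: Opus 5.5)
The paper gives no proof of this statement (it is quoted from \cite{GutTab}). Your vertex-fan strategy is the standard Cassaigne--Hubert--Troubetzkoy route, but as written the bookkeeping is off by one level of differencing, so it cannot yield a double sum.

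If $t(n)=\sum_{j=3}^{n+2}\gd(j)$, then your own $s(m)=s(1)+\sum_{k=3}^{m+1}\sum_{j=3}^{k}\gd(j)$ is already a double sum. Then $p(n)=p(1)+\sum_{m=1}^{n-1}s(m)$ becomes a triple sum: for instance $\gd(3)$ would enter with coefficient $\binom{n-1}{2}$ instead of $n-2$, and no reindexing repairs this. A formula of the displayed shape has $p(n+2)-2p(n+1)+p(n)=\gd(n+2)$, a single term. So it is $s(n)$, not $t(n)$, that is cumulative.

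Your geometric step gives exactly this once stated correctly. We have $s(n)=\sum_v\#\{\text{backward words of length } n \text{ from } v\}$. Each such word extends to exactly one word of length $n+1$, except that every direction at $v$ whose backward trajectory meets a vertex at its $(n+1)$-st hit splits its subfan and adds one word. Hence $s(n+1)-s(n)$ is the number of (oriented) generalized diagonals of one single length. Your phrase ``increases the count by one at every subsequent level'' describes $s$, not $t$.

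For the step you flag as the main obstacle, what makes the count exact is convexity of the unfolded corridor, not uniqueness of geodesics alone. The directions at $v$ realizing a fixed backward word form an interval: an intersection of the intervals of directions crossing each edge of the corridor. So words correspond bijectively to the complementary subintervals of the splitting directions.

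Two further points must be settled rather than deferred. First, the constant. The double sum is empty for $n\le 2$, so an identity $p(n)=C+nD+\sum_{k=3}^{n}\sum_{j=3}^{k}\gd(j)$ valid for all $n$ forces $C+D=p(1)$ and $C+2D=p(2)$, i.e.\ $C=2p(1)-p(2)$. With the printed constant $2p(1)$, the formula reads $p(1)=p(1)+p(2)$ at $n=1$ and $p(2)=2p(2)$ at $n=2$. So the statement is not provable as displayed, and your telescoped constant $p(1)+(n-1)(p(2)-p(1))$ is the correct one.

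Second, the index range. Use this paper's convention: $\cl$ is the number of edge hits plus one, so $\gd(1)$ counts diagonals of $P$, and diagonals are counted with orientation, as the fan argument requires. Then only the length-one diagonals are absorbed into the initial data, via $p(2)-p(1)=p(1)+\gd(1)$. The one-bounce diagonals ($\cl=2$) are precisely the first fan splittings and give $p(3)-2p(2)+p(1)=\gd(2)$. So your claim that $\cl=2$ diagonals create no new branchings is false, and the printed range $j\ge 3$ only matches a length convention shifted by one from the paper's. In the paper's convention, the corrected argument gives $p(n)=p(1)+(n-1)(p(2)-p(1))+\sum_{k=2}^{n-1}\sum_{j=2}^{k}\gd(j)$. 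This is exactly the formula of Theorem~\ref{thm:CassaigneHubertTroubetzkoy}.
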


Motivated by computations of exact asymptotics for families of Euclidean polygons in \cite{CassaigneHubertTroubetzkoy} and  (\cite{AthreyaHubertTroubetzkoy}), we will study the asymptotics of the language complexity of hyperbolic polygonal billiard tables that tile the hyperbolic plane. In particular, we will focus on regular hyperbolic tables with $p$ sides and internal angle $2\pi/q$. The unfoldings will then give us $(p,q)$-tilings, which are highly symmetric and therefore easier to analyze.

\begin{definition}[Hyperbolic $(p,q)$-tiling] For $p,q \in \mathbb{N}$ such that $\frac{1}{p}+ \frac{1}{q} < \frac12$, a \textbf{$(p,q)$-tiling} is a tiling of the hyperbolic plane $\mathbb{H}$ by regular $p$-sided polygons, with $q$ polygons meeting at each vertex. Given that $\frac{1}{p} + \frac{1}{q} < \frac12$, these tilings exist and are unique up to isometry. 
\end{definition}

By Theorem~\ref{thm:GutTab-general}, asymptotic growth rates for language complexity will give us estimates for the topological entropy of the billiard maps on these polygons. 

\subsection{Main Results}

For the rest of the paper, our setting will be a hyperbolic polygonal billiard table $P$ with $p$ sides and internal angle $2\pi/q$ that unfolds to a hyperbolic $(p,q)$-tiling where $\frac{1}{p}+\frac{1}{q} < \frac{1}{2}$. In this process, we also note that billiard paths unfold to hyperbolic geodesics in $\HH$.

Our main results in this paper regarding such billiard systems are two fold. First, for the $q$ even case we compute the billiard language complexity exactly and for the $q$ odd case we give improved bounds (see Theorem~\ref{thm:complexity_evenodd}).  Our main approach is to relate the billiard language complexity to the growth of tiles with respect to \emph{tiling distance} (see Definition~\ref{def: tiling_length}) in associated hyperbolic tilings. To our knowledge, this is a novel approach towards computing billiard language complexity. Secondly, we completely characterize words arising from finite, infinite and bi-infinite billiard paths for the $q$ even case (see Theorems \ref{thm:admissible-tiles}, \ref{thm:infinite_bijection}, \ref{thm:biinfinite_realization}). We state these results in the next two subsections.

\subsubsection{Complexity for hyperbolic polygonal billiards from $(p,q)$-tilings}

Given a hyperbolic billiard table $P$, our main tool in computing its language complexity will be to relate it to the \textit{tiling growth rate} of the associated $(p,q)$-tiling. To do so, we first define tiling paths and their lengths. 

\begin{definition}[Tiling paths and length]
\label{def:tiling_path}
In a given tiling, any sequence of tiles $\mathcal{A} = (A_1, A_2, \dots, )$ (finite, infinite, or bi-infinite) where each $A_i$ and $A_{i+1}$ share an edge is called a \textbf{tiling path}. The \textbf{tiling length} of a finite tiling path $\mathcal{A}$, denoted $\tl(\mathcal{A})$, is the number of steps from tile to successive tile and one fewer than the number of tiles in the path. A finite tiling path $(A_1, \dots,A_n)$ is called \textbf{minimal} if there does not exist a tiling path between $A_1$ and $A_n$ with fewer than $n$ tiles. An infinite or bi-infinite tiling path is \textbf{minimal} if every finite  subpath is minimal. 
\end{definition}

We note that we adopt the convention that tiling length is one fewer than the number of tiles in the path so that the tiling length of a path equals the length (number of edges) of the tiling path in the dual graph.

\begin{definition}[Tiling distance between tiles]\label{def:tiling_distance}
 Given two tiles, $A$ and $B$, the \textbf{tiling distance} between $A$ and $B$, denoted $\td(A, B)$ is the length of a minimal tiling path between $A$ and $B$.
\end{definition}

\begin{definition}[Tiling distance counts]
\label{def: tiling_length}
Given a tiling of the hyperbolic plane, we can choose a base tile. We then let $\ntd(k)$ denote the number of tiles of tiling distance $k$ away from the base tile. By the symmetry of a tiling, $\ntd(k)$ is independent of the choice of base tile.
\end{definition}

Our first main result demonstrates that the growth rate of $\ntd(\cdot)$ for a hyperbolic $(p,q)$-tiling precisely yields the billiard language complexity for a regular hyperbolic polygon with $p$ sides  and $2 \pi/q$ angles when $q$ is even and upper bounds the billiard language complexity when $q$ is odd. 

\begin{restatable}[Hyperbolic Billiard Language Complexity]{theorem}{ComplexityEvenOdd}
\label{thm:complexity_evenodd}
Consider a hyperbolic $(p,q)$-tiling.   
Let $\alpha > 1$  such that $\lim_{n\rightarrow \infty} \frac{\log_\alpha \ntd(n)}{n} = 1$. 
Let $p(n)$ be the language complexity function of the associated hyperbolic billiard system. 

\begin{enumerate}
    \item When $q$ is even, the language complexity function $p(n)$ satisfies 
    $$\lim_{n\rightarrow \infty} \frac{\log_\alpha(p(n))}{n} = 1.$$ It follows that the exponential growth rate of the language complexity function $p(n)$, which is equal to the  topological entropy of the billiard system, is equal to $h_{top} =\alpha$. 
    \item When $q$ is odd, the language complexity function $p(n)$ satisfies
    $$\frac{q-1}{q+1} \leq \lim_{n\rightarrow \infty} \frac{\log_\alpha(p(n))}{n} \leq 1.$$
    It follows that the exponential growth rate of the language complexity function $p(n)$, which is equal to the topological entropy of the billiard system $h_{top}$, is bounded between $\alpha^{\frac{q-1}{q+1}}$ and $\alpha$ in this case.  
    
\end{enumerate}
\end{restatable}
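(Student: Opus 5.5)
The plan is to compare $p(n)$ with the tiling distance counts $\ntd(\cdot)$ by unfolding, and to use Theorem~\ref{thm:GutTab} to reduce everything to generalized diagonals.

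\textbf{Reduction to unfoldings.} A billiard word of length $n$ corresponds, after unfolding, to a tiling path $(A_0,A_1,\dots,A_n)$ crossed by a single geodesic segment. The base tile $A_0$ is fixed, and the letters record which edges are crossed. Distinct words give distinct tiling paths. The set of tiles $A_n$ reachable this way is a subset of the tiles at tiling distance at most $n$, but several words may end at the same tile. So I will bound $p(n)$ from above by counting tiling paths that a geodesic can realize, and from below by counting terminal tiles.

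\textbf{Upper bound, valid for all $q$.} A geodesic crosses each edge line of the tiling at most once. For $q$ even, the edges of a $(p,q)$-tiling assemble into complete bi-infinite geodesics, because opposite edges at a vertex are collinear when $q$ is even. A tiling path cut out by a geodesic therefore crosses each such line at most once. This forces the path to be minimal, so $A_n$ is at tiling distance exactly $n$ from $A_0$.

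I then claim that the number of geodesic-realizable paths from $A_0$ to a tile $B$ at distance $n$ is subexponential, or at least that the total count is $\alpha^{n+o(n)}$. The argument is that minimal paths between two tiles stay in a thin region. Any two minimal paths differ only by reordering crossings around vertices, so a geodesic determines the path up to boundedly many choices near each vertex it passes close to. A cleaner route is Theorem~\ref{thm:GutTab}. A generalized diagonal of combinatorial length $j$ is determined by its two endpoint vertices in the unfolding, so $\gd(j)$ is at most the number of vertices on tiles within distance about $j$. This gives $\gd(j)\le C\sum_{k\le j}\ntd(k)$. Substituting into \eqref{eq:ComplexityFormula} yields $p(n)\le C' n^2\max_{k\le n}\ntd(k)=\alpha^{n+o(n)}$. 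This argument works for odd $q$ too, which gives the upper bound $1$ in both cases.

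\textbf{Lower bound.} For $q$ even, I show that every tile $B$ with $\td(A_0,B)=n$ is the terminal tile of some geodesic segment starting in $A_0$ whose tiling path has length $n$. Take the geodesic between interior points of $A_0$ and $B$, perturbed so that it avoids vertices. It crosses exactly the tiling lines separating $A_0$ from $B$. For $q$ even, the tiling distance equals the number of separating lines, since the lines form a reflection arrangement and the dual graph is the Cayley graph of a Coxeter group. So the path has length $n$, distinct $B$ give distinct words, and $p(n)\ge\ntd(n)$.

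For $q$ odd, edge lines do not continue straight through vertices, so a geodesic may cross a tile path non-minimally. Near a vertex, going from a tile to its antipodal tile costs $(q-1)/2$ steps in tiling distance. A straight line through that neighbourhood may instead pass through about $(q+1)/2$ tiles. I would construct geodesics whose tile sequences have length at most $\frac{q+1}{q-1}$ times the tiling distance of their endpoint. The construction would use a quasi-geodesic comparison that inflates each vertex turn by at most that ratio. This gives injectively $\ntd(m)$ words of length at most $\frac{q+1}{q-1}m$, hence the exponent $\frac{q-1}{q+1}$.

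\textbf{Conclusion and main obstacle.} Combining the upper and lower bounds with Theorem~\ref{thm:GutTab-general} gives $h_{top}$ in each case. I expect the main obstacle to be the odd-$q$ lower bound, namely controlling the worst-case ratio between combinatorial length and tiling distance. For even $q$, the delicate point is proving that geodesic-crossed paths are exactly the minimal ones.
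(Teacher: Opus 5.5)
Your upper bound is sound for both parities. Unfold a generalized diagonal of combinatorial length $j$ from the base tile. It is then determined by its two endpoint vertices, and its last tile is at tiling distance at most $j-1$ from the first. So $\gd(j)\le p^2\sum_{k<j}\ntd(k)$, which uses only the trivial inequality $\td\le\cl-1$. Substituting into \eqref{eq:ComplexityFormula} gives $p(n)\le\alpha^{n+o(n)}$; the polynomial factor is really $n^3$, not $n^2$, which is harmless. The speculative first half of that paragraph, about geodesic-realizable paths staying in a thin region, is unproved but also unnecessary.

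Your $q$-even lower bound is correct and more direct than the paper's. A vertex-avoiding geodesic segment between interior points of $A_0$ and $B$ crosses exactly the edge geodesics separating them, each once. It therefore yields a billiard word of length $\td(A_0,B)$, and distinct $B$ give distinct words, so $p(n)\ge\ntd(n)$. The paper instead uses $p(n)\ge\gd(n)$, which forces it through the vertex--tile maps of Proposition~\ref{prop:vertex_tile_mapping}, Lemma~\ref{lem:gd_tl_relation}, and the primitive-density Remark~\ref{rem:primiveVSnonprimitivegrowth}. Your sandwich $\ntd(n)\le p(n)\le\alpha^{n+o(n)}$ avoids all of that and even gives existence of the limit without citing Gutkin--Tabachnikov.

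The genuine gap is the $q$-odd lower bound. This is the one place a nontrivial geometric inequality is needed. For every geodesic segment $\gamma$ between interior points of tiles $A$ and $B$ you need $\cl(\gamma)\le\frac{q+1}{q-1}\td(A,B)+O(1)$, and you only assert it. No special construction of geodesics is needed, since the straight segment works once the inequality holds; the inequality itself is the whole content.

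Your single-vertex computation shows the ratio $\frac{q+1}{q-1}$ is attained, not that it bounds the global discrepancy. There, a geodesic crosses at most $\frac{q+1}{2}$ consecutive edges, while the other way around takes $\frac{q-1}{2}$ steps. Nothing in your sketch shows that the excess of the geodesic's tiling path over $\td$ is concentrated in such vertex traversals and adds up; tiling distance is not additive along a path. A quasi-geodesic comparison cannot fill this in either. Morse-lemma or quasi-isometry arguments give $\cl\le K\,\td+C$ only for some non-sharp $K$, which yields exponent $1/K$ instead of $\frac{q-1}{q+1}$.

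The missing idea is a system of walls that computes $\td$ exactly when $q$ is odd. Edge lines no longer work, because they cut through tiles and the tile is not a Coxeter polygon. The paper uses zigzags:
\begin{itemize}
\item Minimal tiling paths cross each zigzag at most once (Proposition~\ref{prop:zigzig_tiling}), so $2\td(A,B)=\mathrm{z}(A,B)$ (Proposition~\ref{prop:tiling-distance-zigzag}).
\item Each edge crossed by $\gamma$ lies on exactly two zigzags, so $2(\cl(\gamma)-1)$ equals $\mathrm{z}(A,B)$ plus the number of excess zigzag crossings.
\item A geodesic's crossings of one zigzag are consecutive (Proposition~\ref{prop:consecutivezigzagintersection}). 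So excess crossings pair off at consecutive zigzag edges meeting at a vertex.
\item Each such pair forces $\frac{q-3}{2}$ further crossings at that vertex that are excess for no zigzag (Lemma~\ref{lem:zigzag_intersections}).
\end{itemize}
This bounds the excess by $\frac{4}{q+1}(\cl(\gamma)-1)$, which rearranges to Proposition~\ref{prop:q-oddCombboundsTiling}. Without such a global counting argument, your treatment of part (2) proves only the upper bound.
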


Tiling growth rates (i.e. the growth rate of $\ntd(\cdot)$) of hyperbolic tilings in turn relate to the growth rates of certain hyperbolic groups, a subject which is well-studied (see, for example, \cite{wagreich} and \cite{floydplotnick}). However, since we could not find a direct formulation or proof for $(p,q)$-tilings, in Section~\ref{sec:tiling} we provide one and prove the following theorem for completeness and for the benefit of the reader. We use recursive counting techniques, similar to ones used in \cite{montee} in a more specific setting.

\begin{restatable}[Tiling growth rates]{theorem}{tiling}
\label{thm:tiling_growth}
For $p \geq 3$ and $\frac1p + \frac1q < \frac{1}{2}$, the growth series $f(x) = \sum_{n=0}^\infty \ntd(n) x^n$ of the hyperbolic $(p,q)$-tiling is given by the following generating functions: 
\begin{enumerate}
    \item If $q$ is even, then $$f(x) = \frac{1 + 2x  + \ldots + 2x^{\frac{q}{2}-1} + x^{\frac{q}{2}}}{1 - (p-2)x - \ldots - (p-2)x^{\frac{q}{2}-1} + x^{\frac{q}{2}}}.$$
    \item If $q$ is odd, then $$f(x) = \frac{1 + 2x + \ldots + 2x^{\frac{q-1}{2}-1} + 4x^{\frac{q-1}{2}}+ 2x^{\frac{q-1}{2} + 1}+ \ldots + 2x^{q-2} + x^{q-1}}{1  - (p-2)x - \ldots -(p-2)x^{\frac{q-1}{2}-1} - (p-4)x^{\frac{q-1}{2}} - (p-2)x^{\frac{q-1}{2} + 1}- \ldots - (p-2)x^{q-2} + x^{q-1}}.$$
\end{enumerate}
Subsequently, $\lim_{n\rightarrow \infty}\frac{\log_\alpha \ntd(n)}{n} = 1$ where $\alpha$ is the largest root of the denominator of $f(x)$. 
\end{restatable}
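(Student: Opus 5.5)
We compute the sphere sizes $\ntd(n)$ in the dual graph of the $(p,q)$-tiling by a layered recursion. The approach is the standard ``shelling'' argument. Fix a base tile $T_0$ and let $B_n$ be the union of tiles at tiling distance $\le n$. By induction, each $B_n$ is a topological disk whose boundary is a simple closed curve. Each tile at distance exactly $n$ meets $B_{n-1}$ either along a single edge or along two consecutive edges sharing a vertex, the latter only when that vertex has all but a few of its $q$ incident tiles already in $B_{n-1}$. We classify boundary tiles of layer $n$ by how many of their edges face $B_{n-1}$, and boundary vertices of $\partial B_n$ by how many tiles around them already lie in $B_n$. The key geometric input is convexity. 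The half-planes bounded by the geodesic lines through tile edges (lines made of edges when $q$ is even) show that $B_n$ is ``combinatorially convex''. So around each boundary vertex the tiles of $B_n$ form a contiguous fan, and each new tile is adjacent to at most two old ones.

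\textbf{Case $q$ even.} Each edge line is a full geodesic in the tiling, and a tile's distance from $T_0$ equals the number of edge-lines separating it from $T_0$. Around a vertex, the $q$ tiles then have distances rising from a minimum $m$ to $m+q/2$ and back, so the distance profile around every vertex is a ``tent''. Using this, we set up counts $a_n$ (tiles in layer $n$ with one edge on $\partial B_{n-1}$) and $b_n$ (tiles with two such edges). Each layer-$n$ tile of type $a$ has $p-3$ free edges and type $b$ has $p-4$. A vertex of $\partial B_n$ where the fan has length $j$ spawns a chain of new tiles at distances $n+1,\dots,n+q/2-j$. Tracking these chains gives a linear recurrence whose characteristic polynomial is $1-(p-2)x-\dots-(p-2)x^{q/2-1}+x^{q/2}$. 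We then fix the numerator from the initial values $\ntd(0)=1$, $\ntd(1)=p$, and so on. As a cross-check, for $q$ even the tiling group is the Coxeter group with vertex stabilizers dihedral of order $q$, and $f$ should equal the known rational growth series.

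\textbf{Case $q$ odd.} Edge lines now break at vertices, and around a vertex the distances rise to a peak of two equal tiles at $m+(q-1)/2$. Two paths of length $(q-1)/2$ then meet, which is the source of the middle coefficients $4$ and $p-4$. The recursion is the same, but chains have length up to $q-1$ and the vertex where the two branches merge contributes a correction. I expect this case to be the main obstacle: one must prove rigorously that the fan structure is preserved, that there is no double-counting where chains from adjacent vertices meet, and that the boundary remains simple at every step.

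The final claim follows from the rational form. Write $f(x)=N(x)/D(x)$. The reciprocal of the denominator has a Perron-type dominant real root $\alpha>1$ and other roots of smaller modulus; this can be checked via Descartes/Rouché or Pisot-type arguments for polynomials of the form $x^k-(p-2)(x^{k-1}+\dots+x)+1$. Provided $N$ does not vanish at $1/\alpha$ (positivity of coefficients suffices), partial fractions give $\ntd(n)\sim C\alpha^n$ with $C>0$. Hence $\log_\alpha \ntd(n)/n\to1$, where $\alpha$ is the largest root of the reversed denominator.
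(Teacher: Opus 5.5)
Your strategy is the paper's: its ``types'' of tiles in the dual graph carry the same information as your heights of boundary vertices in the tent profile. But the proposal stops exactly where the content of the theorem lies, so as written it proves neither formula.

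For $q$ even, the sentence ``tracking these chains gives a linear recurrence whose characteristic polynomial is $1-(p-2)x-\cdots+x^{q/2}$'' asserts the answer rather than deriving it. The numerator is deferred to ``initial values and so on'', but you would need $\ntd(0),\dots,\ntd(q/2)$, or the exact range of validity of the recurrence. The chain length is also misstated: if $j$ counts tiles of $B_n$ around $v$, then $j=2(n-m(v))+1$ and the chain ends at $m(v)+q/2$, not at $n+q/2-j$.

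In your own language the gap closes quickly. Let $W_n$ be the number of vertices whose minimal incident tile has distance $m(v)=n$, so $W_0=p$.
\begin{enumerate}
\item Every $v$ with $n-q/2<m(v)\le n$ is an endpoint of exactly two edges between distance-$n$ and distance-$(n+1)$ tiles. Hence there are $\sum_{i=0}^{q/2-1}W_{n-i}$ such edges.
\item The distance-$(n+1)$ tiles with two back edges are exactly the tent tops over the $W_{n+1-q/2}$ vertices with $m(v)=n+1-q/2$.
\item For $n\ge 1$, a tile with one (resp.\ two) back edges is the minimal tile of $p-2$ (resp.\ $p-3$) of its vertices.
\end{enumerate}
This gives $\ntd(n+1)=\sum_{i=0}^{q/2-2}W_{n-i}$ for $n\ge0$ and $W_n=(p-2)\ntd(n)-W_{n-q/2}$ for $n\ge1$. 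Eliminating $W$ yields formula (1). Incidentally, your Coxeter ``cross-check'' is itself a proof of (1). For $q$ even the tile is a Coxeter polygon, tiling distance is word length in $\langle s_1,\dots,s_p\mid s_i^2,(s_is_{i+1})^{q/2}\rangle$, and Steinberg's formula returns (1). This does not carry over directly to odd $q$.

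For $q$ odd the proposal contains no argument, and the local picture it relies on is incorrect. Around a vertex, the distances have either a unique minimum at $m$ and two adjacent maxima at $m+\frac{q-1}{2}$, or two adjacent minima at $m$ and a unique maximum at $m+\frac{q-1}{2}$. Both kinds occur: the far endpoint of the edge shared by the two peak tiles around a vertex of the base tile is of the second kind, since it does not lie on the base tile.

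Adjacent tiles can now have equal distance. So ``one back edge $\Rightarrow$ $p-3$ free edges'' fails for the peak tiles. Your half-plane convexity argument is also unavailable, because edge lines cut through tiles.

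The idea you are missing is the paper's. Glue each one-minimum vertex neighborhood to the two-minimum neighborhood across their shared equal-distance edge, obtaining a tent of height $q-1$ with $2(q-1)$ tiles. Types then run over $1,\dots,q-1$. The transfer matrix differs from the even one only in the row of type $\frac{q-1}{2}$ (the tiles with an equal-distance neighbor), which produces $p-4$ instead of $p-3$ type-$1$ children. This is the source of the coefficients $4$ and $p-4$ in (2).

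Lastly, your asymptotic step needs $1/\alpha$ to be the unique root of minimal modulus of the palindromic denominator. Descartes' rule only controls the positive real roots $\alpha^{\pm1}$, so this is asserted rather than shown; the paper does not prove it either.
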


\subsubsection{Characterization of hyperbolic billiard words}

In addition to determining the language complexity growth for billiards in a hyperbolic polygonal billiard table, we can also hope for the existence of a set of grammar rules that determine or at least restrict what billiard trajectories are possible. Here, we mainly consider grammar rules of the form ``words that contain the subword $w$ can never be achieved by a billiard path'' or ``words that do not contain any of a set of restricted subwords must be achieved by a billiard path". 

In \cite{GiannoniUllmo}, Giannoni and Ullmo propose a set of grammar rules that they claim completely classify billiard trajectories in hyperbolic $(p,q)$-tilings with $q$ even. However, their theorem statement contains some ambiguities that we will discuss in Section~\ref{sec:billiard_words}. Also, at the suggestion of the journal editor, they did not provide a proof of their theorem in their published paper, opting only to discuss some of the ideas that went into the proof. Our contribution is to provide a statement of language rules that classify infinite billiard trajectories, in the spirit of but more precise than what is stated in \cite{GiannoniUllmo}. We will then provide a complete proof of this result using new techniques relating to characterizing both finite and infinite minimal tiling paths. The main result we will prove in Section~\ref{sec:billiard_words} is the following. 

\begin{restatable}[Language rules for $q$ even]{theorem}{GUeven}\label{thm:words_even}
In a hyperbolic polygonal billiard corresponding to a $(p,q)$-tiling with $q$ even, bi-infinite words coming from billiard trajectories must follow the following two rules: 
\begin{enumerate}
    \item[E1] Any two-letter subword $kk$ is not allowed for any $k \in \{1,\ldots,p\}$. 
    \item[E2]  Any subword of length greater than $\nu_q = \frac{q}{2}$ consisting of alternating letters $k$ and $k+1$ (or $p$ and $1$) is not allowed. 
\end{enumerate}

Equivalence classes (as defined in Definition~\ref{def:equivalence}) of basepointed bi-infinite words  that do not violate grammar rules E1 and E2 are realized by a unique geodesic in the $(p,q)$-tiling.
\end{restatable}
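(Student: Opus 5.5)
We work in the unfolding. A bi-infinite billiard trajectory unfolds to a geodesic in $\HH$ that crosses a bi-infinite sequence of tiles, consecutive ones sharing an edge. The letters of the word record which side of the current tile is crossed, in the tile's labelling inherited by reflection. So the plan is to translate E1 and E2 into statements about the tiling path, and then to show that the resulting paths are exactly the minimal bi-infinite tiling paths, each realized by one geodesic.

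\emph{Necessity.} Rule E1 holds because a geodesic leaving a tile through side $k$ would have to re-enter the same tile on its reflected copy. Equivalently, two consecutive hits on side $k$ would make the geodesic cross the same hyperbolic line twice, which is impossible.

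For E2, note that alternating letters $k,k+1$ correspond to rotating around the common vertex $v$ of sides $k$ and $k+1$. Since $q$ is even, reflections around $v$ preserve the labelling, so the sides through $v$ alternate between labels $k$ and $k+1$. A word of $m$ alternating letters therefore means the geodesic passes through $m+1$ consecutive tiles around $v$, crossing $m$ of the $q$ edges emanating from $v$. These $q$ edges lie on $q/2$ full geodesic lines through $v$. A geodesic not passing through $v$ meets each such line at most once, and it meets the edges on only one side of $v$, so it crosses at most $q/2$ consecutive edges at $v$. Hence $m\le \nu_q$.

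\emph{Sufficiency.} We prove three things in turn.

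(a) Show that a word obeying E1 and E2 determines a bi-infinite tiling path, and that this path is minimal in the sense of Definition~\ref{def:tiling_path}. The main tool is a characterization of finite minimal tiling paths, presumably proved later in the paper. A tiling path is non-minimal exactly when it backtracks (E1), or when it winds more than halfway around a vertex (E2). The second case also covers chains of half-turns around successive vertices, where equality $m=q/2$ is reached repeatedly. Those chains are the delicate configuration, and they may force a refinement of E2 together with the equivalence relation of Definition~\ref{def:equivalence}.

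(b) Show that every minimal bi-infinite tiling path is crossed by a geodesic. For each finite subpath $A_{-n},\dots,A_n$, pick a geodesic segment running from $A_{-n}$ to $A_n$. Minimality together with convexity of half-planes bounded by the tiling lines should force this segment to pass through exactly the tiles of the path, possibly up to passing through vertices. Then take a limit of the endpoints in $\partial\HH$ using compactness. The limiting geodesic meets every tile of the path, up to degenerate vertex hits; these are what the basepointed equivalence classes account for.

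(c) Prove uniqueness. Two distinct geodesics crossing the same bi-infinite tile sequence would have to stay within bounded distance of each other in both directions. In $\HH$ this forces them to share both endpoints, so they coincide.

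\emph{Main obstacle.} I expect step (a) to be the hardest: proving that local rules E1 and E2 imply global minimality of the path. I would first try a Dehn-type or Gauss–Bonnet argument. Take a shortest path between the endpoints and the given path, and look at the disc diagram they bound in the dual tiling. Use combinatorial curvature ($p,q$ large enough) to show the diagram must contain a corner where the given path either backtracks or winds more than $q/2$ steps around a vertex.

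The evenness of $q$ is essential here. It makes each vertex star consist of $q/2$ straight lines, and these lines are the walls of a Coxeter group. This suggests using the wall criterion instead: a tiling path is minimal iff it crosses each wall (each line of the tiling) at most once. The plan is then to show that if E1 and E2 hold, no wall is crossed twice. This reduces to local analysis at the first point where the path would turn back toward a wall it has already crossed.
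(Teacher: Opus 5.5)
Your necessity argument is correct and matches the paper's: a geodesic cannot cross an edge geodesic twice. The gap is in sufficiency step (a), and you half-see it. A single word obeying E1 and E2 need \emph{not} have a minimal tiling path. So the plan you settle on, showing by local analysis that ``E1 and E2 imply that no wall is crossed twice'', is aimed at a false statement.

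Take $p=4$, $q=8$, so $\nu_q=4$. The word $12124141$ obeys E1 and E2. Its path makes a vertex sequence (half-turn) around the common vertex $v$ of sides $1,2$. It arrives in a tile whose side $1$ lies on the edge geodesic $L$ through the first edge crossed. The next vertex sequence $4141$, around the other endpoint of that side, crosses $L$ again. So the path is not minimal, and no geodesic realizes it, even after embedding it in a bi-infinite E1/E2 word. The swap $4141\to 1414$ makes this visible as the E2 violation $12121$.

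In general the second crossing can occur arbitrarily far along the wall, with every alternating run of length at most $\nu_q$. No local inspection of one word detects it. This is why the theorem must be read for classes \emph{all} of whose members obey E1 and E2.

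The missing key statement is: a finite path is minimal if and only if every word in its class is admissible. The paper proves the hard direction in three moves.
\begin{itemize}
\item Take a shortest subpath that crosses some edge geodesic $g$ twice.
\item Show its interior must fellow-travel $g$. An edge of a tile that touches $g$ but has no vertex on $g$ extends to a geodesic disjoint from $g$. A subpath that left the tiles along $g$ would therefore double-cross that geodesic, contradicting minimality of proper subpaths.
\item Perform vertex sequence swaps at the successive vertices of $g$ to slide the second crossing back next to the first. This yields an equivalent word containing $kk$.
\end{itemize}

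Step (b) fails for the same reason. The geodesic joining the limit points $\eta_{\pm\infty}$ of a minimal path does not in general pass through that path's tiles, even up to vertex hits. A vertex sequence around $v$ and its swap are both minimal with the same end tiles, and a geodesic missing $v$ follows only one of them. So the equivalence is not just bookkeeping for singular geodesics.

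What you need is that two minimal paths with the same initial tile and the same terminal tile (or the same limit point in $\partial\mathbb{H}$) are word equivalent. The paper proves this inductively in two parts.
\begin{itemize}
\item Where the paths first diverge, they exit through adjacent edges. Non-adjacent edges of a tile have disjoint edge geodesics, which is incompatible with both paths reaching the same end.
\item A path that fellow-travels the other path's first wall and then crosses it can be swapped into one that crosses first.
\end{itemize}

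For bi-infinite paths you must still arrange that the geodesic's path shares a tile with some path equivalent to the given one. The paper does this by a sector analysis at a vertex where the path exits a tile through adjacent sides, with separate care for $p=3$ and for endpoints of edge geodesics.

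Finally, (c) should concern geodesics whose words lie in the same centered class, not geodesics crossing the same tile sequence. It then follows because equivalent minimal paths have the same limit points.
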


We will postpone the formal definition of the equivalence relation on words referred to in this theorem to Definition~\ref{def:equivalence}, but informally, under the equivalence, words containing a $\nu_q$ length string of alternating $k$ and $k+1$ starting with $k$ is equivalent to a string of the same length of alternating $k+1$ and $k$ starting with $k+1$. 

In their paper, Giannoni and Ullmo suggest that their main method of proof is analyzing the shape of shrinking sets of billiard trajectories that achieve longer and longer subwords of a bi-infinite word. 

We found their methods difficult to verify and a proof was not given in their paper, so in Section~\ref{sec:billiard_words}, we will prove this theorem of Giannoni and Ullmo using different methods. Specifically, we will define \textit{admissibility classes} of words, which are sets of words that do not violate grammar rules E1 and E2 that are equivalent, under the definition of equivalence given in Definition~\ref{def:equivalence}. We will then carefully separate the finite, (one-sided) infinite and bi-infinite cases and draw an appropriate conclusion for each. First, we show that admissible classes of finite words are in bijection with tiles that are finite distance away from a base tile. 

\begin{restatable}[Finite length admissible classes and tiles bijection for $q$ even]{theorem}{GUfinite}\label{thm:admissible-tiles} In the $q$ even case, given a base tile, there is a bijection between admissible word classes of length $n$ and tiles of distance $n$ away from the base tile. 
\end{restatable}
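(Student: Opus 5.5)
Fix the base tile $B$ and label its edges $1,\dots,p$ cyclically. Every tile carries an induced labeling: reflecting across an edge carries labels to labels. A word $w=w_1\cdots w_n$ then determines a tiling path $\calA(w)=(A_0=B,A_1,\dots,A_n)$, where $A_i$ is obtained from $A_{i-1}$ by crossing the edge labeled $w_i$. This is exactly the unfolding of a billiard path with bounce sequence $w$. The map $\Phi$ from admissible classes of length $n$ to tiles will be $[w]\mapsto A_n$. The proof has three steps: $\Phi$ is well defined, its image lies at distance exactly $n$, and it is bijective.

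\textbf{Well-definedness.} Around each vertex there are $q$ tiles, and the two edges at a vertex of a tile carry adjacent labels $k,k+1$ (or $p,1$). Going around the vertex therefore spells alternately $k,k+1,k,\dots$. Since $q$ is even, going $\nu_q=q/2$ steps one way around the vertex reaches the same tile as going $\nu_q$ steps the other way. In labels, an alternating string of length $\nu_q$ starting with $k$ ends at the same tile as the alternating string of length $\nu_q$ starting with $k+1$. This is precisely the elementary move generating the equivalence of Definition~\ref{def:equivalence}, so equivalent words give the same endpoint tile and $\Phi$ is well defined.

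\textbf{Image at distance $n$.} The key lemma is that a word $w$ gives a minimal tiling path if and only if $w$ obeys E1 and E2. For necessity: a letter $kk$ backtracks across the same edge. An alternating string of length $>\nu_q$ goes more than halfway around a vertex, so it can be replaced by the shorter path the other way around that vertex. For sufficiency, I would argue by induction on $n$ using the geometry of the hyperbolic tiling. The boundary of the ball of radius $n-1$ is split into edges that cross to distance $n$ and edges that go back or sideways. One then shows that an E1/E2-admissible word, extended by any admissible letter, strictly increases $\td$.

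Concretely, I would use the structure from Section~\ref{sec:tiling}: each tile at distance $n$ has one or two "parent" edges toward distance $n-1$, and two parent edges occur exactly when it closes off an alternating string of length $\nu_q$ around a vertex. I would then show that the forbidden extensions are exactly the letters crossing parent edges, or lateral edges at distance $n$. Lateral edges do not occur for $q$ even, since then each vertex has a tile that is farthest from $B$ in a symmetric way.

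\textbf{Bijectivity.} Surjectivity: any tile at distance $n$ is reached by a minimal path, whose word is admissible by the lemma. Injectivity: two minimal words reaching the same tile must be equivalent. I would prove this by induction on $n$. If the last letters agree, the previous tiles coincide and we induct. If they differ, the final tile has two parent edges. By the structure above, these two parent edges meet at a vertex where the tile is farthest from $B$. Both words can then be rewritten, using the induction hypothesis on their prefixes, to end in the two opposite alternating strings of length $\nu_q$ around that vertex, which are equivalent.

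The main obstacle is the sufficiency and uniqueness part of the lemma. One must show that local rules E1 and E2 force global minimality, and that the only ambiguity among minimal paths comes from "going around a vertex either way". I expect this to require a careful description of the distance function near a vertex. Namely, along the $q$ tiles at a vertex, the distances increase by one up to a unique maximum (or a unique minimum tile). This is a convexity-type statement for balls in the tiling, which I would prove simultaneously with the induction.
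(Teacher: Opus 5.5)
Your proposal has a genuine gap in the sufficiency half of your key lemma. It is false that a single word obeying E1 and E2 traces a minimal tiling path. So the induction you propose for it cannot succeed. That induction claims that appending a letter allowed by E1 and E2 to a minimal word always increases $\td$, because the forbidden letters are exactly those crossing parent edges.

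A counterexample: take the $(4,8)$-tiling (squares, eight at each vertex), so $\nu_q=4$, and $w=12124141$. It has no $kk$, and its alternating runs $1212$ and $4141$ have length exactly $4$.
\begin{itemize}
\item Let $v$ be the common vertex of the edges labelled $1,2$ of $A_0$, and $g$ the edge geodesic through the edge labelled $1$.
\item The prefix $1212$ goes halfway around $v$. So the edge labelled $1$ of $A_4$ is the continuation of $g$ past $v$; call its other endpoint $v'$.
\item The suffix $4141$ goes halfway around $v'$. So its last step crosses the edge opposite that one at $v'$, which again lies on $g$.
\end{itemize}
The path crosses $g$ twice, so it is not minimal by Proposition~\ref{prop:minimal_tiling_path_characterization} (cf.\ Figure~\ref{fig:inadmissible_word_even}).

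Your inductive step fails exactly at $1212414\mapsto 12124141$. The prefix $1212414$ is minimal, since its seven crossed edge geodesics are distinct. Its final run $414$ is too short for E2 to forbid the last letter $1$, even though that letter crosses a parent edge. The obstruction only shows up in another representative. In the equivalent prefix $2121414$, the fourth step crosses the edge $vv'$ itself, so the last four letters form a full run around $v'$. Appending $1$ then gives the forbidden run $14141$. Whether an extension is legal is therefore not readable from one representative. This is exactly why the theorem is about admissible \emph{classes}, all of whose members obey E1 and E2, and the sufficiency proof must use that hypothesis.

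The rest of your plan is sound and can absorb the fix.
\begin{itemize}
\item Well-definedness and necessity are fine. Applying necessity to every representative of the class of a minimal word gives the class-level admissibility that surjectivity needs.
\item Your injectivity argument is a valid alternative to the paper's proof of Proposition~\ref{prop:minimal_tiling_paths_word_equivalent}. You work backwards from the end tile, using that two parent edges of a tile meet at a vertex around which that tile is farthest (Lemma~\ref{lem:valence} and Remark~\ref{rem:cyclic_ordering} applied to the dual). The paper instead works forwards via Lemma~\ref{lem:minimal_adjacent_exits} and fellow traveling. Your argument does not use sufficiency, so prove it first.
\end{itemize}

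Then prove sufficiency by induction on length, as follows.
\begin{itemize}
\item If $[wk]$ is admissible, so is $[w]$. Hence $w$ is minimal and ends at a tile $A_n$ at distance $n$.
\item If crossing the edge labelled $k$ did not increase distance, that edge would be a parent edge of $A_n$, since there are no lateral edges when $q$ is even.
\item If it is the edge $w$ entered through, E1 fails for $wk$.
\item Otherwise the two parent edges meet at a vertex $u$. Consider the word that reaches the nearest tile around $u$ minimally and then goes $\nu_q$ steps around $u$, entering $A_n$ through the same edge as $w$. It is minimal, hence equivalent to $w$ by injectivity. Appending $k$ to it produces a run of length $\nu_q+1$ inside $[wk]$, a contradiction.
\end{itemize}

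The paper reaches the same conclusion (Proposition~\ref{prop:minpathimpliesadmissibleword}) using edge geodesics instead. A shortest subpath double-crossing some edge geodesic $g$ must fellow travel $g$ (Lemma~\ref{lem:necessarilyfellowtraveling}). Vertex sequence moves then slide the second crossing back until it creates a backtrack, which is an E1 violation in an equivalent word.
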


For the infinite case, recall that two geodesic rays in the hyperbolic plane are equivalent if they have the same endpoint. We then show that admissible classes of infinite words are in bijection with equivalence classes of geodesic rays beginning at a base tile. 

\begin{restatable}[Infinite word admissible classes and geodesic rays bijection for $q$ even]{theorem}{GUinfinite}
\label{thm:infinite_bijection}
Given a hyperbolic $(p,q)$-tiling with $q$ even, fix a base tile $A_0$. Then there is a bijection between equivalence classes of geodesic rays beginning in $A_0$ and admissible word classes of infinite words. 
\end{restatable}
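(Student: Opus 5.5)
The plan is to build the bijection in Theorem~\ref{thm:infinite_bijection} out of the finite one in Theorem~\ref{thm:admissible-tiles}. I will use the compatibility of that bijection with prefixes: if an admissible class $[w]$ of length $n$ corresponds to a tile $T$ at distance $n$ from $A_0$, then truncating $w$ to length $n-1$ gives the class of the tile preceding $T$ on a minimal tiling path from $A_0$. I expect this to come out of the construction in Theorem~\ref{thm:admissible-tiles}, where a word is read off a minimal tiling path via the edges crossed. An admissible infinite word class then corresponds to a nested sequence of tiles $T_1, T_2, \dots$ with $\td(A_0,T_n)=n$. Equivalently, it corresponds to an infinite minimal tiling path starting at $A_0$, up to the ambiguity coming from the flips of alternating blocks of length $\nu_q$. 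Those flips correspond to going around the two sides of a vertex.

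\textbf{Forward map (rays to words).} Given a geodesic ray $\gamma$ starting in $A_0$ and avoiding vertices, I read off its infinite bounce word. I then show that the tiling path it crosses is minimal, and hence admissible by the finite theorem. For minimality in the $q$ even case I would use that the edges of the tiling lie on complete geodesics. A geodesic crosses each such line at most once, and $\td(A,B)$ equals the number of these lines separating $A$ from $B$. If $\gamma$ hits vertices, or several rays share an endpoint, they may give different words. I need to check that these words are all equivalent under Definition~\ref{def:equivalence}. The point is that rays asymptotic to each other are eventually separated by the same set of tiling lines. Their words can then differ only through flips at vertices, which are exactly the alternating strings of length $\nu_q$. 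This makes the map from endpoints to classes well defined.

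\textbf{Inverse map (words to rays).} Given an admissible infinite class, I take the corresponding minimal infinite tiling path $(A_0, A_1, \dots)$. A minimal path never recrosses a tiling line, so the half-planes it enters form a nested sequence. The lines crossed go off to infinity because there are only finitely many tiling lines meeting any compact set. Therefore the tiles $A_n$ converge in $\HH \cup \partial\HH$ to a single boundary point $\xi$. The ray from a point of $A_0$ to $\xi$ is the preimage. To see that it realizes the word, I use that $\xi$ lies in every half-plane bounded by a crossed line and containing $A_n$. Since a ray from $A_0$ to $\xi$ must cross exactly those lines, its word agrees with the given class up to equivalence.

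\textbf{Main obstacle.} The hardest step is injectivity on the word side, together with the convergence to a single point. I must show that two inequivalent admissible classes cannot produce the same endpoint $\xi$. Conversely, I must show that a single endpoint cannot be reached by two minimal paths unless they differ by vertex flips. My first attempt would be to show that two minimal infinite paths converging to the same $\xi$ stay within bounded distance of each other, by hyperbolicity and the fact that quasi-geodesics fellow-travel. Then, at each stage, the set of tiles at distance $n$ lying between them is controlled by lines crossed by both. Any discrepancy would then be a finite region bounded by two lines through a common vertex, which forces the flip equivalence. Nondegeneracy of the nested intersection, meaning it contains no interval of boundary points, should follow because the distance from a fixed point to the $n$-th crossed line grows linearly.
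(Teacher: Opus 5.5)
There is a genuine gap. Your skeleton matches the paper's proof. A ray gives a minimal tiling path because it crosses each edge geodesic at most once. An admissible class gives a minimal path with a unique limit point $\xi$, and the ray to $\xi$ is the preimage. But the step that carries all the weight, which you rightly flag as the main obstacle, is left unproved, and the mechanism you suggest for it would not work.

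\textbf{The missing step.} Both well-definedness of rays $\mapsto$ classes and your surjectivity step need that \emph{two minimal infinite tiling paths from $A_0$ with the same limit point are word equivalent} (Proposition~\ref{prop:minimal_same_endpoint}). Bounded Hausdorff distance does not give this. Nor is the discrepancy between two such paths a region bounded by two lines through one vertex. Suppose $\mathcal{B}$ crosses an edge geodesic $g$ on its first step and then fellow travels it, while $\mathcal{A}$ fellow travels $g$ on the near side. The paths stay boundedly close, yet relating them takes a flip at each vertex $v_k,\dots,v_1$ of $g$ that they pass. If $\xi$ is an endpoint of $g$ and $\mathcal{A}$ never crosses $g$, it takes infinitely many flips.

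The same case breaks your realization step. A ray from a point of $A_0$ to an endpoint of $g$ never meets $g$, so it does not cross ``exactly those lines'' that $\mathcal{A}$ crosses. This is precisely where the limiting form of Definition~\ref{def:equivalence} is needed. Even when the crossed sets agree, you still need an argument that turns this into word equivalence. Relatedly, truncation is not defined on classes: in the $(4,8)$-tiling $1212\sim 2121$ but $121\not\sim 212$. So a class does not determine a sequence of tiles $T_n$.

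\textbf{How to close it.} You need a way to align the two paths one tile at a time. The paper takes the last common tile $A_k=B_k$ before the paths diverge. It shows the two exit edges are adjacent on that tile, by the argument of Lemma~\ref{lem:minimal_adjacent_exits} with $\xi$ in place of the final tile. Lemma~\ref{lem:necessarilyfellowtraveling} then shows that $\mathcal{A}$ fellow travels $g_{e_{\mathcal{B}}}$ until it crosses it. Finitely many flips convert that stretch into one that crosses first, so the paths now share one more tile; iterating gives the infinite equivalence.

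A route closer to yours also works. Take a prefix $A_0,\dots,A_n$ none of whose crossed edge geodesics ends at $\xi$, and pick $B_m$ beyond all of them. The prefix followed by a minimal path to $B_m$ crosses no edge geodesic twice, so it is minimal. Hence it is equivalent to $B_0,\dots,B_m$ by Proposition~\ref{prop:minimal_tiling_paths_word_equivalent}. An edge geodesic ending at $\xi$ must be handled separately, by pushing its crossing outward with fellow-traveling flips.

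\textbf{Smaller omissions.} You never argue injectivity of rays $\mapsto$ classes. If $\eta_1\neq\eta_2$, choose an edge geodesic $g$ separating them; it is crossed by exactly one of the two paths. If the classes were equal, finitely many flips would make the non-crossing path agree with the other past its crossing of $g$. Those flips leave its tail, and hence its endpoint, unchanged, so it would cross $g$ twice, contradicting Proposition~\ref{prop:minimal_tiling_path_characterization}.

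Uniqueness of the limit point also does not follow from distances to crossed lines growing. The direct argument is that two accumulation points would be separated by some edge geodesic, which the path would then cross infinitely often.
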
 

Finally, for the bi-infinite case, we \textbf{center} a given bi-infinite word $\dots w_{-1}w_{0}w_{1}\dots$ at a tile $A$ by considering the labels $w_{-1}$ and $w_0$ to be edges of tile $A$. Subsequently, we define a \textbf{centered} bi-infinite admissible word class by picking a representative and centering it at a chosen tile. We then show that centered bi-infinite admissible word classes are realized by unique geodesics. 

\begin{restatable}[Bi-infinite centered admissible classes and unique geodesic realization for $q$ even]{theorem}{GUbiinfinite}
\label{thm:biinfinite_realization}
Given a hyperbolic $(p,q)$-tiling with $q$ even, every centered bi-infinite admissible word class is realized by a unique geodesic.
\end{restatable}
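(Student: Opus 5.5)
We need to prove Theorem~\ref{thm:biinfinite_realization}: every centered bi-infinite admissible word class is realized by a unique geodesic. The plan is to assemble the two one-sided results, Theorems~\ref{thm:admissible-tiles} and~\ref{thm:infinite_bijection}.

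\textbf{Setup.} Fix a representative $\dots w_{-2}w_{-1}w_0w_1\dots$ of the class, centered at a tile $A$, so that $w_{-1}$ and $w_0$ are edges of $A$. Split it into two one-sided infinite words:
\begin{itemize}
\item a forward word $w_0w_1w_2\dots$, read from $A$ across edge $w_0$;
\item a backward word $w_{-1}w_{-2}\dots$, read from $A$ across edge $w_{-1}$.
\end{itemize}
Rules E1 and E2 are local and symmetric under reversal. Hence both halves are admissible infinite words. The equivalence of Definition~\ref{def:equivalence} only swaps alternating blocks of length $\nu_q$, so equivalent bi-infinite words give equivalent halves. The one exception is a swap that straddles the center; this must be handled by checking that such a swap corresponds to recentering at the opposite tile of the same vertex cycle, which is allowed by the notion of centered class.

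\textbf{Existence.} Apply Theorem~\ref{thm:infinite_bijection} to each half with base tile $A$. This gives two endpoints $\xi_+,\xi_-\in\partial\HH$, one for the forward geodesic rays and one for the backward rays. Let $\gamma$ be the geodesic from $\xi_-$ to $\xi_+$.

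First we must show $\xi_+\neq\xi_-$. The forward and backward rays exit $A$ through different edges; by E1 at the junction, $w_{-1}\neq w_0$. By the finite bijection (Theorem~\ref{thm:admissible-tiles}), the tiling paths of each ray are minimal, so each ray eventually stays in the half-plane determined by its edge's geodesic line. Here we use that $q$ is even, so edges extend to full geodesic lines made of tile edges. With $w_{-1}\neq w_0$, these half-planes are distinct; E2 rules out the degenerate case where they meet only at a shared vertex and the rays become asymptotic to a common point.

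Next we must show $\gamma$ realizes the word. Take a long finite subword $w_{-m}\dots w_n$. The tiling path $\mathcal{A}$ it determines is minimal: concatenate the two minimal halves through $A$ and argue via the finite bijection that admissibility of the whole subword gives minimality. Geodesics whose endpoints lie in the shadows of the far tiles must cross the separating geodesic lines between consecutive tiles of $\mathcal{A}$. Since $q$ is even, these lines are complete geodesics, and a hyperbolic geodesic crosses each line at most once. Therefore $\gamma$ crosses exactly the sequence of lines of a minimal path, up to the vertex ambiguities encoded by the equivalence. Letting $m,n\to\infty$ shows that $\gamma$ has the given bounce sequence, centered at $A$.

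\textbf{Uniqueness.} Suppose $\gamma'$ also realizes the centered class. Then its forward ray from $A$ realizes the forward half, so by the injectivity in Theorem~\ref{thm:infinite_bijection} it has endpoint $\xi_+$. Likewise its backward endpoint is $\xi_-$. A geodesic in $\HH$ is determined by its two endpoints, so $\gamma'=\gamma$.

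\textbf{Main obstacle.} The hardest step is existence: showing that the concatenation of the two halves is still a minimal tiling path, and hence that the single geodesic $\gamma$ realizes the whole word rather than each ray separately. This is also where the straddling equivalence is subtle. The approach is to reuse the characterization of minimal paths from the proof of Theorem~\ref{thm:admissible-tiles}: a path is minimal if and only if it has no backtracking (E1) and no run exceeding $\nu_q$ around a vertex (E2), and this is a local criterion.
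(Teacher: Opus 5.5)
\textbf{The gap is in the step where you show that $\gamma$ realizes the word.} Your argument establishes at most that $\gamma$ crosses the same set of edge geodesics as the minimal bi-infinite path $\mathcal{A}$, up to edge geodesics ending at $\xi_\pm$. That does not put the word of $\gamma$ in the centered class of $w$. The geodesic from $\xi_-$ to $\xi_+$ need not pass through $A$, or through any tile of $\mathcal{A}$. The only tool for upgrading ``same endpoint'' to ``word equivalent'' is Proposition~\ref{prop:minimal_same_endpoint}, and it requires the two paths to start from a common tile. Your phrase ``up to the vertex ambiguities encoded by the equivalence'' is exactly the content of the theorem, and nothing in the proposal supplies it.

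The same presupposition appears in your uniqueness paragraph, where you take the forward ray of $\gamma'$ ``from $A$''. Uniqueness should instead come from the fact that the class determines the endpoints of any realizing geodesic.

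The paper's proof is devoted entirely to manufacturing a common tile between $\mathcal{A}$ and the tiling path $\mathcal{B}_\gamma$ of $\gamma$:
\begin{itemize}
\item Case 1: if they share a tile, apply Proposition~\ref{prop:minimal_same_endpoint} to both tails from that tile.
\item Case 2: take a vertex $v$ where $\mathcal{A}$ exits through adjacent edges. Use the $q$ sectors at $v$ and Lemma~\ref{lem:nonadjacent_sectors}, which rests on Lemmas~\ref{lem:p4_nonintersecting} and~\ref{lem:p3_nonintersecting}, to make both paths traverse $v$. The situations where $\xi_\pm$ is an endpoint of an edge geodesic through $v$, and where $p=3$, are handled separately.
\item Case 3: with no adjacent exits, either reduce to Case 2 by fellow traveling, or show $\gamma$ is forced through $A_0$.
\end{itemize}

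Your wall-crossing viewpoint could also be completed, but it needs an actual argument. For example, take a tile $B$ of $\mathcal{B}_\gamma$. Show that for $m,n$ large no edge geodesic separates $B$ from both $A_{-m}$ and $A_n$: such a geodesic would either be crossed twice by $\mathcal{A}$, or would separate $B$ from both $\xi_\pm$. Conclude by Proposition~\ref{prop:minimal_tiling_path_characterization} that some path $A_{-m}\to B\to A_n$ is minimal. Proposition~\ref{prop:minimal_tiling_paths_word_equivalent} then makes it equivalent to the segment of $\mathcal{A}$ through $A$, and only then can you apply Proposition~\ref{prop:minimal_same_endpoint} to the tails from $B$.

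\textbf{Two smaller corrections.}
\begin{itemize}
\item The step you flag as the main obstacle, minimality of the concatenated path, is actually immediate, but your justification is wrong. Minimality is \emph{not} a local E1/E2 criterion on a single word: the word $12124141$ of Figure~\ref{fig:inadmissible_word_even} satisfies E1 and E2, yet its path double-crosses an edge geodesic. The correct tool is Proposition~\ref{prop:minpathimpliesadmissibleword}: a path is minimal iff its whole word class is admissible. It applies here because any word equivalent to a finite subword $w_{-m}\cdots w_n$ extends, by the same exchanges, to a member of the admissible bi-infinite class.
\item Your argument for $\xi_+\neq\xi_-$ is misdescribed when $w_{-1}$ and $w_0$ are adjacent at a vertex $v$. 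The far sides of $g_{w_0}$ and $g_{w_{-1}}$ overlap in a whole wedge, not just at $v$, and E2 plays no role. What works is minimality of the full path. It crosses each of $g_{w_0}$ and $g_{w_{-1}}$ exactly once, at $A$. So the forward tail lies beyond $g_{w_0}$ but not beyond $g_{w_{-1}}$, and the backward tail the reverse. These are opposite quadrants at $v$, whose arcs at infinity are disjoint.
\end{itemize}
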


Noting that billiard paths correspond to hyperbolic geodesics via unfolding, Theorem~\ref{thm:biinfinite_realization} in particular implies that each centered bi-infinite word class is achieved by a unique bi-infinite billiard path.

The grammar rules of Theorem~\ref{thm:words_even}, while useful for getting a better understanding of the set of possible billiard words, are not enough to directly give us language complexity asymptotics. This is because the characterizations involve equivalence classes of words, which are hard to count. For exact computations of the language complexity in the case of even $q$, we can apply our Theorem~\ref{thm:complexity_evenodd}. 

In \cite{GiannoniUllmo}, Giannoni and Ullmo also give an incomplete set of rules in the $q$ odd case (see Theorem~\ref{thm:GUodd}). Again, we found their method of proof difficult to verify. However, assuming that their theorem is correct, it can be used to give upper and lower bounds to language complexity growth rates for $(p,q)$-tilings in the case when $q$ is odd. In Section~\ref{sec:comparison}, we compare our bounds from Theorem~\ref{thm:complexity_evenodd} and show that we obtain better upper bounds than what can be achieved by Giannoni and Ullmo's methods. In this section we also discuss examples illustrating why Theorem~\ref{thm:GUodd} does not give a complete coding of billiard trajectories. 

We summarize the exact value (in the $q$ even case) and best known bounds (in the $q$ odd case) of language complexity coming from Theorems \ref{thm:complexity_evenodd} and  \ref{thm:GUodd} for a few different values of $(p,q)$ in Table \ref{table:ComplexityValues}.

\begin{table}[h!]
\centering
\begin{tabular}{cc|p{5cm}}
$p$ & $q$ & Billiard Language Complexity\\
\hline
3 & 8 &1.72208380573904  \\
4 &6 &2.61803398874989 \\
4 &8& 2.89005363826396 \\
5 &4& 2.61803398874989 \\
5 &6& 3.73205080756888 \\
5 &8 &3.93869094197994 \\
6 &4 &3.73205080756888 \\
6 &6 &4.79128784747792 \\
6 &8 &4.96069291859917 \\
7 &4 &4.79128784747792 \\
7 &6 &5.82842712474619 \\
7 &8 &5.97262435800721 \\
8 &4 &5.82842712474619 \\
8 &6 &6.85410196624968 \\
8 &8 &6.97983577921557 
\end{tabular}
\quad
\begin{tabular}{cc|p{3cm}|p{3cm}}
  &  & \multicolumn{2}{c}{Billiard Language Complexity Range}\\
 \cline{3-4}
 $p$ & $q$ & Lower Bound & Upper Bound\\
 \hline
3 &7 &1.39320015609277 & 1.55603019132268  \\
3 &9 &1.62242661883033 & 1.83107582510231 \\
4 &5 &1.74071386619816 & 2.29663026288654 \\
4 &7 &$2.41421356237309^*$ & 2.82320193241387  \\
4 &9 &$2.83117720720834^*$ & 2.94699466977899 \\
5 &5 &2.30788179437972 & 3.50606805595024   \\
5 &7 & $3.56155281280883^*$ & 3.89797986736932  \\
5 &9 & $3.90057187491196^*$ & 3.97594397745373  \\
6 &5 & $3^*$ & 4.61158178930871 \\
6 &7 & $4.64575131106459^*$ & 4.93282638839610  \\
6 &9 & $4.93394490940640^*$ & 4.98704581211217 \\
7 &3 & 1.61803398874989 & 2.61803398874989   \\
7 &5 & $4^*$ & 5.67798309021366  \\
7 &7 & $5.70156211871642^*$ & 5.95225287964244
\end{tabular}
\caption{Billiard language complexity values and best known bounds for some regular hyperbolic polygons with $p$ sides and internal angle $2\pi/q$. The values for the $q$ even case, and the upper bounds and the unstarred lower bounds in the $q$ odd case come from our contribution (Theorem~\ref{thm:complexity_evenodd}). The starred lower bounds for the $q$ odd case are deduced from Theorem~\ref{thm:GUodd} by Giannoni and Ullmo.}
\label{table:ComplexityValues}
\end{table}

\subsection{Additional Notes and references}

The language complexity of polygonal billiards and related systems has been studied by many authors. For example, other closely related works include the study of language complexity of billiards in Euclidean triangles (\cite{scheglov1}), in general Euclidean polygons (\cite{GutkinRams}), and in higher dimensional polyhedra (\cite{baryshnikov}, \cite{bedaride}). Language complexity has also been studied in other contexts such as for cutting words of tilings of regular polyominos (\cite{HubertVuillon}).

The question of determining the possible billiard words in a polygonal billiard is also well-studied. In \cite{Duchin-HearTheShape}, Duchin et. al. obtain rigidity results for the bounce spectrum of a Euclidean polygonal billiard table. That is, they show that for on right-angled affinely-equivalent polygons, the set of possible billiard words completely determines the shape of the table. This can be considered a polygonal billiard version of the famous geometry question, ``Can one hear the shape of a drum?" (\cite{Kac-ShapeOfDrum}). In the hyperbolic setting, Erlandsson et. al. prove a similar bounce spectrum rigidity theorem in \cite{Erlandsson-HyperbolicBounceSpectrum}. There has also been interest in determining the set of possible words on other geometric structures such as translation surfaces (see, for example, \cite{Davis_CuttingSequences}, \cite{DavisPasquinelliUlcigrai}, \cite{SmillieUlcigrai}). 

In the \emph{ideal} hyperbolic setting (where all vertices of the billiard polygon are on the boundary) Giannoni and Ullmo prove in \cite{GiannoniUllmoNonCompact} that infinite billiard trajectories are in one-to-one correspondence with bi-infinite sequences of words in the sides without repeated letters, relating such billiard systems to subshifts of finite type (SFT). Coding hyperbolic billiards and their relation to SFTs have also been studied in \cite{NagarSingh}, \cite{NagarSingh2024} and \cite{Singh}, the latter work being a generalization to polyhedral billiards in ideal hyperbolic polyhedra. 

In our proof of Theorem~\ref{thm:complexity_evenodd}, where we relate language complexity with tiling growth rate, one of our key tools is relating the tiling distance between two tiles to the number of special curves (which we call \textit{edge geodesics} for $q$ even and \textit{zigzags} for $q$ odd) separating the two tiles. A variant of this idea has been used successfully in other contexts, for example by Erlandsson in \cite{erlandsson} to show that the word length of an element in fundamental group of a hyperbolic surface is given by its intersection number with a collection of well-chosen arcs.

\subsection{Outline of the paper}
In Section~\ref{sec:tiling}, we provide self-contained proofs of the generating functions for the growth of tiles by tiling distance in hyperbolic $(p,q)$-tilings, which is the content of Theorem~\ref{thm:tiling_growth}. In Section~\ref{sec:billiard_complexity}, we relate the combinatorial length of a finite geodesic path to the tiling distance between the initial and final tiles of the path. By doing so, we are then able to relate tiling distance growth rates to combinatorial length growth rates and prove Theorem~\ref{thm:complexity_evenodd} about the growth rates of language complexity for particular regular hyperbolic billiard systems. In Section~\ref{sec:billiard_words}, we discuss difficulties with the theorems by Giannoni and Ullmo (\cite{GiannoniUllmo}) that give grammar rules for billiard paths in hyperbolic $(p,q)$-tilings. We provide a precise reformulation of their theorem in the $q$ even case and prove this theorem using new techniques. Along the way, we prove results about both finite and infinite minimal tiling paths when $q$ is even.  Finally, in Section 5, we compare growth rate bounds for the $q$ odd case coming both from our Theorem~\ref{thm:complexity_evenodd} and from a theorem of Giannoni and Ullmo (\cite{GiannoniUllmo}), and show that in many cases our growth rate bounds are better. Throughout this paper, we use various facts about hyperbolic tilings whose proofs are mostly elementary hyperbolic geometry. We defer the proofs of these facts to Appendix~\ref{appdx:hyperbolictilings}.

\subsection{Acknowledgements}

The authors would like to thank Aaron Fenyes, MurphyKate Montee, and  Serge Troubetzkoy for helpful conversations about this project. J.W. was partially supported by an AMS-Simons Travel Grant and NSF grant DMS-2453391. S.S. was partially supported by an AMS-Simons Research Enhancement Grant for Primarily Undergraduate Institutions.

%% file: tiling_growth_rates.tex
\section{Tiling Growth Rates}
\label{sec:tiling}

In this section, we provide a self-contained derivation of the generating functions for the growth of tiles in $(p,q)$-tilings, which is given in Theorem~\ref{thm:tiling_growth}. We believe that this result is well-known, as evidenced by related results found in various sources (e.g. \cite{floydplotnick}, \cite{wagreich}). Since we found it difficult to find a straightforward proof of the tiling growth rate result in the literature, we provide our own proof here. Some ideas from this proof will also be used in later sections of this paper. A reader eager to get to the main results of this paper could skip this section and refer back to parts of it when they are referenced later in the paper.

We recall that a $(p,q)$-tiling is a tiling of $2$-dimensional space by regular, equiangular $p$-gons, meeting $q$ at a vertex. We will restrict our attention to the case when $p \geq 3$ and $
\frac1p + \frac1q < \frac{1}{2}$, with the latter condition guaranteeing that tiling can be constructed in the hyperbolic plane.

After choosing an arbitrary base tile, we wish to understand the growth of the number of tiles of tiling distance $n$ away from the base tile (for the definition of tiling distance, see Definition~\ref{def: tiling_length}). To do so, we define a \textbf{growth series} of a tiling to be a function $$f(x) = \sum_{n=0}^\infty \ntd(n) x^n,$$ such that $\ntd(n)$ is the number of tiles of tiling distance $n$ away from our base tile. Our goal is to find a generating function for the growth series of a $(p,q)$-tiling. 

To do so, we first need a firmer understanding of how to iteratively generate a $(p,q)$-tiling by starting from a fixed base tile and then inductively generating all tiles of distance $n$ away from the base tile. Equivalently, we can consider how to generate the \textbf{dual graph} or \textbf{dual tiling} (which is a $(q,p)$-tiling), where we start with a base vertex and inductively generate all vertices of distance $n$ away from the base vertex.We note that vertices of the dual graph correspond to faces or tiles of the original tiling. 

\begin{construction}
    \label{con:tiling}
The graph of a $(p,q)$-tiling can be generated iteratively and is a planar graph, with each vertex having a well defined distance from a fixed base vertex: 
\begin{enumerate}
    \item We start with one base vertex, which has distance $0$. This will be called \textbf{stage $0$} of the construction.
    \item Given that the graph has been defined up to distance $k$, from each vertex of distance $k$, we create new edges to new vertices of distance $k+1$ until the original vertex has degree $q$. 
    \item Since the graph is planar, it makes sense to check if there are any open faces with $p$ edges or $p-1$ edges. If there is an open face with $p$ edges, we merge the two distance $k+1$ vertices to create a closed face with $p$ edges. If there is an open face with $p-1$ edges, we add an edge between the two distance $k+1$ vertices to create a closed face with $p$ edges (we note that this latter case only can happen when $p$ is odd). Steps 2 and 3 together will be called \textbf{stage $k+1$} of the construction. Alternately, stage $k+1$ is precisely the stage where vertices of distance $k+1$ are formed.
\end{enumerate}
\end{construction}

\begin{remark} 
\label{rem:construction_unique} We note here that by definition, the process in Construction \ref{con:tiling} generates a planar $(p,q)$-tiling, which has a planar $(q,p)$-tiling as its dual. As all $(p,q)$-tilings are isomorphic as graphs, this process must generate the dual graph to a hyperbolic $(q,p)$-tiling. 
\end{remark}

\begin{remark} 
\label{rem:cyclic_ordering}
Let $b$ be a chosen base vertex of a $(p,q)$-tiling and let the \textbf{distance} from a vertex $v$ to $b$ be defined in the sense of graph theory (the minimum length of an edge path connecting $b$ and $v$). The distance $k$ vertices are exactly those that are constructed in stage $k$ of the construction. Furthermore, we can see from Construction \ref{con:tiling} and the planarity of a $(p,q)$ tiling that there is a fixed structure to the distances of vertices on any given tile: 
\begin{enumerate}
    \item If $p$ is even, then for every tile there is some $k$ such that the vertices of the tile cyclically have distances $\{k, k+1, \ldots, k+ \frac{p}{2}-1, k+ \frac{p}{2}, k+ \frac{p}{2}-1, \ldots, k+1\}$.  
    \item If $p$ is odd, there are two kinds of tiles: those with one vertex of distance $k$ and two vertices of each of distances $k+1, k+2, \ldots, k+ \frac{p-1}{2}$, and those with two vertices of distances $k, k+1, \ldots, k + \frac{p-1}{2}-1$ and one of distance $k+\frac{p-1}{2}$. The cyclic ordering of the vertices in either case is again from $k$ up to $k+\frac{p-1}{2}$ and then back down.
\end{enumerate}

\end{remark}

We also have the following lemma, which will be useful for our next proposition. 

\begin{lemma} In Construction \ref{con:tiling} for generating a $(p,q)$-tiling, the valence of every distance $k$ vertex at stage $k$ of the construction is $1$ or $2$ if $p\geq 4$, and valence $3$ or $4$ if $p = 3$. 
\label{lem:valence}
\end{lemma}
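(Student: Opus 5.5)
We argue by induction on $k$, using the planarity of the construction together with the distance structure on faces from Remark~\ref{rem:cyclic_ordering}. At the end of stage $k$, a distance $k$ vertex $v$ has only been joined to earlier vertices (distance $k-1$) or to other distance $k$ vertices; the latter happens only through the edge added in step 3 when $p$ is odd. The valence of $v$ at stage $k$ therefore equals the number of its neighbors of distance $k-1$, plus the number of its neighbors of distance $k$.

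\emph{Step 1: parents.} Take two distinct neighbors $u_1,u_2$ of $v$ at distance $k-1$, and extend them to two geodesic edge paths back to the base vertex $b$. Together these bound a planar region containing a closed face. Go around $v$ between the edges $vu_1$ and $vu_2$, and look at a face $F$ incident to $v$ lying in that wedge. By Remark~\ref{rem:cyclic_ordering}, the distances around $F$ rise from a unique minimum to a maximum and fall back down, and the maximum is attained at one vertex, or at two adjacent vertices when $p$ is odd. So if $v$ is adjacent within $F$ to two vertices of smaller distance, $v$ must be the top vertex of $F$.

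If $v$ had three parents $u_1,u_2,u_3$ in cyclic order, the middle parent $u_2$ would force the faces on both sides of $vu_2$ to have $v$ as their top vertex. For $p\ge4$, I claim this contradicts planarity and the merging rule. The two paths from $u_1$ and $u_3$ down toward $b$, together with $v$, enclose the path through $u_2$. The tiles inside this region then have $v$ as a vertex but must also contain the region's vertices near $b$, and a regular $p$-gon with all $q$ angles at $v$ cannot accommodate this. I expect a cleaner route is to count edges: a face whose top vertex is $v$ uses exactly two edges at $v$. For $p\ge 4$, the faces on the two sides of $vu_2$ both being topped at $v$ means the open face between consecutive parent edges was closed at stage $k$ by merging. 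Each merge identifies only two new vertices, so each such closure contributes at most one extra parent edge to $v$. This gives at most $2$ parents, so valence $1$ or $2$ when no same-distance edge exists.

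\emph{Step 2: same-distance edges.} When $p$ is odd and $p\ge5$, step 3 adds an edge between two distance $k$ vertices only when a face has two top vertices. In that case each endpoint has exactly one parent along that face, since the face goes down from each top vertex on one side. We then need to show that a vertex $v$ with a same-distance edge has exactly one parent, giving valence $2$, and not $2+1=3$. The faces at $v$ are ordered cyclically, and there are at most two stage-$k$ faces at $v$ (those flanking its stage-$k$ edges). A vertex with two parents already has both of these flanking faces topped only at $v$, so no horizontal edge can be added to it.

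\emph{Step 3: $p=3$.} Here every face is a triangle, and distances on a triangle are $\{k,k+1,k+1\}$ or $\{k,k,k+1\}$. In stage $k$, any two consecutive new edges from a distance $k-1$ vertex leave an open face with $2=p-1$ edges. Step 3 immediately closes it with a horizontal edge. Thus each distance $k$ vertex gets $1$ or $2$ parents and correspondingly $2$ horizontal edges, giving valence $3$ or $4$.

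The hardest step is Step 1: ruling out three parents for $p\ge4$ rigorously. I would carry it out by induction, maintaining that the stage-$k$ boundary is a simple closed cycle in which each vertex has at most $2$ edges going inward.
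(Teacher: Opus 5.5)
There is a genuine gap, and it is where you suspect. Nowhere do you show that a new distance-$k$ vertex is involved in at most one closure during stage $k$. So ``at most two parents'' (and, for $p$ odd, ``not two parents plus a horizontal edge'') is never established.

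\begin{itemize}
\item The region argument in Step~1 is not a proof.
\item Your ``cleaner route'' does not follow. That a merge identifies only two vertices shows each merge adds at most one parent. But a vertex created from $u_1$ could a priori be merged across the open face on one side with a child of $u_2$, and across the open face on its other side with a child of $u_3$. Nothing you write excludes this.
\item Step~2's claims about the stage-$k$ faces at $v$ are asserted rather than derived, and they presuppose the same conclusion. So does the ``$1$ or $2$ parents'' in Step~3.
\item Although you announce an induction on $k$, you never use the inductive hypothesis, and it is precisely what is needed.
\end{itemize}

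The missing idea, which is essentially the paper's whole proof, is local to the stage.

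\begin{itemize}
\item By induction, each distance-$(k-1)$ vertex has valence at most $2$ (at most $4$ if $p=3$) when stage $k$ begins. So it emits at least $q-2\ge 2$ new edges, or at least $q-4\ge 3$ if $p=3$. This uses $q\ge 4$, which holds where the lemma is applied.
\item Hence every new vertex $w$ has, on at least one of its two sides, an open face consisting only of two consecutive new edges $wu$, $uw'$ from its parent $u$.
\item For $p\ge 4$ such a face has fewer than $p-1$ edges, so it is neither merged nor closed by an added edge at this stage. Thus at most one side of $w$ is completed, either by a merge (a second parent) or by a horizontal edge.
\item A merged vertex is flanked on both sides by such two-edge faces, so it cannot be completed again.
\end{itemize}

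This gives valence $1$ or $2$ and settles your Steps~1 and~2 at once, with no unimodality or region argument. For $p=3$, the two-edge faces are exactly those closed by horizontal edges. The same bookkeeping then gives $1$ or $2$ parents plus exactly two horizontal edges, i.e.\ valence $3$ or $4$.
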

\begin{proof}
We start with the case where $p\geq4$ and induct on the stage $k$ of the construction. As the base case, we see by inspection that every distance $1$ vertex has valence $1$ at stage $1$. Suppose that distance $k$ vertices all had valence $1$ or $2$ at stage $k$. Then, in stage $k+1$, each distance $k$ vertex would create new edges to at least $q-2 \geq 2$ new distance $k+1$ vertices. Each such new vertex must then be adjacent to at least one open face with only two edges, which will not be completed in stage $k+1$. Thus, each new vertex can only have valence one (if neither open polygon on either side of it is completed in this step) or valence two (if exactly one adjacent open polygon is completed). The proof of the $p=3$ case follows a similar structure, except that we note that at every stage $k$, every open face is completed to a triangle.
\end{proof}

Construction \ref{con:tiling} implies the following fact that gives a well-defined map from the vertices of the original tiling to the faces when $p \geq 4$ and vice versa when $p = 3$, which will be useful while relating tiling length (to be defined) to combinatorial length (see Definition~\ref{def:combinatorial_length}) of generalized diagonals in Section~\ref{sec:billiard_complexity}.

\begin{proposition}\label{prop:vertex_tile_mapping}
    Consider a hyperbolic $(p,q)$-tiling. Let $\mathcal{V}$ be the set of all vertices and $\mathcal{T}$ be the set of all tiles. 
    \begin{enumerate}
        \item When $p \geq 4$, there exists a surjective map $\varphi:\mathcal{V} \rightarrow \mathcal{T}$ such that for each $v \in \mathcal{V}$, $\varphi(v) \in \mathcal{T}$ is a tile adjacent to $v$ (i.e. containing $v$ as a vertex) and the map $\varphi$ is at most $p$ to 1.  
        \item When $p =3$, there exists a surjective map $\psi: \mathcal{T} \rightarrow \mathcal{V}$ such that for each $A \in \mathcal{T}$, $\psi(A) \in \mathcal{V}$ is a vertex of tile $A$ and the map $\psi$ is at most $q$ to 1.  
    \end{enumerate}
    
\end{proposition}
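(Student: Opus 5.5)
We work in Construction~\ref{con:tiling} applied to the graph of the $(p,q)$-tiling (vertices and edges of the tiling), with a fixed base vertex $b$. Every vertex then has a well-defined distance, and every tile has the distance profile described in Remark~\ref{rem:cyclic_ordering}. In the construction, each tile is \emph{closed} at a definite stage, namely the stage at which its last vertex (the one of maximal distance, or the top edge when $p$ is odd) is created. The plan is to use this "closing stage" bookkeeping to assign tiles to vertices, or vertices to tiles.

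\textbf{Case $p\ge 4$.} We define $\varphi(v)$ for a vertex $v$ of distance $k\ge 1$ using the valence count of Lemma~\ref{lem:valence}. At stage $k$, the vertex $v$ has valence $1$ or $2$, so it has $q-1$ or $q-2\ge 1$ edges still to be created in stage $k+1$. Consider the angular sectors at $v$ lying between the edges present at stage $k$. At least one tile adjacent to $v$ contains $v$ as a vertex of minimal distance on that tile (a "bottom" vertex); we set $\varphi(v)$ to be one such tile, for instance the first in a fixed cyclic order around $v$. For $b$ itself we choose any adjacent tile.

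To get surjectivity, we check that every tile $A$ has a vertex $v$ for which $A$ is a bottom tile, namely its minimal-distance vertex. We must also ensure that the choice of $\varphi$ covers every tile, so we make the rule canonical differently. Each tile $A$ has a distinguished minimal vertex $m(A)$ (when there are two, we take the first clockwise), and we invert this assignment. The map $A\mapsto m(A)$ is at most $q$-to-$1$, which is the wrong direction. What we actually need is an injection-like assignment of vertices onto tiles, so we instead use the maximal vertex $M(A)$ of each tile, or one of its top vertices.

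The key point is that at each vertex $v$ of distance $k$, the faces that are \emph{completed} at stage $k$ while containing $v$ number at most $2$, since $v$ has valence $1$ or $2$ at stage $k$ and is adjacent to at most two closed faces. The claim we need is that each vertex is a top vertex of at least one tile. This holds because the open face adjacent to a newly created vertex eventually closes with that vertex in a non-top position, while the face through the parent edge has $v$ at the top.

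We then define $\varphi(v)$ as a tile having $v$ as a top vertex. Each tile has at most $p$ vertices, so $\varphi$ is at most $p$-to-$1$. Surjectivity requires that every tile $A$ is chosen by at least one of its top vertices. We arrange this by prescribing the rule in terms of the stage-$k$ picture: each vertex created at stage $k$ sits as the newest vertex on the tile that is closed by merging or by adding an edge. In Step~3 of Construction~\ref{con:tiling}, each tile closed at stage $k$ is closed by a specific new vertex or edge, and we send that vertex (or an endpoint of that edge) to the tile. Vertices that close no tile are sent to any adjacent tile. Since every tile other than those adjacent to $b$ is closed at a unique stage by a unique merge or edge addition, surjectivity follows, and the $p$-to-$1$ bound is automatic.

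The main obstacle is verifying that the assignment "closing vertex $\mapsto$ closed tile" is well defined, i.e.\ that no single vertex closes two tiles forced to be distinct preimage targets. We handle this by allowing the map to go only one way: we need a vertex for each tile, not a tile for each vertex. To resolve the direction, we pick for each vertex $v$ a tile closed at $v$, if one exists. To rule out the case where two tiles are closed at the same vertex and one tile is left uncovered, we use Lemma~\ref{lem:valence}. By that lemma, a merge vertex closes faces on both sides, but those faces also have the other merged predecessors' children as top vertices. So we redistribute among the at most two closing vertices of each tile: a tile closed by an added edge has two top vertices. A small matching argument along the cyclic order of stage-$k$ vertices, which form a cycle, lets us assign distinct tiles. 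This argument is the step I expect to be the most delicate.

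\textbf{Case $p=3$.} Here we define $\psi(A)$ as the minimal-distance vertex of the triangle $A$, choosing consistently when there are two such vertices. Every vertex $v\neq b$ is the minimal vertex of some triangle: by Lemma~\ref{lem:valence}, $v$ has valence $3$ or $4$ at stage $k$ but degree $q\ge 7$ in the full tiling, so triangles are formed at stage $k+1$ with $v$ as their unique bottom vertex. The vertex $b$ is the bottom of all its adjacent triangles. This gives surjectivity, and since each vertex lies on exactly $q$ triangles, $\psi$ is at most $q$-to-$1$.
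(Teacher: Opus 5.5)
Your $p=3$ case is the paper's argument: take the bottom vertex of each triangle, and get surjectivity because a vertex of valence $3$ or $4$ at stage $k$ with $q\ge 7$ has consecutive new edges at stage $k+1$ bounding triangles of which it is the unique bottom. The $p\ge 4$ case is where the proposal breaks down.

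\textbf{The gap in the case $p\ge 4$.} First, your stated key claim, that every vertex is a top vertex of some tile, is false. To top a tile $A$, a vertex $v$ of distance $k$ must already have both of its $A$-edges at the end of stage $k$. So any vertex of valence $1$ at its stage tops nothing, for example every distance-$1$ vertex in Construction~\ref{con:tiling}. The face through the parent edge does not have $v$ at the top. You tacitly drop this claim later (``vertices that close no tile are sent to any adjacent tile''). After that, everything rests on the closing-vertex assignment being well defined. That step you leave to an unspecified ``matching argument along the cyclic order''. It is motivated by the assertion that a merge vertex closes faces on both sides, which contradicts Lemma~\ref{lem:valence}. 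As written, surjectivity is therefore not established.

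\textbf{The missing observation.} No matching is needed:
\begin{itemize}
\item If $v$ has distance $k$ and tops a tile $A$ (as its unique maximal vertex, or as an endpoint of the top edge when $p$ is odd), then both edges of $A$ at $v$ exist at the end of stage $k$.
\item Two distinct tiles at $v$ use at least three distinct edges at $v$.
\item By Lemma~\ref{lem:valence}, $v$ has valence at most $2$ at stage $k$ when $p\ge 4$, so $v$ tops at most one tile.
\end{itemize}
Now define $\varphi(v)$ to be the tile $v$ tops, or any adjacent tile if there is none. This map is well defined. It is onto, since every tile has a top vertex. It is at most $p$-to-$1$, since the preimages of $A$ are vertices of $A$.

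\textbf{Comparison with the paper.} With this repair your route is valid and genuinely different. The paper never uses top vertices. It proves only the tile-to-vertex statement via bottom vertices, for vertex degree at least $4$. It then gets part 1 by applying that statement to the dual $(q,p)$-tiling, so its $\varphi(v)$ is a tile around $v$ of least tiling distance from a base tile. Your version stays in the primal tiling and avoids duality. The paper's version handles both parts with a single argument.
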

\begin{proof}

It will suffice to show that when $q\geq4$, there exists a surjective map $\psi: \mathcal{T} \rightarrow \mathcal{V}$ such that for each tile $A$, $\psi(A) \in \mathcal{V}$ is a vertex of tile $A$ and the map $\psi$ is at most $q$ to $1$. Then, case 1 follows by considering the dual graph of the $(p,q)$-tiling, and case 2 follows because when $p=3$, we must have that $q \geq 7$. 

Fix a hyperbolic $(p,q)$-tiling with $q \geq 4$. After choosing a base vertex $b$ of the tiling, recall from Remark~\ref{rem:cyclic_ordering} the distance of a vertex $v$ is defined as the distance between $b$ and $v$ in the sense of graph theory (the minimum length of an edge path connecting $b$ and $v$).

Let $A$ be a tile of a hyperbolic $(p,q)$-tiling with $q \geq 4$. By Remark \ref{rem:cyclic_ordering}, $A$ has either one minimum distance vertex or two minimum distance vertices that are adjacent. Then, $\psi(A)$ will be defined as the unique minimum distance vertex, or the first of the two adjacent minimum distance vertex when traversing the boundary of the tile counterclockwise. By definition, for each $A \in \mathcal{T}, \psi(A)$ is adjacent to $A$. Additionally, since each vertex is adjacent to $q$ tiles, this map is at most $q$ to $1$. 

To show that the map $\psi: \mathcal{T} \rightarrow \mathcal{V}$ is surjective when $q \geq 4$, it suffices to show that any vertex $v$ is the unique minimal distance vertex for some tile. To see this, let $v$ be a vertex of distance $k$ from the base vertex. We first consider the case when $p \geq 4$. Then by Lemma~\ref{lem:valence}, at stage $k$ of the construction of the tiling (where $v$ first appears in the graph), there are either $1$ or $2$ incoming edges from distance $k-1$ vertices (see Figure~\ref{fig:SurjectiveVtoT} for a depiction of the latter case). 

\begin{figure}[h!]
\centering
\begin{subfloat}[Stage $k$]{
    \includegraphics[scale=1]{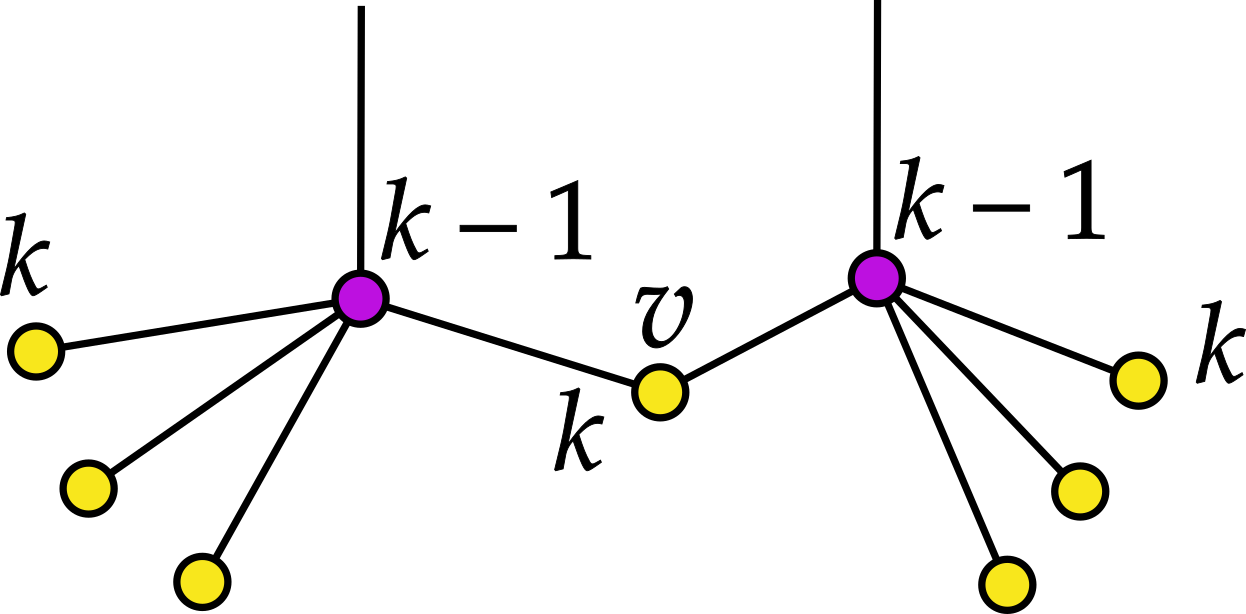}
    \vspace{1.6cm}}  
\end{subfloat}
\hspace{1cm}
\begin{subfloat}[Stage $k+1$]{
    \includegraphics[scale=1]{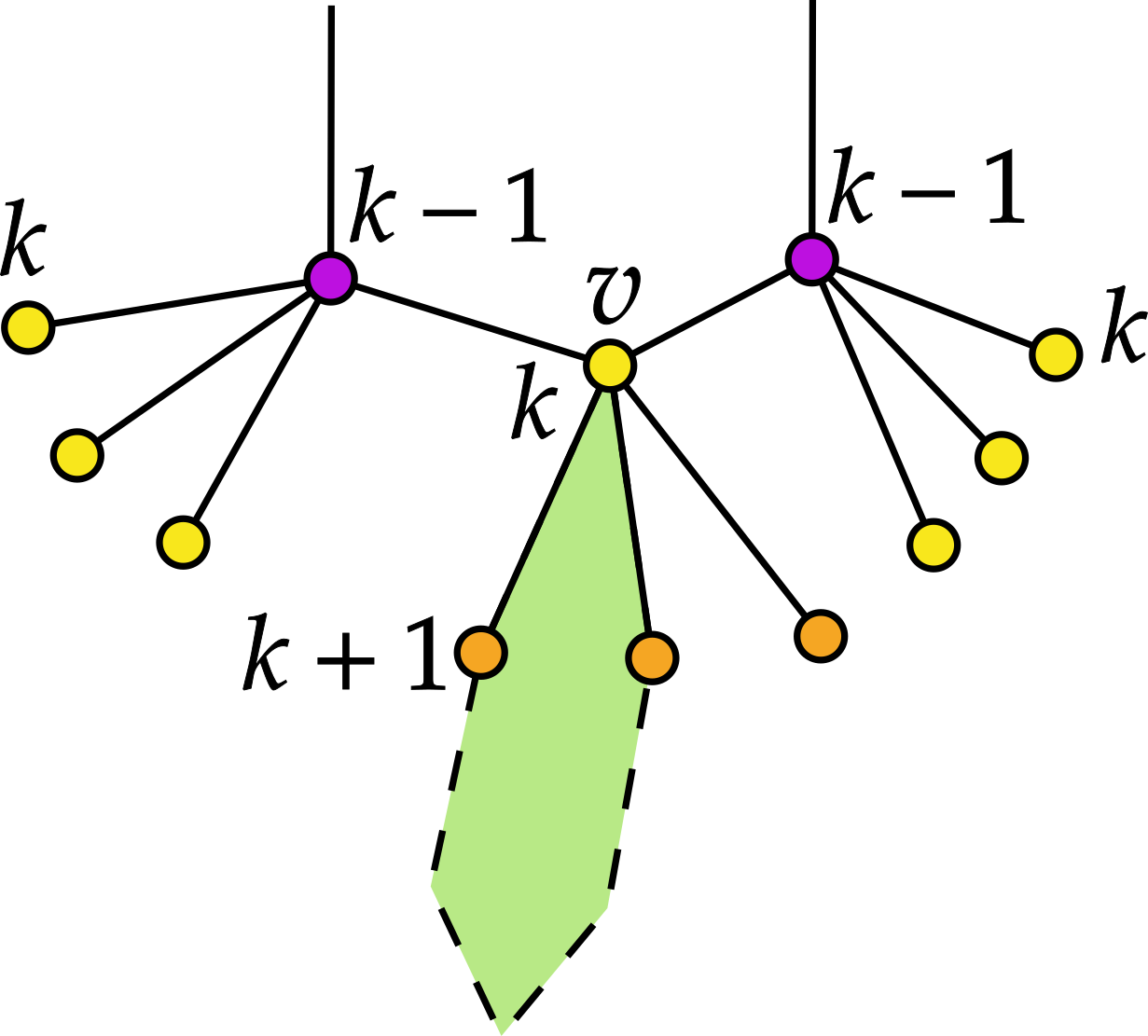} }   
\end{subfloat}
\caption{The vertex $v$ is the unique minimal vertex for the shaded face.}
    \label{fig:SurjectiveVtoT}
\end{figure}

Since $q \geq 4$, there are at least two new outgoing edges at this stage which form the start of a new open face that $v$ belongs to. For this open face, $v$ is the unique minimal vertex. The case when $p=3$ is similar since $q \geq 7$, and so at stage $k+1$ each vertex $v$ forms at least $q-4\geq 3$ new edges. 
\end{proof}

We are now ready to prove Theorem~\ref{thm:tiling_growth}.

\tiling*

\begin{proof}
Our strategy for proof is to use the dual graph to generate recursive relationships among the number of different types of vertices of given types. The \textbf{type} of a vertex in the dual graph will describe how far away the vertex is from the base vertex in the faces that it belongs to. This will give us recursive relationships between types of tiles in the original tiling.

\begin{figure}[h!]
\centering
\begin{subfloat}[Types of vertices in the dual graph of a $(6,8)$-tiling. In the face shown, assume the top vertex is the minimal distance vertex of distance $k$. Then the vertices adjacent to it below it are distance $k+1$ vertices, the ones on the next level are distance $k+2$ vertices and so on.]{
\includegraphics[scale=0.9]{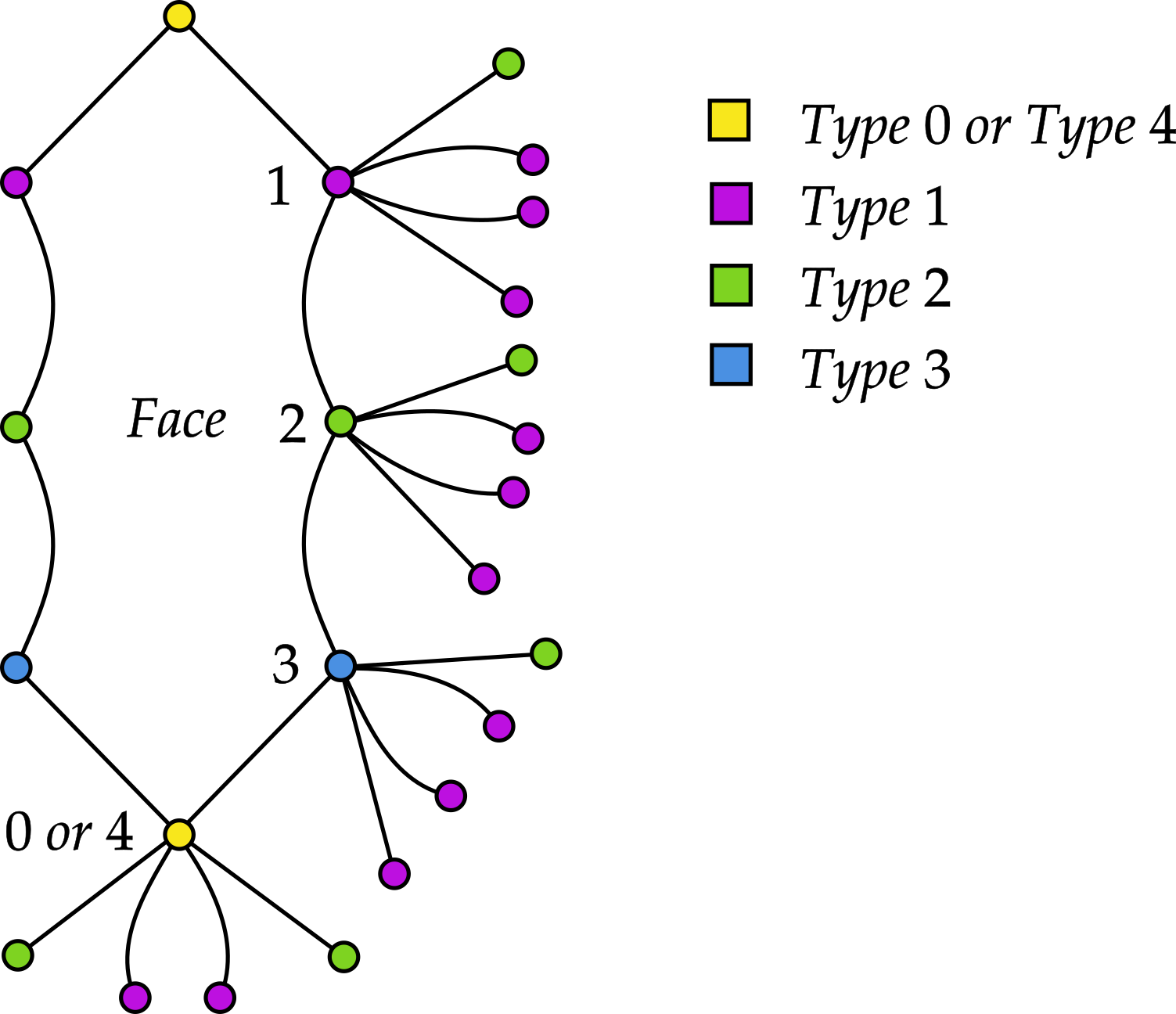}\label{fig:typeseven}}
\end{subfloat}
\hspace{0.5cm}
\begin{subfloat}[Types of vertices in the dual graph of a $(6,5)$-tiling. In order to define the type of vertices, the two faces of different kinds are combined. Then, the type of vertices is defined similar to the $q$-even case.]{
\includegraphics[scale=0.9]{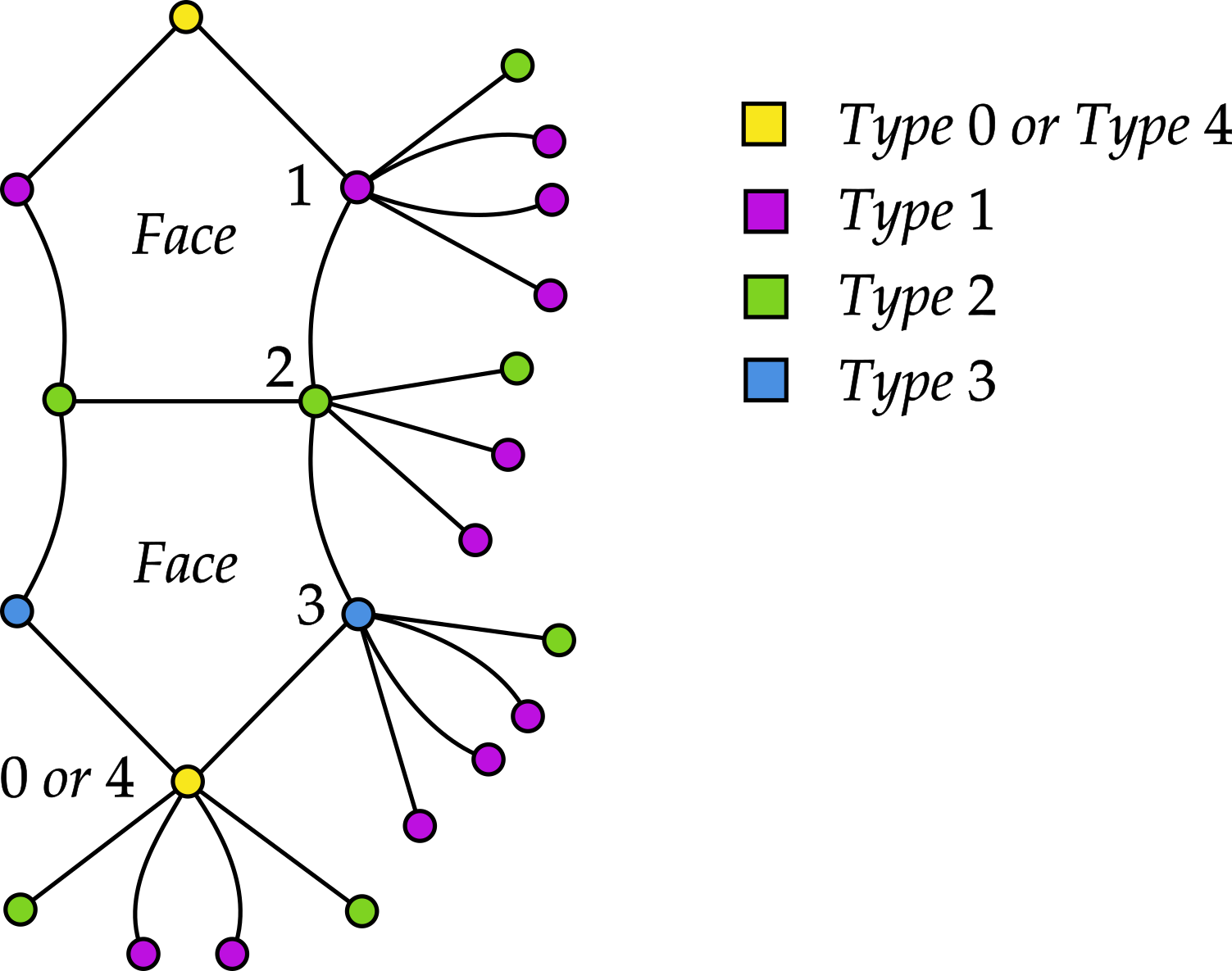}\label{fig:typesodd}}
\end{subfloat}
    \caption{A schematic showing types of vertices in the dual graphs of certain $(p,q)$-tilings illustrating the case of $q$ being even $q$ odd separately.} \end{figure}

We first assume that $q$ is even. Following the process for generating the dual graph, we can see that every face in the dual graph has one vertex of distance $k$ for some $k$, two vertices of distance $k+1, k+2, \ldots, k+ \frac{q}{2}-1$, and one vertex of distance $k+\frac{q}{2}$. Suppose that we have a vertex of distance $n$. Then, we define the \textbf{type} of the vertex to be the largest $i$ for which there is a face that the vertex is a part of that has a vertex of distance $n-i$. For ease of computation, we will use type $0$ and type $\frac{q}{2}$ to mean the same thing. Figure~\ref{fig:typeseven} illustrates how the types would be defined in some vertices in a $(6,8)$-tiling.

Let $T_k^i$ be the number of type $i$ vertices of distance $k$, for $1 \leq i \leq \frac{q}{2}$. Using the structure and the symmetry of the dual graph, we can then define each $T^j_{k+1}$ recursively from the $T^i_k$ terms. For example, for a type $1$ vertex, there are $p$ adjacent edges, exactly one of which is a type $q/2$ or type $0$ vertex. Thinking about the faces generated when we create the next level of vertices of one greater distance, we see that there are two new type $2$ vertices and $p-3$ new type $1$ vertices. The reasoning is similar for the other types. The one case that is a little bit different is when we are considering type $\frac{q}{2}-1$ vertices. Two of these vertices will generate a single type $\frac{q}{2}$ vertex, so we say that $\frac12$ of a type $\frac{q}{2}$ vertex is generated by every type $\frac{q}{2}-1$ vertex. We then put this information together into a matrix, where the $(i,j)$ entry indicates the number of type $j$ vertices of distance $n+1$ generated by each type $i$ vertex of distance $n$. 
\[
\begin{blockarray}{cccccccc}
& T^1 & T^2 & T^3 & T^4 & \cdots & T^{\frac{q}{2}-1} & T^{\frac{q}{2}} \\
\begin{block}{c(ccccccc)}
  T^1 & p-3 & 2 & 0 & 0 & \cdots & 0 & 0 \\
  T^2 & p-3 & 1 & 1 & 0 & \cdots  & 0 & 0\\
  T^3 & p-3 & 1 & 0 & 1 & 0 & \cdots  & 0 \\
  \vdots & \vdots & \vdots & \vdots & \ddots & \ddots &   \ddots & \vdots\\
  T^{\frac{q}{2}-2} & p-3 & 1 & 0 & 0 & \ddots  & 1 &0 \\
  T^{\frac{q}{2}-1} & p-3 & 1 & 0 & 0 & \cdots   & 0 &  \frac12\\
  T^{\frac{q}{2}} & p-4 & 2 & 0 & 0 & \cdots   & 0&  0\\
\end{block}
\end{blockarray}
\]
Note that due to degeneracy, for the $q=4$ case, type $1$ and type $q/2-1$ are the same type. Hence, the matrix has the form

\[\begin{blockarray}{ccc}
& T^1 & T^2 \\
\begin{block}{c(cc)}
  T^1 & p-3 & 1 \\
  T^2 & p-4 & 1 \\
\end{block}
\end{blockarray}\]

Alternatively, we can rewrite these recursive relations in $T_n^i$ notation. For example, by reading down the columns of the matrix, the first relation reads $$T^{1}_{n+1} = (p-3)(T^1_n + T^2_n + \ldots + T^{\frac{q}{2}-1}_n) + (p-4)T^{\frac{q}{2}}_n$$ and the second one reads 

\begin{equation} \label{eq:t2} T^2_{n+1} = 2T^1_n + T^2_n + \ldots + T^{\frac{q}{2} - 1}_n + 2T^{\frac{q}{2}}_n \end{equation}
If we add up all $\frac{q}{2}$ of these recursive relations, we get that for $n \geq 2$, the total number of distance $n+1$ vertices $\ntd(n+1)$ is given by \begin{equation} \label{eq:c} \ntd(n+1) = (p-1)(T^1_n + T^2_n + \ldots + T^{\frac{q}{2}}_n) - \frac12 T^{\frac{q}{2}-1}_n - T^{\frac{q}{2}}_n. \end{equation}

 We are going to find the generating function $f(x) = \sum_{n=0}^\infty \ntd(n) x^n$ by reducing the recursions to be just in terms of the $T^2_k$ terms as well as the $\ntd(k)$ terms. We notice that for $3 \leq i \leq \frac{q}{2} - 1$, $T^{i}_{n+1} = T^{i-1}_n$, and similarly that $T^{\frac{q}{2}}_{n+1} = \frac12 T^{\frac{q}{2}-1}_n$. We can use these relations to rewrite equation \ref{eq:c} as \begin{equation} \label{eq:csmall} \ntd(n+1) = (p-1)\ntd(n) - \frac12 T^2_{n - (\frac{q}{2}-1-2)} - \frac12 T^2_{n - (\frac{q}{2} - 2)}. \end{equation} We can also rewrite equation \ref{eq:t2} as \begin{equation}\label{eq:t2small} T_{n+1}^2 = 2\ntd(n) - (T_n^2 + T_n^3 + \ldots + T_n^{\frac{q}{2}-1}) = 2\ntd(n) - (T_n^2 + T_{n-1}^2 + \ldots + T_{n-(\frac{q}{2}-1 - 2)}^2).\end{equation} Both equations \ref{eq:csmall} and \ref{eq:t2small} hold for all $n \geq 2$. For $n=0$ and $n=1$, we can check that $T_0^2 = 1, T_1^2 = 0,$ and $\ntd(0) = 1, \ntd(1) = p$. We  get recursive equations for the generating functions $f(x) = \sum_{n=0}^\infty \ntd(n) x^n$ and $t(x) \coloneqq \sum_{n=0}^\infty T_n^2 x^n$ from equations \ref{eq:csmall} and \ref{eq:t2small} and by manipulating the coefficients so that the low $n$ terms are correct: 
\begin{equation} \label{eq:frecursion} f(x) = (p-1)xf(x) - \frac12 x^{\frac{q}{2}-2} t(x) - \frac12 x^{\frac{q}{2}-1} t(x) + 1 + x + \frac{1}{2}x^{\frac{q}{2}-2} + \frac{1}{2} x^{\frac{q}{2}-1}, \end{equation}
 
 and 

\begin{equation} \label{eq:trecursion} t(x) = 2xf(x) - xt(x) - x^2 t(x) - \ldots - x^{\frac{q}{2}-2} t(x) - 2x +1 .\end{equation}

Solving for $t(x)$ in equation \ref{eq:trecursion} gives us $$t(x) = \frac{2x(f(x) - 1) + 1}{1 + x + x^2 + \ldots + x^{\frac{q}{2}-2}},$$ and substituting this back into equation \ref{eq:frecursion} and solving for $f(x)$ gives us that $$f(x) = \frac{1 + 2x  + \ldots + 2x^{\frac{q}{2}-1} + x^{\frac{q}{2}}}{1 - (p-2)x - \ldots - (p-2)x^{\frac{q}{2}-1} + x^{\frac{q}{2}}},$$ as desired.
 
The proof of the $q$ odd case follows a similar structure as the $q$ even case, except that the recursive formulas for the vertices of different types is different. 

Since $q$ vertices surround every face in the dual graph, there will be two kinds of faces: faces where there is one vertex of minimum distance and faces where there are two vertices of minimum distance. To define the type of each vertex, we imagine combining adjacent faces of different kinds (that share an edge connecting vertices of the same distance) into a larger face with $2(q-1)$ vertices. With these larger faces, we then define type in the same way as when $q$ was even, as a measure of the maximum distance of the vertex from the minimum distance vertex in all of these larger faces that it belongs to. Figure~\ref{fig:typesodd} demonstrates this for part of a $(p,q) = (6,5)$ tiling. We notice that for a $(p,q)$-tiling, we will have vertices of types $0$ through $q-1$, with type $q-1$ and type $0$ being interchangeable.

Like we did for the $q$ even case, we can create a matrix where the $(i,j)$ entry displays how many type $j$ vertices are generated of distance $k+1$ for every type $i$ vertex of distance $k$. 

\[
\begin{blockarray}{cccccccccc}
& T^1 & T^2 & T^3 &  \cdots & T^{\frac{q-1}{2}} & T^{\frac{q+1}{2}} & \cdots  & T^{q-2} & T^{q-1}\\
\begin{block}{c(ccccccccc)}
  T^1 & p-3 & 2 & 0 &  \cdots & 0 & 0 & \cdots &  0 & 0 \\
  T^2 & p-3 & 1 & 1 &  \ddots  & 0 & 0 & \cdots & 0 & 0 \\
 T^3 & p-3 & 1 & 0 &  \ddots  & 0 & 0 & \cdots & 0 & 0 \\
  \vdots & \vdots  & \vdots  & \vdots & \ddots & \ddots  & \ddots & \ddots & \ddots & \vdots \\
  T^{\frac{q-1}{2}} & p-4 & 1  & 0 & \ddots &  0 & 1 & \ddots & 0 & 0 \\
  \vdots & \vdots & \vdots  & \vdots & \ddots   & \ddots & \ddots & \ddots &  \ddots &  \vdots \\
  T^{q-3} & p-3 & 1 & 0  & \cdots   & 0&  0 & \cdots & 1& 0\\
  T^{q-2} & p-3 & 1 & 0  & \cdots   & 0&  0 & \cdots & 0 & \frac12\\
  T^{q-1} & p-4 & 2 & 0  & \cdots   & 0&  0 & \cdots & 0 & 0\\
\end{block}
\end{blockarray}
\]

With these recursions in place, the same strategy of the proof of the $q$ even case can be applied to obtain the generating function for the $q$ odd case from the theorem statement. \end{proof}
As a direct corollary to the theorem, we can find the growth series of the tilings generated by regular $4n$- and $(4n+2)$-gon surfaces. 

\begin{corollary}
    The growth series of the tilings associated to surfaces created by gluing the opposite sides of the polygon together are as follows. 

    \begin{enumerate}
            \item For the $4n$-gon surface, $$f(x) = \frac{1 + 2x  + \ldots + 2x^{2n-1} + x^{2n}}{1 - (4n-2)x - \ldots - (4n-2-2)x^{2n-1} + x^{2n}}.$$
        \item For the $(4n+2)$-gon surface, $$f(x) = \frac{1 + 2x + \ldots + 2x^{n-1} + 4x^{n}+ 2x^{n + 1}+ \ldots + 2x^{2n-1} + x^{2n}}{1  - (4n)x - \ldots -(4n)x^{n-1} - (4n-2)x^{n} - (4n)x^{n + 1}- \ldots - (4n)x^{2n-1} + x^{2n}}.$$
    \end{enumerate}
\end{corollary}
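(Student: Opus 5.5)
The plan is to identify, for each surface, the $(p,q)$-tiling of $\HH$ obtained by lifting the polygon to the universal cover, and then substitute the values of $p$ and $q$ into Theorem~\ref{thm:tiling_growth}. The whole argument rests on counting the vertex classes under the side gluing. This determines how many corners of the polygon meet at each point of the surface, and hence the value of $q$.

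\textbf{Step 1 (the $4n$-gon).} Take a regular $4n$-gon and glue opposite sides by translations. Follow the corner cycle: crossing a side $e_i$ glued to its opposite $e_{i+2n}$ moves a corner to the corner offset by $2n \pm 1$. Since $2n\pm1$ is odd, one checks that all $4n$ corners lie in a single class. So the quotient has one vertex and Euler characteristic $1-2n+1 = 2-2n$, i.e.\ genus $n$. For the hyperbolic structure, the interior angles must sum to $2\pi$ at that vertex, so each angle is $2\pi/(4n)$. The universal cover is therefore tiled by copies of the polygon meeting $4n$ at each vertex, which is the $(4n,4n)$-tiling. We have $\frac{1}{4n}+\frac{1}{4n}<\frac12$ for $n\ge 2$; the case $n=1$ is the Euclidean torus and should be excluded. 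Here $q=4n$ is even, so part (1) of Theorem~\ref{thm:tiling_growth} applies with $p=4n$ and $q/2=2n$. This gives numerator $1+2x+\dots+2x^{2n-1}+x^{2n}$ and denominator $1-(4n-2)x-\dots-(4n-2)x^{2n-1}+x^{2n}$. The coefficient printed as ``$4n-2-2$'' in the last term should read $4n-2$, uniformly.

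\textbf{Step 2 (the $(4n+2)$-gon).} Glue opposite sides of a regular $(4n+2)$-gon. Now the shift $2n+1\pm1$ is even, so the corner cycle alternates parity and the corners split into two classes of $2n+1$ corners each. The quotient has two vertices and Euler characteristic $2-(2n+1)+1=2-2n$, so again genus $n$. The angle at each vertex is $2\pi/(2n+1)$, so the lift is the $(4n+2,2n+1)$-tiling. This is hyperbolic since $\frac{1}{4n+2}+\frac{1}{2n+1}=\frac{3}{4n+2}<\frac12$ for $n\ge 2$. Here $q=2n+1$ is odd, with $\frac{q-1}{2}=n$ and $q-1=2n$. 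Part (2) of Theorem~\ref{thm:tiling_growth} applies with $p-2=4n$ and $p-4=4n-2$. This yields exactly the stated numerator, with a $4$ at $x^n$ and top term $x^{2n}$, and the stated denominator, with $-(4n-2)x^n$ at the middle position.

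\textbf{Main obstacle.} Everything else is direct substitution. The only step with content is the corner-cycle computation, since it fixes $q$ and the parity case of Theorem~\ref{thm:tiling_growth}. I would verify it by explicitly tracking a corner through successive gluings and confirming that the cycle lengths are $4n$ and $2n+1$ respectively.
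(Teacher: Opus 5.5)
Your proposal is correct and follows the paper's proof, which identifies $(p,q)=(4n,4n)$ and $(p,q)=(4n+2,2n+1)$ and substitutes these into Theorem~\ref{thm:tiling_growth}; your corner-cycle and Euler-characteristic count justifies these values of $q$, which the paper only asserts, and you are right that $n\ge 2$ is needed and that the coefficient ``$4n-2-2$'' should read $4n-2$. One wording slip: in the $(4n+2)$-gon case the even shifts $2n+1\pm 1$ \emph{preserve} the parity of a corner's index rather than alternate it, and this is exactly why the corners split into the even-indexed and odd-indexed classes of $2n+1$ each.
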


\begin{proof}
    For the $4n$-gon surface, the universal cover is tiled by $4n$-gons, $4n$ meeting at a vertex. Thus, we have that $(p,q) = (4n,4n)$. For the $(4n+2)$-gon surface, the cover is tiled by $(4n+2)$-gons, $(2n+1)$ meeting at a vertex, so $(p,q) = (4n+2, 2n+1)$. We then substitute these $p$ and $q$ into Theorem~\ref{thm:tiling_growth} to get the corollary. 
\end{proof}

%% file: language_complexity.tex
\section{From Tiling Growth Rates to Billiard Language Complexity}\label{sec:billiard_complexity}

Recall that a generalized diagonal is a billiard trajectory between two vertices without any vertices in the interior of the trajectory. In this section, we work in the universal cover, a hyperbolic $(p,q)$-tiling. We will call the lift of a generalized diagonal to the $(p,q)$-tiling by the same name. Generalized diagonals in a tiling are \textbf{primitive} geodesic segments connecting two vertices, in that the interior of the trajectory does not pass through any vertices. 

We will relate two different notions of length of a generalized diagonal: the combinatorial length and the tiling length.

\begin{definition}[Combinatorial length of a generalized diagonal]
\label{def:gd_comb_length} The \textbf{combinatorial length} $\cl(\gamma)$ of a generalized diagonal $\gamma$ in a $(p,q)$-tiling is the number of tiles that $\gamma$ minus its endpoints passes through, which is also one more than the number of edges of the tiling that $\gamma$ minus its endpoints passes through. 
\end{definition}

We note that the combinatorial length of a generalized diagonal in a hyperbolic polygonal billiard table (from Definition~\ref{def:combinatorial_length}) is the same as the combinatorial length of its lift to the $(p,q)$-tiing.  Given a generalized diagonal $\gamma$, we can naturally obtain a tiling path by considering the sequence of tiles that $\gamma$ minus its endpoints passes through, allowing us to define the tiling length for generalized diagonals.

\begin{definition}[Tiling length of a generalized diagonal]
\label{def:gd_tiling_length} 
Given a generalized diagonal $\gamma$, we say that the \textbf{starting tile} and the \textbf{ending tile} of $\gamma$ are the first and last tiles that $\gamma$ minus its endpoints passes through. The \textbf{tiling length} $\tl(\gamma)$ of $\gamma$ is the tiling length of the tiling path obtained from $\gamma$ (see Definition~\ref{def:tiling_path}) between the starting and ending tile of $\gamma$, which is one fewer than the number of tiles in the tiling path.
\end{definition}


By Theorem~\ref{thm:GutTab}, the language complexity of a hyperbolic polygonal billiard can be determined by counting generalized diagonals by their combinatorial length. In this section, we will relate these counts to the counts of generalized diagonals by their tiling length, which we can then relate to the counts of tiles by tiling distance. We understand the asymptotics of these last counts via the tiling growth asymptotics of Theorem~\ref{thm:tiling_growth}. This will allow us to directly relate or to bound the language complexity growth rate by the tiling growth rate, depending on whether $q$ is even or odd.

\subsection{Relating tiling length to combinatorial length when $q$ is even}

Our main goal in this section will be to prove that the combinatorial length of a geodesic segment between vertices is one more than its tiling length (in the $q$ even case) by relating both of these quantities precisely to the number of \emph{edge geodesics} (defined below) separating the two vertices of the generalized diagonal. 

\begin{definition}[Edge geodesic]
\label{def:edge_geodesic} In a hyperbolic $(p,q)$-tiling with $q$ even, every edge $e$ extends via edges of a tiling to a bi-infinite geodesic $g_e$, which we call the \textbf{edge geodesic} of $e$.
\end{definition}

First, we extend the notion of combinatorial and tiling lengths to a geodesic segment between two vertices that is not necessarily a generalized diagonal (i.e., when the segment is not primitive). To do this, we first define a nudge of a geodesic segment, so that we may resolve issues that arise when a geodesic segment passes through a vertex of the tiling.

\begin{definition}[Nudged geodesic segment]
\label{def:nudge} Let $\gamma$ be a geodesic segment that passes through vertices $v_1, v_2, \ldots$ in its interior. A \textbf{nudge} of $\gamma$ around $v_i$ is a local deformation where we deform $\gamma$ so that it traverses a small counter-clockwise half-circle through tiles adjacent to $v_i$, as depicted in Figure~\ref{fig:nudge}. We defined the \textbf{nudged geodesic segment} $\gamma'$ of $\gamma$ as the path where we deform $\gamma$ by nudges at every $v_i$. If $\gamma$ does not pass through any vertices in its interior, then the nudged $\gamma$ is just $\gamma' = \gamma$. 
\end{definition}

To any geodesic segment $\gamma$, we can then associate a tiling path, and define the combinatorial and tiling length of $\gamma$. 

\begin{definition}[Tiling path of a geodesic segment]
\label{def:TilingPathGeodesicSegment}
Given a geodesic segment $\gamma$, consider any nudged segment $\gamma'$. Then, the tiling path of $\gamma$, denoted by $\mathcal{A}_\gamma$ is the sequence of tiles that $\gamma'$ passes through the interiors of. In the case where $\gamma$ is a portion of an edge geodesic, we do not double-list the tiles that are the last tile traversed by one nudge and the first tile traversed by the subsequent nudge (see Figure~\ref{fig:nudge}
\end{definition}

\begin{figure}[h]
    \centering

\begin{tikzpicture}[scale=0.7]
\draw (-2,0) -- (2,0);
\draw (0,-2) -- (0,2); 
\draw (-1.41,1.41) -- (1.41,-1.41);
\draw (-1.41, -1.41) -- (1.41, 1.41);
\draw[thick, ->] (-1.5, -.3) -- (1.5, .3); 
\draw[thick, dashed, color = red] (-1.5, -.3) -- (-.5, -.1); 
\draw[thick, dashed, color = red, ->] (.5, .1) -- (1.5, .3);
\draw [color = red, dashed, thick, domain=195:375] plot ({.3*cos(\x)}, {.3*sin(\x)});
\node at (0.2, -.55) {\textcolor{red}{$\gamma'$}}; 
\node at (1.7, .5) {$\gamma$}; 
\node at (-.2, .4) {$v$};
\draw[fill] (0,0) circle [radius = 0.05];
\end{tikzpicture} \hspace{1cm}
\includegraphics[width=11cm]{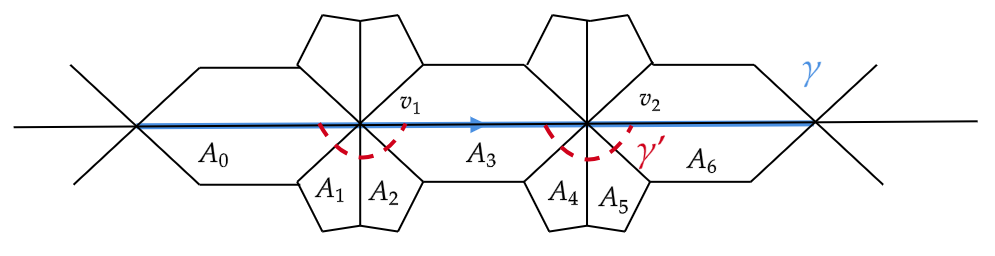}
\caption{Left: a portion of a geodesic segment $\gamma$ and nudged segment $\gamma'$. Right: A geodesic segment $\gamma$ that is a portion of an edge geodesic, and a nudge $\gamma'$, and the corresponding tiling path $\mathcal{A}_\gamma = (A_0, A_1, \ldots, A_6)$.} 
\label{fig:nudge}
\end{figure}

\begin{definition}[Combinatorial and tiling lengths of a geodesic segment]\label{def:CombAndTilingLengthGeodesicSegment}
Let $\gamma$ and let $\gamma'$ be a nudge of $\gamma$. The \textbf{tiling length} $\tl(\gamma)$ is then defined as the length of the associated tiling path $\mathcal{A}_\gamma$. The \textbf{combinatorial length} $\cl(\gamma)$ is then one more than the number of edges of the tiling that $\gamma'$ minus its endpoints passes through transversely. If $\gamma$ is a portion of an edge geodesic $g$, then $\gamma'$ will traverse portions of the edges of $g$, but those edges will not count toward the combinatorial length. 
\end{definition}

\begin{remark}
We note that we made a choice to apply just counter-clockwise nudges to geodesic segments $\gamma$. If we had chosen instead to apply clockwise nudges instead, then the tiling path $\mathcal{A}_\gamma$ would be different, but $\tl(\gamma)$ and $\cl(\gamma)$ would not have changed, because of the symmetry in the number of edge geodesics crosses when traversing halfway around a vertex $v$ in either direction. 
\end{remark}

Having extended the notion of combinatorial/tiling lengths, we now establish the following relationship between minimal tiling paths (see Definition~\ref{def:tiling_path}) and edge geodesics.

\begin{proposition}
\label{prop:minimal_tiling_path_characterization}
    Let $\mathcal{A} = (A_0, \ldots, A_n)$ be a tiling path. Then, $\mathcal{A}$ is a minimal tiling path between tiles $A_0$ and $A_n$ if and only if it crosses every edge geodesic separating $A_0$ and $A_n$ exactly once and it does not cross any other edge geodesic. Consequently, minimal tiling paths do not double cross any edge geodesic. 
\end{proposition}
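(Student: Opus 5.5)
The plan is to treat the edge geodesics as a system of "walls" that cut the plane into tiles, and to count wall crossings.

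\emph{Step 1: wall structure.} When $q$ is even, each edge lies on exactly one edge geodesic, because at every vertex the $q$ edges pair up into $q/2$ straight lines. Every edge geodesic is a bi-infinite hyperbolic geodesic, so it separates $\HH$ into two half-planes. The tiles are then exactly the complementary regions of the union of all edge geodesics: every tile edge lies on some edge geodesic, and no edge geodesic passes through a tile interior. From this I draw two consequences:
\begin{itemize}
\item each step $A_i \to A_{i+1}$ of a tiling path crosses the shared edge, and hence crosses exactly one edge geodesic, namely the one containing that edge;
\item two distinct tiles are separated by at least one edge geodesic.
\end{itemize}
Two distinct edge geodesics meet in at most one point, and only at a vertex, since geodesics intersect at most once. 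This I will cite from the appendix facts.

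\emph{Step 2: lower bound.} Let $S(A_0,A_n)$ be the set of edge geodesics separating $A_0$ from $A_n$. This set is finite, because only finitely many edge geodesics meet a compact segment joining interior points of the two tiles. Any tiling path from $A_0$ to $A_n$ must cross each $g \in S(A_0,A_n)$ an odd number of times, hence at least once. By Step 1 its length therefore satisfies
\[
\tl(\mathcal{A}) \geq |S(A_0,A_n)|,
\]
with equality exactly when each separating geodesic is crossed once and no other geodesic is crossed. A geodesic that does not separate the two tiles is crossed an even number of times, so crossing it at all means crossing it at least twice.

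\emph{Step 3: realizing the bound.} I show by induction on $|S(A,B)|$ that for any tiles $A \neq B$ there is a neighbour $A'$ of $A$ across an edge lying on some $g \in S(A,B)$. Then $S(A',B) = S(A,B) \setminus \{g\}$, since crossing $g$ changes the side of $A$ only with respect to $g$. Iterating produces a path of length exactly $|S(A,B)|$, so $\td(A,B) = |S(A,B)|$.

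This existence of a separating edge on $\partial A$ is the step I expect to be the main obstacle. To prove it, take a geodesic segment $\sigma$ from an interior point of $A$ to an interior point of $B$ that avoids all vertices; this is possible by perturbing, since vertices are discrete. The first edge that $\sigma$ crosses lies on $\partial A$, and its edge geodesic $g$ meets $\sigma$ exactly once, because two geodesics intersect at most once. Hence $g$ separates the endpoints of $\sigma$, so $g$ separates $A$ from $B$.

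\emph{Step 4: conclusion.} By Step 3, a minimal path has length $\td(A_0,A_n) = |S(A_0,A_n)|$, which is the equality case of Step 2. So a tiling path is minimal if and only if it crosses each separating geodesic exactly once and crosses no other edge geodesic. In particular, a minimal path never double-crosses an edge geodesic, which is the final assertion of the statement.
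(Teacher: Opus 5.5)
Your proof is correct, but for the forward direction it takes a genuinely different route from the paper. The converse is the same in both: each step crosses exactly one edge geodesic, and every separating edge geodesic must be crossed, which is your Step~2.

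For ``minimal $\Rightarrow$ each separating geodesic is crossed once and no other is crossed,'' the paper uses a folding argument. Suppose $\mathcal{A}$ crosses an edge geodesic $g$ at $(A_{i-1},A_i)$ and again at $(A_j,A_{j+1})$. Reflecting the subpath $(A_i,\dots,A_j)$ across $g$ is a symmetry of the tiling, and it sends $A_i$ to $A_{i-1}$ and $A_j$ to $A_{j+1}$. Splicing in the reflected subpath gives a path shorter by $2$, contradicting minimality.

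You never use reflection symmetry. Instead you prove $\td(A,B)=|S(A,B)|$ directly by a greedy construction. The key input is that the edge geodesic of the first edge crossed by a vertex-avoiding geodesic segment from $A$ to $B$ separates $A$ from $B$, because two geodesics meet at most once. The characterization then falls out as the equality case of your crossing count.

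What each approach buys:
\begin{itemize}
\item The paper's argument is shorter and purely combinatorial once the reflection symmetry is available; it is the standard folding argument for galleries in reflection groups. It obtains $\td = |S|$ only afterwards, as a consequence.
\item Your argument needs a little more setup: finiteness of $S$ and a generic segment avoiding vertices. In exchange, it needs no symmetry, only that the edges assemble into a locally finite family of geodesic lines whose complementary regions are the tiles. It also produces a minimal path explicitly.
\item Your Step~3 is essentially the geometric idea the paper uses later in Proposition~\ref{prop:q-evenCombEqualsTiling}.
\end{itemize}

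A minor point: the second bullet of your Step~1 (distinct tiles are separated by some edge geodesic) is redundant, since Step~3 proves it.
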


\begin{proof}

We assume that $\mathcal{A} = (A_0, \ldots, A_n)$ is a minimal tiling path. By the connectedness of the tiling path, it must intersect every edge geodesic separating $A_0$ and $A_n$ an odd number of times and every edge geodesic not separating $A_0$ and $A_n$ an even number of times. 

\begin{figure}[h!]
\centering
\includegraphics[width=8cm]{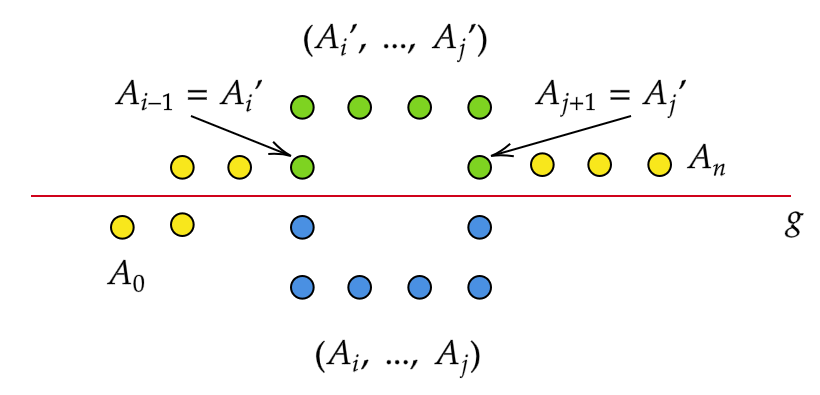}
\caption{A way to turn a tiling path in a $(p,q)$-tiling into a shorter tiling path if it does not minimally intersect an edge geodesic $g$. Here, dots represent tiles. The original tiling path is $\calA = (A_0, \dots, A_n)$ and the shortened tiling path is $(A_0, \dots, A_{i-2}, A_i', \dots, A_j', A_{j+2}, \dots, A_n)$.}
\label{fig:tiling_path}
\end{figure}

Suppose that $\mathcal{A}$ crossed an edge geodesic $g$ at two distinct places: from $A_{i-1}$ to $A_i$ and $A_{j}$ to $A_{j+1}$, as shown in Figure~\ref{fig:tiling_path}. Since the tiling is symmetric over $g$, we can reflect the tiling subpath $(A_i, A_{i+1}, \ldots, A_j)$ over $g$ to get a different subpath $(A_i', A_{i+1}', \ldots, A_j')$ where $A_i' = A_{i-1}$ and $A_j' = A_{j+1}$. Then, the tiling path $\mathcal{A}' = (A_1, \ldots, A_{i-2}, A_i', \ldots, A_j', A_{j+2}, \ldots, A_n)$ is a tiling path connecting $A_0$ and $A_n$ which is length $2$ shorter than $\mathcal{A}$, contradicting the minimality of $\mathcal{A}$. Thus, $\mathcal{A}$ crosses $g$ once if $g$ separates $A_0$ and $A_n$ and zero times otherwise. 

In contrast, we notice that every move from tile $A_i$ to successive tile $A_{i+1}$ crosses exactly one edge geodesic, and every edge geodesic separating $A_1$ and $A_n$ must be crossed at least once. Thus, a tiling path $\mathcal{A} = (A_0,\ldots, A_n)$ that crosses each edge geodesic separating $A_0$ and $A_n$ exactly once and all other edge geodesics zero times must be minimal. 
\end{proof}

As a consequence of Proposition~\ref{prop:minimal_tiling_path_characterization}, the tiling length of a minimal tiling path is precisely the number of edge geodesics separating the initial and final tiles. 

We now establish our main result of this subsection: the combinatorial length of a geodesic segment between vertices is one more than its tiling length.

\begin{proposition} 
\label{prop:q-evenCombEqualsTiling}
Let $\gamma$ be a geodesic segment between vertices in a hyperbolic $(p,q)$-tiling with $q$ even. With combinatorial length $\cl(\gamma)$ and tiling length $\tl(\gamma)$ as defined in Definition~\ref{def:CombAndTilingLengthGeodesicSegment}, $$\cl(\gamma) = \tl(\gamma)+1.$$
\end{proposition}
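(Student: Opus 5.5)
Both quantities will be expressed through a third one, $N(\gamma)$: the number of edge geodesics that separate the two endpoints of $\gamma$, or more precisely that $\gamma$ crosses transversely in its interior. The plan is to show $\cl(\gamma) = N(\gamma)+1$ and $\tl(\gamma) = N(\gamma)$. When $q$ is even, the edges of the tiling are exactly the union of the edge geodesics, and every vertex lies on exactly $q/2$ of them, which meet there at angles $2\pi/q$. A geodesic segment $\gamma$ can meet a distinct edge geodesic $g$ at most once, since two distinct hyperbolic geodesics intersect at most once. So each edge geodesic either crosses $\gamma$ transversely exactly once, contains $\gamma$, or misses the interior of $\gamma$.

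For the combinatorial length, first take $\gamma$ primitive, meaning no vertices in its interior. Each transverse edge crossing of $\gamma$ lies on a unique edge geodesic, and distinct crossings lie on distinct edge geodesics because each geodesic meets $\gamma$ at most once. Therefore $\cl(\gamma)-1$ equals the number of edge geodesics $\gamma$ crosses in its interior. Next take the nudged segment $\gamma'$ when $\gamma$ passes through interior vertices $v_i$. The counterclockwise half-circle at $v_i$ crosses the edge geodesics through $v_i$. Of the $q/2$ geodesics through $v_i$, all of them are crossed exactly once by the half-circle, except the one containing $\gamma$ when $\gamma$ lies along an edge geodesic. These are precisely the geodesics that $\gamma$ crosses at $v_i$. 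The edge geodesics through the two endpoints of $\gamma$ are not counted, since the endpoints are removed. This bookkeeping shows that $\cl(\gamma)-1$ equals the number of edge geodesics crossed by $\gamma'$, and each is crossed exactly once.

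For the tiling length, the tiling path $\mathcal{A}_\gamma$ of $\gamma'$ steps from tile to tile, and each step crosses exactly one edge geodesic. The steps of $\mathcal{A}_\gamma$ therefore correspond exactly to the transverse crossings of $\gamma'$ counted above, so $\tl(\gamma)$ equals that same number and $\cl(\gamma)=\tl(\gamma)+1$. As a byproduct, $\gamma'$ crosses each edge geodesic at most once. By Proposition~\ref{prop:minimal_tiling_path_characterization}, $\mathcal{A}_\gamma$ is then a minimal tiling path, which is what will later link generalized diagonals to $\ntd$.

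I expect the main obstacle to be the careful treatment of the degenerate cases. The first is when $\gamma$ runs along an edge geodesic: the nudges then traverse edges of $g$ without crossing it, and Definition~\ref{def:TilingPathGeodesicSegment} avoids double listing tiles between consecutive nudges. The second is checking that the counterclockwise half-circle at a vertex crosses each of the other $q/2-1$ (or $q/2$) geodesics exactly once, which uses that an even $q$ makes the geodesics through $v$ pair up opposite edges. I would handle both with a local picture at a single vertex and then sum over vertices.
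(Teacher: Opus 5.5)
Your proposal is correct and follows essentially the same route as the paper: both $\cl(\gamma)-1$ and $\tl(\gamma)$ are identified with the number of edge geodesics separating the endpoints of $\gamma$, which $\gamma$ and its nudge $\gamma'$ each cross exactly once. The only difference is that you count the steps of $\mathcal{A}_\gamma$ directly against the crossings of $\gamma'$ and obtain minimality as a byproduct, whereas the paper first deduces minimality of $\mathcal{A}_\gamma$ from Proposition~\ref{prop:minimal_tiling_path_characterization} and reads off $\tl(\gamma)$ from that; your local half-circle count at each vertex spells out what the paper asserts in one line.
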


\begin{proof}

    Let $\gamma$ be a geodesic segment in a hyperbolic $(p,q)$-tiling with $q$ even. Since hyberbolic geodesics intersect minimally, $\gamma$ crosses every edge geodesic separating the end vertices exactly once. If we nudge $\gamma$ to $\gamma'$ as in Definition~\ref{def:nudge}, $\gamma$ and $\gamma'$ intersect exactly the same set of edge geodesics transversely. Thus $\cl(\gamma)$ is equal to one more than the number of edge geodesics separating the endpoints of $\gamma$.

    Now, given $\gamma$, let $\calA_{\gamma} = (A_1, \dots, A_n)$ be the tiling path associated to $\gamma$ (see Definition~\ref{def:TilingPathGeodesicSegment}). Since edge geodesics do not pass through the interiors of tiles, the set of edge geodesics separating $A_1$ and $A_n$ are exactly the set of edge geodesics separating the endpoints of $\gamma$. These edge geodesics, by the definition of $\mathcal{A}_\gamma$, are exactly the edge geodesics generated by the edges separating successive tiles in $\mathcal{A}_\gamma$. Each edge geodesic separating $A_1$ and $A_n$ is intersected exactly once because $\gamma$ and its nudge $\gamma'$ intersects these edge geodesics exactly once. Thus, the tiling path $\mathcal{A}_\gamma$ is minimal by Proposition~\ref{prop:minimal_tiling_path_characterization}, and $\tl(\gamma)$ is then equal to the number of edge geodesics separating the endpoints of $\gamma$. The statement of the proposition then follows.
\end{proof}

\subsection{Relating tiling length to combinatorial length when $q$ is odd}

Contrary to when $q$ is even, edges of a $(p,q)$-tiling when $q$ is odd do not connect to form geodesics. We will define quasigeodesics called \textit{zigzags} that will play a similar role as edge geodesics in the $q$ even case, and will show that there is a relation between tiling length and the number of zigzags separating two tiles. 

For every edge in the tiling adjacent to some vertex, there are two edges adjacent to the same vertex that are nearly opposite our original edge which we call the \textit{opposite left edge} and the \textit{opposite right edge}.

\begin{definition}[Zigzag] \label{def:zigzag} In a hyperbolic $(p,q)$-tiling with $q$ odd, a \textbf{zigzag} to be a bi-infinite sequence of edges which alternate between opposite left and opposite right edges, as depicted in Figure~\ref{fig:zigzag_crossing}. 
\end{definition}

We note that each edge of the tiling belongs to exactly two zigzags. Zigzags are also quasigeodesics and split the hyperbolic plane into two halves: 

\begin{restatable}[Zigzags are quasigeodesics]{proposition}{zigzagQuasiGeod}\label{prop:zigzag_halfspace}
    Let $\zeta$ be a zigzag in a hyperbolic $(p,q)$-tiling with $q$ odd. Then there exists a hyperbolic geodesic that passes through all the midpoints of the edges of the zigzag. Consequently, zigzags are quasigeodesics and hence divide the hyperbolic plane into two unbounded halves.     
\end{restatable}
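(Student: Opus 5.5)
The plan is to use the symmetry of the tiling to build the geodesic directly. I will take the dual viewpoint at a single vertex. Let $v$ be a vertex of the tiling with $q$ odd. Consecutive edges of the zigzag $\zeta$ meet at vertices. At each such vertex $v$, the two zigzag edges $e$ and $e'$ are, say, the edge at angle $0$ and the opposite-left edge at angle $\pi - \pi/q$ (angles measured at $v$, in units of $2\pi/q$). I will then check the local picture. Let $m, m'$ be the midpoints of $e, e'$, and let $r$ be the geodesic through them. The isosceles triangle $v m m'$ has $d(v,m) = d(v,m')$, equal to half the edge length. Hence the angle bisector at $v$, which is at angle $\pi/2 - \pi/(2q)$, is a symmetry axis of the triangle and of the local picture. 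In particular, $r$ meets $e$ and $e'$ at equal angles $\theta$, measured on the side facing $v$.

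Next I would propagate along the zigzag using the half-turn symmetry. The rotation by $\pi$ about the midpoint $m'$ of the edge $e'$ is a symmetry of the $(p,q)$-tiling, since every edge midpoint is a center of order-two rotational symmetry of a regular tiling. This half-turn swaps the endpoints of $e'$. Because the zigzag alternates opposite-left and opposite-right, it maps $e$ to the next zigzag edge $e''$: the opposite-left relation at one endpoint becomes the opposite-right relation at the other under a rotation. It also preserves every geodesic through $m'$, mapping $r$ to itself, since $r$ passes through $m'$. Therefore $r$ contains the image of $m$, which is the midpoint of $e''$.

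Inducting in both directions, I will show that the single geodesic $r$ passes through the midpoints of all edges of $\zeta$. Equivalently, the composition of two consecutive half-turns is a hyperbolic translation along $r$ that preserves $\zeta$. This is nontrivial because consecutive midpoints are distinct, and a product of two half-turns about distinct points is a translation along the line joining them. The zigzag is then an orbit of a compact fundamental piece under this translation, so it stays within bounded distance of $r$, at most half an edge length. A path parametrized by arc length that lies within bounded distance of a geodesic and advances along it by a fixed translation length per period is a quasigeodesic, so $\zeta$ is a quasigeodesic.

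Finally, for the separation claim, I will argue that $\zeta$ is a properly embedded simple bi-infinite path. Simplicity needs a check: the zigzag does not revisit a vertex, because it projects monotonically onto $r$ via the translation. By the Jordan curve theorem for proper lines in the plane, $\zeta$ splits $\HH$ into two components. Each component contains one of the two half-planes of $r$ minus a bounded neighborhood, so both are unbounded. The main obstacle I expect is verifying carefully that the half-turn about $m'$ sends the opposite-left edge at one endpoint to the opposite-right edge at the other. This is what ensures the zigzag, and not some other edge path, is preserved, and I would check it by an explicit angle count at the two endpoints of $e'$.
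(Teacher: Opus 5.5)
Your argument is correct and is essentially the paper's proof. The half-turn about the midpoint of an edge of $\zeta$ is a symmetry of the tiling that preserves $\zeta$, swaps the two neighboring edges, and reverses every geodesic through that midpoint; so the geodesic through two consecutive midpoints also passes through the next one, and translational symmetry carries this along all of $\zeta$. Your extra details fill in steps the paper only asserts: the explicit check that the half-turn sends $e$ to $e''$, the translation written as a product of two half-turns, and the simplicity and Jordan-curve argument for separation. For that last argument, monotone projection is better justified by your observation that each vertex lies on the perpendicular bisector of its two adjacent midpoints than by the translation alone.
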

\begin{proof}
   The proof uses elementary hyperbolic geometry arguments. Hence, we postpone the proof to Appendix~\ref{appdx:hyperbolictilings}.
\end{proof}

Unlike hyperbolic geodesics which either intersect at exactly one point or do not intersect at all, two zigzags can intersect along a whole edge of the tiling, as depicted in Figure~\ref{fig:zigzag_crossing}

\begin{figure}[h]
\centering
\includegraphics[width=7cm]{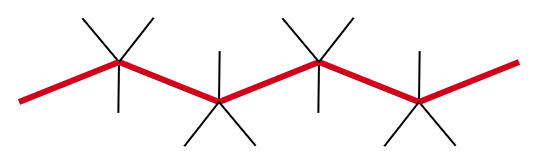} \vspace{.5cm} \includegraphics[width=7cm]{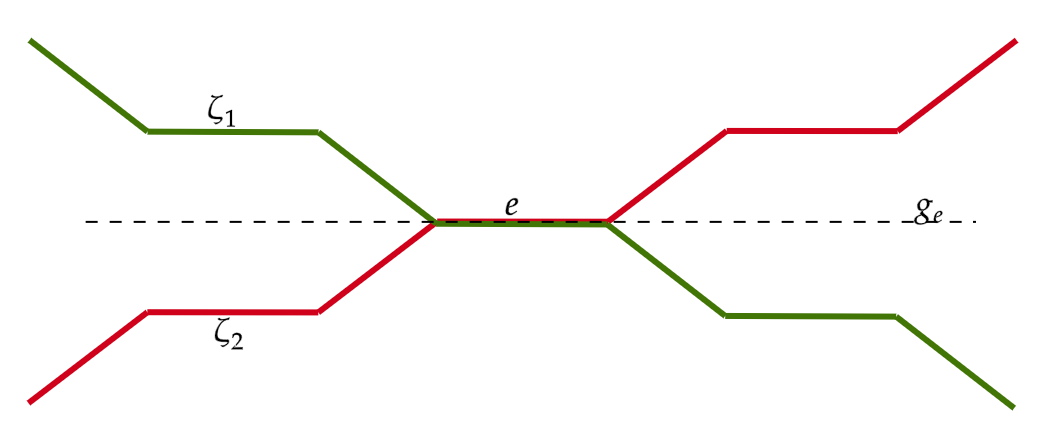}
\caption{Left: A schematic diagram of part of a zigzag path of edges (in bold) in a $(p,q)$-tiling with $q=5$. Right: Two zigzags $\zeta_1$ and $\zeta_2$ that intersect at an edge $e$.}
\label{fig:zigzag_crossing}
\end{figure}

However, besides at that edge, such zigzags do not intersect as we state below:

\begin{restatable}[Behavior of intersecting zigzags]{proposition}{zigzagTransverse}
    \label{prop:zigzag_crossing} Let $\zeta_1$ and $\zeta_2$  be two zigzags that share an edge $e$, in a hyperbolic $(p,q)$-tiling with $q$ odd. Then, $\zeta_2$ is the reflection of $\zeta_1$ across the edge geodesic $g_e$ that contains $e$. Each $\zeta_i$ crosses $g_e$ at $e$ and otherwise does not intersect $g_e$. As a result, $\zeta_1$ and $\zeta_2$ split the tiling into four distinct quarter-spaces. 
\end{restatable}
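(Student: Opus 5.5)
The plan is to exploit the reflection symmetry of the tiling across the line containing $e$. Let $v$ and $w$ be the endpoints of $e$, and let $g_e$ be the complete hyperbolic geodesic containing $e$. When $q$ is odd, $g_e$ is not an edge geodesic: beyond $v$ it continues through the interior of tiles. Still, the reflection $r$ across $g_e$ is a symmetry of the regular $(p,q)$-tiling, since $g_e$ is an axis of symmetry of the edge $e$ and of each tile containing $e$. Therefore $r$ maps zigzags to zigzags.

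The first step is a local combinatorial check at $v$. Label the $q$ edges at $v$ cyclically as $e=e_0,e_1,\dots,e_{q-1}$. The two edges nearly opposite $e_0$ are $e_{(q-1)/2}$ and $e_{(q+1)/2}$, and $r$ swaps them. A zigzag through $e$ is determined by its successive left/right choices, and the two zigzags containing $e$ differ in the choice made at $v$ (equivalently at $w$). Since $r$ fixes $e$ and reverses orientation, it swaps left and right at both endpoints. Hence $r(\zeta_1)$ is a zigzag containing $e$ that differs from $\zeta_1$, so $r(\zeta_1)=\zeta_2$.

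The second step shows that $\zeta_1$ meets $g_e$ only along $e$, and crosses it there. Near $v$, $\zeta_1$ leaves along one of $e_{(q\pm1)/2}$, which lies strictly on one side of $g_e$. Near $w$ it leaves on the opposite side, because the alternation of left and right, viewed from the edge $e$, places the two continuations on opposite sides of $g_e$. To get the global statement I would use Proposition~\ref{prop:zigzag_halfspace}. Let $h_1$ be the geodesic through the midpoints of the edges of $\zeta_1$. It passes through the midpoint $m$ of $e$, and $h_1\neq g_e$ because $h_1$ is transverse to $e$ there. So $h_1$ meets $g_e$ only at $m$, and each half of $h_1$ beyond $m$ lies in one open half-plane of $g_e$.

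The remaining task, which I expect to be the main obstacle, is to show that each half of the zigzag stays on the same side of $g_e$ as the corresponding half of $h_1$. Each zigzag edge straddles $h_1$ with its midpoint on $h_1$, so its distance from $h_1$ is bounded. That alone does not prevent an edge near $v$ from touching $g_e$. I would first try a direct angle computation. The edges adjacent to $e$ make angle $\pi(q-1)/q$ with $e$, and consecutive zigzag edges alternate turning by $\pm\pi/q$. The zigzag half beyond $v$ is therefore contained in the region swept between $h_1$ and the edge geodesic-like ray from $v$ at angle $\pi/q$ to $g_e$. That ray diverges from $g_e$, and convexity of half-planes keeps the whole half within the open side. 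Alternatively, I would show that $g_e$ lies inside the strip of tiles containing $e$ and meets no other vertex except through tile interiors, so the zigzag, which follows edges, could only meet $g_e$ at vertices lying on $g_e$. When $q$ is odd, no vertex other than $v$ and $w$ lies on $g_e$ near $e$, and the divergence argument rules out vertices farther away.

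Given this, $\zeta_1$ crosses $g_e$ exactly at $e$. Combining with $\zeta_2=r(\zeta_1)$, the halves of $\zeta_1$ and $\zeta_2$ pair up on each side of $g_e$ as mirror images. Within each half-plane of $g_e$, the half of $\zeta_1$ and the half of $\zeta_2$ meet only at the shared endpoint of $e$. Away from that endpoint, the half-rays are separated by $g_e$-reflection symmetry together with the quasigeodesic divergence of $h_1$ from $h_2=r(h_1)$. Thus the union $\zeta_1\cup\zeta_2$ is a cross with center $e$, and it divides the plane into four quarter-spaces.
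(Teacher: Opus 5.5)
The gap is the step you flag as the main obstacle, which is the real content of the proposition: that $\zeta_1$ meets $g_e$ nowhere outside $e$. The earlier steps are fine, and your first step is more careful than the paper's one-line appeal to symmetry. $r$ fixes $e$ pointwise and swaps the two nearly opposite edges at $v$, so $r(\zeta_1)$ is the other zigzag through $e$. (Your stated reason that $r$ is a tiling symmetry is wrong: $g_e$ is not an axis of the tiles containing $e$, since $r$ swaps those two tiles. The fact itself is standard.) The local picture at $v,w$ is correct, and so is the observation that $h_1$ meets $g_e$ only at $m$.

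Neither of your two routes for the key step works. In the first, the containment claim is false. The ray from $v$ along the next zigzag edge $f$ (your ray at angle $\pi/q$ to $g_e$) crosses $h_1$ at the midpoint of $f$. From there on, the vertices of $\zeta_1$ alternate sides of $h_1$. So every other vertex lies on the same side of $h_1$ as $v$, the side facing $g_e$, and hence outside the region between $h_1$ and that ray. Those are exactly the vertices that could a priori land on $g_e$, so the convexity step has nothing to stand on.

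The second route assumes that a path along edges can meet $g_e$ only at vertices, and that fails. For $q$ odd, $g_e$ beyond $v$ passes through the interior of one tile (two when $p$ is odd, perpendicularly bisecting the edge between them). It then reaches a vertex and continues along an edge of the tiling. So $g_e$ contains further edges and vertices at bounded distance from $e$, and no divergence argument excluding them is actually given.

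Your last paragraph also misdescribes the configuration. Within each half-plane of $g_e$, the halves of $\zeta_1$ and $\zeta_2$ start from \emph{different} endpoints of $e$, not a shared one.

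The idea you are missing is the one the paper uses: any point $x\in\zeta_1\cap g_e$ is fixed by $r$, so it also lies on $r(\zeta_1)=\zeta_2$.
\begin{itemize}
\item If $x$ is interior to an edge $f\neq e$, then $r(f)$ is an edge through $x$, so $r(f)=f$. That makes $f$ a second edge shared by $\zeta_1$ and $\zeta_2$. This is impossible, because the distinct geodesics $h_1$ and $h_2=r(h_1)$ would then meet at two midpoints.
\item If $x$ is a vertex where $\zeta_1$ crosses $g_e$, the midpoints of the two zigzag edges at $x$ lie on opposite sides of $g_e$. Then $h_1$ would cross $g_e$ twice.
\item The remaining case is a tangential touch at a vertex. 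The paper excludes it with a mirror through $x$ perpendicular to $g_e$ that would carry $e$ to a second shared edge. You can also see it directly. At any vertex on $g_e$, one ray of $g_e$ is an edge and the other bisects a tile. So the edges in each open half-plane span an angle of only $(q-3)\pi/q<(q-1)\pi/q$. Hence two consecutive zigzag edges at $x$ either lie on opposite sides of $g_e$, or one of them lies on $g_e$, which is the first case.
\end{itemize}

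If you would rather rescue your divergence idea, it needs a genuinely new ingredient. The half-turns about edge midpoints preserve both $\zeta_1$ and $h_1$. So all vertices of $\zeta_1$ lie at one common distance from $h_1$, with projections moving monotonically away from $m$. Meanwhile, distance to $h_1$ along $g_e$ increases strictly away from $m$. That, not a comparison with the ray along $f$, is what keeps the later vertices strictly on one side of $g_e$.
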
 

\begin{proof}
    See Appendix~\ref{appdx:hyperbolictilings}.
\end{proof}

For $q$ odd, zigzags will play the role that edge geodesics played in the $q$ even case. That is, we will relate the tiling length and combinatorial length of a generalized diagonal to the number of zigzags that separate the endpoints of the generalized diagonal. Towards this, we are now ready to establish the following proposition stating that minimal tiling paths intersect zigzags minimally.

\begin{proposition} \label{prop:zigzig_tiling} Let $\mathcal{A} = (A_0, \ldots, A_n)$ be a minimal tiling path connecting its initial and terminal tiles. Then, this path intersects every zigzag path minimally. That is, the tiling path intersects a zigzag path once if $A_0$ and $A_n$ are on opposite sides of the zigzag and zero times otherwise. 
\end{proposition}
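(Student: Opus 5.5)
We argue by contradiction, in the style of the proof of Proposition~\ref{prop:minimal_tiling_path_characterization}, with a zigzag $\zeta$ playing the role of the edge geodesic $g$. Since $\zeta$ is a bi-infinite union of tiling edges dividing $\HH$ into two halves (Proposition~\ref{prop:zigzag_halfspace}), each step $A_i \to A_{i+1}$ either crosses $\zeta$ (the shared edge lies on $\zeta$) or stays on one side. Hence $\mathcal{A}$ crosses $\zeta$ an odd number of times if $\zeta$ separates $A_0$ and $A_n$, and an even number of times otherwise. It therefore suffices to show that a minimal path never crosses a zigzag twice. So suppose $\mathcal{A}$ crosses $\zeta$ at steps $A_{i-1}\to A_i$ and $A_j \to A_{j+1}$, with $i \le j$, and with $A_i,\dots,A_j$ on the far side of $\zeta$. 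We may take these to be consecutive crossings.

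We cannot simply reflect across $\zeta$, since $\zeta$ is not a geodesic and the tiling has no reflection symmetry across it. Instead we compare with the side $\zeta$ itself. Let $e_1$ and $e_2$ be the two crossed edges of $\zeta$. The tiles on the near side of $\zeta$ that are adjacent to $\zeta$ between $e_1$ and $e_2$ form a tiling path running along $\zeta$ from $A_{i-1}$ to $A_{j+1}$. Call its length $L$; the same construction on the far side gives a path from $A_i$ to $A_j$ of length $L'$.

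The key step is a local computation from the zigzag structure. At each vertex of $\zeta$, the edges alternate between opposite-left and opposite-right, so one side sees $(q-1)/2$ tiles at that vertex and the other side sees $(q+1)/2$, and the roles alternate. Because of this alternation, walking along $\zeta$ between two of its edges costs the same number of steps on either side, up to a bounded correction at the ends. From this we aim to show that the near-side path along $\zeta$ from $A_{i-1}$ to $A_{j+1}$ has length at most $\td(A_i,A_j)$, and hence at most $j-i$. Replacing the subpath $(A_{i-1},\dots,A_{j+1})$, of length $j-i+2$, by this path shortens $\mathcal{A}$ by at least $2$, contradicting minimality.

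To bound $\td(A_i,A_j)$ from below by the along-zigzag count, we plan to induct on the distance, using Proposition~\ref{prop:zigzag_crossing}. Each edge $e$ of $\zeta$ lies on exactly one other zigzag $\zeta_e$, namely the reflection of $\zeta$ across $g_e$, and the successive $\zeta_e$ are pairwise transverse to $\zeta$. Any path from $A_i$ to $A_j$ staying on the far side must cross each $\zeta_e$ for the edges $e$ strictly between $e_1$ and $e_2$. So the far-side distance is at least the number of such edges, and this matches the near-side along-zigzag length after accounting for the endpoints.

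The main obstacle is making this counting precise. We need an exact bookkeeping of tiles adjacent to $\zeta$ on each side, which uses the alternation of $(q\pm1)/2$ sectors. We also need to ensure that the transverse zigzags $\zeta_e$ really separate $A_i$ from $A_j$ within the far half-space, which is what the four-quarter-space statement of Proposition~\ref{prop:zigzag_crossing} provides. We would first verify the case of two crossings at adjacent edges of $\zeta$ and then induct on the number of edges of $\zeta$ between $e_1$ and $e_2$.
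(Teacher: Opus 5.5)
Your reduction to two consecutive crossings of $\zeta$ is fine. The key inequality, however, is false. Take the base case you plan to check first: the crossed edges $e_1,e_2$ of $\zeta$ meet at a vertex $v$, and the far side of $\zeta$ is the side with $\frac{q-1}{2}$ tiles at $v$. Then $A_i,\dots,A_j$ can be exactly those tiles, so $\td(A_i,A_j)\le\frac{q-3}{2}$. The near-side path along $\zeta$ from $A_{i-1}$ to $A_{j+1}$ runs through the other $\frac{q+1}{2}$ tiles at $v$ and has length $\frac{q-1}{2}$. So the claim that the near-side length is at most $\td(A_i,A_j)$ fails. The replacement saves exactly one step (from $\frac{q+1}{2}$ to $\frac{q-1}{2}$), not two.

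This is not an accident. For $q$ odd, each vertex star is an odd cycle in the dual graph, so the parity mechanism behind \emph{shorten by two} in Proposition~\ref{prop:minimal_tiling_path_characterization} is unavailable. The correct target is $L_{\mathrm{near}}\le (j-i)+1$. Your alternation observation, that the along-$\zeta$ lengths $L_{\mathrm{near}}$ and $L_{\mathrm{far}}$ on the two sides differ by at most $1$, would give this \emph{provided} you know $j-i\ge L_{\mathrm{far}}$.

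That lower bound is where the real difficulty lies, and your counting does not reach it. Suppose $\zeta$ has $r$ vertices $v_1,\dots,v_r$ between the crossed edges, and let $s_k\in\{\frac{q-1}{2},\frac{q+1}{2}\}$ be the number of far-side tiles at $v_k$. Then $L_{\mathrm{far}}=\sum_{k=1}^r (s_k-1)$, which is roughly $r\,\frac{q-2}{2}$. The transverse zigzags $\zeta_e$ number only $r-1$, one per interior edge of $\zeta$. Moreover, since every step crosses an edge lying on two zigzags, counting separating zigzags forces in general only half as many steps as there are zigzags. So these quantities do not \emph{match} for $q\ge 5$.

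What you would actually need is this: the $2(s_k-1)$ zigzags through each $v_k$ that contain the far-side edges crossed at $v_k$ are pairwise distinct (also across different $k$), and each separates $A_i$ from $A_j$. Then, because each step crosses exactly two zigzags, $j-i\ge L_{\mathrm{far}}$. Apart from $\zeta_{e_k}$ and $\zeta_{e_{k+1}}$, these zigzags share no edge with $\zeta$ and meet it only at a vertex. Proposition~\ref{prop:zigzag_crossing} therefore says nothing about them. You also cannot borrow the separation from Proposition~\ref{prop:tiling-distance-zigzag}, whose proof invokes the statement you are proving.

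The paper avoids distance lower bounds altogether. It inducts on the length of $\mathcal{A}$ and reduces to a double crossing at the first and last steps. It then either reflects across the edge geodesic $g_{e_2}$ (still a symmetry line for odd $q$) when that geodesic bisects $A_0$, or passes to the second zigzag through $e_1$. In each case it produces a shorter minimal path that double crosses a zigzag. The adjacent-edge situation is settled by directly comparing the two vertex traversals, where the saving is exactly one step.
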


 \begin{proof}
Let $\zeta$ be a zigzag and $\mathcal{A}$ be a minimal tiling path. If suffices to show that $\mathcal{A}$ cannot intersect $\zeta$ more than once. We will induct on the length of $\mathcal{A}$.

For the base case, assume the length of a minimal tiling path $\mathcal{A}$ is $1$. Then, $\mathcal{A}$ consists of two tiles and trivially cannot double cross any zigzags. Now, let $n \in \NN$ and assume as induction hypothesis that no minimal tiling paths of length $\leq n$ can double cross any zigzag. 

Consider a minimal tiling path $\mathcal{A} = (A_0, \dots, A_{n+1})$ of length $n+1$ and assume towards a contradiction that it double crosses a zigzag $\zeta$ twice. We can assume that the double crossing occurs at the edge between $A_0$, $A_1$ and the edge between $A_{n}$, $A_{n+1}$ without any crossing in between or else we would have a subpath of $\mathcal{A}$ (and hence minimal) of length $\leq n$ that double crosses $\zeta$, violating the induction hypothesis. Note that this means $A_0$ and $A_{n+1}$ are on the same side of $\zeta$.
 
Let $e_1$ be the edge between $A_0$, $A_1$ where the first crossing occurs and let $e_m$ be the edge between $A_{n}, A_{n+1}$ where the second crossing occurs. Label the rest of the edges of $\zeta$ in between $e_1$ and $e_m$ as $e_2, \dots, e_{m-1}$. 

We now deal with two cases. 

\textbf{Case 1:} The first case is when the edge geodesic $g_{e_2}$ containing $e_2$ bisects the tile $A_0$. See Figure~\ref{fig:twowaysAcrossesGeod}.

Define $\rho$ to be the hyperbolic reflection across the edge geodesic $g_{e_2}$. This reflection preserves the tiling and sends $A_0$ to itself. Hence, the image of $\mathcal{A}$ is another tiling path $\rho(\mathcal{A}) := (\rho(A_0) = A_0, \rho(A_1), \dots, \rho(A_{n+1}))$. 

Note that $\zeta$ and $\rho(\zeta)$ both contain the edge $e_2$. So, by Proposition~\ref{prop:zigzag_crossing}, they do not share any other edge. Hence, $\rho(e_1) \not \in \zeta$. As $\rho(e_1)$ is the edge between $\rho(A_0) = A_0$ and $\rho(A_1)$, the tiling path $\rho(\mathcal{A})$ does not cross $\zeta$ between $\rho(A_0)$ and $\rho(A_1)$. Since $\rho(A_0) = A_0$, we obtain that $A_0$ and $\rho(A_1)$ are on the same side of $\zeta$. As $A_{n+1}$ and $A_0$ are on the same side of $\zeta$, we then observe that $A_{n+1}$ and $\rho(A_1)$ are on the same side of $\zeta$.

Next, we note that the path $(A_1, \dots, A_{n+1})$ must cross the geodesic $g_{e_2}$ since $A_1$ and $A_{n+1}$ are on its opposite sides. Now, this crossing can occur in two ways: Either there is a tile $A_i$, with $2 \leq i \leq n$ that gets bisected by $g_{e_2}$ or there are two tiles $A_{i-1}$, $A_{i}$ such that their shared edge belongs to $g_{e_2}$. See Figure~\ref{fig:twowaysAcrossesGeod}.

\begin{figure}[h!]
\centering
    \begin{subfloat}[Tile $A_i$ gets bisected by $g_{e_2}$]{
    \includegraphics[scale=0.9]{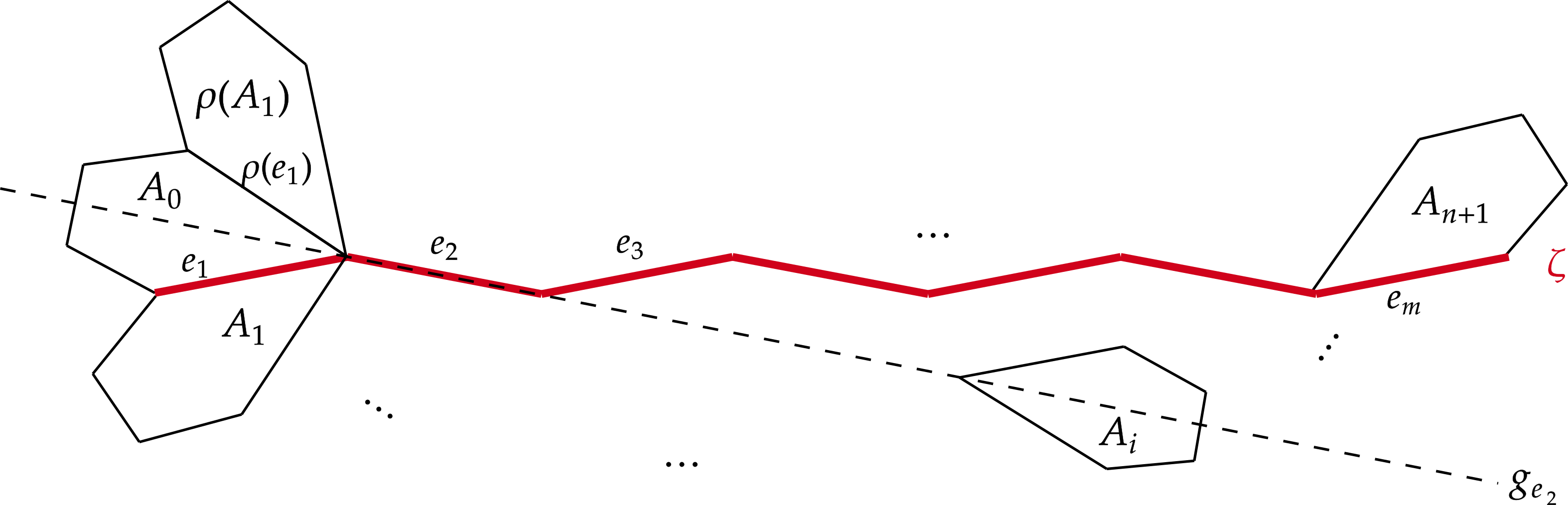}
    \vspace{1.6cm}}  
    \end{subfloat}
    \begin{subfloat}[Edge geodesic $g_{e_2}$ pass through the edge between $A_{i-1}$ and $A_{i}$]{
    \includegraphics[scale=0.9]{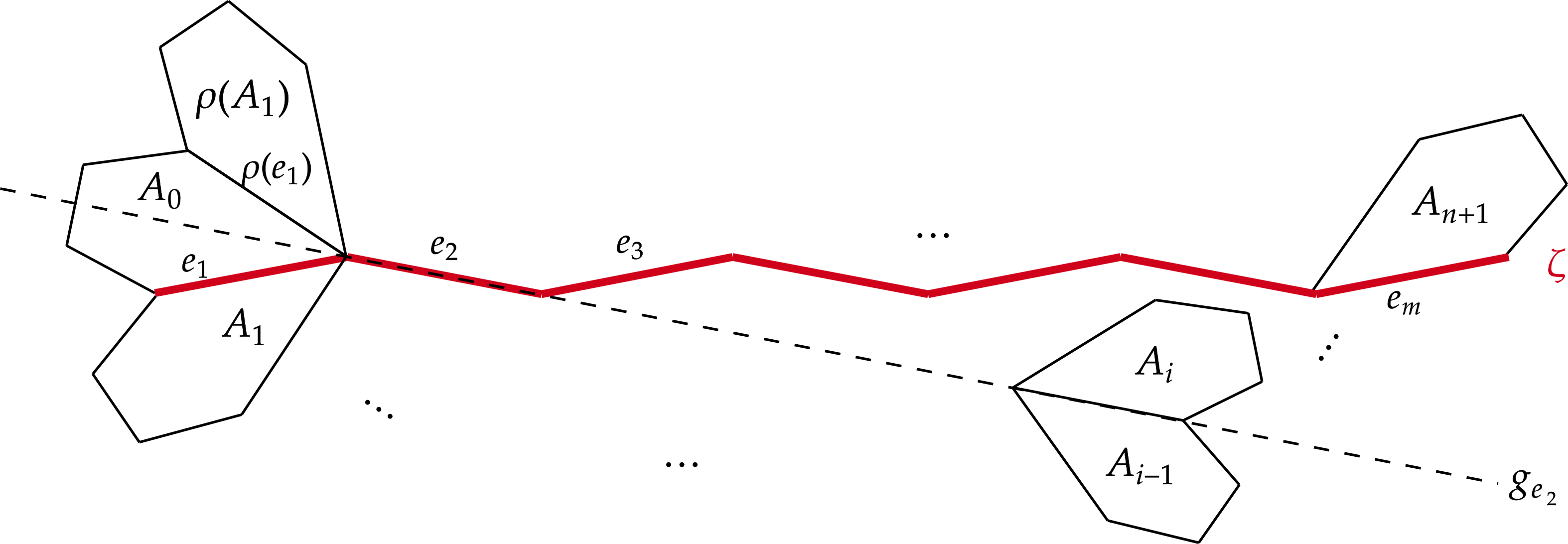}
    \vspace{1.6cm}}  
    \end{subfloat}
    \caption{In the setting of Case 1 where the edge geodesic $g_{e_2}$ bisects tile $A_0$, there are two ways in which the tiling path $\mathcal{A}$ crosses $g_{e_2}$.}
    \label{fig:twowaysAcrossesGeod}
\end{figure}

If there is a tile $A_i$ with $2 \leq i \leq n$ that gets bisected, we have $\rho(A_i) = A_i$, and that $A_i$ must be on the opposite side of $\zeta$ as $A_0$ and $A_{n+1}$. So, we then consider the tiling path $\mathcal{B} = (\rho(A_1), \rho(A_{2}), \dots, \rho(A_i) = A_i, A_{i+1}, \dots, A_{n+1})$. This path has length $\leq n$. Its extension, \\$(A_0, \rho(A_1), \dots, \rho(A_i) = A_i, A_{i+1}, \dots, A_{n+1})$ is minimal since it is a length $\leq n+1$ path between $A_0$ and $A_{n+1}$. Hence, $\mathcal{B}$ is minimal of length $\leq n$. However, as we showed $\rho(A_1)$ and $A_{n+1}$ are on the same side of $\zeta$, the path must double cross $\zeta$ to reach $A_i$, a contradiction to the induction hypothesis.

If there are two tiles $A_{i-1}$, $A_{i}$, such that their shared edge belongs to $g_{e_2}$ then $\rho(A_{i-1}) = A_{i}.$ If the shared edge is not $e_2$, then $A_{i-1}$ and $A_i$ are both on the opposite side of $\zeta$ as $A_0$ and $A_{n+1}$. So we consider the tiling path $\mathcal{B} = (\rho(A_1), \rho(A_{2}), \dots, \rho(A_{i-1}) = A_{i}, A_{i+1}, \dots A_{n+1})$. Similar to the former case, this path has length $\leq n$, is minimal and must double cross $\zeta$, a contradiction to the induction hypothesis. 

If the shared edge between $A_i$ and $A_{i-1}$ is $e_2$, then $A_0, A_1, \ldots, A_i$ double-crosses $\zeta$ at adjacent edges in the zigzag $e_1$ and $e_2$. Because of how a $(p,q)$-tiling can be iteratively constructed (see Construction \ref{con:tiling}), the unique minimal tiling path between $A_0$ and $A_i$ is a vertex traversal of length $\frac{q}{2} - \frac{1}{2}$ that does not double cross $\zeta$, contradicting that $\mathcal{A}$ was minimal.

\textbf{Case 2:} The second case is when the edge geodesic $g_{e_2}$ containing $e_2$ does not bisect the tile $A_0$. Assume Case 1 is not satisfied. In this case, let $\zeta'$ be the other zigzag that contains the edge $e_1$. See Figure~\ref{fig:case2zigzag}. 
\begin{figure}[h!]
    \centering
    \includegraphics[scale=0.9]{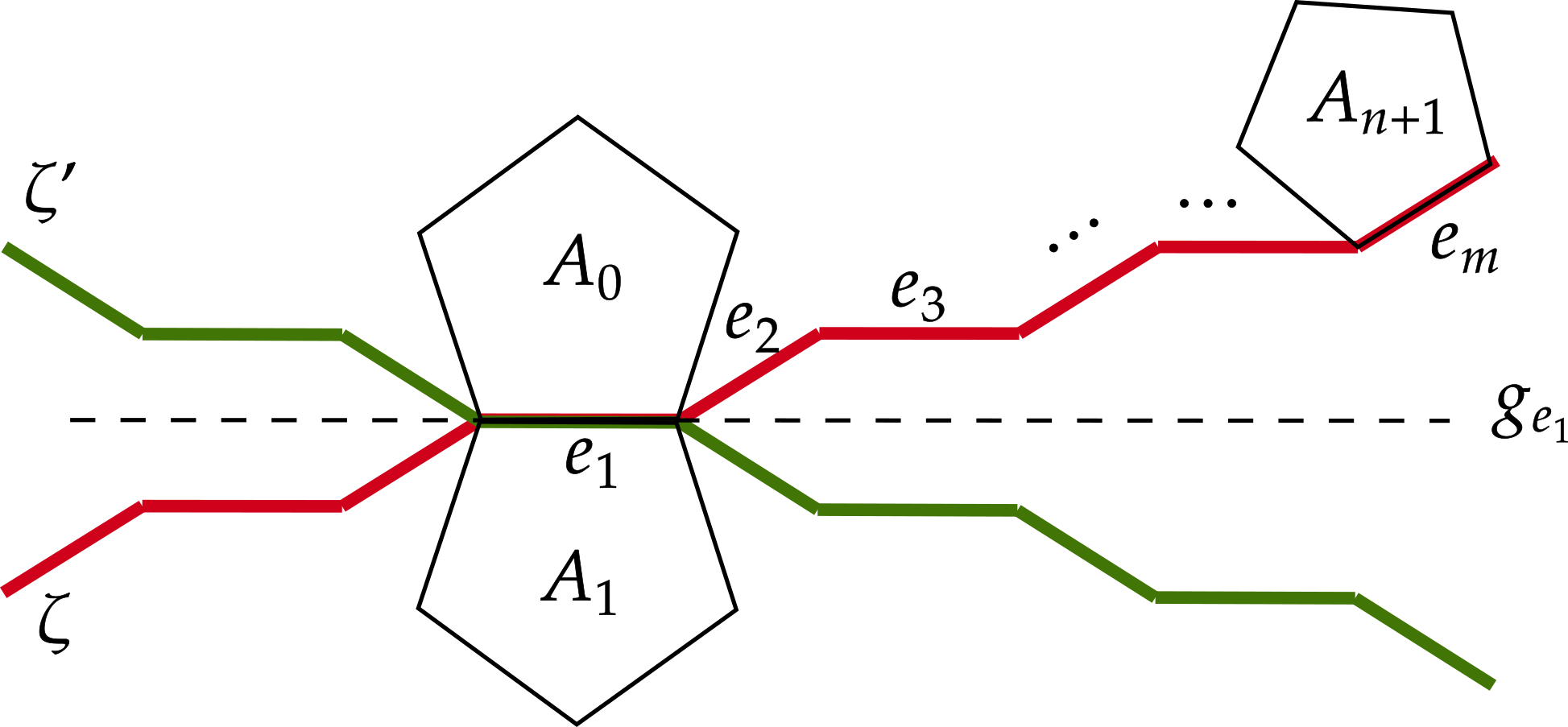}
    \caption{In Case 2, the edge geodesic $g_{e_2}$ does not go through tile $A_0$ so we consider the zigzag $\zeta'$.}
    \label{fig:case2zigzag}
    
\end{figure}

By Proposition~\ref{prop:zigzag_crossing}, $\zeta$ and $\zeta'$ do not intersect anywhere else other other than $e_1$. In particular, all the edges $e_2, \dots, e_m$ are on the same side of $\zeta'$ as $A_0$. As $e_m$ belongs to  $A_{n+1}$ we see that $A_0$ and $A_{n+1}$ are on the same side of $\zeta'$.   

Now consider the path $\mathcal{A}$. It crosses $\zeta'$ once at edge $e_1$. As $A_1$ and $A_{n+1}$ are on opposite sides of $\zeta'$, $\mathcal{A}$ must cross $\zeta'$ again at another edge. However, as $\zeta$ and $\zeta'$ only share edge $e_1$, the edge $e_m$ (i.e. the edge between the final tiles of $\mathcal{A}$ where it crosses $\zeta$) does not belong to $\zeta'$. Hence, the subpath $(A_0, A_1, A_2, \dots,A_{n})$ of $\mathcal{A}$ of length $\leq n$ is minimal and must cross $\zeta'$ twice, a contradiction to the induction hypothesis.

Hence, a minimal tiling path $\mathcal{A}$ cannot double cross a zigzag. \end{proof}

The next proposition serves as a generalization of the idea that when $q$ is even, the tiling length between two tiles can be computed by counting the number of edge geodesics that separate the two tiles. 

\begin{proposition}
\label{prop:tiling-distance-zigzag}
    Given a $(p,q)$ tiling with $q$ odd, let $A$ and $B$ be two tiles. Let $\td(A,B)$ be the tiling length between the two tiles and let $\mathrm{z}(A,B)$ be the number of zigzags separating $A$ and $B$. Then, $$2\td(A,B) = \mathrm{z}(A,B).$$ 
\end{proposition}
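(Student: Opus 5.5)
The plan is to mirror the $q$ even argument of Proposition~\ref{prop:minimal_tiling_path_characterization}, using Proposition~\ref{prop:zigzig_tiling} in place of the reflection argument. Fix a minimal tiling path $\mathcal{A} = (A_0, \ldots, A_n)$ with $A_0 = A$, $A_n = B$ and $n = \td(A,B)$. Each step $A_{i-1} \to A_i$ crosses a single edge $e_i$ of the tiling. The first point to record is that $e_i$ lies on exactly two zigzags, as noted after Definition~\ref{def:zigzag}. Hence each step crosses exactly two zigzags, and the total number of zigzag crossings along $\mathcal{A}$, counted with multiplicity, is exactly $2n$.

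Next, count the same crossings zigzag by zigzag. By Proposition~\ref{prop:zigzag_halfspace}, each zigzag $\zeta$ separates $\HH$ into two halves. Tile interiors are disjoint from $\zeta$, since $\zeta$ is a union of edges, so each tile lies on a well-defined side of $\zeta$. Proposition~\ref{prop:zigzig_tiling} then says that $\mathcal{A}$ crosses $\zeta$ exactly once if $\zeta$ separates $A$ and $B$, and not at all otherwise. Summing over all zigzags, the total crossing count equals $\mathrm{z}(A,B)$. Comparing with the first count gives $2\td(A,B) = \mathrm{z}(A,B)$.

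One bookkeeping issue has to be settled for the two counts to agree. We need that a step across $e_i$ changes sides of $\zeta$ exactly when $e_i \in \zeta$, so that "crossing" in Proposition~\ref{prop:zigzig_tiling} means the same thing as "$e_i$ is an edge of $\zeta$". This is where I expect the main care to be needed. A step can only move between sides of $\zeta$ through an edge of $\zeta$. Conversely, if $e_i \in \zeta$, the two tiles sharing $e_i$ must lie on opposite sides of $\zeta$; otherwise $\zeta$ would merely touch $e_i$ without separating locally. I would deduce the converse from the quasigeodesic description in Proposition~\ref{prop:zigzag_halfspace}: $\zeta$ passes through the midpoint of $e_i$ along a geodesic transverse to $e_i$, so the two tiles adjacent to $e_i$ lie on opposite sides near that midpoint.

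A second point to check is that two zigzags containing $e_i$ are distinct, which Proposition~\ref{prop:zigzag_crossing} guarantees. This ensures the step is counted twice and not once. Once these local facts are in place, the double-counting argument is routine.
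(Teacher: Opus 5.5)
Your proposal is correct and is the paper's own argument: fix a minimal path, use Proposition~\ref{prop:zigzig_tiling} to see that it crosses each separating zigzag exactly once and no other zigzag, and double count using the fact that each crossed edge lies on exactly two zigzags. Your two-way count is a more careful write-up of the paper's short proof.

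One small correction to your bookkeeping concerns why stepping across $e_i\in\zeta$ changes sides of $\zeta$. The reason is simply that $\zeta$ contains $e_i$, so the two tiles are locally on opposite sides of $\zeta$ near an interior point of $e_i$, and hence lie in different components of $\HH\setminus\zeta$ by Proposition~\ref{prop:zigzag_halfspace}. The midpoint geodesic $g_\zeta$ is not the right tool here: it crosses $e_i$ transversally at its midpoint, so it passes through both tiles adjacent to $e_i$ rather than separating them.
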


\begin{proof}

Let $\mathcal{A}$ be a minimal tiling path between tiles $A$ and $B$. By Proposition~\ref{prop:zigzig_tiling}, $\mathcal{A}$ intersects each zigzag that separates $A$ and $B$ exactly once. Each edge that the tiling path crosses is part of exactly two of these separating zigzags. Since the number of edges crossed by the minimal tiling path is exactly the tiling distance, we have that $2\td(A,B) = \mathrm{z}(A,B)$. 
\end{proof}

Using the above relation of tiling distance with number of zigzag intersections, we can now deduce the following proposition which relates the combinatorial length of a generalized diagonal with the tiling distance between the first and last tiles that the generalized diagonal passes through.

\begin{proposition}\label{prop:q-oddCombboundsTiling} Consider a hyperbolic $(p,q)$-tiling with $q$ odd. The tiling distance between two tiles is upper bounded by one less than the combinatorial length of a geodesic connecting two interior points in these tiles, is lower bounded by $\frac{q-1}{q+1}$ times this quantity. In particular, for any generalized diagonal $\gamma$, \[\tl(\gamma) + 1 \leq \cl(\gamma) \leq  \frac{q+1}{q-1} \tl(\gamma)+1.\]  
\end{proposition}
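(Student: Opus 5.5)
Both inequalities will be proved by counting zigzags, in analogy with the $q$ even case, using Proposition~\ref{prop:tiling-distance-zigzag} ($2\td(A,B)=\mathrm{z}(A,B)$). Let $\gamma$ be a generalized diagonal with starting tile $A$ and ending tile $B$. Then $\cl(\gamma)-1$ is the number of edges that $\gamma$ crosses transversally. Since the tiling path of $\gamma$ is a tiling path from $A$ to $B$, its length $\cl(\gamma)-1$ is at least $\td(A,B)$. The minimal tiling path has length $\tl$ in the sense of the statement, so this gives the lower bound $\tl(\gamma)+1\le\cl(\gamma)$. (If $\tl(\gamma)$ means the length of the path traced by $\gamma$ rather than the distance, the same comparison is read with $\td(A,B)$ in its place.) The same argument applies verbatim to a geodesic joining interior points of two tiles.

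The substantive part is the upper bound $\cl(\gamma)-1\le \frac{q+1}{q-1}\td(A,B)$. My plan is to bound how many edge crossings of $\gamma$ can fail to be "efficient" relative to zigzags.

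\emph{Step 1.} Every zigzag separates the plane and is quasigeodesic by Proposition~\ref{prop:zigzag_halfspace}, sitting along a hyperbolic geodesic through its edge midpoints. I will argue that the geodesic $\gamma$ crosses each zigzag at most once, and crosses exactly those zigzags separating $A$ from $B$. For this I would use the geodesic $h_\zeta$ through the midpoints: the region between $\zeta$ and $h_\zeta$ is a union of small triangles at the zigzag's vertices, and a geodesic re-entering one of them would have to cross $h_\zeta$ twice. I expect this to be the main obstacle, since a geodesic near a vertex might clip a zigzag corner twice. The endpoints of $\gamma$ being vertices, not interior points, may also need care.

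\emph{Step 2.} Each edge crossing of $\gamma$ crosses the two zigzags through that edge. Hence $2(\cl(\gamma)-1)$ equals the sum over zigzags $\zeta$ of the number of crossings of $\gamma$ with edges of $\zeta$. If every crossing were a transverse crossing of a separating zigzag, this sum would equal $\mathrm{z}(A,B)=2\td$ and we would get equality. Excess arises because $\gamma$ can cross an edge of a zigzag $\zeta$ without crossing $\zeta$ as a curve, since a zigzag turns at each vertex.

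\emph{Step 3.} Near a vertex $v$, the $q$ edges at $v$ belong to zigzags that bend there, and a segment of $\gamma$ passing through the star of $v$ crosses a consecutive run of these edges. Crossing $k$ consecutive edges around $v$ (with $k\le (q-1)/2$ for a geodesic, since a geodesic cannot turn more than halfway around $v$) should yield at least $k-1$ genuine zigzag crossings per edge beyond a bounded loss. I aim to show that in every maximal run of $\frac{q+1}{2}$ edge crossings around a vertex, at least $\frac{q-1}{2}$ are "genuine", meaning both incident zigzags are crossed transversally. This ratio $\frac{q-1}{q+1}$ matches the vertex-traversal picture of Construction~\ref{con:tiling}: going around a vertex, the minimal path has length $\frac{q-1}{2}$ while the geodesic-type path uses up to $\frac{q+1}{2}$.

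\emph{Step 4.} Summing over the runs gives $\frac{q-1}{q+1}(\cl(\gamma)-1)\le \td(A,B)$, which is the stated upper bound.

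I would attempt Step 3 via a local case analysis of how a geodesic line meets the fan of $q$ edges at a vertex.
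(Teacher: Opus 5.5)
Your lower bound is correct: the tiling path traced by $\gamma$ runs from $A$ to $B$ and has length $\cl(\gamma)-1$, so it is at least $\td(A,B)$. This is more direct than the zigzag count the paper uses.

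The upper bound has a genuine gap. Step 1 is false as stated, and your justification for it fails. A geodesic can enter the corner triangle at a vertex $v$ of $\zeta$ through one leg and leave through the other. It then meets $\zeta$ on the two consecutive edges at $v$ without touching the midpoint geodesic at all, and this is exactly how excess intersections arise. They cannot be ruled out: if Step 1 held, you would get $2(\cl(\gamma)-1)=\mathrm{z}(A,B)=2\td(A,B)$, i.e.\ $\cl=\td+1$ always. That contradicts the sharp example of two almost-opposite tiles at a vertex, with $\cl=\frac{q+1}{2}+1$ and $\td=\frac{q-1}{2}$. So all the content sits in Step 3, which you only state as an aim. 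As formulated it is purely local, so it cannot see excess intersections of $\gamma$ with a zigzag at non-adjacent edges of $\zeta$, far apart along $\gamma$. Such excess would not be attached to any vertex run, and your accounting would miss it.

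Two ingredients are missing, and the paper supplies both.
\begin{itemize}
\item \emph{A global consecutivity statement} (Proposition~\ref{prop:consecutivezigzagintersection}): if $\gamma$ meets $\zeta$ at two edges, it meets every edge of $\zeta$ in between. The paper proves this by angle-sum contradictions for the hyperbolic polygon bounded by $\gamma$ and the intervening zigzag edges. It lets the (even number of) excess intersections with $\zeta$ be paired into consecutive edges $e_1,e_2$ of $\zeta$ meeting at a vertex $v$. Each such pair forces $\gamma$ to cross the full run of $\frac{q+1}{2}$ edges at $v$ on the small-angle side.
\item \emph{Lemma~\ref{lem:zigzag_intersections}}: in such a run, every other zigzag through $v$ is met exactly once. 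Hence each run carries exactly $2$ excess zigzag-intersections, and runs belonging to different pairs do not overlap.
\end{itemize}
Together these give excess $\le \frac{4}{q+1}(\cl(\gamma)-1)$. Combined with $\cl(\gamma)-1=\td(A,B)+\frac12(\text{excess})$, this yields the bound.

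Your Step 3 bookkeeping is also off in two places.
\begin{itemize}
\item A geodesic can cross $\frac{q+1}{2}$ consecutive edges at a vertex, not just $\frac{q-1}{2}$. Those edges span angle $\pi-\frac{\pi}{q}<\pi$.
\item In a maximal run, only $\frac{q-3}{2}$ edges have both of their zigzags crossed once, because the two end edges share the double-crossed zigzag. Crediting only such ``genuine'' edges gives the constant $\frac{q+1}{q-3}$, not $\frac{q+1}{q-1}$. The count must be done per zigzag-intersection: the run has $q+1$ of them, $2$ of which are excess, i.e.\ $\frac{q-1}{2}$ edge-equivalents.
\end{itemize}
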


As a tool for proving this proposition, we will use the following hyperbolic geometry lemma, whose proof will be deferred until the appendix. 

\begin{restatable}{lemma}{ZigzigIntersections}
\label{lem:zigzag_intersections}
Consider a hyperbolic $(p,q)$-tilling with $q$ odd. Let $\gamma$ be a geodesic that intersects two consecutive edges $e_1$ and $e_2$ on a zigzag $\zeta_1$, where $e_1$ and $e_2$ meet at a vertex $v$. Let $\zeta_2\neq \zeta_1$ be a zigzag containing vertex $v$. Then $\gamma$ intersects $\zeta_2$ exactly once.
\end{restatable}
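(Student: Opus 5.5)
We must prove Lemma~\ref{lem:zigzag_intersections}: a geodesic $\gamma$ meeting two consecutive edges $e_1,e_2$ of a zigzag $\zeta_1$ at their common vertex $v$ crosses every other zigzag $\zeta_2$ through $v$ exactly once. The plan is a local analysis at $v$ combined with the quarter-space structure from Proposition~\ref{prop:zigzag_crossing}.

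First I would set up the local picture. The $q$ edges at $v$ are spaced by angle $2\pi/q$, and $e_2$ is the opposite left (or right) edge of $e_1$. So the angle between $e_1$ and $e_2$ on one side is $\pi - \pi/q$, and the short wedge $W$ between them contains no other edge. Any zigzag $\zeta_2\ne\zeta_1$ through $v$ uses two edges at $v$ that are nearly opposite each other. Since $e_1$ and $e_2$ are each used by exactly one other zigzag, I would classify $\zeta_2$ by where its two edges at $v$ sit relative to the wedge $W$ and its complement. For $\zeta_2\ne\zeta_1$, the two edges of $\zeta_2$ at $v$ are nearly opposite, so they must lie on opposite sides of the broken line $e_1\cup e_2$ near $v$, or one of them is $e_1$ or $e_2$. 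In every case, $\zeta_2$ locally crosses $\zeta_1$ at $v$. Moreover, the segment of $\gamma$ from its point on $e_1$ to its point on $e_2$ cuts off a small triangle with apex $v$, and this triangle contains, or is crossed by, exactly one of the two edges of $\zeta_2$ at $v$. I would check this by counting angles: the triangle's angle at $v$ is $\pi-\pi/q$ or $\pi+\pi/q$ depending on the side. So $\gamma$ meets $\zeta_2$ an odd number of times near $v$, and at least once.

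Next I would rule out a second intersection. By Proposition~\ref{prop:zigzag_halfspace}, $\zeta_2$ is a quasigeodesic tracked by a hyperbolic geodesic $h_2$ through the midpoints of its edges, and $\zeta_2$ divides $\HH$ into two halves. I would show that $\gamma$ and $\zeta_2$ intersect in a connected set, in fact a single point. One route is to observe that $\zeta_2$ lies in the region bounded between $h_2$ and a nearby curve. Then $\gamma$, which crosses $h_2$ at most once, could only meet $\zeta_2$ twice by entering and leaving one of the thin regions between $\zeta_2$ and $h_2$, which are unions of triangles formed by edge pieces and $h_2$. A cleaner route uses reflections: if $\zeta_2$ shares an edge with $\zeta_1$ (that is, $e_1$ or $e_2$), then Proposition~\ref{prop:zigzag_crossing} says $\zeta_2$ is the reflection of $\zeta_1$ across $g_{e_i}$ and the two split $\HH$ into four quarter-spaces. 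I would then track which quarter-spaces the two ends of $\gamma$ lie in, using the fact that $\gamma$ crosses $g_{e_i}$ at most once. For zigzags through $v$ sharing no edge with $\zeta_1$, I would use the edge geodesics through $v$ (each edge at $v$ extends geodesically through $v$, since $q$ is odd only breaks this across vertices, not at edge interiors) as separators, in the same way.

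The main obstacle is this global uniqueness step: excluding a second crossing of $\zeta_2$ far from $v$. The hypothesis that $\gamma$ hits both $e_1$ and $e_2$ forces $\gamma$ to run nearly parallel to $\zeta_1$ near $v$, and hence transverse to $\zeta_2$. My first attempt would be to show that $\zeta_2$ leaves the angular sector bounded by the geodesic rays from $v$ along its two edges and never returns. This is a convexity statement about the zigzag's half-spaces, provable with the appendix-style hyperbolic trigonometry used for Proposition~\ref{prop:zigzag_halfspace}. Then $\gamma$, entering that sector once, crosses $\zeta_2$ once.
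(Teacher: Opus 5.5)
There is a genuine gap, and it sits exactly at the step you yourself call the main obstacle: nothing in the proposal shows that $\gamma$ meets $\zeta_2$ \emph{at most} once. None of your routes closes it.
\begin{itemize}
\item The thin-region route only restates the problem. Geodesics really do enter and leave such regions; this is how the excess intersections in Proposition~\ref{prop:q-oddCombboundsTiling} arise. You give no argument, using the hypothesis, that $\gamma$ does not.
\item Locating the two ends of $\gamma$ among the quarter-spaces of Proposition~\ref{prop:zigzag_crossing} controls only the parity of the number of crossings.
\item The separator idea for the remaining zigzags is not carried out.
\item Your last route does not close either. Grant that $\zeta_2$ minus its two edges $a,a'$ at $v$ avoids the convex sector $C$ they span; this in fact follows at once from Proposition~\ref{prop:zigzag_crossing}, since a zigzag meets the geodesic through one of its edges only along that edge. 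Even so, an entire end of $\gamma$ lies outside $C$, and nothing you say prevents it from hitting $\zeta_2$ there.
\end{itemize}
Separately, your local picture needs correcting. The convex wedge between $e_1$ and $e_2$ has angle $\pi-\pi/q=\frac{q-1}{2}\cdot\frac{2\pi}{q}$, so it contains $\frac{q-3}{2}$ further edges at $v$, not none. This is where the $\frac{q-3}{2}$ in Proposition~\ref{prop:q-oddCombboundsTiling} comes from. The zigzags through $v$ sharing no edge with $\zeta_1$ are exactly those with one edge strictly inside this wedge. Also, since $\gamma$ is a geodesic, it can only cut off this convex side, because no triangle has an angle $\pi+\pi/q$.

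The missing ingredient is Proposition~\ref{prop:consecutivezigzagintersection}: the edges of a zigzag met by a geodesic form a consecutive block. This turns uniqueness into a local check. Let $a$ be the one edge of $\zeta_2$ at $v$ that $\gamma$ crosses. It then suffices to show that $\gamma$ misses the two neighbours of $a$ on $\zeta_2$. The paper obtains this from three facts:
\begin{itemize}
\item a geodesic crosses at most $\frac{q+1}{2}$ edges around a vertex;
\item its crossings of the edges at a given vertex are consecutive;
\item from $e_1$ to $e_2$, $\gamma$ already uses a maximal run of $\frac{q+1}{2}$ edges at $v$.
\end{itemize}
Alternatively, you can complete your sector idea by using both lines. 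The geodesics $g_a$ and $g_{a'}$ cut $\HH$ into four sectors at $v$. Proposition~\ref{prop:zigzag_crossing}, together with the alternation of the zigzag, puts the two tails of $\zeta_2$ into the two thin sectors of angle $\pi/q$. Now $\gamma$ crosses $g_a$ at $a$, and crosses $g_{a'}$ exactly once, on the ray opposite $a'$ inside a tile at $v$. Hence $\gamma$ never enters one thin sector, and meets the other only inside a single tile at $v$ (apart from its point on $a$), a region no zigzag edge enters. Either way, the argument must control all of $\gamma$, not only its part inside $C$.
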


With this, we can now prove the Proposition.

\begin{proof}[Proof of Proposition~\ref{prop:q-oddCombboundsTiling}]
Let $A$ and $B$ be two tiles, and let $p_A$ and $p_B$ be points in the interiors of tiles $A$ and $B$ respectively. By Proposition~\ref{prop:tiling-distance-zigzag}, $\td(A,B) = \frac12\mathrm{z}(A,B)$, where $\td(A,B)$ denotes the tiling distance and $\mathrm{z}(A,B)$ denotes the number of zigzags separating $A$ and $B$. If $\gamma_{p_A, p_B}$ is a geodesic path connecting $p_A$ and $p_B$, then $\gamma_{p_A, p_B}$ must intersect every zigzag separating $A$ and $B$ at least once, but may also contain \textbf{excess intersections}, intersections with a zigzag beyond the minimally required zero or one intersection. Assuming that the intersections happen at the interiors of edges, every intersection is on exactly two zigzags (since each edge is part of exactly two zigzags). Thus, we have that $\mathrm{cl}(\gamma_{p_A, p_B}) \geq \frac12 \mathrm{z}(A,B)+1 = \td(A,B)+1$.

To prove the other bound, we upper bound the number of excess intersections of $\gamma_{p_A, p_B}$ with zigzags. We note that all excess intersections of $\gamma_{p_A,p_B}$ with a zigzag $\zeta$ are successive by Proposition~\ref{prop:consecutivezigzagintersection}, and that there are an even number of them since a zigzag divides the plane into two connected pieces by Proposition~\ref{prop:zigzag_halfspace}.

\begin{figure}[h]
    \centering
    \includegraphics[scale=0.9]{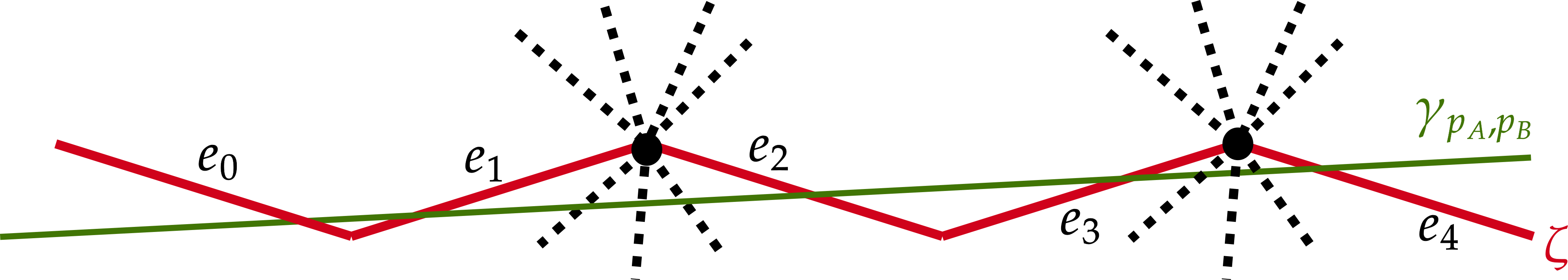}
    \caption{A schematic for a situation where the geodesic $\gamma_{p_A,p_B}$ crosses a zigzag $\zeta$ separating the tiles $A$ and $B$ multiple times. Since $\zeta$ separates $A$ and $B$, the excess intersections are even in number (in this illustration, that number is 4). We then pair the excess intersections edges (in this case $(e_1, e_2)$ and $(e_3, e_4)$ are paired) so that for each pair, we associate $\frac{q-3}{2}$ in-between edges (in this illustration $q=9$ shown using the dashed edge-crossings) that are not excess for any zigzag.}
    \label{fig:zigzag_double_cross}
\end{figure}
Given a zigzag $\zeta$ with which $\gamma_{p_A, p_B}$ has excess intersections, we pair those excess intersections into consecutive neighboring edges $e_1$ and $e_2$ meeting at a vertex $v$ on $\zeta$ (see Figure~\ref{fig:zigzag_double_cross}). By applying Lemma~\ref{lem:zigzag_intersections}, we then have that for every such pair of excess intersections (corresponding to edges that are only excess intersections for that zigzag $\zeta$), there are $\frac{q-3}{2}$ additional associated edge intersections, corresponding to the edges adjacent to $v$ that $\gamma$ crosses between $e_1$ and $e_2$, that are not excess intersections for any zigzag. We then have that $$\frac{\text{total excess intersections of } \gamma_{p_A, p_B}}{\text{total edge intersections of } \gamma_{p_A, p_B}} \leq \frac{2}{2 + \frac{q-3}{2}} = \frac{4}{q+1},$$ and so $$\text{total excess intersections of } \gamma_{p_A, p_B} \text{ with zigzags} \leq \frac{4}{q+1} (\cl(\gamma_{p_A, p_B})-1).$$

We then have that
\begin{align*} 
\cl(\gamma_{p_A, p_B}) & = \frac12 \left(z(A,B) + \text{excess intersections with zigzags}\right) +1 \\
& \leq \frac12\left( 2 \td(A,B)  + \frac{4}{q+1} (\cl(\gamma_{p_A, p_B})-1)\right) + 1. 
\end{align*}
By rearranging the inequality, we get that $\cl(\gamma_{p_A, p_B}) \leq \frac{q+1}{q-1}\td(A,B)+1$.

Then, given a generalized diagonal $\gamma$ and letting $A$ and $B$ be the first and last tiles intersected by the generalized diagonal, it follows that $\tl(\gamma)+1 \leq \cl(\gamma) \leq \frac{q+1}{q-1}\tl(\gamma)+1$. 
\end{proof}

\subsection{Language complexity of billiards in hyperbolic polygons}

We now wish to use the relation between the tiling length and combinatorial length of geodesic segments to relate the growth the numbers of tiles by tiling length and geodesic segments by combinatorial length. We will then relate the growth of such geodesic segments in the tiling to the growth of generalized diagonals in a polygon, which will subsequently yield Theorem~\ref{thm:complexity_evenodd}.

\begin{definition}[Tiling distance and combinatorial length counts]
\label{def:counts} Consider a hyperbolic $(p,q)$-tiling and fix a base tile $P$ and a base vertex $v$. Let $\ntd(n)$ denote the number of tiles of tiling distance exactly $n$ away from $P$. Let $\ncl(n)$ denote number of geodesic segments of combinatorial length $n$ with  vertex endpoints such that the one of the vertices is the base vertex $v$. Let $\nclprim(n)$ be such geodesic segments that are primitive (i.e. ones that do not have a vertex in the interior). 
\end{definition}

From Theorem~\ref{thm:tiling_growth} we see that $\ntd(n) =  g(n)\alpha^n$ where $\alpha>1$ is the largest eigenvalue of the denominator polynomial from Theorem~\ref{thm:tiling_growth} and $g(n)$ is subexponential in $n$. In other words, we have that $$\lim_{n\rightarrow \infty} \frac{\log_\alpha(\ntd(n))}{n}=1.$$

Using this notation, we first prove the following lemma relating $\ncl(\cdot)$ and $\ntd(\cdot)$ in the $q$ even case and $\nclprim(\cdot)$ and $\ntd(\cdot)$ in the $q$ odd case.

\begin{lemma}\label{lem:gd_tl_relation}
    Consider a hyperbolic $(p,q)$-tiling. 
\begin{enumerate}
    \item Let $q$ be even. There exist positive constants $C_{p,q}$ and $K_{p,q}$ depending only on $p$ and $q$ such that for any $k \in \NN$,
    $$ \ncl(k) \leq C_{p,q} \cdot \max_{|n-k+1| \leq q} \{\ntd(n)\} \,\,\, \text{ and } \,\,\, \ntd(k) \leq K_{p,q} \cdot \max_{|n-k-1| \leq q} \{\ncl(n)\}.$$
\item Let $q$ be odd. There exist positive constants $C_{p,q}$ and $K_{p,q}$ depending only on $p$ and $q$ such that for any $k\in \mathbb{N}$,
$$ \ncl(k) \leq  C_{p,q} \cdot k \cdot  \left(\max\left\{\ntd(n) : \frac{q-1}{q+1}(k-q-4) \leq n \leq k+q-1\right\}\right)$$
and $$\ntd(k) \leq K_{p,q}\cdot k  \cdot \left(\max \left\{\ncl(n): k-q+1 \leq n \leq \frac{q+1}{q-1} \cdot k+q+4\right\}\right).$$   
\end{enumerate}
\end{lemma}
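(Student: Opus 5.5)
The plan is to build maps in both directions between geodesic segments from the base vertex $v$ and tiles, each at most boundedly-many-to-one (or $O(k)$-to-one in the odd case), and to control how lengths shift under these maps using Propositions~\ref{prop:q-evenCombEqualsTiling} and \ref{prop:q-oddCombboundsTiling}. The $\max$ over a window in the statement absorbs the bounded shifts in index.

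\emph{From segments to tiles (bound on $\ncl$).} Let $\gamma$ run from $v$ to a vertex $w$, with $\cl(\gamma)=k$. Assign to $\gamma$ the tile $B=\varphi(w)$ from Proposition~\ref{prop:vertex_tile_mapping} (for $p\ge 4$; for $p=3$ take any tile at $w$ chosen by a fixed rule). Distinct segments from $v$ end at distinct vertices, so $\gamma\mapsto B$ is at most $p$-to-one.

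\begin{itemize}
\item For $q$ even, Proposition~\ref{prop:q-evenCombEqualsTiling} gives $\tl(\gamma)=k-1$. The path $\mathcal{A}_\gamma$ is minimal, so the last tile of $\gamma$ lies at tiling distance $k-1$ from the first. The first tile is one of the $q$ tiles around $v$, hence within distance $q/2$ of the base tile $P$. We may take $P$ to contain $v$. Likewise $B$ and the last tile share the vertex $w$, so they are within distance $q/2$ of each other. By the triangle inequality, $\td(P,B)=n$ with $|n-k+1|\le q$. Summing over $n$ in this window, with at most $p$ preimages per tile, gives $\ncl(k)\le C_{p,q}\max_{|n-k+1|\le q}\ntd(n)$.
\item For $q$ odd, the same argument applies, but Proposition~\ref{prop:q-oddCombboundsTiling} only gives $\frac{q-1}{q+1}(k-1)\le \tl\le k-1$ for primitive $\gamma$. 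For non-primitive $\gamma$, I would apply that bound piecewise on the primitive subsegments, or argue directly with zigzag counts as in its proof. The window then becomes the stated range $[\frac{q-1}{q+1}(k-q-4),\,k+q-1]$, after adding the $O(q)$ vertex-to-tile slack. The factor $k$ comes from summing over the $O(k)$ values of $n$ in this window rather than taking a single window of bounded width.
\end{itemize}

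\emph{From tiles to segments (bound on $\ntd$).} Given a tile $B$ with $\td(P,B)=k$, assign to it the vertex $\psi(B)$, which is at most $q$-to-one. Let $w=\psi(B)$ be a vertex of $B$, and take the geodesic segment $\gamma$ from $v$ to $w$. The same triangle-inequality estimates, now using the converse inequalities, put $\cl(\gamma)$ in the window $|n-k-1|\le q$ for $q$ even. For $q$ odd, Proposition~\ref{prop:q-oddCombboundsTiling} puts it in $[k-q+1,\ \frac{q+1}{q-1}k+q+4]$. Counting preimages then gives the second inequality in each case.

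\emph{Main obstacle.} The delicate point is bookkeeping for non-primitive segments and for tiles adjacent to the endpoint vertices. In the $q$ even case, the nudge definitions make the relevant lengths exact. In the $q$ odd case, Proposition~\ref{prop:q-oddCombboundsTiling} is stated for generalized diagonals and interior points, so I would first perturb the endpoints slightly into adjacent tiles. That reduces to the interior-point version of the proposition, at a cost of $O(q)$ in each length, which is what produces the constants $q+4$. Once this perturbation is set up, everything else is routine counting.
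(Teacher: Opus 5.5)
Your proposal is correct and follows essentially the paper's route: a bounded-to-one correspondence between vertices and adjacent tiles, a slack of $q/2$ at each end via the triangle inequality, then Proposition~\ref{prop:q-evenCombEqualsTiling} or Proposition~\ref{prop:q-oddCombboundsTiling}, and finally summation over the resulting window (of width $O(k)$ when $q$ is odd, which gives the factor $k$); your use of an arbitrary tile-to-vertex assignment for the second inequality is a small simplification, since it removes any need for the surjectivity of $\varphi$. The paper applies Proposition~\ref{prop:q-oddCombboundsTiling} to every segment without addressing non-primitive ones, and of your fixes the endpoint perturbation is the one to use: shift both endpoints to the side of the counterclockwise nudges, so that the perturbed geodesic crosses exactly the edges the nudged path crosses. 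Applying the bound piecewise on primitive subsegments cannot yield the needed lower bound on tiling distance, because tiling distance is only subadditive.
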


\begin{proof}
    
Our strategy for this proof will be to relate geodesic segments with vertex endpoints to nearby tiling paths in a way where we can control the counts of such geodesics.

Consider a hyperbolic $(p,q)$-tiling. Let $P$ a base polygon and $v$ a base vertex. For each vertex $u \neq v$, let $\gamma_u$ be the geodesic segment between $v$ and $u$. 
We now deal with the two cases of $q$ being even and odd separately and within each case, we further distinguish between the $p \geq 4$ and $p=3$ cases.

\begin{enumerate}
    \item We first consider the case when $q$ is even, starting with when $p \geq 4$. Recall that $\tl(\gamma_u)$ is the tiling length between the first and last tiles that $\gamma_u$ passes through. Now, consider the map $\varphi$ from vertices to tiles in Proposition~\ref{prop:vertex_tile_mapping}. Recall that $\varphi$ is surjective, at most $p$ to $1$, and that each tile $\varphi(u)$ is adjacent to $u$. So, the first and last tiles of $\gamma_u$ are distance $\leq q/2$ from the tiles $P$ and $\varphi(u)$ respectively. Since tiling distance satisfies the triangle inequality, it then follows that  

$$\td(P,\varphi(u))- q \leq \tl(\gamma_u) \leq \td(P,\varphi(u)) + q$$

where $\td(P, \varphi(u))$ is the tiling distance between tiles $P$ and $\varphi(u)$. By Proposition~\ref{prop:q-evenCombEqualsTiling}, $\cl(\gamma_u) = \tl(\gamma_u)+1$ so that
\begin{equation}
\label{eq:cl_td_even}
\td(P,\varphi(u))-q + 1 \leq \cl(\gamma_u) \leq \td(P,\varphi(u)) + q + 1
\end{equation}
for each geodesic segment $\gamma_u$ counted in $\ncl(k)$. Since the map $\varphi$ from vertices to tiles is at most $p$ to $1$, we have that 
    \begin{equation} 
    \label{eq:ncl_bound_even}
    \ncl(k) \leq \sum_{n=k-q-1}^{k+q-1} p\cdot \ntd(n) \leq p \cdot (2q+1) \max_{|n-k+1|\leq q} \{\ntd(n)\}.
    \end{equation}

For the other bound, fix $k \in \NN$. Since $\varphi$ is surjective, any tile that is tiling distance $k$ away from the base tile is an image under the mapping $\varphi$ of a vertex $u$ with $\cl(\gamma_u)$ between $k-q+1$ and $k+q+1$.

Hence, we get the bound,
   \begin{equation} 
    \label{eq:ntd_bound_even}
    \ntd(k) \leq \sum_{n=k-q+1}^{k+q+1} \ncl(n) \leq (2q+1) \max_{|n-k-1|\leq q}\{\ncl(n)\}.
    \end{equation}
 
The argument for the $p=3$ case is similar. Here we use the map $\psi$ from tiles to vertices stated in Proposition~\ref{prop:vertex_tile_mapping}. The $q$ to $1$ and surjective nature of $\psi$ yields the two required bounds on $\ntd(k)$ and $\ncl(k)$ respectively.

\item The proof in the $q$ odd case follows the structure of the proof in the $q$ even case. In place of the equality $\cl(\gamma_u) = \tl(\gamma_u) +1$, we apply the inequality from Proposition~\ref{prop:q-oddCombboundsTiling}, $$\tl(\gamma_u) + 1 \leq \cl(\gamma_u) \leq \frac{q+1}{q-1}\tl(\gamma_u) + 1. $$ As a result, the inequality in Equation \eqref{eq:cl_td_even} is replaced with $$\td(P, \varphi(u)) - q + 1 \leq \cl(\gamma_u) \leq \frac{q+1}{q-1}(\td(P, \varphi(u)) + q) + 1 \leq \frac{q+1}{q-1}\td(P, \varphi(u)) + q + 4.$$
The inequalities in Equations \eqref{eq:ncl_bound_even} and \eqref{eq:ntd_bound_even} then become $$\ncl(k) \leq \sum_{n=\lceil \frac{q-1}{q+1}(k-q-4)\rceil}^{k+q-1} p\cdot \ntd(n) \leq p \cdot \left(k+q- \frac{q-1}{q+1}(k-q-4)\right)  \cdot \left(\max_{ \frac{q-1}{q+1}(k-q-4) \leq n \leq k+q-1} \{\ntd(n)\}\right)$$ 
and $$\ntd(k) \leq \sum_{n=k-q+1}^{\lfloor \frac{q+1}{q-1} k+q+4\rfloor} \ncl(n) \leq  \left(\frac{q+1}{q-1} k - k +2q+4\right) \cdot \left( \max_{k-q+1 \leq n \leq \frac{q+1}{q-1} k+q+4} \{\ncl(n)\}\right)$$ respectively. The proposition then follows by noting that we can upper bound the constants by $C_{p,q}\cdot k$ and $K_{p,q}\cdot k$ by noting that $k \geq 1$. As in the $q$ even case, the $p=3$ case follows similarly when we use the map $\psi$ from tiles to vertices from Proposition~\ref{prop:vertex_tile_mapping}. 
\end{enumerate}
\end{proof}

Since our goal in this section is to relate the growth rate of $\ntd(\cdot)$ to the growth of the billiard language complexity function $p(\cdot)$, which in turn is related to the count of the number of generalized diagonals $\gd(\cdot)$ by \eqref{eq:ComplexityFormula}, we make the following remarks on the relation between $\ncl(\cdot)$, $\nclprim(\cdot)$ and $\gd(\cdot)$.

\begin{remark}\label{rem:gd_vs_Ncl}
Given a polygonal billiard table $P$, the number of generalized diagonals in $P$ of combinatorial length $k\geq 2$ satisfies the relation:
\begin{equation}\label{eq:gdToNclPrim}
\gd(k) = \frac{p}{2q}\cdot \nclprim(k)
\end{equation}
 Let $P$ be a base tiling in a hyperbolic $(p,q)$-tiling, and $v$ be a vertex of $P$. Then, $\nclprim(k)$ counts the number of primitive geodesic segments of combinatorial length $k$ from $v$ to an arbitrary vertex $u \neq v$. By rotational symmetry, only $\frac{1}{q}$ of these segments pass through the base tile $P$, so we divide by $q$. The multiplication by $p$ comes from the $p$ possible base vertices in polygon $P$. We then divide by $2$ because each generalized diagonal is double counted. 
\end{remark}

\begin{remark}\label{rem:primiveVSnonprimitivegrowth}
Since the number of tiles and vertices in a hyperbolic tiling grow exponentially, one can show via a standard argument that primitive geodesic segments with vertices as endpoints (i.e. generalized diagonals) are of asymptotic density one among all geodesic segments with vertices as endpoints. The heuristic for this argument is that if $\ncl(k)$ grows as $\alpha^k$, then the nonprimitive elements in this count have growth upper bounded by $\sum_{d|k, d < k}\ncl(d) \approx \sum_{d|k, d < k}\alpha^d$, which is asymptotically proportionally $0$ compared with $\alpha^k$. 

Hence, the exponential growth rate of $\nclprim(\cdot)$ and $\ncl(\cdot)$ is the same. In other words, for any $\alpha > 1$, 
$$\lim_{k\rightarrow \infty} \frac{\log_\alpha(\ncl(k))}{k} = 1 \qquad \text{ if and only if } \qquad \lim_{k\rightarrow \infty} \frac{\log_\alpha(\nclprim(k))}{k} = 1.$$  
The same is true for $\liminf$ and $\limsup$.    
\end{remark}

\begin{proposition}[Asymptotic growth of generalized diagonals]\label{prop:tilinggrowthequalsgendiaggrowth}
Consider a hyperbolic $(p,q)$-tiling. 

\begin{enumerate}
    \item If $q$ is even and $\displaystyle\lim_{k\rightarrow \infty} \frac{\log_\alpha (\ntd(k))}{k} = 1$ for some $\alpha > 1$, then $\displaystyle\limsup_{k\rightarrow \infty} \frac{\log_\alpha (\gd(k))}{k} = 1$.
    \item If $q$ is odd and $\displaystyle\lim_{k\rightarrow \infty} \frac{\log_\alpha (\ntd(k))}{k} = 1$ for some $\alpha > 1$, then  $\frac{q-1}{q+1} \leq \displaystyle\limsup_{k\rightarrow \infty} \frac{\log_\alpha (\gd(k))}{k} \leq 1$.
\end{enumerate}

\end{proposition}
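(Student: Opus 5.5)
The plan is to go from the tiling counts $\ntd(\cdot)$ to the vertex-to-vertex segment counts $\ncl(\cdot)$ via Lemma~\ref{lem:gd_tl_relation}, then to the primitive counts $\nclprim(\cdot)$ via Remark~\ref{rem:primiveVSnonprimitivegrowth}, and finally to $\gd(\cdot)$ via Remark~\ref{rem:gd_vs_Ncl}. Since $\gd(k) = \frac{p}{2q}\nclprim(k)$ for $k\ge 2$, the constant $\frac{p}{2q}$ is harmless after taking $\log_\alpha$ and dividing by $k$. Hence it suffices to bound $\limsup_k \frac{\log_\alpha \ncl(k)}{k}$ in each case and then transfer the bound to $\nclprim$.

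For $q$ even, take logarithms in the first inequality of Lemma~\ref{lem:gd_tl_relation}(1):
\[
\log_\alpha \ncl(k) \le \log_\alpha C_{p,q} + \max_{|n-k+1|\le q} \log_\alpha \ntd(n).
\]
Since $\log_\alpha \ntd(n) = n(1+o(1))$ and every $n$ in the window satisfies $n\le k+q-1$, dividing by $k$ gives $\limsup_k \frac{\log_\alpha \ncl(k)}{k}\le 1$.

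For the reverse inequality, use $\ntd(k)\le K_{p,q}\max_{|n-k-1|\le q}\ncl(n)$. For each $k$ choose $n_k$ attaining this max, so that $|n_k-k-1|\le q$. Then
\[
\log_\alpha \ncl(n_k)\ge \log_\alpha \ntd(k) - \log_\alpha K_{p,q} = k(1+o(1)).
\]
Since $n_k\to\infty$ and $n_k/k\to 1$, this gives $\limsup_n \frac{\log_\alpha \ncl(n)}{n}\ge 1$. Combining the two bounds, the $\limsup$ equals $1$.

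For $q$ odd the structure is identical, using Lemma~\ref{lem:gd_tl_relation}(2). The upper bound has $n\le k+q-1$ in the window, and the extra factor $C_{p,q}k$ contributes only $O(\log k)=o(k)$, so again $\limsup \le 1$. For the lower bound, pick $n_k$ with $k-q+1\le n_k\le \frac{q+1}{q-1}k+q+4$ and $\log_\alpha \ncl(n_k)\ge k(1+o(1)) - O(\log k)$. Then
\[
\frac{\log_\alpha \ncl(n_k)}{n_k}\ge \frac{k(1+o(1))}{\frac{q+1}{q-1}k+O(1)}\to \frac{q-1}{q+1},
\]
and since $n_k\to\infty$ this gives $\limsup\ge\frac{q-1}{q+1}$.

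Finally, I would transfer these bounds from $\ncl$ to $\nclprim$ using the $\limsup$ version of Remark~\ref{rem:primiveVSnonprimitivegrowth}, and then to $\gd$ by Remark~\ref{rem:gd_vs_Ncl}.

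The main obstacle is this transfer from $\ncl$ to $\nclprim$ in the $\limsup$ setting, where we do not know that $\ncl$ has a genuine limit growth rate. The upper bound is immediate because $\nclprim\le \ncl$. For the lower bound I would argue directly, rather than lean on the heuristic in the remark. A non-primitive segment of combinatorial length $n$ from $v$ is determined by its first primitive piece, which ends at an interior vertex $w$, together with the ray's continuation; since geodesics are determined by their initial direction, the non-primitive segments are injectively indexed by primitive initial pieces of length $<n$. This gives
\[
\ncl(n)\le \sum_{j\le n}\nclprim(j),
\]
so $\max_{j\le n}\nclprim(j)\ge \ncl(n)/n$. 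Applied along the subsequence $n_k$ above, this produces indices $j_k\le n_k$ with $\log_\alpha \nclprim(j_k)\ge \log_\alpha \ncl(n_k)-\log_\alpha n_k$. Because $\nclprim(j)$ grows at most exponentially in $j$ (by the upper bound), these $j_k$ must tend to infinity and have ratio $\log_\alpha \nclprim(j_k)/j_k$ at least the ratio along $n_k$, up to $o(1)$. This preserves the lower bounds $1$ and $\frac{q-1}{q+1}$ respectively.
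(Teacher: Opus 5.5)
Your argument is correct, and its skeleton matches the paper's proof. You bound $\ncl$ above and below via Lemma~\ref{lem:gd_tl_relation}, pick the maximizing index $n_k$ in the window, pass to $\nclprim$, and then pass to $\gd$ via Remark~\ref{rem:gd_vs_Ncl}.

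The one step where you genuinely diverge is the transfer from $\ncl$ to $\nclprim$ in the lower bound.
\begin{itemize}
\item The paper simply cites Remark~\ref{rem:primiveVSnonprimitivegrowth}. That remark is justified only heuristically, by a divisor-sum count $\sum_{d\mid k}\ncl(d)$ modeled on counting closed geodesics. This does not literally fit vertex-to-vertex segments, because the primitive pieces of a non-primitive segment have no divisibility relation to its combinatorial length. The remark also asserts a density-one statement that is stronger than what is needed.
\item Your injection says a non-primitive segment is determined by its first primitive piece. It gives only $\ncl(n)\le\sum_{j\le n}\nclprim(j)$, which does not yield density one. It is, however, rigorous and exactly sufficient for comparing exponential rates in the $\limsup$ setting, where you cannot assume $\ncl$ has a genuine growth limit.
\end{itemize}

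To make the injection airtight, state explicitly that $\cl([v,w_i])$ is strictly increasing along the successive vertices $w_1,w_2,\dots$ on the ray. This holds because the nudge around each intermediate vertex crosses at least one edge transversely. Knowing the ray does not by itself determine which vertex on it is the endpoint, so this monotonicity is what makes combinatorial length $n$ pick out at most one segment.

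A minor point: $j_k\to\infty$ already follows from $\nclprim(j_k)\to\infty$ together with the finiteness of each $\nclprim(j)$. The exponential upper bound is not needed for that step.
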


\begin{proof} 
We will handle the $q$ even and $q$ odd cases separately. We work with a $\limsup$ to avoid issues about whether $\lim_{k \rightarrow \infty} \frac{\log_\alpha(\gd(k))}{k}$ exists. 

\begin{enumerate}

\item Assume $q$ is even. By Lemma~\ref{lem:gd_tl_relation}, there exists a $C_{p,q}$ such that for every $k \in \NN$,
$$\ncl(k) \leq C_{p,q} \cdot \max_{|n-k+1| \leq q} \{\ntd(n)\}.$$
For each $k$, let $n_k$ be the $n \in [k-q-1,k+q-1]$ for which $\max_{|n-k+1|\leq q} \{\ntd(n)\}$ is achieved.

Assume that $\lim_{k\rightarrow \infty} \frac{\log_\alpha (\ntd(k))}{k} = 1$ for some $\alpha > 0$. Since $|n_k-k|\leq q+1$ and $n_k \rightarrow \infty$ as $k \rightarrow \infty$, we then have that  
$$\limsup_{k\rightarrow \infty} \frac{\log_\alpha (\nclprim(k))}{k}  \leq \limsup_{k\rightarrow \infty} \frac{\log_\alpha (\ncl(k))}{k} \leq  \limsup_{k \rightarrow \infty}  \left(\frac {\log_\alpha (C_{p,q}) + \log_\alpha (\ntd(n_k))}{n_k} \cdot \frac{n_k}{k} \right) = 1.$$

For the lower bound, using Lemma~\ref{lem:gd_tl_relation} again, there exists $K_{p,q} > 0$ such that for any $k \in \NN$, 
$$\ntd(k) \leq K_{p,q} \cdot \max_{|n-k-1| \leq q} \{\ncl(n)\}.$$
Again, let $n_k$ be the $n \in [k-q+1, k+q+1]$ for which 
$\max_{|n-k-1| \leq q}\{\ncl(n)\}$ is achieved. Then,
$$\limsup_{k \rightarrow \infty} \frac{\log_\alpha(\nclprim(k))}{k} \geq \liminf_{k \rightarrow \infty} \frac{\log_\alpha(\ncl(n_k))}{n_k} \geq \liminf_{k \rightarrow \infty} \left(\frac{-\log_\alpha (K_{p,q}) + \log_\alpha (\ntd(k))}{k} \cdot \frac{k}{n_k}\right) = 1, $$
where the first inequality uses Remark~\ref{rem:primiveVSnonprimitivegrowth} and the second uses that $ \ncl(n_k) \geq K^{-1}_{p,q} \ntd(k) $.

Combining the lower and upper bounds gives us $$ \limsup_{k \rightarrow \infty} \frac{\log_\alpha(\nclprim(k))}{k} = 1. $$
Then, using \eqref{eq:gdToNclPrim}, we see
$$
\limsup_{k \rightarrow \infty} \frac{\log_\alpha(\gd(k))}{k} = \limsup_{k\rightarrow \infty}\left(\frac{\log_\alpha(p/2q)}{k}+ \frac{\log_\alpha(\nclprim(k))}{k} \right) = 1.$$

\item Let $q$ be odd. The proof is similar to the $q$ even case. From, Lemma~\ref{lem:gd_tl_relation}, we have that there exists a positive constant $C_{p,q}$ such that for any $k \in \mathbb{N}$, 
$$\ncl(k) \leq  C_{p,q} \cdot k \cdot  \left(\max\left\{\ntd(n) : \frac{q-1}{q+1}(k-q-4) \leq n \leq k+q-1\right\}\right).$$
Let $n_k$ be the maximal $n \in [\frac{q-1}{q+1}(k-q-4) , k+q+1]$ for which the maximum  of $\{\ntd(n)\}$ is achieved. 

By similar arguments as in the upper bound of the $q$ even case, we have that 
$$\limsup_{k \rightarrow \infty} \frac{\log_\alpha(\gd(k))}{k} \leq \limsup_{k \rightarrow \infty}\left(\frac{\log_\alpha (C_{p,q} \cdot k) + \log_\alpha(\ntd(n_k))}{n_k} \cdot \frac{n_k}{k} \right) \leq 1,$$
where we are using that $\frac{n_k}{k}$ is asymptotically between $\frac{q-1}{q+1}$ and $1$. For the lower bound, using Lemma~\ref{lem:gd_tl_relation}, there exists $K_{p,q} > 0$ such that for all $k \in \mathbb{N}$, $$\ntd(k) \leq K_{p,q}\cdot k  \cdot \left(\max \left\{\ncl(n): k-q+1 \leq n \leq \frac{q+1}{q-1} \cdot k+q+4\right\}\right).$$ Letting $n_k$ be the $n \in [k-q+1 , \frac{q+1}{q-1} \cdot k+q+4]$ for which the maximum of $\{\ncl(n)\}$ is achieved, we have by similar arguments as in the even case that $$\limsup_{k\rightarrow \infty} \frac{\log_\alpha(\gd(k))}{k} \geq \liminf_{k \rightarrow \infty} \left(\frac{-\log_\alpha (K_{p,q} \cdot k) + \log_\alpha (\ntd(k))}{k} \cdot \frac{k}{n_k}\right) \geq \frac{q-1}{q+1},$$ since $\frac{k}{n_k}$ is asymptotically between $\frac{q-1}{q+1}$ and $1$. 
\end{enumerate}
\end{proof}

The asymptotic bounds on $\nclprim(n)$ allows us now to deduce the asymptotic growth of $p(n)$:

\ComplexityEvenOdd*

\begin{proof}
    From Theorem~\ref{thm:GutTab}, we have 
    \begin{align}\label{eqn:pn}
   p(n) = c_1 + c_2 n + \sum_{k=3}^n \sum_{j=3}^k \gd(j) 
\end{align}for some non-negative constants $c_1$ and $c_2$. 
Let $\epsilon >0$ be arbitrary. We will first show that 
$$\lim_{n\rightarrow\infty}\frac{\log_\alpha(p(n))}{n} <  1+\epsilon$$
for both the $q$ even and odd cases. 
By Proposition~\ref{prop:tilinggrowthequalsgendiaggrowth}, $\limsup_{n\rightarrow \infty} \frac{\log_\alpha(\gd(n))}{n} < 1+ \epsilon.$ which means that there exists $N$ such that for all $n > N$, $\gd(n) < \alpha^{n(1+\epsilon)}$.
For such an $N$, let $g_{N} = \max_{j \in \{1, \dots, N\}} \gd(j)$. Then for all $n > N$,
\begin{align*}
p(n) &= c_1+ c_2n +\sum_{k=3}^N \sum_{j=3}^k \gd(j) + \sum_{k=N+1}^n\sum_{j=3}^N \gd(j) + \sum_{k=N+1}^n \sum_{j=N+1}^k \gd(j)\\
&\leq c_1+ c_2n +\sum_{k=3}^N \sum_{j=3}^k g_N + \sum_{k=N+1}^n\sum_{j=3}^N g_N + \sum_{k=N+1}^n \sum_{j=N+1}^k \gd(j)\\
&< c_1+c_2n + n^2g_N + \sum_{k=N+1}^n \sum_{j=N+1}^k \gd(j)
\end{align*}
where the first equality comes from rearranging the sum in (\ref{eqn:pn}) and the rest follow from the definition of $g_N$. Noting that for $j > N$, $\gd(j) < \alpha^{j(1+\epsilon)}$, where the latter function is increasing as a function of $j$ since $\alpha > 1$, we see that $$p(n) < c_1 + c_2n + n^2 g_N + n^2 \alpha^{n(1+\epsilon)}.$$ By taking a $\log_\alpha$ on both sides, it follows then that for both $q$ even and $q$ odd, $$\lim_{n \rightarrow \infty} \frac{\log_\alpha p(n)}{n} \leq 1+ \epsilon \qquad \Rightarrow \qquad \lim_{n\rightarrow \infty} \frac{\log_\alpha p(n)}{n}  \leq 1, $$ since $\epsilon > 0$ was arbitrary.

Additionally, we have that $p(n) \geq  \gd(n)$ so that,
$$\lim_{n\rightarrow\infty}\frac{\log_\alpha(p(n))}{n} = \limsup_{n\rightarrow\infty}\frac{\log_\alpha(p(n))}{n} \geq \limsup_{n\rightarrow\infty}\frac{\log_\alpha(\gd(n)))}{n},$$ which, by Proposition~\ref{prop:tilinggrowthequalsgendiaggrowth},  is equal to $1$ when $q$ is even and at least $\frac{q-1}{q+1}$ when $q$ is odd. Since $h_{top}$ satisfies that $$1 = \lim_{n\rightarrow \infty} \frac{\log_{h_{top}}(p(n))}{n} = \lim_{n\rightarrow \infty} \frac{\log_{\alpha}(p(n))}{n} \cdot \frac{1}{\log_\alpha(h_{top})},$$ it follows that $h_{top} =\alpha$ when $q$ is even, and $\alpha^{\frac{q-1}{q+1}} \leq h_{top} \leq 1$ when $q$ is odd. 
\end{proof}

\begin{remark}
    We remark that to do better in the bounds on language complexity and topological entropy in the $q$ odd case using the methods in this paper, we would have to improve the multiplicative constant in the upper bound $\cl(\gamma) \leq \frac{q+1}{q-1}\tl(\gamma)+1$ in Proposition~\ref{prop:q-oddCombboundsTiling}. A geodesic segment the connects two tiles $A$ and $B$ that are share a vertex and are almost opposite can achieve this bound ($\cl(\gamma) = \frac{q+1}{2} + 1$ and $\tl(\gamma) = \frac{q-1}{2}$, see Figure~\ref{fig:q_odd_vertex_sequence}), so the bound is sharp. However, it may be possible to improve this bound asymptotically as $\tl(\gamma) \rightarrow \infty$, which could lead to a better bound on language complexity.
\end{remark}

%% file: word_characterization.tex
\section{Characterization of billiard words for hyperbolic $(p,q)$-tilings when $q$ is even}
\label{sec:billiard_words}

Our goal in this section is to give a set of language rules that will allow us to determine when a bi-infinite word in a regular hyperbolic $(p,q)$-tiling with $q$ even is realized by a geodesic. This is equivalent to determining what billiard words in a regular hyperbolic polygonal billiard table with $p$ sides and interior angles $2 \pi /q$ are realized by a billiard trajectory. 

\subsection{Previous Work of Other Authors}

In \cite{GiannoniUllmo}, Giannoni and Ullmo investigate the set of bi-infinite words that arise from billiard trajectories in various settings, including in compact, regular, hyperbolic polygons that tile the hyperbolic plane. Such polygons with $p$ sides and $2\pi/q$ angles give rise to $(p,q)$-tilings. Their theorem in the $q$ even case, mildly restated to match the language of this paper, was as follows.

\begin{theorem}[Language rules for $q$ even (\cite{GiannoniUllmo})] 
\label{thm:GUeven} 
In a hyperbolic polygonal billiard corresponding to a regular $(p,q)$-tiling with $p$ even, we have the following language rules: 
\begin{enumerate}
    \item Any two-letter subword $kk$ is forbidden for any $k \in \{1,\ldots, p\}$
    \item Any subword of length greater than $\nu_q = \frac{q}{2}$ consisting of alternating letters $k$ and $k+1$ (or $p$ and $1$) is not allowed. 
    \item All $\nu_q$ repetitions of two successive letters $k$ and $k+1$ (or $p$ and $1$) are considered equivalent, regardless of whether the first letter is $k$ or $k+1$ (or $p$ or $1$). 
\end{enumerate}
The last rule introduces new letters $\{|1|, |2|, \ldots, |p|\}$ to the alphabet, where $|k|$ represents the equivalence class of sequences of $\nu_k$ alternating $k$ and $(k+1)$ letters, starting with either letter. With this modified alphabet, every word that does not violate a grammar rule is attained by a unique geodesic. 
\end{theorem}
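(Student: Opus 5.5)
The plan has three parts: necessity of the grammar rules, the local-to-global step, and existence and uniqueness. We work in the unfolding, where a billiard word becomes the tiling path $\mathcal{A}=(\dots,A_{-1},A_0,A_1,\dots)$ of tiles crossed by a geodesic $\gamma$. Since $q$ is even, every edge crossing is a crossing of an edge geodesic (Definition~\ref{def:edge_geodesic}). Also, the nudged tiling path of a geodesic segment is minimal, by the argument in the proof of Proposition~\ref{prop:q-evenCombEqualsTiling}.

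\textbf{Necessity of rules 1 and 2.} A subword $kk$ means that $\mathcal{A}$ re-crosses, immediately, the edge geodesic it just crossed. This is impossible, because two hyperbolic geodesics meet at most once. A subword alternating $k$ and $k+1$ means that $\gamma$ passes through consecutive tiles around the common vertex $v$ of sides $k$ and $k+1$. Exactly $\nu_q=q/2$ edge geodesics pass through $v$. Each step of the alternating word crosses a distinct one of them, and $\gamma$ crosses each at most once, so at most $q/2$ such letters can occur. The case of exactly $q/2$ letters is the one where $\gamma$ crosses all edge geodesics through $v$. The two starting letters then correspond to passing $v$ on its two sides, and in the limiting case $\gamma$ passes through $v$ itself. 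This explains rule 3: such a geodesic is recorded by the new letter $|k|$, since either nudge (Definition~\ref{def:nudge}) produces one of the two alternating strings.

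\textbf{Local-to-global step.} Next I would show that any finite word obeying rules 1--3 determines a \emph{minimal} tiling path. By Proposition~\ref{prop:minimal_tiling_path_characterization}, it suffices to show that the path never crosses an edge geodesic $g$ twice. I would argue by induction on length, taking a shortest subpath that crosses some $g$ twice. Reflecting the intermediate subpath across $g$, as in the proof of Proposition~\ref{prop:minimal_tiling_path_characterization}, should force this subpath to lie in the ring of tiles around a single vertex of $g$, or to contain a forbidden pattern. Inside such a ring the word is alternating in two letters, and crossing $g$ twice there requires more than $q/2$ alternations, or a repeated letter when the ring is of length $1$. This step is the main obstacle. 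The difficulty is that a double crossing could a priori be spread along many tiles adjacent to $g$ rather than around one vertex. I would first try to use the layered structure from Construction~\ref{con:tiling} and Remark~\ref{rem:cyclic_ordering} to show that the tiles between the two crossings, on the far side of $g$, form a ``strip'' along $g$. In that strip the word must contain either a repeated letter or an over-long alternation at each vertex of $g$ it passes.

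\textbf{Existence and uniqueness.} Given a bi-infinite admissible word, the previous step yields a bi-infinite minimal tiling path $\mathcal{A}$ that crosses each edge geodesic at most once. For each $n$, the edge geodesics crossed by $(A_{-n},\dots,A_n)$ cut out a nonempty set of geodesics crossing exactly those edge geodesics. Nonemptiness comes from choosing a geodesic segment between points of $A_{-n}$ and $A_n$: its tiling path is minimal, and it crosses the same separating edge geodesics. These sets are nested and compact in the space of geodesics meeting $A_0$, so their intersection gives a geodesic realizing the word. The equivalence in rule 3 absorbs the vertex-passing ambiguity, which is why the rule is stated with the letters $|k|$. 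For uniqueness, two distinct bi-infinite geodesics have distinct endpoints on $\partial\HH$. Hence some edge geodesic far out separates them: edge geodesics accumulate densely on $\partial\HH$, because the tiling is cocompact. One geodesic therefore crosses an edge geodesic that the other does not, and their words differ even modulo the equivalence.
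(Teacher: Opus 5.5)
Your necessity argument for rules 1 and 2 is fine. The local-to-global step, however, fails, and the problem is not missing detail: the claim you intend to prove is false.

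By Lemma~\ref{lem:necessarilyfellowtraveling}, a shortest subpath crossing an edge geodesic $g$ twice does run along a strip of tiles adjacent to $g$ between the two crossings. But its word need not contain a repeated letter, or an alternation longer than $\nu_q$, at any vertex of $g$. The paper's example in Figure~\ref{fig:inadmissible_word_even} (there $\nu_q=4$) is the word $12124141$:
the first letter crosses $g$; the letters $212$ go around a vertex $v$ of $g$ on the far side; the letters $414$ go around the next vertex $w$ of $g$; and the final $1$ recrosses $g$.
This word satisfies rules 1 and 2, yet no geodesic realizes it. So rules 1 and 2, checked on a single word, do not imply minimality of the tiling path. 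Likewise, replacing alternations of length $\nu_q$ by the letters $|k|$ does not by itself tell you which such words are forbidden. This ambiguity is exactly why the paper does not prove the statement as written and instead proves the reformulation Theorem~\ref{thm:words_even}.

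The missing idea has two parts. First, impose rules 1 and 2 on \emph{every} word in the equivalence class, i.e.\ work with admissible word classes. Second, use the vertex-sequence moves themselves in the local-to-global step, instead of looking for local patterns in one representative. Performing moves at the vertices of $g$ along the strip, from the last vertex back to the first, slides the second crossing next to the first. This produces a backtrack $kk$ in an equivalent word; in the example, $12124141\sim 12121414\sim 11212414$. This is the content of Proposition~\ref{prop:minpathimpliesadmissibleword}.

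Your existence step also needs more than compactness:
\begin{itemize}
\item The sets you intersect are defined by transverse crossings, so they need not be closed.
\item Their elements need not meet $A_0$ when $A_0$ lies inside a vertex sequence, since a geodesic can go around that vertex the other way.
\item A limit geodesic may pass through vertices.
\end{itemize}
The real content is showing that the limit geodesic's \emph{centered} word lies in the given class. The paper does this by comparing endpoints (Proposition~\ref{prop:minimal_same_endpoint}). It then uses the sector case analysis in the proof of Theorem~\ref{thm:biinfinite_realization} to produce a tile shared, after moves, by the given path and the geodesic's path.
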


While Giannoni and Ullmo omitted the proof of this theorem from their published paper at the advisement of the referee, they provided the idea for a proof, which we paraphrase here: after choosing a base tile, they consider the set of geodesics that realize an allowed word. As the allowed word grows longer, the set of geodesics shrinks. In the limit, there is one geodesic that realizes the word. 

When we attempted to independently verify that their proof idea worked, we ran into some difficulties due to some imprecise language around their theorem and proof ideas. We'll briefly comment on some of those difficulties here: 

\begin{itemize}
    \item The authors state in an unpublished version of their paper that successive equivalence class letters $|k||k+1|$ should be interpreted as a length $2\nu_q-1$ sequence as in the following example: when $q=4$, $|2||3|$ should be interpreted as $2323434$. They do not mention this in their published paper, so it is unclear if this convention still holds. 
    \item As stated, it is unclear how to tell whether words including the new letters $\{|1|,|2|, \ldots,|p|\}$ are allowed. For example, when $q=4$, the word $|4|343|4|$ (or $|4||3||4|$ under the contention mentioned in the previous bullet point) is not allowed because one of the equivalent representations is $54543434545$, which violates rule 2.
    \item It is unclear how the authors approach the case where the base tile itself is in the middle of a word $|k|$. To the best of our knowledge, all of the examples and explanations given by the authors, both in their published paper and unpublished draft, toward a proof of this theorem do not address this case. 
\end{itemize}

We note that in \cite{NagarSingh}, Nagar and Singh claim to prove that bi-infinite words that satisfy rules (1) and (2) of Giannoni and Ullmo's Theorem~\ref{thm:GUeven} are always realized by a unique billiard word. Their proof is light on details, and their theorem cannot be true as written since there are finite words that satisfy the rules (1) and (2) and cannot be realized (for example, see Figure~\ref{fig:inadmissible_word_even}). 

\begin{figure}[h]
    \includegraphics[width=10cm]{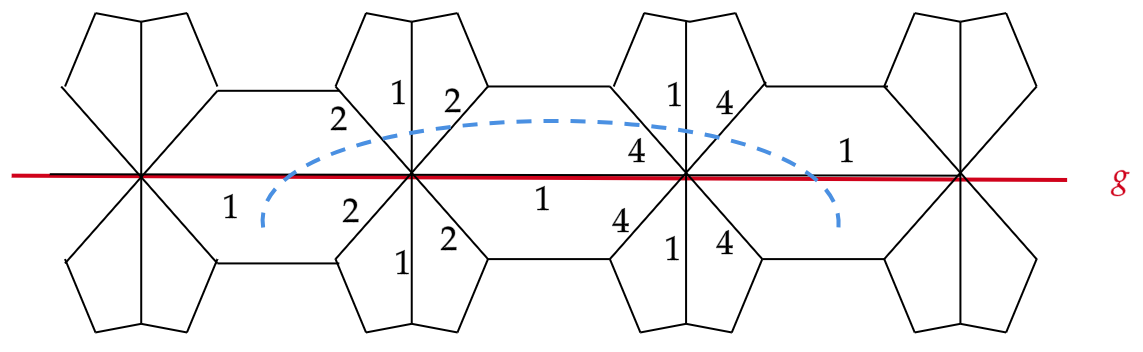}
    \caption{In a $(8,4)$ tiling, the word $12124141$ does not violate rules E1 and E2 of Theorem~\ref{thm:words_even}, but cannot be realized by a billiard word because the corresponding tiling path double crosses the edge geodesic $g$. The word is word equivalent to $12121414$, which violates rule $E2$.}
    \label{fig:inadmissible_word_even}
\end{figure}

The goal of this section is to provide an alternate and self-contained proof of a precisely stated billiard path realization theorem for bi-infinite words when $q$ is even. We will prove this theorem using new proof methods involving insights about minimal tiling paths. We will examine the $q$ odd case in Section~\ref{sec:comparison}. We start by developing preliminary definitions concerning the $q$ even case in the next subsection. 

\subsection{Preliminaries for the $q$ even case}\label{subsec:qevenpreliminaries}

Consider a hyperbolic $(p,q)$-tiling with $q$ even. Given a base tile and a cyclic labeling of that base tile $\{1,2,\ldots,p\}$, the edge labeling then unfolds to a globally defined edge labeling of the $(p,q)$-tiling. 

\begin{restatable}{proposition}{ConsistentLabelling}\label{prop:consistentlabelling}
 In a $(p,q)$-tiling with $q$ even, given any cyclic labeling of the edges of a base tile, there is a labeling of all edges of the tiling that is consistent under reflections over edges.     
\end{restatable}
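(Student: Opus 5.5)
We will build the labeling by propagating it through the dual graph and then show that propagation never yields a conflict. Fix the base tile $P_0$ with its cyclic labels $1,\dots,p$. For every other tile $A$, pick a tiling path $\mathcal{A}=(P_0=A_0,A_1,\dots,A_n=A)$. Let $\rho_i$ be the reflection across the edge geodesic $g_{e_i}$, where $e_i$ is the shared edge of $A_{i-1}$ and $A_i$. The isometry $\sigma_{\mathcal{A}}=\rho_n\circ\cdots\circ\rho_1$ then maps $P_0$ onto $A$. Give the edges of $A$ the labels obtained by transporting those of $P_0$ through $\sigma_{\mathcal{A}}$. This rule is consistent under reflections by construction: when we step across $e_{i}$, the new tile's labeling is the reflection of the old one, so the shared edge $e_i$ keeps its label on both sides.

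Everything therefore reduces to showing that the labeling of $A$ does not depend on which path $\mathcal{A}$ we chose. Two tiling paths between the same tiles differ by a closed loop in the dual graph, so it suffices to check that going around any closed tiling path based at $P_0$ returns the labeling of $P_0$ to itself. Each reflection $\rho_i$ preserves the tiling. Hence the composite of the reflections along a closed path is a symmetry of the tiling that fixes $P_0$ setwise, that is, an element of the dihedral group of $P_0$. We must show this element is the identity, or at least that it fixes every labeled edge.

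For this we use the planar structure. The dual graph is the $(q,p)$-tiling, a simply connected planar cell complex. By Construction~\ref{con:tiling}, every closed edge path in it is a product (in the sense of homotopy of edge paths, including backtracking) of conjugates of boundary cycles of its faces. A face of the dual corresponds to a vertex $v$ of the original tiling, and its boundary cycle is the loop of $q$ tiles around $v$. Backtracking $A_{i-1}\to A_i\to A_{i-1}$ contributes $\rho^2=\mathrm{id}$.

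The key local computation, which we expect to be the main obstacle, is the loop around a vertex $v$. The $q$ edges at $v$ make consecutive angles $2\pi/q$. Because $q$ is even, each edge at $v$ continues straight through $v$ along an edge geodesic, so there are exactly $q/2$ edge geodesics through $v$. The successive reflections around the loop are across lines through $v$ at angles $0,\,2\pi/q,\,4\pi/q,\dots$. Composing two reflections across lines meeting at angle $\theta$ gives a rotation by $2\theta$. We need to verify that the ordered product of all $q$ reflections (properly conjugated as we move around) is a total rotation by a multiple of $2\pi$, i.e.\ the identity; the evenness of $q$ is what makes this number come out right. The cleanest way to do this is to note that the reflections across the edge geodesics generate a discrete reflection group containing the stabilizer of $v$, namely the dihedral group of order $2q$, and that the tile-to-tile transport around $v$ is the Coxeter relation $(\rho_a\rho_b)^{q/2}=\mathrm{id}$ for adjacent edges $a,b$ at $v$.

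A cleaner alternative that we will also keep in mind is to observe that the tiles are exactly the chambers of the reflection group $W$ generated by reflections across the edge geodesics, which exist precisely because $q$ is even (Definition~\ref{def:edge_geodesic}). Then $W$ acts simply transitively on tiles, so each tile $A$ is $w(P_0)$ for a unique $w\in W$, and we label $A$ by $w$. Simple transitivity, which follows from the Poincaré polygon theorem since the angles $2\pi/q=\pi/(q/2)$ are submultiples of $\pi$, gives well-definedness immediately. The first approach reduces to this Coxeter relation, so either way the heart of the proof is the angle condition $2\pi/q=\pi/(q/2)$.
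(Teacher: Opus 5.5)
Your proposal is correct, apart from one slip noted below, but it takes a different route from the paper.

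\textbf{The paper's route.} The paper argues by induction on tiling distance. Each tile at distance $n+1$ is labeled by reflecting a neighbor at distance $n$. The only possible conflict is a distance-$(n+1)$ tile with two distance-$n$ neighbors. The paper handles it by noting that these three tiles share a vertex, around which the labels alternate $abab\cdots ab$ through $q$ edges and close up because $q$ is even.

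\textbf{Your routes.} Your first route transports the labeling along arbitrary tiling paths and reduces well-definedness to trivial holonomy around closed loops. Simple connectivity of the dual cell complex then reduces this to loops around single vertices. Your second route invokes the Poincar\'e polygon theorem to obtain a reflection group $W$ acting simply transitively on tiles.

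\textbf{Comparison.} The local heart of all three arguments is the same: an even number $q$ of reflections around a vertex composes to a rotation by $2\pi$. The global bookkeeping differs.
\begin{itemize}
\item The paper's induction is elementary and reuses the layered picture of Section~\ref{sec:tiling}. The cost is that it relies on geometric facts about the layers, which it asserts rather than proves: the two closer neighbors share a vertex with the tile, and the other tiles at that vertex are closer and already consistently labeled.
\item Your first route replaces these facts by the purely topological fact that the dual tiling is a cell decomposition of the simply connected plane $\mathbb{H}^2$. Justify it that way, rather than by citing Construction~\ref{con:tiling}.
\item Your second route outsources everything to Poincar\'e's theorem. In exchange it gives a stronger, equivariant statement: $W$ acts simply transitively on tiles.
\end{itemize}

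\textbf{The slip.} The stabilizer of $v$ in $W$ is dihedral of order $q$, generated by the reflections in the $q/2$ edge geodesics through $v$; it is not of order $2q$. The order-$2q$ group is the stabilizer of $v$ in the full symmetry group of the tiling. That group contains the reflections in the bisectors of the tiles at $v$, each of which fixes a tile while swapping two of its labels. So if $W$ contained it, your conclusion would fail.

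\textbf{A simplification.} With your absolute reflections $\rho_i$, no conjugation is needed in the vertex computation. If the loop around $v$ crosses the edges $e_2, e_3, \dots, e_q, e_1$ at $v$ in turn, its holonomy is $\rho_{e_1}\rho_{e_q}\cdots\rho_{e_3}\rho_{e_2}$. This groups into $q/2$ rotations about $v$, each by $4\pi/q$, hence is the identity. For odd $q$, the odd number of reflections would be orientation-reversing, which is exactly where parity enters.
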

\begin{proof}
    Since this is a fact from classical hyperbolic geometry, we postpone the proof to Appendix \ref{appdx:hyperbolictilings}.   
\end{proof}

From now on, we will assume that we have a base tile whose edges are cyclically labeled, and a labeling of all edges in the $(p,q)$-tiling that matches on the base tile and is consistent under reflections across edges. Since billiard trajectories in the base polygon lift to geodesics in the tiling, preserving the billiard word, we will mostly work with geodesics in the tiling and comment on the implications for hyperbolic billiards.

Our main theorem of this section is the following, which is a more precisely stated version of Theorem~\ref{thm:GUeven} from \cite{GiannoniUllmo}. Commentary about the components of the theorem will follow the theorem statement. 
\GUeven*

\begin{wrapfigure}[17]{r}{0.3\textwidth}
\centering
\begin{tikzpicture}[scale=0.65]
\draw (-2,0) -- (2,0);
\draw (0,-2) -- (0,2); 
\draw (-1.41,1.41) -- (1.41,-1.41);
\draw (-1.41, -1.41) -- (1.41, 1.41);
\draw[thick, ->] (-1.3, -.1) -- (1.3, .5); 
\draw[thick, ->] (-1.3, -.5) -- (1.3, .1);
\node at (2.3, 0) {$k$};
\node at (0, 2.3) {$k$};
\node at (-2.3, 0) {$k$};
\node at (0, -2.3) {$k$};
\node at (1.6, 1.6) {$k+1$};
\node at (1.6, -1.6) {$k+1$};
\node at (-1.6, 1.6) {$k+1$};
\node at (-1.6, -1.6) {$k+1$};
\end{tikzpicture}
\caption{A schematic of two unfolded billiard trajectories passing near a vertex in a tiling where $q=8$. We notice that these trajectories cannot intersect more than $\frac{8}{2} = 4$ edges around the vertex in a row.} 
\label{fig:q_even_vertex}
\end{wrapfigure}

The two grammar rules are natural. Rule E1 states that a geodesic in the tiling cannot hit the same side twice in a row. When $q$ is even, recall that edge geodesics (see Definition~\ref{def:edge_geodesic}) are bi-infinite geodesics containing an edge and are entirely comprised of edges of the tiling. Rule E2 holds because a geodesic cannot double cross an edge geodesic, so a billiard word cannot contain subwords of length $> \frac{q}{2}$ consisting of alternating $k$ and $k+1$ letters (or 1 and $p$). See Figure~\ref{fig:q_even_vertex} for an illustration of this latter case. To make sense of the equivalence relation on allowed words that gives us unique realization by a billiard trajectory, we need a few definitions.

\begin{definition}[Associated word of tiling path]\label{def:associatedWordToTilingPath}
    Given a finite tiling path $\mathcal{A} = (A_0, \dots, A_n )$, the \textbf{associated word} of the path $\mathcal{A}$ is the word $w_{\mathcal{A}} = a_1 a_2 \dots a_n $ where $a_i$ is the label of the edge between $A_{i-1}$ and $A_{i}$. The associated word for infinite tiling paths is likewise defined. Assuming that all tiling paths begin at a given base tile $A_0$, there is a bijection between (finite or infinite) words $a_1a_2a_3\ldots$ and (finite or infinite) tiling paths $(A_0, A_1, A_2, \ldots)$ with that associated word.
\end{definition}

Note that Definition~\ref{def:associatedWordToTilingPath} also applies for the case when $q$ is odd, and we will use it in Section~\ref{sec:comparison} when we compare various growth rates. 

In order to obtain a tiling path from a finite or infinite word, we must choose a base tile $A_0$. When words are bi-infinite, we must choose a center of the word which corresponds to a chosen tile base tile.

\begin{definition}[Centered word]\label{def:BiInfiniteCenter}
Given a bi-infinite word on $p$ letters, $w = \ldots w_{-2} w_{-1} w_0 w_1 w_2 \ldots$,  we \textbf{center the word at tile $A_0$} of a hyperbolic $(p,q)$-tiling by considering the labels $w_{-1}$ and $w_0$ to correspond to edges of tile $A_0$. 
\end{definition}

With these definitions, we can move back and forth between (centered) words and tiling paths based at a tile $A_0$. We will mostly work with tiling paths as they are easier to visualize and work with. Next, as mentioned in Theorem~\ref{thm:words_even}, we will define equivalence classes of words and tiling paths. For this, we first give a name to subpaths that pass through tiles adjacent to a given vertex or equivalently sub-words that consist of alternating letters.

\begin{definition}[Vertex traversal] Let $v$ be a vertex in a hyperbolic $(p,q)$-tiling. A \textbf{vertex traversal} of $v$ is defined to be a tiling subpath that traverses up to $\frac{q}{2}+1$ tiles adjacent to $v$ in a row. The corresponding subword is a vertex traversal if it is of length at most $v_q=\frac{q}{2}$ consisting of alternating letters $k$ and $k+1$ (or $1$ and $p$) or $k+1$ and $k$ (or $p$ and $1$).
\end{definition}

We give a special name to vertex traversal words (equivalently, paths) that contain precisely $\frac{q}{2}$ alternating letters, since these represent the extreme case of what is allowed by rule $E2$ in Theorem~\ref{thm:words_even}.

\begin{definition}[Vertex sequence] A tiling subpath is a \textbf{vertex sequence} if it is a vertex traversal with $\frac{q}{2}+1$ tiles. Correspondingly, a subword is a vertex sequence if it is a vertex traversal of length $v_q = \frac{q}{2}$. Two tiling path vertex sequences are \textbf{vertex sequence equivalent} if they have the same starting and ending tiles but vertex traverse $\frac{q}{2}+1$ successive tiles in opposite directions around a vertex $v$. The corresponding vertex sequence subwords are vertex sequence equivalent when they interchange the order of $k$ and $k+1$ (e.g. when $q=8$, the words $2121$ and  $1212$ are vertex sequence equivalent). 
\end{definition}

Changing a word/tiling path by replacing a vertex sequence with its vertex sequence equivalent gives rise to an equivalent word/tiling path. We make this notion precise in the following definition.

\begin{definition}[Word equivalence of words/tiling paths] \label{def:equivalence} We say that two finite tiling paths or words are \textbf{word equivalent} if they differ by finitely many exchanges of vertex sequences with their vertex sequence equivalents. Two infinite tiling paths or infinite or (centered) bi-infinite words are \textbf{word equivalent} if there there is a sequence of exchanges of vertex sequence equivalence moves, such that any finite segment of the path/word is equivalent to the corresponding segment of the other path/word after finitely many moves.

\end{definition}

Similar to how bi-infinite words can be centered, we can also center an equivalence class of bi-infinite words:

\begin{definition}[Centered word class]\label{defn:centeredwordclass}
Given a bi-infinite word class $\omega$ we \textbf{center the word equivalence class at tile $A_0$} by picking a representative word $w  =\ldots w_{-2} w_{-1} w_0 w_1 w_2 \ldots \in \omega$ and centering it on tile $A_0$. This then centers the other words in the class by considering how vertex sequence equivalences change the tiling path, even though their centers might be different than tile $A_0$. 
\end{definition}

Next, following Theorem~\ref{thm:words_even}, we give terminology to words that do not violate grammar rules E1 and E2.

\begin{definition}[Admissible] A (finite, infinite or bi-infinite) word $w$ will be called \textbf{admissible} if it does not violate grammar rules E1 and E2 from Theorem~\ref{thm:words_even}. An \textbf{admissible word class} $[w]$ is an equivalence class of (finite, infinite or bi-infinite) words where each word in the class is admissible. A tiling path or equivalence class of tiling paths is admissible if the corresponding words are. 
\end{definition}

\begin{remark} 
\label{rem:even_observations}
Here are some useful observations about tiling paths corresponding to admissible words in a $(p,q)$-tiling with $q$ even. Let $\mathcal{A}$ be a tiling path and $w_\mathcal{A}$ be its associated word. 
    \begin{itemize}
        \item Words that violate rule E1 and contain a two-letter subword $kk$ correspond to tiling paths that contain a \textbf{backtracking step} $(A,B,A)$ and are thus not minimal (as per Definition~\ref{def:tiling_path}).
        \item Words that violate rule E2 and contain a subword of length $\frac{q}{2}+i$ for $i \geq 1$ of alternating $k$ and $k+1$ letters correspond to a tiling path that goes more than halfway around a vertex. Such paths can be shortened by going the other way around the vertex and are thus not minimal. 
        \item If two finite words are equivalent, then the corresponding tiling paths have the same beginning tile (by assumption) and ending tile. This is because a vertex sequence move is a local move that does not change any tiles before or after the vertex sequence (see Figure~\ref{fig:q_even_vertex}). 
    \end{itemize}
\end{remark}

\begin{remark} We also make a few observations about admissible word classes, that will be useful in relating words and tiling paths. 
\begin{itemize}
    \item It is possible for an admissible word to be equivalent to a non-admissible word. For example, consider a $(4,8)$-tiling. Then, the word $12123131$ is admissible and equivalent to the word $12121313$, which is not admissible. An admissible word class must not contain any non-admissible words. For example, $12123131$ cannot be a part of an admissible word class because it is equivalent to the non-admissible word $12121313$. The corresponding tiling path will double cross an edge geodesic. 
    \item  A consequence of Theorem~\ref{thm:words_even} is that in any admissible word class of bi-infinite words, exactly one element of the equivalence class is the word of a billiard trajectory. We note that in an admissible word class of words that are not bi-infinite, this need not be the case anymore. For example, in the hyperbolic $(4,8)$-tiling, the words $1212$ and $2121$ are in the same admissible word class, but both come from billiard trajectories, as visualized in Figure~\ref{fig:q_even_vertex}.
    \end{itemize}
\end{remark}

Finally, it will be helpful for us to be able to associate a word to a geodesic in the hyperbolic plane.

\begin{definition}[Word associated to a geodesic/geodesic segment]\label{def:AssociatedWordOfGeodesic}
Given a geodesic or geodesic segment $\gamma$ in a $(p,q)$-tiling, let $\calA_\gamma$ be its associated tiling path defined in Definition~\ref{def:TilingPathGeodesicSegment}. The \textbf{word associated to} $\gamma$, denoted $w_\gamma$, is then defined to be the word associated to the tiling path $\calA_\gamma$ using Definition~\ref{def:associatedWordToTilingPath}.
\end{definition}

We end this section with a few more definitions that we will need in the subsequent sections. 

\begin{definition}[Sectors] \label{def:sector} Given a distinguished vertex $v$ of a hyperbolic $(p,q)$-tiling with $q$ even, the edge geodesics passing through $v$ divide the hyperbolic plane into $q$ \textbf{sectors}. The interior of every tile is completely contained within such a sector.  Let $v$ be a vertex that defines $q$ sectors in the hyperbolic plane. A bi-infinite tiling path is said to begin (end) in one of the sectors if it has an initial (final) segment that remains completely in that sector. 
\end{definition}

\begin{definition}[Fellow traveling]\label{def:fellowtravel} Given a hyperbolic $(p,q)$-tiling with $q$ even and an edge geodesic $g$, a tiling path $\calA$ is said to fellow travel $g$ if $\calA$ does not back-track (i.e. does not contain a subpath of the form $(A, B, A)$) and, every tile in $\calA$ exists in the same half-space of $g$ and has at least one vertex on $g$. 

\end{definition}

We note that the idea of a tiling path fellow traveling an edge geodesic is that it vertex traverses successive vertices along the geodesic. Figure~\ref{fig:fellow_travel_conversion} includes a pictorial depiction of fellow traveling.

\subsection{A characterization of finite billiard words when $q$ is even}\label{subsec:qevenfinitecharacterization}

The goal of this subsection is to, after fixing a base tile, give a bijection between tiles and admissible word classes. Towards this end, we first state two propositions that relate minimal tiling paths and admissible words. We will give proofs of these propositions later in this subsection.

\begin{proposition}
\label{prop:minimal_tiling_paths_word_equivalent}
    In a hyperbolic $(p,q)$-tiling with $q$ even, minimal tiling paths with the same starting and ending tiles are word equivalent. 
\end{proposition}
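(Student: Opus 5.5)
We argue by induction on the common length $n$ of the two minimal tiling paths. Minimal paths between the same tiles have the same length, namely the number of edge geodesics separating the endpoints (Proposition~\ref{prop:minimal_tiling_path_characterization}). Let $\mathcal{A}=(A_0,\dots,A_n)$ and $\mathcal{B}=(B_0,\dots,B_n)$ be minimal with $A_0=B_0$ and $A_n=B_n$. By Proposition~\ref{prop:minimal_tiling_path_characterization}, both cross exactly the same set $S$ of edge geodesics, each exactly once.

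If $A_1=B_1$, the tails $(A_1,\dots,A_n)$ and $(B_1,\dots,B_n)$ are minimal paths of length $n-1$ with common endpoints. By induction they are word equivalent, and prepending the common first step gives the result. We may therefore assume $A_1\neq B_1$. Then the first steps cross two distinct edge geodesics $g_a,g_b\in S$, which correspond to two distinct edges $e_a,e_b$ of $A_0$.

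The key geometric step is to show that $e_a$ and $e_b$ must be adjacent edges of $A_0$, sharing a vertex $v$. Suppose they were not adjacent. Since $q$ is even, the edge geodesics through non-adjacent edges of a regular tile do not intersect: this is an elementary hyperbolic geometry fact, analogous to those in the appendix, and it uses that the internal angle $2\pi/q\le\pi/2$ makes the polygon "fat". Then $g_a$ and $g_b$ are disjoint, and the half-planes beyond them that avoid $A_0$ are disjoint as well. But $A_n$ lies beyond both $g_a$ and $g_b$, since both separate $A_0$ from $A_n$, which is a contradiction. Hence $g_a$ and $g_b$ both pass through the vertex $v$ of $A_0$. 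In addition, $A_n$ lies in the sector at $v$ (Definition~\ref{def:sector}) opposite to $A_0$ relative to $g_a$ and $g_b$.

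Next we need the sharper claim that $A_n$ lies beyond \emph{all} $q/2$ edge geodesics through $v$, i.e.\ in the sector at $v$ opposite to the one containing $A_0$. This is the step I expect to be the main obstacle. My first attempt would use the angle structure at $v$: the edge geodesics through $v$ cut the plane into $q$ sectors, and the geodesics met in cyclic order between $g_a$ and $g_b$ across the far side must also separate $A_0$ from $A_n$. The idea is that a half-plane beyond $g_a$ intersected with a half-plane beyond $g_b$ is a wedge at $v$. Every other edge geodesic through $v$ either cuts this wedge (and then we would need to locate $A_n$ more carefully) or does not. I would handle this by a secondary induction or contradiction. If some geodesic $h$ through $v$ does not separate $A_0$ from $A_n$, then choose the vertex sequence around $v$ that starts with $e_a$. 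We would like to show that this path can be continued minimally to $A_n$, but that path would cross $h$, contradicting Proposition~\ref{prop:minimal_tiling_path_characterization}. Pinning this down cleanly is the delicate part.

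Granting this claim, the two vertex traversals of $v$ starting with $e_a$ and with $e_b$ are the two vertex sequences from $A_0$ to the opposite tile $A_0^{*}$ at $v$. Each crosses exactly the $q/2$ geodesics through $v$, all of which lie in $S$. Therefore the path that first traverses $v$ from $A_0$ to $A_0^{*}$ and then follows any minimal path from $A_0^{*}$ to $A_n$ is minimal. Let $\mathcal{A}'$ be the version of this path starting with the traversal through $e_a$, and $\mathcal{B}'$ the version starting with the traversal through $e_b$, using the same minimal continuation from $A_0^{*}$ to $A_n$. Then $\mathcal{A}'$ and $\mathcal{B}'$ differ by a single vertex sequence exchange, so they are word equivalent. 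Moreover, $\mathcal{A}'$ shares its first step with $\mathcal{A}$, and $\mathcal{B}'$ shares its first step with $\mathcal{B}$. By the first case, $\mathcal{A}\sim\mathcal{A}'$ and $\mathcal{B}\sim\mathcal{B}'$. Chaining these gives $\mathcal{A}\sim\mathcal{B}$, which completes the induction.
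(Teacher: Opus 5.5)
Your reduction to $A_1\neq B_1$ and your proof that $e_a,e_b$ are adjacent are fine; the latter is exactly Lemma~\ref{lem:minimal_adjacent_exits}. However, the step you flag is left unproven, and the contradiction you sketch for it is circular. Showing that the vertex sequence through $e_a$ extends minimally to $A_n$ already presupposes knowing which edge geodesics through $v$ separate $A_0$ from $A_n$.

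The gap closes in a few lines:
\begin{itemize}
\item Since $e_a,e_b$ meet at $v$ at the tile angle $2\pi/q$, the half-planes of $g_a$ and $g_b$ not containing $A_0$ intersect in the vertical angle at $v$. This wedge has measure $2\pi/q$ and is bounded by the rays of $g_a,g_b$ opposite $e_a,e_b$.
\item Those two rays are consecutive edge rays at $v$, so the wedge is exactly the sector (Definition~\ref{def:sector}) containing the opposite tile $A_0^{*}$.
\item Any other edge geodesic $h$ through $v$ meets $g_a$ and $g_b$ only at $v$. So its two rays lie in the two wedges of angle $\pi-2\pi/q$, and $h$ separates this sector from $A_0$.
\item You had already shown that $A_n$ lies in this wedge. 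Hence every edge geodesic through $v$ separates $A_0$ from $A_n$, and none separates $A_0^{*}$ from $A_n$.
\item An edge geodesic not through $v$ cannot separate two tiles at $v$. Combining this with the previous point gives $S=\{\text{edge geodesics through } v\}\sqcup\{\text{edge geodesics separating } A_0^{*} \text{ and } A_n\}$.
\end{itemize}
That decomposition is precisely what your ``therefore'' needs to conclude that the concatenated paths $\mathcal{A}'$ and $\mathcal{B}'$ are minimal. With this inserted, your argument is complete.

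Your route genuinely differs from the paper's. The paper never locates $A_n$ in the sector opposite $A_0$. After the adjacency lemma, it uses Lemma~\ref{lem:necessarilyfellowtraveling} to see that $\mathcal{A}$ fellow travels $g_{e_\mathcal{B}}$ up to its first crossing of it, which may occur at a vertex of $g_{e_\mathcal{B}}$ far from $v$. It then slides that crossing back to the first step by a chain of vertex sequence moves at the successive vertices of $g_{e_\mathcal{B}}$. This yields $\mathcal{A}'$ with $A_1'=B_1$, and the paper then applies minimality of the counterexample once.

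You instead make a single vertex sequence move at $v$ and invoke the induction hypothesis twice: once for $\mathcal{A}$ versus $\mathcal{A}'$, and once for $\mathcal{B}$ versus $\mathcal{B}'$. So you need neither the fellow-traveling lemma nor the sliding along $g_{e_\mathcal{B}}$. For the finite statement this is the more elementary argument.

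What the paper's version buys is an explicit conversion of $\mathcal{A}$, by finitely many moves in an initial segment, into a path agreeing with $\mathcal{B}$ on one more tile. That is the form reused for infinite paths in Proposition~\ref{prop:minimal_same_endpoint}, where there is no terminal tile and no induction on length.
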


\begin{proposition}\label{prop:minpathimpliesadmissibleword}
In a hyperbolic $(p,q)$-tiling with $q$ even, a finite tiling path $\calA$ is minimal if and only if the word-equivalence class of $w_\calA$ is admissible. 
\end{proposition}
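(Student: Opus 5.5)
The statement to prove is Proposition~\ref{prop:minpathimpliesadmissibleword}. Both directions go through Proposition~\ref{prop:minimal_tiling_path_characterization}: a path is minimal iff it crosses each edge geodesic separating its endpoints exactly once and crosses no other edge geodesic. I will also use Proposition~\ref{prop:minimal_tiling_paths_word_equivalent}.

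\emph{Minimal implies admissible class.} Let $\calA$ be minimal and let $w'$ be any word equivalent to $w_\calA$. By Remark~\ref{rem:even_observations}, a vertex sequence exchange preserves the starting and ending tiles. It also preserves the length of the path. So the path $\calA'$ with word $w'$ has the same endpoints and the same length as $\calA$, and is therefore also minimal. This is an induction on the number of exchanges. A minimal path has no backtracking step $(A,B,A)$, so $w'$ satisfies E1. A minimal path never goes more than halfway around a vertex, since such a path could be shortened, so $w'$ satisfies E2. Hence every word in the class is admissible.

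\emph{Admissible class implies minimal.} I argue by contrapositive. Suppose $\calA$ is not minimal. By Proposition~\ref{prop:minimal_tiling_path_characterization}, $\calA$ crosses some edge geodesic $g$ twice. Choose such a double crossing that is innermost: steps $A_{i-1}\to A_i$ and $A_j\to A_{j+1}$ both cross $g$, and $(A_i,\dots,A_j)$ lies on one side of $g$. Among all such choices I take one with $j-i$ minimal, possibly after first replacing $\calA$ by an equivalent word. I then induct on $j-i$, aiming to show that some word equivalent to $w_\calA$ contains $kk$ or an alternating run longer than $\nu_q$.

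\begin{itemize}
\item If $j=i$, the subpath is $(A_{i-1},A_i,A_{i-1})$, a backtrack, so E1 fails.
\item Otherwise, the labels of the crossings of $g$ are both the label $k$ of $g$. Since $g$ is not crossed in between, the subpath $(A_i,\dots,A_j)$ is itself a path between tiles adjacent to $g$ on the same side. If this subpath is not minimal, it contains a shorter double crossing of some other geodesic, and I apply induction to it. The induction should be phrased on path length, so that an inner violation after equivalence moves yields a violating word equivalent to $w_\calA$.
\item If the subpath is minimal, I want to show that it can be made to fellow travel $g$ (Definition~\ref{def:fellowtravel}) via vertex sequence exchanges. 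The plan is to show that a minimal path between two tiles both touching $g$, on one side, is word equivalent to the fellow-traveling path, essentially Proposition~\ref{prop:minimal_tiling_paths_word_equivalent} applied to the endpoints.
\end{itemize}

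Once the subpath fellow travels $g$, the full segment $A_{i-1},A_i,\dots,A_j,A_{j+1}$ traverses tiles around consecutive vertices of $g$. The first crossing and the first vertex traversal then combine into an alternating run of $k$ and $k\pm1$. If the subpath stays near a single vertex, that run has length at least $\nu_q+1$, contradicting E2. If it passes several vertices, I apply vertex sequence exchanges at the intermediate vertices to push the crossing of $g$ along. This shortens the excursion and reduces to the single-vertex case, or produces a $kk$.

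The main obstacle is this last reduction: controlling how exchanges transform a fellow-traveling excursion so that a forbidden subword appears. My first attempt would be a careful local induction on the number of vertices of $g$ visited, checking label patterns explicitly around each vertex using the consistent labeling from Proposition~\ref{prop:consistentlabelling}.
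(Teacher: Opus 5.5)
Your outline follows the paper's proof. The forward direction is the same. For the converse, the paper also takes a shortest subpath double crossing some edge geodesic $g$, shows its interior fellow travels $g$, and uses vertex sequence exchanges at the vertices of $g$ to produce a backtrack. Since you minimize $j-i$ over all double crossings, $(A_i,\dots,A_j)$ double crosses nothing and is therefore minimal, so your non-minimal sub-case and the induction are vacuous.

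\textbf{The gap: the fellow-traveling step is not justified.} You plan to apply Proposition~\ref{prop:minimal_tiling_paths_word_equivalent}, but that requires the fellow-traveling path from $A_i$ to $A_j$ to be minimal, which you never establish. It is true: each edge geodesic that path crosses meets $g$ only at one vertex $v_\ell$, and it is crossed exactly once, in the half-fan at $v_\ell$. The paper needs no exchanges here. Lemma~\ref{lem:necessarilyfellowtraveling} shows the minimal interior already fellow travels $g$: leaving the strip of tiles touching $g$ through an edge $e$ with no vertex on $g$ would force a double crossing of $g_e$, which is disjoint from $g$ by Lemma~\ref{lem:EdgeGeodDontIntersectTwoCases}.

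\textbf{The step you call the main obstacle is routine once fellow traveling is known.} Let $v_1,\dots,v_r$ be the vertices of $g$ that the interior turns around, with the first crossing at the edge $[v_0,v_1]$. That crossing edge, followed by the $\frac{q}{2}-1$ half-fan edges at $v_1$, is exactly a vertex sequence. Exchanging it moves the crossing to $[v_1,v_2]$, now approached from the original side of $g$. Repeating this at $v_2,\dots,v_r$ makes the two crossings consecutive, which gives $(A,B,A)$ and hence a letter pair $kk$. The paper performs the same exchanges in the reverse order, starting from the last vertex. If you stop before $v_r$, you are left with $\frac{q}{2}+1$ consecutive edges around $v_r$, which is your E2 violation.

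\textbf{A false claim.} You say both crossings carry ``the label $k$ of $g$''. This fails when $q\equiv 2\pmod 4$: opposite edges at a vertex are $\frac{q}{2}$ apart in the alternating labeling, so their labels differ when $\frac{q}{2}$ is odd, and labels change along $g$. Nothing in your argument depends on it, since both violations are read off directly from consecutive edges around a single vertex or from a literal backtrack.
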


We can now state and prove the main theorem of this subsection.
 
\GUfinite*

\begin{proof}[Proof of Theorem~\ref{thm:admissible-tiles}] Given a tile of distance $n$ away from a fixed base tile $A_0$, we can choose a minimal tiling path $\calA = (A_0, A_1, \ldots, A_n)$ from the base tile $A_0$ to the tile $A_n$. By Proposition~\ref{prop:minpathimpliesadmissibleword}, $w_\calA$ is then an admissible word, and by Remark \ref{rem:even_observations}, every word in the word class of $w_\calA$ ends at tile $A_n$. Thus, we have a map from tiles of distance $n$ away to admissible word classes of length $n$. By Proposition~\ref{prop:minimal_tiling_paths_word_equivalent}, this map is well defined since any other minimal tiling path from $A_0$ to $A_n$ would result in an equivalent word. 

On the other hand, given an admissible word class of length $n$, we can choose any word in the admissible word class and follow the word from the base tile $A_0$ to get a tiling path $\calA=(A_0, A_1, \ldots, A_n)$ to some end tile $A_n$. By Proposition~\ref{prop:minpathimpliesadmissibleword}, the path $\calA$ is a minimal tiling path so that tile $A_n$ is distance $n$ away from the base tile $A_0$.

By Remark \ref{rem:even_observations}, choosing any other word in the admissible word class would have resulted in the same end tile $A_n$. Thus, this map from admissible word classes of length $n$ to tiles of distance $n$ away from the base tile is well-defined. 

We have seen that the forward and backward maps are well-defined. By construction, they are inverses of one another, and thus we have a bijection between our two sets. 
\end{proof}

To prove Propositions \ref{prop:minimal_tiling_paths_word_equivalent} and \ref{prop:minpathimpliesadmissibleword}, we need the following lemmas. The first is a hyperbolic geometry lemma concerning two cases where edge geodesics cannot intersect, which are illustrated in Figure~\ref{fig:HypGeomLemmas}. The proof is detailed in Appendix \ref{appdx:hyperbolictilings}.

\begin{restatable}{lemma}{EdgeGeodDontIntersectTwoCases}\label{lem:EdgeGeodDontIntersectTwoCases}
    Given a hyperbolic $(p,q)$-tiling where $q$ is even, let $A$ be a tile and $g$ be an edge geodesic in the tiling that shares at least one vertex with $A$. Let $e$ be an edge of $A$ that does not have any vertices on $g$. Then the edge geodesic  extending $e$ does not intersect $g$.
\end{restatable}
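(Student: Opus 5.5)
We will argue by contradiction, placing the possible intersection point inside a region bounded by edge geodesics through vertices of $A$ and then using angle sums. Let $g_e$ be the edge geodesic extending $e$, and let $v$ be a vertex of $A$ lying on $g$. The case where $g$ contains an entire edge of $A$ and the case where $g$ meets $A$ only at $v$ (so $g$ passes through $v$ between two edges of $A$, or bisects $A$) are handled in the same way, by tracking where $g$ leaves the boundary of $A$.

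\textbf{Reduction to the tile.} Both $g$ and $g_e$ consist of edges of the tiling and are geodesics. Hence two distinct such geodesics can meet only at a vertex of the tiling, and they meet transversely there. Since $e$ has no vertex on $g$, the geodesic $g$ does not cross the interior of $A$ anywhere near $e$. The convex tile $A$ lies in one closed half-plane of $g$, or, if $g$ bisects $A$, it is cut by $g$ into two convex pieces. In either case, $e$ lies in an open half-plane $H$ of $g$. Now suppose $g_e$ meets $g$ at a vertex $w$. Then the segment of $g_e$ from $e$ to $w$, together with a segment of $g$ and a path along $\partial A$, bounds a geodesic polygon $Q$ that contains $A$ or the piece of $A$ on the $H$-side.

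\textbf{Angle argument.} We use the Gauss--Bonnet formula: a hyperbolic polygon has angle sum strictly less than that of a Euclidean polygon with the same number of vertices. We will build a triangle or quadrilateral whose interior angles are forced to be too large. One side is the segment of $g_e$ from an endpoint $x$ of $e$ to $w$. A second side is the segment of $g$ from $w$ to $v$. The remaining sides are the chain of edges of $\partial A$ joining $v$ to $x$. At each vertex of $A$ between $v$ and $x$, the interior angle is $2\pi/q$ on the tile side. Because $q$ is even, this means the geodesic continuing an edge turns by exactly the straight angle $\pi$ relative to its partner. Thus the straight continuations of consecutive edges of $A$ form angles of $\pi - 2\pi/q$ with $\partial A$ on the outside.

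The cleanest route is to pass to the adjacent edge geodesics. Let $x$ be the endpoint of $e$ closest to $v$ along $\partial A$, and let $f$ be the edge of $A$ at $x$ other than $e$. We argue inductively along $\partial A$ from $v$ to $x$: the edge geodesics through consecutive edges of $A$ meet at the shared vertex with angle $2\pi/q$. We compare this with the well-known fact that in a right-angled regular tiling, the edge geodesics of non-adjacent edges of a tile are ultraparallel, and here the angles are at most $\pi/2$.

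\textbf{Expected obstacle.} The main difficulty is making the inductive/convexity step rigorous: we must show that $g_e$ exits $Q$ before it could reach $g$. Concretely, we need a polygon whose Euclidean angle-sum bound is violated. Consider the triangle with vertices $v$, $x$ and $w$. Its angle at $v$ is at least the angle $\pi - (\text{interior angle of }A\text{ at }v)$ between $g$ and the line $vx$. Its angle at $x$ is at least $\pi - 2\pi/q$ plus the contribution from convexity of $A$. With $q \ge 4$, this gives an angle sum of at least $\pi$, contradicting hyperbolicity. The main obstacle is verifying this angle lower bound in the edge cases $p=3$ and $q=4$. If a direct triangle argument fails there, we will fall back on the symmetry of the tiling: reflect across $g_e$, and use that $g$ and its reflection are both edge geodesics through vertices of the reflected tile, producing a bigon of edge geodesics, which is impossible.
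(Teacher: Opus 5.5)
Your overall plan (assume $g_e$ meets $g$ at a vertex $w$, then contradict the hyperbolic angle-sum bound for a region cut out by $g$, $g_e$ and $\partial A$) is the paper's. The angle count you actually carry out, however, has a genuine gap.

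In the triangle $vxw$ you assert that the angle at $v$ is at least $\pi-2\pi/q$. Your conclusion ``angle sum at least $\pi$'' comes entirely from this bound together with the bound at $x$; you never estimate the angle at $w$. Nothing justifies the bound at $v$. The only local information there is that the ray $vx$ lies in the $2\pi/q$-sector of $A$ at $v$, and this sector can sit much closer to the ray $vw$:
\begin{itemize}
\item If $v$ is an arbitrary vertex of $A$ on $g$ (as in your setup) and $g$ contains an edge of $A$ pointing from $v$ toward $w$, the angle at $v$ is at most $2\pi/q$.
\item Even if $v$ is the vertex of $A$ on $g$ nearest $w$, the local geometry only guarantees $2\pi/q$ (e.g.\ when $g$ meets $A$ only at $v$), and this is smaller than $\pi-2\pi/q$ once $q\ge 6$.
\end{itemize}
The bound at $x$ also depends on choosing $x$ correctly. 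It must be the endpoint of $e$ nearer to $w$ along $g_e$, not ``the endpoint closest to $v$ along $\partial A$.'' If $w$ lies beyond the other endpoint of $e$, the ray $xw$ runs along $e$ and the angle at $x$ is at most $2\pi/q$.

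Relatedly, the polygon $Q$ ``containing $A$'' from your reduction is the wrong region. There the vertices of $A$ contribute only $2\pi/q$ each and the angle at $x$ is straight, so no contradiction arises; the useful region lies between $A$ and $w$. Your fallback via reflection in $g_e$ produces no second intersection point, so it yields no bigon.

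The missing ingredient is the angle at $w$ itself. Since $g$ and $g_e$ are distinct edge geodesics through the vertex $w$, they meet at a positive multiple of $2\pi/q$. Because $g$ is a union of edges, $A$ lies in a closed half-plane of $g$; it also lies in a closed half-plane of $g_e$. Hence $A$ sits in the wedge at $w$ spanned by the ray of $g$ through $v$ and the ray of $g_e$ through $e$. Take $x$ to be the endpoint of $e$ nearer $w$. The ray $xv$ lies in the sector of $A$ at $x$, which makes an angle between $\pi-2\pi/q$ and $\pi$ with the ray $xw$. So the triangle $wvx$ has angle $\ge 2\pi/q$ at $w$, $\ge \pi-2\pi/q$ at $x$, and $>0$ at $v$ (since $x\notin g$). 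Its angle sum therefore exceeds $\pi$, which is impossible.

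This works for every hyperbolic $(p,q)$ with $q$ even, so the cases $p=3$ and $q=4$ you flag need no separate treatment ($p=3$ forces $q\ge 8$). The corrected triangle argument is a slightly leaner variant of the paper's proof. The paper instead uses the non-convex polygon with vertices $w$, the vertex of $A$ on $g$ nearest $w$, the intermediate vertices of $A$ (with reflex angles $2\pi-2\pi/q$), and $x$, and shows its angle sum is at least $(k-2)\pi$. Finally, for $q$ even an edge geodesic never bisects a tile, so that case in your reduction is vacuous.
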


\begin{figure}
    \centering
    \begin{subfigure}{0.35\textwidth}
    \centering
    \includegraphics[scale=0.8]{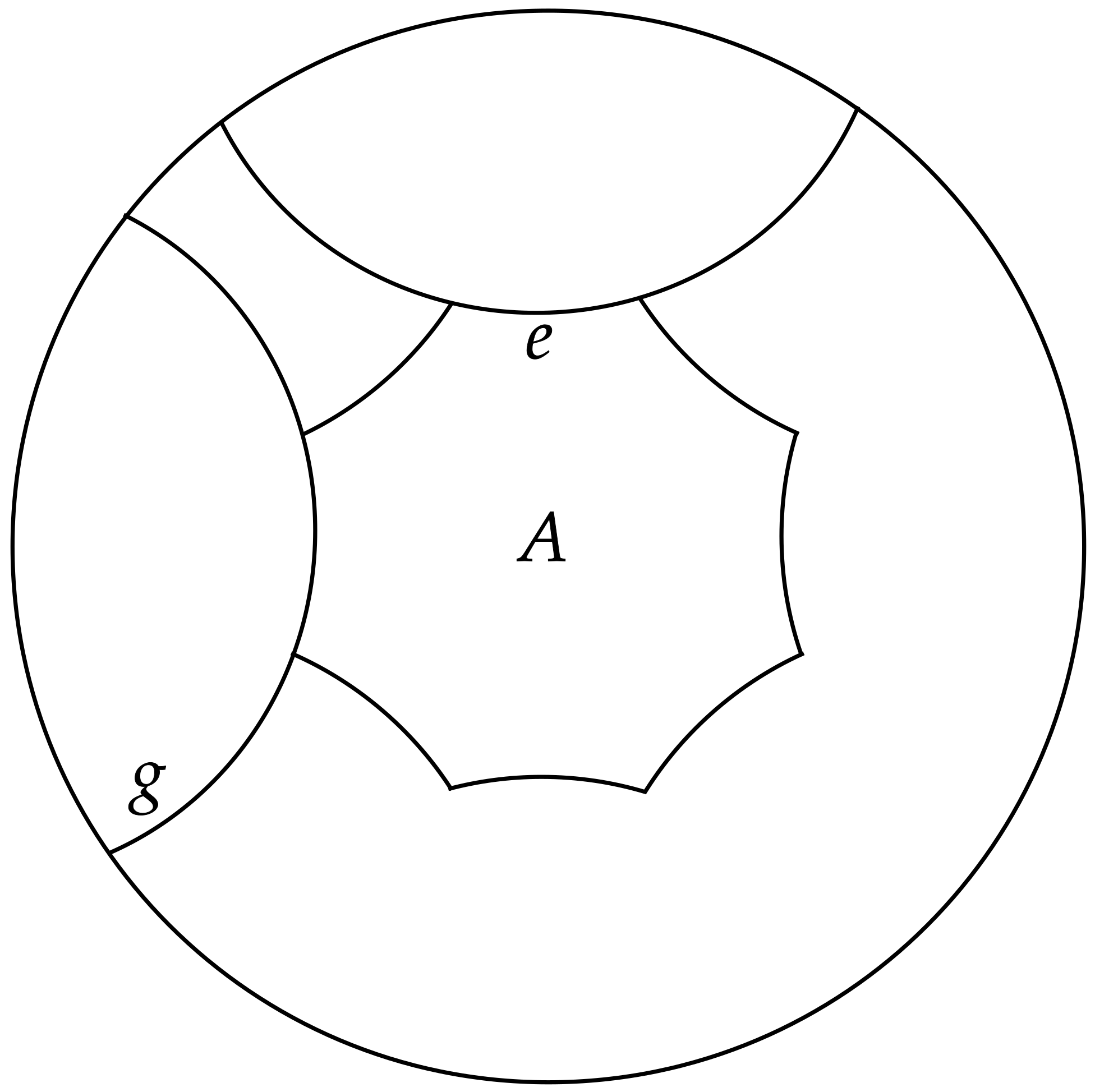}
    
    \caption{Edge geodesic $g$ shares an edge with $A$.}
    \label{fig:EdgeGeodesicNonIntersection1}
    \end{subfigure}
    \hspace{1cm}
    \begin{subfigure}{0.35\textwidth}
    \centering
    \includegraphics[scale=0.8]{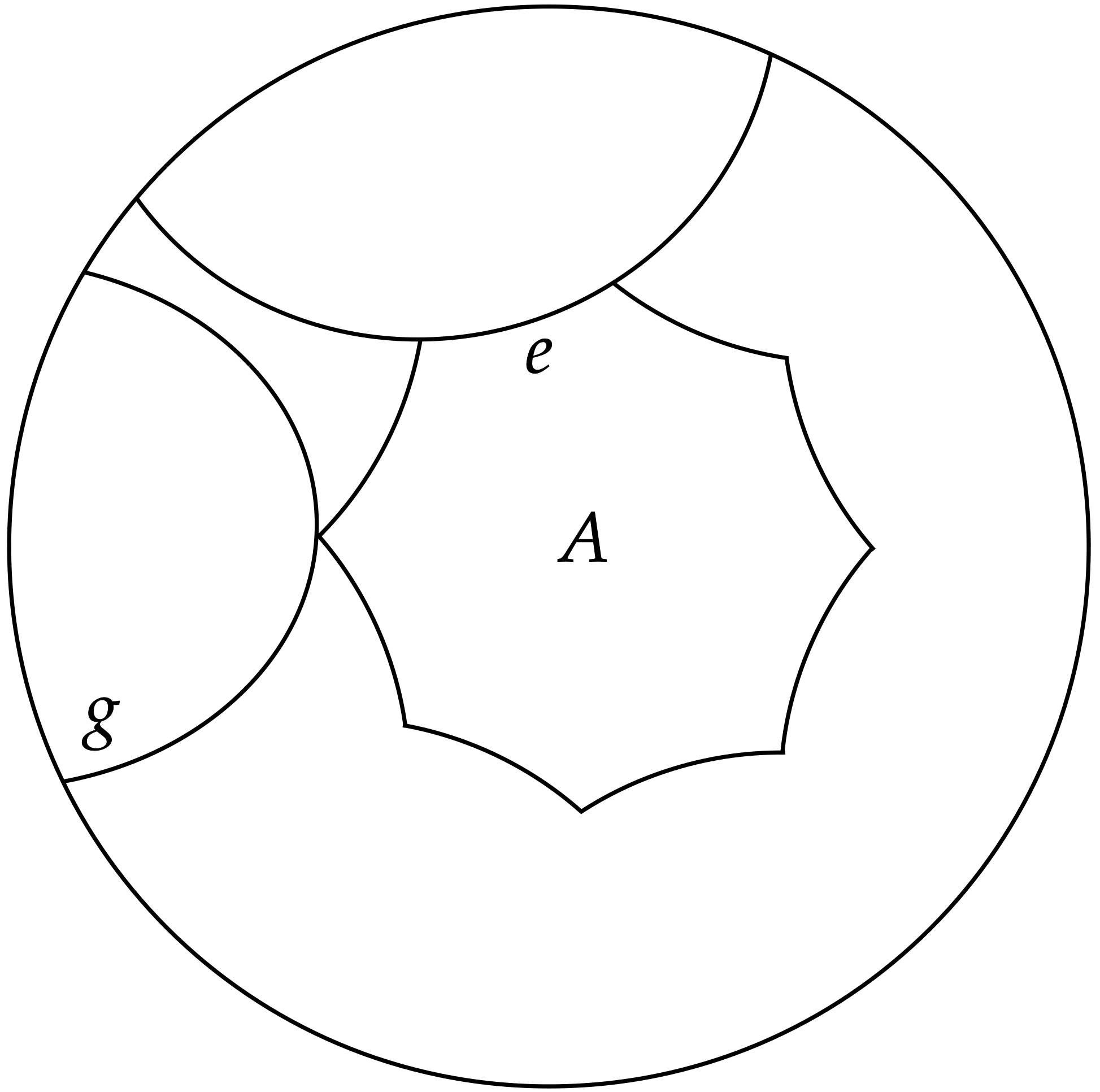}
    \caption{Edge geodesic $g$ shares just a vertex with $A$.}
    \label{fig:EdgeGeodesicNonIntersection2}
    \end{subfigure}
    \caption{A schematic illustrating Lemma~\ref{lem:EdgeGeodDontIntersectTwoCases}. In both cases, the edge geodesic $g$ does not intersect the geodesic extension of edge $e$.}
    \label{fig:HypGeomLemmas}
\end{figure}

This next lemma gives a restriction on the behavior of minimal tiling paths with the same beginning and ending tiles. 

\begin{lemma} 
\label{lem:minimal_adjacent_exits}
Let $\mathcal{A} = (A_0, A_1, \ldots, A_n)$ and $\mathcal{B} = (B_0, B_1, \ldots, B_n)$ be minimal tiling paths with the same beginning and ending tiles $A_0 = B_0$ and $A_n = B_n$. Let $e$ and $f$ be edges between $A_0$ and $A_1$ and $B_0$ and $B_1$ respectively. Then $e$ and $f$ are adjacent on $A_0 = B_0$. In other words, the paths $\mathcal{A}$ and $\mathcal{B}$ exit the starting tile through adjacent edges of $A_0 = B_0$. 
\end{lemma}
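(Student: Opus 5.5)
We plan to argue by contradiction using Proposition~\ref{prop:minimal_tiling_path_characterization}, which says a minimal tiling path crosses exactly the edge geodesics separating its endpoints, each exactly once, and no others. Note first that if $e = f$ there is nothing to prove (we interpret ``adjacent'' to include equality, or else the statement is about distinct exits), so assume $e \neq f$ and that $e$ and $f$ are not adjacent on $A_0$, i.e.\ they share no vertex.

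Let $g_e$ and $g_f$ be the edge geodesics extending $e$ and $f$. Since $\mathcal{A}$ is minimal and its first step crosses $g_e$, the edge geodesic $g_e$ separates $A_0$ from $A_n$. Likewise $g_f$ separates $A_0$ from $A_n$. Now $f$ is an edge of $A_0$ with no vertices on $g_e$ (because $e$ and $f$ share no vertex, and $g_e \cap A_0 = e$ by convexity of the tile and since $g_e$ is a geodesic through the edge $e$). Also $g_e$ shares a vertex with $A_0$. By Lemma~\ref{lem:EdgeGeodDontIntersectTwoCases}, $g_f$ does not intersect $g_e$. Thus $g_e$ and $g_f$ are disjoint geodesics, both adjacent to $A_0$, with $A_0$ lying between them: the half-plane $H_e$ of $g_e$ not containing $A_0$ contains $A_1$, hence $g_f \not\subset H_e$ since $g_f$ passes through $f \subset A_0$'s side; as $g_f$ is disjoint from $g_e$, $g_f$ lies entirely in the complement of $H_e$. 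Symmetrically $g_e$ lies in the complement of $H_f$, the side of $g_f$ not containing $A_0$.

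Now $A_n$ lies in $H_e$ (since $g_e$ separates $A_0$ and $A_n$) and in $H_f$. But $H_e$ and $H_f$ are disjoint: $H_e$ is a connected region disjoint from $g_f$ (as $g_f$ lies on the other side of $g_e$), so $H_e$ lies entirely on one side of $g_f$; since $\overline{H_e}$ contains $g_e$, which lies on the $A_0$ side of $g_f$, $H_e$ lies on the $A_0$ side of $g_f$, i.e.\ $H_e \cap H_f = \emptyset$. This contradicts $A_n \in H_e \cap H_f$. Hence $e$ and $f$ must share a vertex, i.e.\ they are adjacent on $A_0$.

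The main point requiring care is the separation/half-plane bookkeeping, in particular verifying the hypothesis of Lemma~\ref{lem:EdgeGeodDontIntersectTwoCases} (that $f$ has no vertex on $g_e$, which uses that an edge geodesic meets a convex tile only along one edge or at a vertex) and the elementary planar topology that two disjoint geodesics with $A_0$ between them bound disjoint outer half-planes.
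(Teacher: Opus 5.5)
Your proposal is correct and follows essentially the same route as the paper: use Proposition~\ref{prop:minimal_tiling_path_characterization} to see that $g_e$ and $g_f$ each separate $A_0$ from $A_n$, apply Lemma~\ref{lem:EdgeGeodDontIntersectTwoCases} to get $g_e \cap g_f = \emptyset$, and reach a contradiction from the nesting of the half-planes. Your version is more careful than the paper's in three places: it checks the lemma's hypothesis (that $f$ has no vertex on $g_e$), it spells out the half-plane argument, and it notes the $e=f$ caveat.
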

\begin{proof}
Assume for the sake of contradiction that $e$ and $f$ are non-adjacent edges on $A_0=B_0$. Consider edge geodesics $g_e$ and $g_f$. Note that $\mathcal{A}$ crosses $g_e$ and $\mathcal{B}$ crosses $g_f$ at edges $e$ and $f$ respectively. 

From Proposition~\ref{prop:minimal_tiling_path_characterization}, $\calA$ crosses $g_e$ precisely once. So, the initial tile $A_0$ and the final tile $A_n$ are on opposite half-spaces of $g_e$. By Lemma~\ref{lem:EdgeGeodDontIntersectTwoCases} $g_e$ and $g_f$ are parallel. So, the half-space of $g_e$ that contains $A_0 = B_0$ also contains the geodesic $g_f$. See Figure~\ref{fig:HypGeomLemmas}. This  implies that the half-space of $g_f$ that contains $B_n=A_n$ is a subset of the half-space of $g_e$ that contains $A_0=B_0$, a contradiction to $A_0$ and $A_n$ being on opposite half-spaces of $g_e$. Hence, edges $e$ and $f$ are adjacent in $A_0$.
\end{proof}

This next lemma restricts the behavior of minimal tiling paths that traverse two tiles adjacent to an edge geodesic. It uses the concept of a fellow travelling tiling path, defined in Definition~\ref{def:fellowtravel}.

\begin{lemma}[Necessarily fellow traveling]\label{lem:necessarilyfellowtraveling}
Let $\mathcal{A} = (A_0, \dots, A_n)$ be a tiling path so that there exists edges $ e \in A_0$ and $f \in A_n$ that have the same geodesic extension $g$. If $A_0$ and $A_n$ are on the same halfspace of $g$, then the tiling path $\mathcal{A}$ is minimal if and only if it fellow travels $g$. If $A_0$ and $A_n$ are on opposite halfspaces, then $\mathcal{A}$ is minimal if and only if it fellow travels $g$ on one side in the direction of $A_n$, crosses $g$ and then fellow travels $g$ on the other side in the same direction. 
\end{lemma}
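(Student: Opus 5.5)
The whole argument runs through Proposition~\ref{prop:minimal_tiling_path_characterization}: a tiling path is minimal exactly when it crosses each edge geodesic separating its endpoints once and crosses no other edge geodesic. I would first record the geometry of tiles along $g$. The tiles having a vertex on $g$ and lying in a fixed half-space $H$ of $g$ form a ``strip'' along $g$. Consecutive vertices $v_1, v_2, \ldots$ of $g$ each have $\frac q2$ tiles of $H$ around them. Consecutive fans around $v_i$ and $v_{i+1}$ share exactly the tile of $H$ containing the edge $[v_i,v_{i+1}]$. A path fellow travels $g$ on the $H$ side precisely when it moves monotonically through these fans, one vertex traversal after another.

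\textbf{Same half-space case.} For the ``if'' direction, suppose $\mathcal{A}$ fellow travels $g$. Each step crosses an edge of the form $[v_i,w]$ with $v_i\in g$ and $w\notin g$. The edge geodesics crossed are therefore exactly those through the $v_i$ transverse to $g$. Because there is no backtracking and all tiles stay in $H$, the path rotates monotonically around each $v_i$. So each such transverse geodesic through $v_i$ is crossed once, and it really separates $A_0$ from $A_n$. To see separation, note that the transverse geodesic through $v_i$ leaves $H$'s strip with earlier vertices on one side and later vertices on the other. Lemma~\ref{lem:EdgeGeodDontIntersectTwoCases} ensures these transverse geodesics do not intersect $g$ elsewhere or each other badly. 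Minimality then follows from Proposition~\ref{prop:minimal_tiling_path_characterization}.

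For ``only if'', suppose $\mathcal{A}$ is minimal. Then $\mathcal{A}$ never crosses $g$, since $g$ does not separate $A_0$ from $A_n$. Let $h_i$ denote the edge geodesics through the vertices $v_i$ between $e$ and $f$ that are transverse to $g$; these are exactly the geodesics that separate $A_0$ from $A_n$. Any tile of $H$ with no vertex on $g$ lies beyond some edge geodesic $g'$ extending an edge of a strip tile with no vertex on $g$. By Lemma~\ref{lem:EdgeGeodDontIntersectTwoCases}, $g'$ is disjoint from $g$, so $g'$ separates neither $A_0$ nor $A_n$ from the strip. A minimal path may not cross $g'$, so every tile of $\mathcal{A}$ has a vertex on $g$. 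Backtracking is excluded by minimality. This gives fellow traveling.

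\textbf{Opposite half-space case.} Let $A_j \to A_{j+1}$ be the unique step crossing $g$, at some edge $[v_k,v_{k+1}]$; uniqueness comes from Proposition~\ref{prop:minimal_tiling_path_characterization}. I apply the same-half-space result to the subpaths $(A_0,\dots,A_j)$ and $(A_{j+1},\dots,A_n)$. Both are minimal, since subpaths of minimal paths are minimal. The endpoints of each lie in a common half-space and each has an edge on $g$: $A_j$ and $A_{j+1}$ both contain $[v_k,v_{k+1}]$. The direction claim comes from ruling out a crossing of a transverse $h_i$ in the wrong order. That would double-cross it, and reflecting through $g$ reduces this to the monotonicity in the first case. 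The converse is verified by again counting the crossed geodesics, as in the ``if'' direction.

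The main obstacle is the ``only if'' step showing that a minimal path cannot leave the strip. It requires identifying, for each non-strip tile, a separating edge geodesic disjoint from $g$ on which Lemma~\ref{lem:EdgeGeodDontIntersectTwoCases} can act. I would get this from the outer edges of the strip tiles. The degenerate case where $e$ and $f$ share a vertex or coincide needs care.
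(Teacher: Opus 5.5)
Your proposal follows the paper's proof in its essentials. Everything runs through Proposition~\ref{prop:minimal_tiling_path_characterization}. The ``only if'' direction uses Lemma~\ref{lem:EdgeGeodDontIntersectTwoCases} on an edge of a strip tile with no vertex on $g$. The opposite-side case is split at the unique crossing of $g$ into two same-side subpaths.

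The one step you leave unproved is the global claim that every non-strip tile of $H$ lies beyond some such geodesic $g'$. It is true, but you do not need it.

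\begin{itemize}
\item Let $A_j$ be the first tile of $\mathcal{A}$ with no vertex on $g$. Then $A_{j-1}$ is a strip tile.
\item The shared edge $e'$ has no vertex on $g$, since otherwise $A_j$ would have one. So $g_{e'}$ is disjoint from $g$.
\item $A_j$ lies on the far side of $g_{e'}$. But $A_{j-1}$ and $A_n$ lie on the side containing $g$, because they have a vertex or an edge on $g$.
\item Hence $\mathcal{A}$ crosses $g_{e'}$ twice, contradicting minimality.
\end{itemize}

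This is exactly the paper's argument. It is purely local and needs no special treatment when $e$ and $f$ share a vertex or coincide.

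For the converses, your direct count of crossed geodesics is more explicit than the paper's. The paper appeals to the existence of some minimal path and tacitly assumes the fellow-traveling path is unique. Your count needs one justification you leave implicit: two strip tiles cannot be adjacent across an edge with no vertex on $g$. This follows from Lemma~\ref{lem:EdgeGeodDontIntersectTwoCases} by the same separation reasoning as above. With it, each step crosses an edge geodesic through exactly one vertex $v_i$ of $g$, and distinct $v_i$ give distinct geodesics. Your double-crossing argument for the ``same direction'' clause also fills a point the paper leaves unstated.
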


\begin{proof}
We first consider the case when $A_0$ and $A_n$ are on the same side of $g$. Since $\mathcal{A}$ is minimal, all tiles of $\mathcal{A}$ must be on the same side of $g$ or else the tiling path $\calA$ would double cross $g$, contradicting Proposition~\ref{prop:minimal_tiling_path_characterization}. Assume now for the sake of contradiction that $\mathcal{A} = (A_0, \dots, A_n)$ does not fellow travel $g$. This implies that there exists a minimal $0 < j < n$ such that the tile $A_j$ on the path $\calA$ does not have any vertex on $g$. By minimality of $j$, the tile $A_{j-1}$ has either exactly one or two vertices on $g$.

In either case, let $e$ be the edge shared by $A_{j-1}$ and $A_j$. This edge cannot have a vertex on $g$ or else $A_j$ would have a vertex on $g$. By Lemma~\ref{lem:EdgeGeodDontIntersectTwoCases} the corresponding edge geodesic $g_e$ does not intersect $g$. See Figure~\ref{fig:fellowtraveling}.

Furthermore, by definition of $g_e$, $A_j$ is in the opposite halfspace of $g_e$ as the geodesic $g$ and all tiles that share a vertex with $g$. In particular, $A_{j-1}$ and $A_n$ are in the opposite halfspace as $A_j$. Thus $\calA$ is minimal but double-crosses $g_e$, which contradicts Proposition~\ref{prop:minimal_tiling_path_characterization}. Therefore, the minimal path $\calA$ must necessarily fellow travel $g$. Conversely, since a minimal tiling path between $A_0$ and $A_n$ must exist, it follows that the fellow traveling path between $A_0$ and $A_n$ is minimal. 

Now we consider the case when $A_0$ and $A_n$ are on opposite sides of $g$. Suppose first that $\mathcal{A}$ is minimal. It must cross $g$ exactly once at some subpath $(A_j,A_{j+1})$ for $0 \leq j \leq n-1$. Considering the subpaths from $A_0$ to $A_j$, as well as from $A_{j+1}$ to $A_n$, the first case applies and these subpaths must fellow travel. Conversely, we suppose that $\mathcal{A}$ fellow travels on one side of $g$, crosses $g$, and then fellow travels on the other side of $g$. Since there must exist some minimal tiling path between $A_0$ and $A_n$, at least one of these paths must be minimal. Since all such paths differ by vertex sequences, they all have the same length and thus are all minimal. 
\end{proof}

\begin{figure}[h!]
\centering
\includegraphics[scale=1.2]{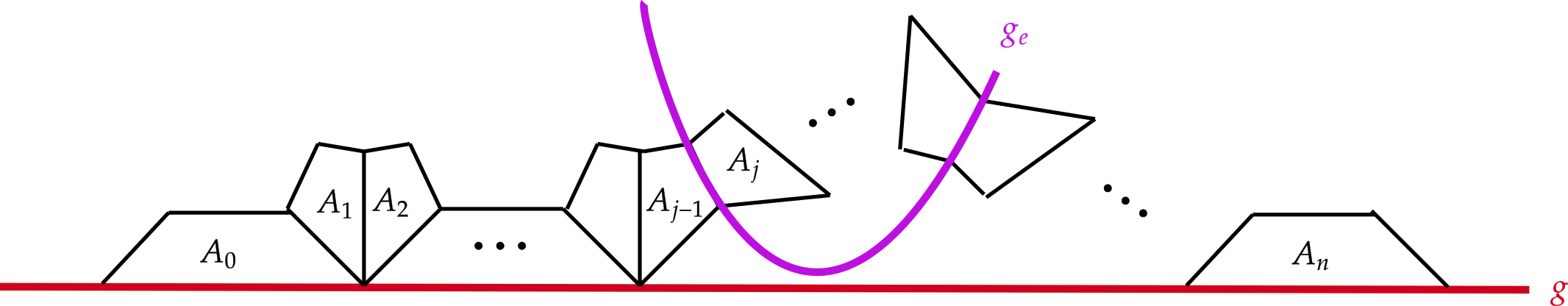}
\caption{If the tiling path $\calA$ does not fellow travel the geodesic $g$, then it must double cross the edge geodesic $g_e$}
\label{fig:fellowtraveling}
    
\end{figure}

Having proven these lemmas, we can now prove Proposition~\ref{prop:minimal_tiling_paths_word_equivalent}, which states that minimal tiling paths with the same beginning and ending tiles are word equivalent.

\begin{proof}[Proof of Proposition~\ref{prop:minimal_tiling_paths_word_equivalent}]

We suppose for contradiction that there exist minimal tiling paths with the same initial and ending tiles that are not word equivalent. Then, let $n$ denote the least length of such paths, and let $\mathcal{A} = (A_0, A_1, \ldots, A_n)$ and $\mathcal{B} = (B_0, B_1, \ldots, B_n)$ be two minimal tiling paths starting and ending at $A_0=B_0$ and $A_n=B_n$ respectively, that are not word equivalent (see Figure~\ref{fig:minimal_adjacent_exits}). 

\begin{figure}[h!]
    \centering
    \includegraphics[width=12cm]{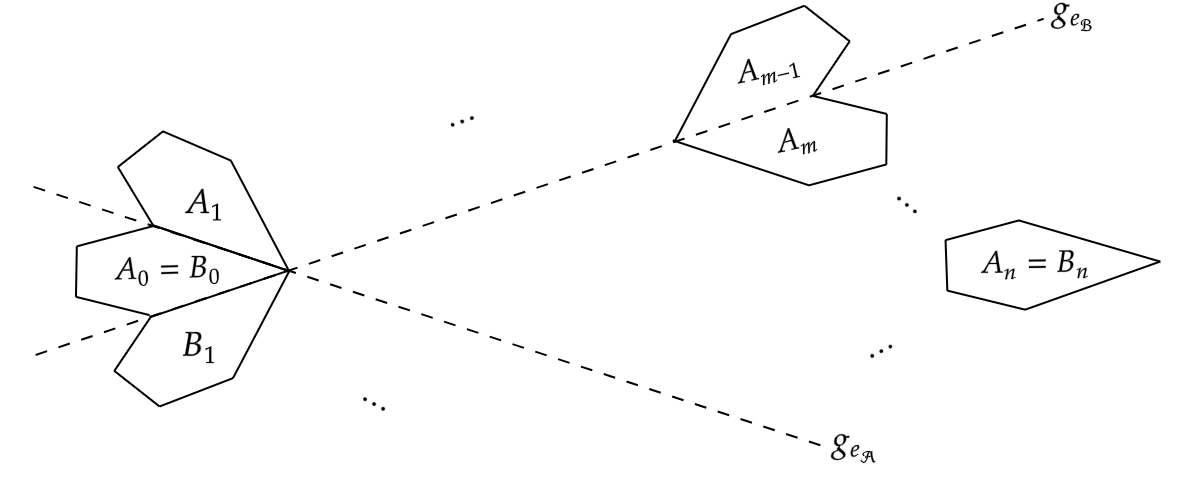}
    \caption{Two minimal tiling paths $\mathcal{A}$ and $\mathcal{B}$ with the same starting and ending tile.}
    \label{fig:minimal_adjacent_exits}
\end{figure}

Let $e_\mathcal{A}$ and $e_{\mathcal{B}}$ be the edges between $A_0$ and $A_1$, and $B_0$ and $B_1$ respectively. These edges must be adjacent edges of $A_0$ by Lemma~\ref{lem:minimal_adjacent_exits}.  We have that $\mathcal{B}$ does not double cross the edge geodesic $g_{e_\mathcal{B}}$ by Proposition~\ref{prop:minimal_tiling_path_characterization} and therefore $A_n$ must be on the opposite side of $g_{e_\mathcal{B}}$ than $A_0$. Let $m \leq n$ be such that $A_m$ is the first tile of $\mathcal{A}$ that is on the opposite side of $g_{e_{\mathcal{B}}}$ from $A_0$. By Lemma~\ref{lem:necessarilyfellowtraveling}, $\mathcal{A}$ must fellow travel $g_{e_\mathcal{B}}$ before crossing the geodesic at $(A_{m-1}, A_m)$. 

\begin{figure}[h!]
    \centering
    \includegraphics[width=15cm]{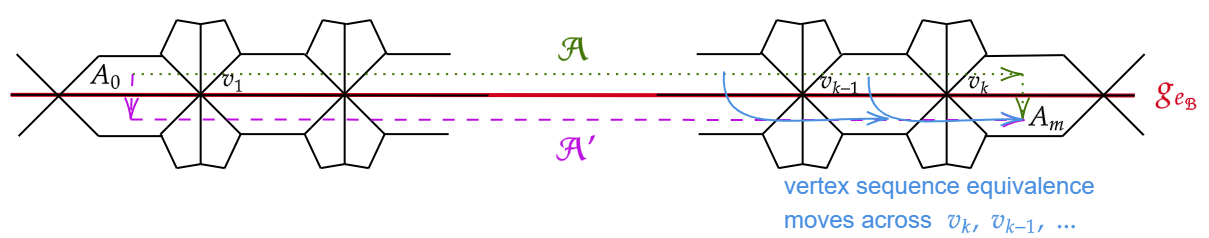}
    \caption{A schematic depiction of how the path $\mathcal{A}$ that fellow travels an edge geodesic $g_{e_\mathcal{B}}$ and then crosses it is vertex sequence equivalent to $\mathcal{A}'$ that first crosses $g_{e_\mathcal{B}}$ and then fellow travels it, via vertex sequence equivalence moves across vertices $v_k, v_{k-1}, \ldots, v_1$ along $g_{e_\mathcal{B}}$.}
    \label{fig:fellow_travel_conversion}
\end{figure}

By performing repeated vertex sequence equivalence moves across the vertices of $g_{e_{\mathcal{B}}}$ as depicted in Figure~\ref{fig:fellow_travel_conversion}, the path $(A_0, A_1', \ldots, A_{m-1}', A_m)$ that crosses $g_{e_{\mathcal{B}}}$ at $(A_0, A_1')$ and then fellow travels $g_{e_{\mathcal{B}}}$ up until $A_m$ is equivalent to $(A_0, A_1, \ldots, A_{m-1},A_m)$. If we let $\mathcal{A}' = (A_0, A_1', \ldots, A_{m-1}', A_m, \ldots, A_n)$ we must have that $\mathcal{A}'$ is also a minimal path between $A_0$ and $A_n$. But $A_1' = B_1$, so $\mathcal{A}'$ and $\mathcal{B}$ without the initial tile $A_0 = B_0$ must be length $n-1$ minimal tiling paths with the same initial and ending tile but that are not word equivalent (since if they were word equivalent, then $\mathcal{A}$ and $\mathcal{A}'$ and $\mathcal{B}$ would be word equivalent.) This contradicts that $\mathcal{A}$ and $\mathcal{B}$ were least length examples of this phenomenon, and so we are done. 
\end{proof}

Finally we prove Proposition~\ref{prop:minpathimpliesadmissibleword}, which states that a tiling path $\mathcal{A}$ is minimal if and only if its word class $[w_{\mathcal{A}}]$ is admissible. 

\begin{proof}[Proof of Proposition~\ref{prop:minpathimpliesadmissibleword}]

First, suppose that $\mathcal{A}$ is a minimal tiling path. By Remark \ref{rem:even_observations}, the word $w_\mathcal{A}$ must be admissible and every word $w$ in the word class $[w_{\mathcal{A}}]$ is of the same length as $w_{\mathcal{A}}$. Since each such $w$ defines a tiling path of the same length and with the same starting and ending tiles as $\mathcal{A}$, it follows that $w$ is also admissible. 

Now we suppose that a tiling path $\mathcal{A}$ is not minimal. By Proposition~\ref{prop:minimal_tiling_path_characterization}, $\mathcal{A}$ must double cross some edge geodesic. Without loss of generality, we can replace $\mathcal{A}$ with its shortest length tiling subpath $(A_0, A_1, \ldots, A_m)$ that double crosses some edge geodesic $g$, and therefore does not double cross any other edge geodesic. It follows then $\mathcal{A}$ crosses $g$ at $(A_0,A_1)$, stays on one side of $g$, and then recrosses $g$ at $(A_{m-1}, A_m)$. Furthermore, every proper subpath of $\mathcal{A}$ must be minimal since it does not double-cross any edge geodesics. By Lemma~\ref{lem:necessarilyfellowtraveling}, $(A_1, A_2, \ldots, A_{m-1})$ fellow travels $g$, as depicted in Figure~\ref{fig:fellow_travel_double_cross}.

\begin{figure}[h!]
    \centering
    \includegraphics[width=15cm]{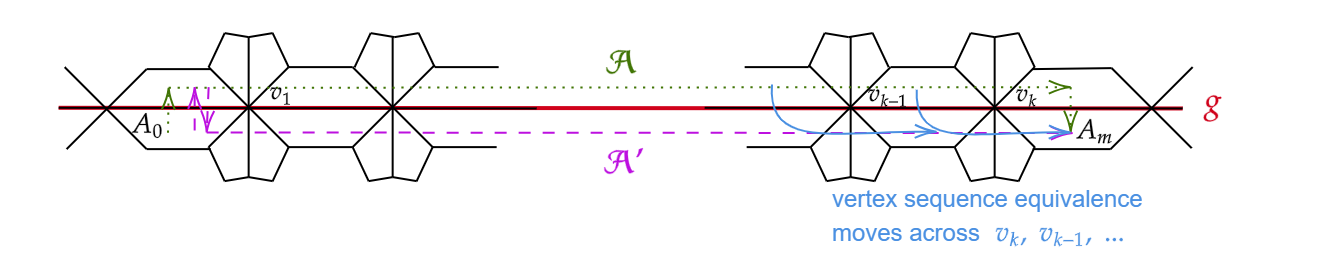}
    \caption{If $\mathcal{A}$ crosses $g$ and then fellow travels before crossing $g$ again, then $\mathcal{A}$ is word equivalent to a tiling path $\mathcal{A}'$ that backtracks and violates condition E1 for admissibility.}
    \label{fig:fellow_travel_double_cross}
\end{figure}

By a similar process as in the proof of Proposition~\ref{prop:minimal_tiling_paths_word_equivalent}, $(A_1, A_2, \ldots, A_{m})$, which fellow travels and then crosses $g$ is word equivalent to a path $(A_1, A_2', \ldots, A_{m-1}', A_m)$ that first crosses and then fellow travels $g$, via a sequence of vertex sequence equivalence moves across the vertices $v_k, v_{k-1}, \ldots, v_1$ of $g$. But then, letting $\mathcal{A}' = (A_0, A_1, A_2'=A_0, A_3', \ldots, A_{m-1}', A_m)$, we have that $\mathcal{A}$ and $\mathcal{A}'$ are word equivalent, but $\mathcal{A}'$ is not admissible since $(A_0, A_1, A_2'=A_0)$ is a backtracking move, with an associated word that violates condition E1. Thus, $\mathcal{A}$ not being minimal implies that $[w_\mathcal{A}]$ is not an admissible word class. \end{proof}

\subsection{A characterization of infinite billiard words when $q$ is even}\label{subsec:qevenOneSidedInfinite}

In this section, we will give a characterization of (one-directional) infinite billiard words in $(p,q)$-tilings where $q$ is even. To do so, we will show that after fixing a base tile, there is a bijection between equivalence classes of geodesic rays starting in the base tile and admissible word classes. 

\begin{definition}[Equivalence classes of geodesic rays] After fixing a base tile $A_0$, we can consider the set of all geodesic rays beginning in $A_0$. We say that two such geodesic rays are equivalent if they are bounded distance apart. Let $\mathcal{G}$ denote the set of such equivalence classes of geodesic rays. The set $\mathcal{G}$ is then in bijection with $S^1 \cong \partial \mathbb{H}^2$, the circle at infinity.  
\end{definition}

We wish to find a bijection between these equivalence classes of geodesic rays starting at a tile $A_0$ and admissible word classes infinite (in the forward direction only) words $[w_1w_2w_3\cdots]$. 

\GUinfinite*

As an immediate corollary of this theorem, we have that following, which we can obtain by folding up a geodesic ray that does not pass through any vertices into a billiard path: 

\begin{corollary}
\label{cor:infinite_billiard_word}
Every admissible word class is achieved by some (not necessarily unique) infinite billiard path. 
\end{corollary}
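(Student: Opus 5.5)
The plan is to deduce this directly from Theorem~\ref{thm:infinite_bijection}, by choosing a representative geodesic ray that avoids all vertices and then folding it back into the table.

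Fix the base tile $A_0$ and an admissible infinite word class $[w]$. By Theorem~\ref{thm:infinite_bijection}, $[w]$ corresponds to an equivalence class of geodesic rays beginning in $A_0$, that is, to a point $\xi \in \partial\HH$. I will take the bijection in the form the theorem's proof should provide: a ray $\gamma$ is sent to the class $[w_\gamma]$ of its associated word (Definition~\ref{def:AssociatedWordOfGeodesic}), and every ray in the class of $\xi$ gives the same word class. If the theorem's proof only builds the correspondence from one chosen representative, I will add this check: two rays from $A_0$ to $\xi$ are eventually separated by the same edge geodesics. Their finite prefixes are then minimal tiling paths (Proposition~\ref{prop:minimal_tiling_path_characterization}) whose end tiles can be joined, and Proposition~\ref{prop:minimal_tiling_paths_word_equivalent} makes the prefix words equivalent up to a bounded tail. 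I expect this to be the only delicate step, and I would lean on the theorem's construction as far as possible.

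Next, choose a representative ray that avoids all vertices. For a point $x$ in the interior of $A_0$, let $\gamma_x$ be the geodesic ray from $x$ to $\xi$. The ray $\gamma_x$ passes through a vertex $u$ only if $x$ lies on the complete geodesic through $u$ with endpoint $\xi$. There are countably many vertices, so the bad points $x$ lie on a countable union of geodesic lines, which has measure zero in $A_0$. Hence some $x$ in the interior of $A_0$ has $\gamma_x$ meeting no vertex. For this choice no nudging is needed: $\gamma_x$ crosses edges transversally, and $w_{\gamma_x}$ is literally the sequence of edge labels it crosses. By the previous paragraph, $w_{\gamma_x} \in [w]$.

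Finally, fold $\gamma_x$ back into the polygon. Reflecting each tile it visits onto the base tile gives a billiard trajectory in the table, and since the labeling is consistent under reflections (Proposition~\ref{prop:consistentlabelling}), its bounce sequence is exactly $w_{\gamma_x}$. This trajectory never hits a corner, so it is a genuine infinite billiard path, and it realizes the class $[w]$.

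The realization is not unique in general. Different admissible choices of $x$ give different billiard paths, possibly with different words in $[w]$, as in the $1212$ versus $2121$ example of Figure~\ref{fig:q_even_vertex}. This accounts for the parenthetical ``not necessarily unique'' in the statement.
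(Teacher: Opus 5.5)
Your proof is correct and matches the paper's. The paper gets the corollary from Theorem~\ref{thm:infinite_bijection} by folding up a geodesic ray in the given class that avoids all vertices; your measure-zero choice of the starting point $x$ and your appeal to Proposition~\ref{prop:consistentlabelling} make that one-line argument explicit. The well-definedness step you flagged as delicate is already proved in the paper's proof of Theorem~\ref{thm:infinite_bijection} via Proposition~\ref{prop:minimal_same_endpoint}, so you do not need your fallback sketch, which as written is too loose to establish word equivalence of infinite words anyway.
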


To prove Theorem~\ref{thm:infinite_bijection}, we build up a sequence of propositions for infinite paths/words, many which are generalizations of of propositions from the previous section when we considered the relationship between finite minimal tiling paths and finite admissible word classes. For the first of these,  recall that an infinite tiling path is said to be minimal if every finite subpath is minimal.

\begin{proposition}
\label{prop:infinite_minimal_admissible}
Consider a hyperbolic $(p,q)$-tiling with $q$ even. Given an infinite tiling path $\mathcal{A} = (A_0, A_1, A_2, \ldots)$, $\mathcal{A}$ is minimal if and only if $[w_\mathcal{A}]$ is admissible. 
\end{proposition}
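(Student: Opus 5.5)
The plan is to reduce to the finite case, Proposition~\ref{prop:minpathimpliesadmissibleword}, applied to the initial segments $\mathcal{A}_n = (A_0, \dots, A_n)$. The key point is that, for infinite words, admissibility of $[w_\mathcal{A}]$ should be a statement about finite prefixes. Under Definition~\ref{def:equivalence}, every finite segment of a word equivalent to $w_\mathcal{A}$ is, after finitely many moves, equivalent to the corresponding segment of $w_\mathcal{A}$. Any violation of E1 or E2 occurs in a finite segment. So if some word in $[w_\mathcal{A}]$ is not admissible, some finite segment of $w_\mathcal{A}$ is finitely word equivalent to an inadmissible finite word.

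One subtlety must be handled first. A vertex sequence exchange in the infinite word may straddle the cutoff point $n$. I would therefore phrase everything in terms of arbitrary finite subpaths $(A_i, \dots, A_j)$. An exchange either lies entirely inside such a subpath or can be avoided by enlarging $j$ by at most $q/2$.

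For the forward direction, suppose $\mathcal{A}$ is minimal, so every finite subpath is minimal. By Proposition~\ref{prop:minpathimpliesadmissibleword}, the word class of each finite subword is admissible. Now let $w'$ be any word in $[w_\mathcal{A}]$ and suppose $w'$ contains a forbidden pattern in positions $i$ through $j$. By definition of infinite word equivalence, there is a finite segment $w'_{[i', j']}$ with $[i,j] \subseteq [i',j']$ that is finitely equivalent to $w_{\mathcal{A},[i',j']}$. Here the endpoints $i'$ and $j'$ are chosen so that no exchange crosses the segment boundary; this is possible since each move is local of length $q/2$. Then the finite class $[w_{\mathcal{A},[i',j']}]$ contains an inadmissible word, which contradicts minimality of $(A_{i'-1}, \dots, A_{j'})$ via Proposition~\ref{prop:minpathimpliesadmissibleword}. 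Hence $[w_\mathcal{A}]$ is admissible.

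For the converse, suppose $\mathcal{A}$ is not minimal. Then some finite subpath $(A_i, \dots, A_j)$ is not minimal. By Proposition~\ref{prop:minpathimpliesadmissibleword}, its word class contains an inadmissible finite word $u$, obtained from $w_{\mathcal{A},[i+1,j]}$ by finitely many vertex sequence exchanges. Performing the same exchanges inside $w_\mathcal{A}$ gives an infinite word in $[w_\mathcal{A}]$, because the moves are local and leave the other letters unchanged. This word contains $u$, and hence contains a forbidden subword. So $[w_\mathcal{A}]$ is not admissible.

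The main obstacle I expect is the bookkeeping with infinite sequences of exchanges in the forward direction. One has to make sure that the finite segment witnessing the violation can be chosen so that the finitely many relevant moves are internal to it, rather than overlapping its endpoints, where they would alter tiles outside the segment. I would handle this by using that each move fixes the tiles before and after the vertex sequence (Remark~\ref{rem:even_observations}), and by enlarging the window to absorb any boundary-straddling move.
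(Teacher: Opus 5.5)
Your proposal is correct and is essentially the paper's argument: both directions reduce to Proposition~\ref{prop:minpathimpliesadmissibleword} on finite subpaths, and your two directions are the contrapositives of the paper's two. The paper works with initial segments, so only the right end of the window matters, and since matching a given prefix takes only finitely many moves, one can take the segment long enough to contain all of them; this makes your boundary bookkeeping immediate, whereas a single enlargement by $q/2$ may not suffice if moves chain.
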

\begin{proof}
Let $\mathcal{A} = (A_0, A_1, A_2, \ldots)$ be an infinite tiling path and let $[w_\mathcal{A}]$ be the word class of its associated word. 
Suppose that $[w_\mathcal{A}]$ is admissible. It follows that the word class of any finite substring of $w_{\mathcal{A}}$ is admissible. But then, by Proposition~\ref{prop:minpathimpliesadmissibleword}, every finite tiling subpath of $\mathcal{A}$ is minimal, which implies the minimality of $\mathcal{A}$ by definition. 

Now suppose that $[w_\mathcal{A}]$ is not admissible. Then, there exists a word $w$ that is word equivalent to $w_\mathcal{A}$ that is not admissible and therefore contains a subword that violates rule E1 or E2. Consider an initial segment of $v$ that contains that subword. By definition, after finitely many vertex sequence equivalence moves, an initial segment of $w_{\mathcal{A}}$ will match that initial segment of $v$. But then, after finitely many vertex sequence moves $\mathcal{A}$ contains a sub-tiling path that violates E1 or E2 and therefore can be shortened by Remark \ref{rem:even_observations}. Therefore, $\mathcal{A}$ has a finite subpath that is not minimal, which means that $\mathcal{A}$ is not minimal by definition. 
\end{proof}

The following proposition shows that tiling paths with equivalent associated words limit to the same point on the boundary of hyperbolic space.

\begin{proposition}
\label{prop:endpoints}
Consider a $(p,q)$-tiling with $q$ be even. Given a base tile and an admissible word class $[w]$, there exists a unique endpoint in the boundary of the hyperbolic disk that is the limit point of any tiling path corresponding to a word in $[w]$. 
\end{proposition}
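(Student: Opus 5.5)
We fix the base tile $A_0$ and a representative $w \in [w]$, and let $\mathcal{A}_w = (A_0, A_1, A_2, \ldots)$ be the associated infinite tiling path. We need two things: $\mathcal{A}_w$ converges to a single point $\xi_w \in \partial \HH$, and $\xi_w$ does not depend on the choice of representative in $[w]$.

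\textbf{Step 1: existence of the limit.} Since $[w]$ is admissible, Proposition~\ref{prop:infinite_minimal_admissible} shows that $\mathcal{A}_w$ is minimal. By Proposition~\ref{prop:minimal_tiling_path_characterization}, each finite subpath crosses every edge geodesic at most once, and it crosses exactly those edge geodesics separating its endpoints. It follows that $\td(A_0, A_n) = n$, so the tiles leave every compact set. Each step $A_{i-1}\to A_i$ crosses an edge geodesic $g_i$, the $g_i$ are pairwise distinct, and $\mathcal{A}_w$ never recrosses any of them. Let $H_i$ be the closed half-plane of $g_i$ containing $A_j$ for all $j \ge i$, and let $I_i \subset \partial\HH$ be its arc at infinity. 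The arcs $\{I_i\}$ have the finite intersection property: any finitely many of the $H_i$ share all tiles $A_j$ for $j$ large, and these tiles escape to infinity inside the intersection. Hence $\bigcap_i \overline{H_i}$ meets $\partial \HH$ in a nonempty closed set $I$, which is either a point or an arc.

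\textbf{Step 2: the limit set is a single point.} This is the main obstacle. The plan is to show that the Euclidean (disk-model) diameters of the arcs $I_i$ tend to $0$ along a subsequence. Only finitely many edge geodesics meet any compact set, and the $g_i$ are distinct, so the hyperbolic distance $d(A_0, g_i) \to \infty$. This alone is not enough, since the $g_i$ could be long geodesics all nested around a common arc. I would argue instead as follows. If $I$ contained a nondegenerate arc, then we could pick a boundary point $\eta$ in its interior. The geodesic ray from $A_0$ to $\eta$ crosses infinitely many edge geodesics, because the tiling is cocompact and edge geodesics exist near every point. Each such edge geodesic crossed near $\eta$ would have both of its endpoints within $I$'s neighborhood, which would force the tiling path to pass on a definite side of it. Using the minimality and the fellow-traveling structure of Lemma~\ref{lem:necessarilyfellowtraveling}, I would derive a contradiction from the claim that $\mathcal{A}_w$ never crosses these geodesics while all $A_j$ remain in every $H_i$.

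A cleaner alternative route, which I would try first, is to show that $\mathcal{A}_w$ is a quasigeodesic. Minimal tiling paths are geodesics in the dual graph, and the dual graph is quasi-isometric to $\HH$ by Švarc–Milnor, since the reflection group acts cocompactly. A geodesic ray in the dual graph is therefore a quasigeodesic ray in $\HH$. By Morse stability it stays within bounded distance of a hyperbolic geodesic ray, and so it converges to a unique boundary point. This settles existence and uniqueness of $\xi_w$ at once.

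\textbf{Step 3: independence of the representative.} Let $w' \in [w]$. By Definition~\ref{def:equivalence}, for every $N$ there are finitely many vertex sequence moves after which the first $N$ letters of $w$ and $w'$ agree. By Remark~\ref{rem:even_observations}, each such move alters the tiling path only among tiles around a single vertex, and tiles before and after the vertex sequence are unchanged. For each $n$, I would use the finite-prefix agreement to find, beyond any given index, tiles shared by $\mathcal{A}_w$ and $\mathcal{A}_{w'}$, or tiles adjacent to a common vertex, hence within bounded distance $\le q/2$ of each other. The cleanest way to organize this is via the quasigeodesic picture: both $\mathcal{A}_w$ and $\mathcal{A}_{w'}$ are minimal by Proposition~\ref{prop:infinite_minimal_admissible}. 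Once we show that they come within uniformly bounded distance of each other infinitely often at tiles escaping to infinity, they converge to the same boundary point. Thus $\xi_{w'} = \xi_w$, and the endpoint is a well-defined invariant of the class $[w]$.
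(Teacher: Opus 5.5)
Your convergence argument is correct by a different route from the paper. Your Step 3, independence of the representative, has a genuine gap.

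\textbf{Convergence of a single path (Steps 1--2).} Take your ``cleaner alternative'': minimal tiling paths are geodesics in the dual graph, the dual graph is quasi-isometric to $\HH$ (\v{S}varc--Milnor), so the image is a quasigeodesic ray, and the Morse lemma gives a unique endpoint. This works. The paper argues elementarily instead. It picks $p_i\in A_i$ and uses compactness of $\overline{\HH}$ to get accumulation points. None lies in $\HH$, since a ball meets finitely many tiles and no tile repeats. Two distinct boundary accumulation points $\eta_1\neq\eta_2$ would be separated by some edge geodesic, which the path would then cross infinitely often, contradicting Proposition~\ref{prop:minimal_tiling_path_characterization}. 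Your route imports heavier machinery but works for any cocompact tiling; the paper's uses only the no-double-crossing property. The paper's trick is exactly what your first plan for Step 2 lacks. You only use the geodesics $g_i$ that the path actually crosses, whereas the contradiction comes from an \emph{arbitrary} edge geodesic separating two putative limit points. As written, that first plan is not a proof, and the arcs $I_i$ are unnecessary.

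\textbf{Independence of the representative (Step 3).} The paper's proof of Proposition~\ref{prop:endpoints} only treats a single path and leaves this point implicit, so you are right that it needs an argument. Yours is not yet one.

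\emph{What goes wrong.} The definition of equivalence gives, for each $N$, a finite modification $\mathcal{A}^{(N)}$ of $\mathcal{A}_w$. This path agrees with $\mathcal{A}_{w'}$ at indices $\le N$, but with $\mathcal{A}_w$ only at indices $\ge K_N$, and nothing bounds $K_N-N$. Aligning the first $N$ letters can require a long cascade of moves, as in Figure~\ref{fig:fellow_travel_conversion}, and an index may be moved several times. So nothing forces $\mathcal{A}_w$ and $\mathcal{A}_{w'}$ to share tiles or to have tiles around a common vertex. Your sentence ``once we show that they come within uniformly bounded distance'' is precisely the claim that needs proof.

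\emph{Two ways to close it.}
\begin{enumerate}
\item[(i)] Within your framework: $\mathcal{A}^{(N)}|_{[0,K_N]}$ and $\mathcal{A}_w|_{[0,K_N]}$ are dual-graph geodesics with the same endpoints, since exchanges preserve length and endpoints. Thin bigons in the hyperbolic dual graph then give $d(\mathcal{A}^{(N)}_j,(\mathcal{A}_w)_j)\le C$ with $C$ independent of $N$. Hence $d((\mathcal{A}_{w'})_j,(\mathcal{A}_w)_j)\le C$ for all $j$, so the two rays have the same endpoint.
\item[(ii)] Elementarily, as in the paper's injectivity argument for Theorem~\ref{thm:infinite_bijection}. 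Suppose the endpoints satisfy $\eta\neq\eta'$. Choose an edge geodesic $g$ cutting off a small cap around $\eta'$ that contains neither $A_0$ nor $\eta$; edge geodesics crossed far out along the ray from $A_0$ to $\eta'$ do this. $\mathcal{A}_{w'}$ enters the cap at some step $n$. For $N>n$, $\mathcal{A}^{(N)}$ enters it too, and since its tail converges to $\eta$ it must leave again. That is a double crossing of $g$ by a path that is minimal by Proposition~\ref{prop:infinite_minimal_admissible}, a contradiction.
\end{enumerate}
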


\begin{proof}
Let $\mathcal{A} = (A_0, A_{1}, A_2, \ldots)$ be an infinite tiling path corresponding to a word from an admissible word class. We show that the forward tiling path has a unique limit point on the boundary of the hyperbolic disk. 

By Proposition~\ref{prop:infinite_minimal_admissible}, $\calA$ is minimal and therefore by Proposition~\ref{prop:minimal_tiling_path_characterization}, $\calA$ cannot double cross any edge geodesics nor double visit any tiles. For each $i$, let $p_i \in A_i$ be any point. Then, for $C$ the diameter of each tile, every point in each $A_i$ is at most hyperbolic distance $C$ away from $p_i$. We will show that $\{p_0, p_1, p_2, \ldots\}$ has a unique limit point in $\overline{\mathbb{H}}$, the Poincar\'{e} disk with its boundary. 

Since $\overline{\mathbb{H}}$ is compact, there exist limit points to the sequence $\{p_0, p_1, p_2, \ldots\}$. Let $\eta$ be any such limit point. Then, $\eta$ cannot be in the interior $\mathbb{H}$, as a finite radius open ball around $L$ would only intersect finitely many tiles and therefore only contain finitely many $p_i$. Thus, all limits points $\eta$ must be on the boundary $\partial \mathbb{H}$. There can only be one limit point $\eta$ since if there were two limit points $\eta_1 \neq \eta_2$ on $\partial \mathbb{H}$, we could find an edge geodesic that separates $\eta_1$ and $\eta_2$, and the forward tiling path $(A_0, A_1, A_2, \ldots)$ would then have to cross the edge geodesic infinitely often, contradicting Proposition~\ref{prop:minimal_tiling_path_characterization}. Thus, the sequence $\{p_0, p_1, p_2, \ldots\}$ and therefore the forward tiling path $(A_0, A_1, A_2, \ldots)$ limits to $\eta \in \partial \mathbb{H}$. 
\end{proof}

The next proposition is analogous to Proposition~\ref{prop:minimal_tiling_paths_word_equivalent} and has a similar proof. 

\begin{proposition}
\label{prop:minimal_same_endpoint}
Consider a hyperbolic $(p,q)$-tiling with $q$ even. Any two infinite minimal tiling paths $\mathcal{A}$ and $\mathcal{B}$ with the same initial tile and the same endpoint on $S^1 \cong \partial \mathbb{H}^2$ are word equivalent. 
\end{proposition}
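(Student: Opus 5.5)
The plan is to mimic the proof of Proposition~\ref{prop:minimal_tiling_paths_word_equivalent}, replacing the finite endpoint $A_n$ by the boundary point $\eta$. Let $\mathcal{A} = (A_0, A_1, \ldots)$ and $\mathcal{B} = (B_0 = A_0, B_1, \ldots)$ be infinite minimal tiling paths limiting to the same $\eta \in \partial\mathbb{H}$; by Proposition~\ref{prop:endpoints} each has a well-defined endpoint. Since every finite subpath is minimal, Proposition~\ref{prop:minimal_tiling_path_characterization} shows that neither path crosses any edge geodesic more than once.

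The first step is to characterize which edge geodesics an infinite minimal path crosses. I claim $\mathcal{A}$ crosses an edge geodesic $g$ if and only if $g$ separates $A_0$ from $\eta$ (with $\eta$ not an endpoint of $g$). Crossing at most once means the path eventually stays in one closed half-plane of $g$. Hence its limit point lies in that half-plane's closure, giving the "only if" direction up to endpoint issues. Conversely, if $g$ separates $A_0$ from $\eta$, the path must cross $g$ to accumulate at $\eta$.

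The delicate case is when $\eta$ is an endpoint of some edge geodesic $g$. Then the path may fellow travel $g$ forever on either side without crossing it. Lemma~\ref{lem:necessarilyfellowtraveling} should describe this tail behavior: the path eventually fellow travels $g$. Two paths fellow traveling on opposite sides are related by infinitely many vertex sequence moves along the vertices of $g$, which is exactly the infinite word equivalence of Definition~\ref{def:equivalence}. I expect this case, where $\mathcal{A}$ and $\mathcal{B}$ disagree about crossing $g$, to be the main obstacle. I would handle it using the conversion shown in Figure~\ref{fig:fellow_travel_conversion}. Each finite initial segment is converted after finitely many moves, so the sequence of moves satisfies the infinite definition.

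The second step is the inductive construction. Take the exit edges $e_{\mathcal{A}}, e_{\mathcal{B}}$ of $A_0$. If they differ, an argument as in Lemma~\ref{lem:minimal_adjacent_exits} shows they are adjacent: non-adjacent edge geodesics are disjoint by Lemma~\ref{lem:EdgeGeodDontIntersectTwoCases}, which would force $\eta$ to lie on both sides. The path $\mathcal{B}$ crosses $g_{e_{\mathcal{B}}}$, so $\eta$ lies beyond it. Then $\mathcal{A}$ crosses $g_{e_{\mathcal{B}}}$ at some finite step $m$, or else falls into the endpoint case handled above. If it crosses at step $m$, Lemma~\ref{lem:necessarilyfellowtraveling} shows that $(A_0, \ldots, A_m)$ fellow travels $g_{e_{\mathcal{B}}}$ before crossing. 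Finitely many vertex sequence moves then convert it to a path $\mathcal{A}'$ with $A'_1 = B_1$, leaving the tail unchanged.

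Iterating this step, with $B_k$ as the new base tile, produces a sequence of moves. Each stage alters only finitely many positions and fixes the first $k$ tiles forever after. The composite therefore satisfies Definition~\ref{def:equivalence}: any finite segment of $\mathcal{A}$ agrees with that of $\mathcal{B}$ after finitely many moves. The remaining check is that the moves at stage $k$ never disturb positions before $k$. This holds because the fellow-traveling conversion occurs within $(A'_k, \ldots, A'_m)$.
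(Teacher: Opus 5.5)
Your proposal is essentially the paper's proof. You take the first tile $A_k=B_k$ after which the paths diverge, show the exit edges are adjacent via the boundary-point analogue of Lemma~\ref{lem:minimal_adjacent_exits}, use Lemma~\ref{lem:necessarilyfellowtraveling} to see that $\mathcal{A}$ fellow travels $g_{e_{\mathcal{B}}}$ before crossing it, convert that segment by finitely many vertex sequence moves so the common prefix grows, and iterate.

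The endpoint case you flag, where $\mathcal{A}$ never crosses $g_{e_{\mathcal{B}}}$, is passed over silently in the paper. There the moves must be applied to $\mathcal{B}$ rather than $\mathcal{A}$: no finite sequence of moves on $\mathcal{A}$ can make it cross $g_{e_{\mathcal{B}}}$, whereas a single vertex sequence move on $\mathcal{B}$ at the common vertex of $e_{\mathcal{A}}$ and $e_{\mathcal{B}}$ already gives agreement at the next tile. That single move is what your conversion along $g_{e_{\mathcal{B}}}$ amounts to when run in that direction.
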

\begin{proof}
    Let $\mathcal{A} = (A_0, A_1, A_2, \ldots)$ and $\mathcal{B} = (B_0, B_1, B_2, \ldots)$ be two minimal tiling paths with the same initial tile $A_0 = B_0$ and the same limit point $\eta_\mathcal{A} = \eta_\mathcal{B} \in S^1 \cong \partial \mathbb{H}^2$. Let $A_{k+1} \neq B_{k+1}$ be the first time that these tiling paths traverse different tiles, so that $\mathcal{A}$ and $\mathcal{B}$ have their first $k+1$ initial tiles in common. Note that $k \geq 0$. This scenario is depicted in Figure~\ref{fig:infinite_minimal_equivalent}.

    \begin{figure}[h!]
    \centering
    \includegraphics[width=12cm]{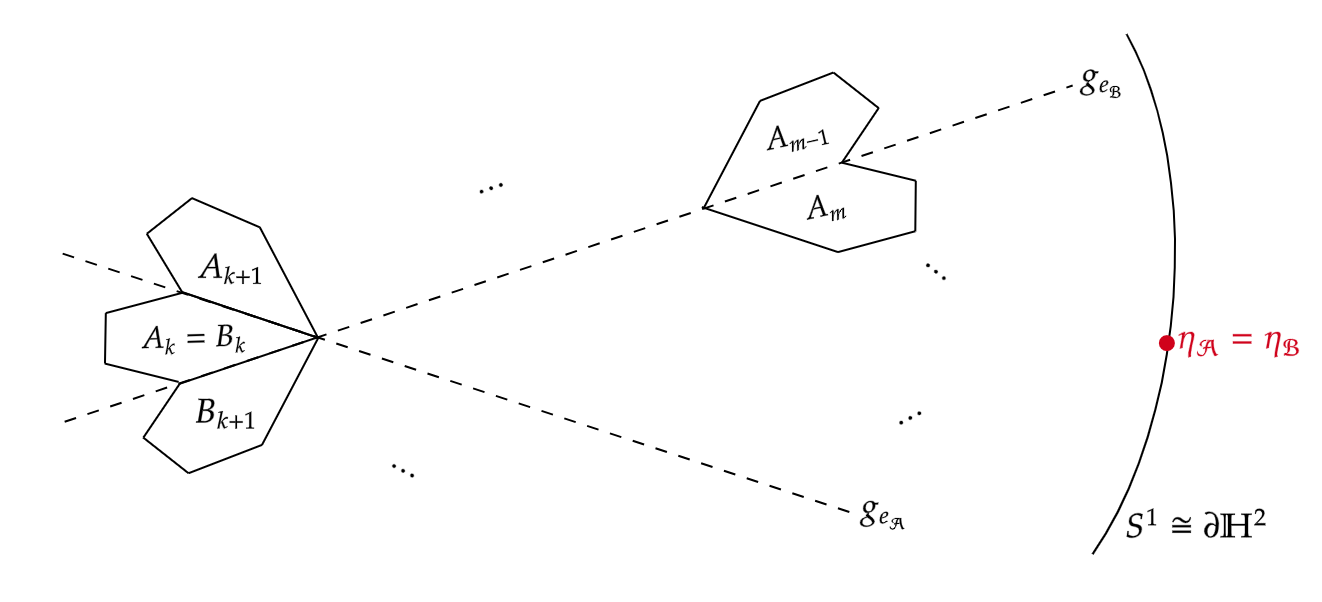}
    \caption{Two minimal tiling paths $\mathcal{A}$ and $\mathcal{B}$ with the same starting tile and endpoint in $\partial \mathbb{H}^2$.}
    \label{fig:infinite_minimal_equivalent}
\end{figure}

By a similar argument as in the proof of Lemma~\ref{lem:minimal_adjacent_exits} but with $\eta_{\mathcal{A}}$ and $\eta_{\mathcal{B}}$ taking the places of $A_n$ and $B_n$ respectively, the edges separating $(A_k, A_{k+1})$ and $(B_k, B_{k+1})$ must be adjacent on $A_k = B_k$. Let $g_{e_\mathcal{A}}$ and $g_{e_\mathcal{B}}$ denote the edge geodesics of these edges respectively. By a similar argument as in the proof of Proposition~\ref{prop:minimal_tiling_paths_word_equivalent}, if $(A_{m-1}, A_m)$ is the first time that $\mathcal{A}$ crosses $g_{e_{\mathcal{B}}}$, then $(A_k, A_{k+1}, \ldots, A_{m-1}, A_m)$ fellow travels $g_{e_\mathcal{B}}$ and then crosses it. This segment is word equivalent to a tiling subpath $(A_k, A_{k+1}', \ldots, A_{m-1}', A_m)$ that crosses $g_{e_\mathcal{B}}$ and then fellow travels it. But then, we have done finitely many vertex sequence equivalence moves to convert $\mathcal{A}$ to $\mathcal{A}'$, where $\mathcal{A}'$ and $\mathcal{B}$ have at least their first $k+2$ tiles in common. This is at least one more than $\mathcal{A}$ and $\mathcal{B}$ had in common. Inductively repeating this process and using the equivalence between base-tiled tiling paths and words, we find that $\mathcal{A}$ and $\mathcal{B}$ are word equivalent. 
\end{proof}

With these propositions in hand, we can now prove our main theorem of this subsection. 

\begin{proof}[Proof of Theorem~\ref{thm:infinite_bijection}]
Let $\mathcal{G}$ be the set of equivalence classes of geodesic rays beginning in the interior of a fixed based tile $A_0$, and let $\mathcal{W}$ be the set of admissible word classes of infinite words. We will first create a map from $\Phi:\mathcal{G} \rightarrow \mathcal{W}$ as follows.

Let $[\alpha] \in \mathcal{G}$ and $\eta \in \partial \HH^2$ be its corresponding endpoint in the boundary. After picking a representative geodesic ray $\alpha$, we have that $w_\alpha$ is the word associated to $\alpha$ (see Definition~\ref{def:AssociatedWordOfGeodesic}), and we define $\Phi([\alpha]) = [w_\alpha]$. 

We first show this map is well defined. Let $\alpha, \beta \in [\alpha]$ be geodesic rays starting in $A_0$ with the same endpoint $\eta \in \partial \mathbb{H}^2$. Since $\alpha$, $\beta$ are geodesics rays, the corresponding tiling paths $\calA_\alpha$ and $\calA_\beta$ are minimal tiling paths with the same base tile and endpoint, and so $w_\alpha$ and $w_\beta$ are admissible by Proposition~\ref{prop:infinite_minimal_admissible}. Furthermore, by Proposition~\ref{prop:minimal_same_endpoint}, $\mathcal{A}_\alpha$ and $\mathcal{A}_\beta$, and therefore $w_{\alpha}$ and $w_{\beta}$, are word equivalent.

We now argue $\Phi$ is bijective, starting with injectivity. Let $[\alpha], [\beta] \in \mathcal{G}$ be distinct and let $\eta_1, \eta_2 \in \partial\HH^2$ be their distinct endpoints. Let $g$ be an edge geodesic whose endpoints separate $\eta_1$ and $\eta_2$. See Figure~\ref{fig:infinitebijection}. This means that exactly one of the tiling two paths $\calA_\alpha := (A_0, A_1, \dots, )$ or $\calA_\beta := (A_0, B_1, \dots)$ crosses $g$. Without loss of generality, suppose that $\calA_\beta$ crosses $g$ between edges $B_{n-1}$ and $B_n$.  

\begin{figure}[h!]
\centering
\includegraphics[width=10cm]{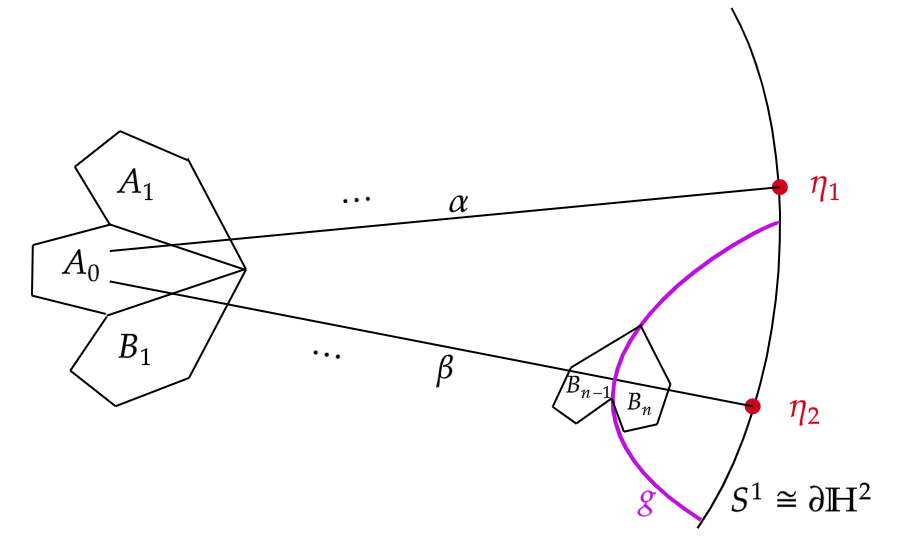}
\caption{Tiling paths $\calA_\alpha$ and $\calA_\beta$ have different endpoints $\eta_1$ and $\eta_2$ respectively that are separated by edge geodesic $g$.}
\label{fig:infinitebijection}
\end{figure}

Let $B_n$ be the first tile of $\calA_\beta$ that is on the $\eta_2$ side of $g$. If the tiling paths $\calA_\alpha$ and $\calA_\beta$ were word equivalent, then there would exist another minimal tiling path $\calA_\alpha'$ that matches with $\calA_\beta$ for at least the first $n+1$ tiles, but would differ from $\calA_\alpha$ by finitely many vertex sequence equivalence moves. Thus, the endpoint of $\calA_\alpha'$ is still $\eta_1$, the same as the endpoint of $\calA_\alpha$. But now, $\calA_\alpha'$ is a minimal tiling path that double crosses the edge geodesic $g$, which contradicts Proposition $\ref{prop:minimal_tiling_path_characterization}$. Hence, $w_\alpha$ and $w_\beta$ are not in the same admissible word class, and so $\Phi$ is injective.

Finally, to prove surjectivity, let $[w] \in \mathcal{W}$ be an infinite admissible word class and let $\eta \in \partial \HH^2$ be the corresponding endpoint whose existence is asserted in Proposition~\ref{prop:endpoints}. Let $\calA$ be a tiling path corresponding to $w$ that starts at tile $A_0$ and is minimal by Proposition~\ref{prop:infinite_minimal_admissible}. Now, let $\alpha$ be a geodesic ray from a point in the interior of tile $A$ to $\eta$. Let $\calA_\alpha$ be the corresponding tiling path, which is minimal since $\alpha$ is a geodesic ray. As $\calA$ and $\calA_\alpha$ are both minimal tiling paths that share the starting tile and ending point, by Proposition~\ref{prop:minimal_same_endpoint}, they and their corresponding words are equivalent. By definition of $\Phi$, this means, $\Phi([\alpha]) = [w]$.  
\end{proof}

\subsection{A characterization of bi-infinite billiard words when $q$ is even}\label{subsec:qevenBiInfinite}

We now use our results about what one-sided infinite word classes are realized by a billiard trajectory to prove the following theorem about bi-infinite centered word classes (see Definition~\ref{defn:centeredwordclass}).

\GUbiinfinite*

As an immediate corollary to this theorem, we have the following, which we get by folding up the geodesic into a billiard trajectory. We note that the billiard trajectory that realized a centered bi-infinite word class may be \textbf{singular}, in that it passes through one or more vertices of the billiard table. 

\begin{corollary} In a hyperbolic billiard table obtained from a tile of a hyperbolic $(p,q)$-tiling with $q$ even, every centered bi-infinite admissible word class is achieved by a unique (possibly singular) billiard trajectory. 
\end{corollary}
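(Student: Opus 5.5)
The plan is to reduce Theorem~\ref{thm:biinfinite_realization} to the one-sided result, Theorem~\ref{thm:infinite_bijection}, applied in both directions from the centering tile $A_0$.

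Fix a centered bi-infinite admissible word class $\omega$ and a representative $w = \ldots w_{-1}w_0w_1\ldots$ centered at $A_0$. This gives a bi-infinite tiling path $(\ldots, A_{-1}, A_0, A_1, \ldots)$. Its forward half $(A_0, A_1, \ldots)$ has word $w_0w_1\ldots$. The reversed backward half $(A_0, A_{-1}, A_{-2}, \ldots)$ has word $w_{-1}w_{-2}\ldots$, since edge labels are symmetric under reversal.

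Every finite subword of an admissible class is admissible. So the one-sided classes $[w_0w_1\ldots]$ and $[w_{-1}w_{-2}\ldots]$ are admissible, and by Proposition~\ref{prop:infinite_minimal_admissible} both half-paths are minimal. In fact the whole bi-infinite path is minimal: every finite subpath has an admissible word class, so Proposition~\ref{prop:minpathimpliesadmissibleword} applies.

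By Proposition~\ref{prop:endpoints}, the forward half limits to a point $\eta_+$ and the backward half to a point $\eta_-$ in $\partial\HH$. Vertex sequence moves are local and only change finitely many tiles in any finite window. Hence $\eta_\pm$ depend only on the centered class, even when a move straddles $A_0$ and shifts the center: such a move alters finitely many tiles and does not change the tails.

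Next I show $\eta_+ \neq \eta_-$. The path contains a step $(A_0, A_1)$ crossing some edge geodesic $g$. By minimality (Proposition~\ref{prop:minimal_tiling_path_characterization}) the path crosses $g$ exactly once. So the two tails lie in opposite closed half-planes of $g$, and hence $\eta_+$ and $\eta_-$ lie in opposite closed arcs of $g$. They could still coincide at an endpoint of $g$. To exclude this I would use a second crossed geodesic further along: the forward tail crosses infinitely many distinct edge geodesics, and these nest toward $\eta_+$. Choosing one that separates $\eta_+$ from both endpoints of $g$ gives the separation.

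Let $\gamma$ be the geodesic from $\eta_-$ to $\eta_+$. Its tiling path (via nudges) must be shown to be word equivalent to $w$ with the correct centering. I would argue that $\gamma$ crosses exactly the edge geodesics that separate $\eta_-$ from $\eta_+$. These are exactly the edge geodesics crossed by the minimal bi-infinite path, since each is crossed at most once and the tails go to the two ends. In particular $\gamma$ passes through a tile shared with the class's path set, which is forced by the separation pattern around $A_0$. Then I would imitate Proposition~\ref{prop:minimal_same_endpoint} in both directions from a common tile to get word equivalence, with the center matching.

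Uniqueness follows from injectivity in Theorem~\ref{thm:infinite_bijection} applied to each half. Any geodesic realizing the class has forward endpoint $\eta_+$ and backward endpoint $\eta_-$, and a geodesic is determined by its two endpoints.

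The main obstacle is the centering and existence step: showing that $\gamma$ actually meets $A_0$, or a tile reachable from $A_0$ by the vertex sequence moves allowed in the class. This matters especially when $\gamma$ passes through a vertex of $A_0$ and the center lies inside a vertex sequence. There I would argue via the set of edge geodesics separating $A_0$ from $\eta_\pm$, combined with Lemma~\ref{lem:necessarilyfellowtraveling}.
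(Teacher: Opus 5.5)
For the corollary itself, nothing beyond Theorem~\ref{thm:biinfinite_realization} is needed: the paper simply folds the realizing geodesic, which may pass through vertices, back into the table. You instead re-derive that theorem from the one-sided results, and the proposal has a genuine gap exactly at the step you flag as the main obstacle.

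Your claim that $\gamma$ crosses exactly the edge geodesics crossed by the bi-infinite path fails when $\eta_+$ or $\eta_-$ is an endpoint of an edge geodesic $h$. In that case $\gamma$ is asymptotic to $h$ and does not cross it. A minimal path converging to that endpoint, however, may fellow travel $h$ on either side, so it may or may not cross $h$; this is the situation the paper's Case 2b handles. Moreover, even when the two crossing sets agree, no common tile follows. Suppose the path vertex-traverses a vertex $v$ of $A_0$ on the side of $A_0$. A geodesic can cross the same two edge geodesics through $v$ while passing through the opposite sector at $v$, and so miss $A_0$ altogether. So ``forced by the separation pattern around $A_0$'' is not yet an argument, and Lemma~\ref{lem:necessarilyfellowtraveling} alone does not supply one.

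What is missing is a localization step that produces a common tile up to vertex sequence moves. Only after that does your two-sided use of Proposition~\ref{prop:minimal_same_endpoint} apply. The paper does this in two cases.

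\emph{Adjacent exits.} If the path exits some tile through adjacent edges, it vertex-traverses some vertex $v$, and one works with the $q$ sectors at $v$. Lemma~\ref{lem:nonadjacent_sectors} shows that both the path and $\mathcal{B}_\gamma$ are equivalent to paths that vertex-traverse $v$ from their initial to their final sector. Its proof uses Lemmas~\ref{lem:p4_nonintersecting} and \ref{lem:p3_nonintersecting} to certify that the fellow-travel, traverse, fellow-travel replacement path is minimal. Since these sectors agree or are adjacent for the two paths, the traversals share a tile after at most one vertex sequence move. The boundary cases need separate arguments: an endpoint of $\gamma$ at an endpoint of an edge geodesic through $v$, and $p=3$ with sectors only one apart.

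\emph{No adjacent exits.} Let $e$ be the edge through which the path enters $A_0$, and let $g_1,g_2$ be the edge geodesics of the two edges of $A_0$ adjacent to $e$. If the path crosses $g_1$ or $g_2$, then by Lemma~\ref{lem:necessarilyfellowtraveling} it is equivalent to a path with adjacent exits. If it crosses neither, $\eta_\pm$ are confined so that $\gamma$ must cross $e$ itself and enter $A_0$.

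Your argument that $\eta_+\neq\eta_-$ breaks in the only case it must handle. If $\eta_+$ is an endpoint of $g$, no edge geodesic separates $\eta_+$ from both endpoints of $g$. Instead, take an edge geodesic $h$ crossing $g$ transversally at a vertex beyond $A_0$, toward the putative common endpoint. Both tails would then have to cross $h$, which contradicts minimality of the bi-infinite path.

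Word equivalence of infinite words allows infinitely many moves, so ``only finitely many tiles change'' does not show that $\eta_\pm$ are class invariants. Use the argument from the injectivity part of Theorem~\ref{thm:infinite_bijection} instead. Your uniqueness argument is fine.
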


We note that given any vertex $v$, any bi-infinite admissible tiling path $\calA$ (i.e. one whose associated word is admissible)  must have a well-defined beginning and ending sector (see Definition~\ref{def:sector}), since the path can cross each edge geodesic through $v$ at most once. We note also that $\calA$ and the tiling path $\mathcal{B}_g$ associated to the geodesic $g$ connecting the limit points $\eta_{\infty}$ and $\eta_{-\infty}$ of $\calA$ have the same beginning and ending sectors whenever $\eta_{-\infty}$ and $\eta_\infty$ are not at the endpoints of the edge geodesics through $v$. When $\eta_{-\infty}$ or $\eta_\infty$ are at the endpoints of these edge geodesics, we notice that the beginning or ending sectors respectively of $\calA$ and $\mathcal{B}_g$ are at most one apart. 

We will need the following hyperbolic geometry lemmas, whose proofs can be found in Appendix \ref{appdx:hyperbolictilings}.

\begin{restatable}{lemma}{fournonintersecting}
\label{lem:p4_nonintersecting}
    Consider a hyperbolic $(p,q)$-tiling with $p \geq 4$ and $q$ even. Let $g_1$ and $g_2$ be two edge geodesics that intersect at a vertex $v$. Let $g_3$ be an edge geodesic that intersects $g_2$ at exactly one vertex $w \neq v$. Then $g_3$ does not intersect $g_1$. 
\end{restatable}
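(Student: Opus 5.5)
The plan is to turn this into an angle computation in the hyperbolic plane, using the symmetry of the tiling and the classical criterion for when two geodesics are disjoint. Since $q$ is even, the $q/2$ edge geodesics through any vertex meet pairwise at angles that are multiples of $2\pi/q$. Consecutive edge geodesics through $v$ meet at exactly $2\pi/q$. The edge geodesic $g_2$ passes through $v$ and $w$, and $v,w$ are vertices along $g_2$, so the segment $[v,w]\subset g_2$ consists of at least one full edge of the tiling.

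First I would reduce to the extremal configuration. Let $\theta_1$ be the angle at $v$ between $g_1$ and $g_2$, and $\theta_3$ the angle at $w$ between $g_3$ and $g_2$. Both are at least $2\pi/q$, and both are taken on the side of $g_2$ where the rays could possibly meet. Consider the rays of $g_1$ and $g_3$ lying on a common side of $g_2$ and pointing toward each other. If they met at a point $x$, we would obtain a geodesic triangle $vwx$ with angle sum at least $\theta_1+\theta_3$. Each side of $g_2$ has to be handled separately, but if they meet on one side, the supplementary angles on the other side are the relevant ones, so it suffices to show that no such triangle exists on either side.

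Next I would show that such a triangle cannot exist. The main tool is a monotonicity argument. Shrinking $[v,w]$ to a single edge and taking both angles as small as possible makes intersection most likely, so it is enough to check that even with $d(v,w)$ equal to the edge length $\ell$ and both angles equal to $2\pi/q$, the two rays are disjoint or at worst asymptotically parallel. Concretely, I would use the tiling structure rather than trigonometry. Take the tile $A$ having edge $[v,w]$ and lying in the sector between $g_2$ and $g_1$. The ray of $g_1$ from $v$ and the ray of $g_3$ from $w$ then contain the two edges of $A$ adjacent to $[v,w]$. Because $p\ge 4$, these two edges are non-adjacent: their endpoints other than $v,w$ are distinct vertices of $A$. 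Let $e$ be an edge of $A$ separating them, which exists since $p\geq 4$ gives a tile with at least four edges. By Lemma~\ref{lem:EdgeGeodDontIntersectTwoCases}, the geodesic extension $g_e$ of $e$ does not meet the edge geodesics $g_1$ and $g_3$. I would then argue that $g_1$ and $g_3$ lie in different halfspaces of $g_e$, which implies $g_1\cap g_3=\emptyset$. If the general case has larger angles or a longer $[v,w]$, I would instead choose the tile adjacent to $v$ between $g_1$ and $g_2$ and an edge of it disjoint from $g_1$ and $g_2$, with the same separating conclusion.

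The main obstacle is the separation step. I must check that $g_e$ actually separates $g_1$ from $g_3$, and that every case is covered: both sides of $g_2$, every angle multiple, and $w$ not adjacent to $v$. If the tiling-combinatorial separation becomes messy, the fallback is explicit hyperbolic trigonometry. For the triangle with base $\ell$ and base angles $2\pi/q$, the angle-of-parallelism formula gives nonintersection exactly when $\cosh \ell \ge \frac{1+\cos^2(2\pi/q)}{\sin^2(2\pi/q)}$. The edge length satisfies $\cosh(\ell/2)=\cos(\pi/p)/\sin(\pi/q)$, and I would verify this inequality using $p\ge4$, noting that it fails when $p=3$, which is why that case is excluded.
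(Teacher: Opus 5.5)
Your reduction to the extremal configuration is sound. That is the case where $[v,w]$ is a single edge of a tile $A$, and $g_1,g_3$ extend the edges $[v,v']$ and $[w,w']$ of $A$ adjacent to it. However, the separation step you list first fails as written.

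\begin{itemize}
\item To apply Lemma~\ref{lem:EdgeGeodDontIntersectTwoCases} to an edge $e$ against both $g_1$ and $g_3$, $e$ must avoid all of $v,v',w,w'$. For $p=4$ the only candidate is $[v',w']$, and its extension actually passes through $v'\in g_1$ and $w'\in g_3$. For $p=5$ both candidates contain $v'$ or $w'$.
\item For $p\ge 6$ a suitable $e$ exists, but $g_e$ cannot separate $g_1$ from $g_3$. Each is disjoint from $g_e$ and contains a vertex of $A$ off $g_e$, so both lie in the halfspace of $g_e$ containing $A$. What does work for $p\ge 6$ is a different argument: $g_e$ would cross the interior of the putative triangle $vwx$ and could leave it only across $[v,w]$, and only once.
\end{itemize}

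The lemma should be applied directly instead. Take $g=g_1$, which shares the edge $[v,v']$ with $A$, and $e=[w,w']$. Since $p\ge 4$ forces $w'\neq v'$, $e$ has no vertex on $g_1$, so $g_3=g_e$ misses $g_1$. That single application is exactly the base step of the paper's proof. Your closing remark about choosing a different tile for the general case is unnecessary once your reduction is in place, and it rests on the same flawed separation idea.

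Your trigonometric fallback does close the argument. With $\theta=2\pi/q$, the condition $\cosh\ell\ge(1+\cos^2\theta)/\sin^2\theta$ is equivalent to $\cosh(\ell/2)\sin(2\pi/q)\ge 1$. Your edge-length formula gives $\cosh(\ell/2)\sin(2\pi/q)=2\cos(\pi/p)\cos(\pi/q)$. This is $\ge 1$ whenever $p,q\ge 4$, strictly so in the hyperbolic range, and it is $<1$ for $p=3$.

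You should still justify the monotonicity claim. It can be done synthetically:
\begin{itemize}
\item A ray from $v$ rotated toward $[v,w]$ enters the triangle $vwx$ and must exit through $[w,x]$.
\item A ray from a point of $[v,w]$ making the same angle as $g_3$ does at $w$ must exit through $[v,x]$. Exiting through $[w,x]$ would produce a triangle with angle sum $\pi$.
\end{itemize}

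Compared with the paper: the paper settles the extremal case synthetically, via the angle-sum argument inside Lemma~\ref{lem:EdgeGeodDontIntersectTwoCases}. It handles larger angles and longer $[v,w]$ with its remark that the other edge geodesics through $v_1$ lie ``between'' $\gamma_1$ and $\gamma_1'$, plus an induction along the vertices $v_0,\dots,v_n$ of $g_2$. These two devices are precisely your monotonicity in the angles and in the base length. Your route packages them into one reduction, and the trigonometry yields the explicit criterion $2\cos(\pi/p)\cos(\pi/q)\ge 1$, which shows exactly why $p=3$ must be excluded. The paper's route avoids computation entirely.
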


\begin{restatable}{lemma}{threenonintersecting}
\label{lem:p3_nonintersecting}
    Consider a hyperbolic $(p,q)$-tiling with $p =3$ and $q$ even. Let $g_1$ and $g_2$ be edge geodesic rays emanating from a vertex $v$ with angle at $v$ that is at least $6\pi/q$. Let $g_3$ be any edge geodesic that intersects $g_2$ at exactly one vertex $w \neq v$. Then $g_3$ does not intersect $g_1$. 
\end{restatable}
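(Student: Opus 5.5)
The idea is to argue by contradiction. I would build a geodesic triangle whose sides lie on $g_1$, $g_2$, $g_3$, and then show that a disk tiled by the triangles of the tiling cannot have such boundary angles. Combinatorial Gauss--Bonnet (Euler characteristic) for the triangulation will give the contradiction.

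\emph{Setup.} Suppose $g_3$ meets $g_1$ at a point $x$. Edges of the tiling meet only at vertices, so $x$ is a vertex of the tiling. Also $x \neq v$: otherwise $g_3$ would pass through both $v$ and $w$ and hence equal $g_2$. Likewise $x \neq w$, since $w \notin g_1$ (two distinct geodesics through $v$ meet only at $v$). So $v,w,x$ span a nondegenerate geodesic triangle $\Delta$. Its sides lie on edge geodesics, so $\Delta$ is a union of tiles and its boundary is made of tiling edges. All angles between edge geodesics at a vertex are multiples of $2\pi/q$. I therefore write the angles of $\Delta$ as $2\pi a/q$ at $v$, $2\pi b/q$ at $w$ and $2\pi c/q$ at $x$, with $b,c \geq 1$ integers. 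The hypothesis on the angle between $g_1$ and $g_2$ gives $a \geq 3$, provided the triangle is on the side of that angle. I would check that it is: if the angle of the rays is $\geq \pi$ there is nothing in that sector to form a triangle, and the case where the interior angle of $\Delta$ at $v$ is the supplementary one needs a short separate remark. Since $g_3$ meets $g_2$ only at $w$, and meets $g_1$ at $x$, every other vertex of the triangle's boundary is a non-corner boundary vertex lying in the interior of a side.

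\emph{Counting.} I would view $\Delta$ as a triangulated disk, with tiles as faces. For such a disk, Euler's formula gives
\[
\sum_{u \text{ interior}} (6-\deg u) + \sum_{u \in \partial\Delta} (4-\deg u) = 6,
\]
where $\deg$ denotes the degree within $\Delta$. The contributions are as follows.
\begin{itemize}
\item An interior vertex has degree $q \geq 8$ (since $p=3$ forces $q\geq 7$ and $q$ is even), so it contributes $6-q<0$.
\item A non-corner boundary vertex lies on a straight edge geodesic, so $q/2$ tiles of $\Delta$ meet there and its degree is $q/2+1$. It contributes $3-q/2\leq -1$.
\item The corners have degrees $a+1$, $b+1$, $c+1$ and contribute $3-a\leq 0$, $3-b\leq 2$ and $3-c\leq 2$.
\end{itemize}
Hence the left side is at most $4<6$, which is the desired contradiction.

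\emph{Main obstacle.} The delicate part is the bookkeeping that makes $\Delta$ a genuine triangulated disk with exactly three corners. Concretely, I need to check three things. First, each side of $\Delta$ is a union of edges, with no tile crossed by a side. Second, the corners have $a,b,c$ tiles respectively inside $\Delta$. Third, the angle at $v$ inside $\Delta$ really is the one bounded below by $6\pi/q$. I would settle the orientation issue first, by choosing $x$ to be the intersection point on the ray $g_1$ and noting which sector at $v$ the triangle occupies. After that, the Euler count above is routine.
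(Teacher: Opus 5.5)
Your argument is correct, and it takes a genuinely different route from the paper's.

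\textbf{The paper's route.} The paper never forms the triangle $vwx$. It walks along $g_2$ through the vertices $v=v_0,v_1,\dots,v_n=w$. Let $A$ be the tile at $v$ with an edge on $g_2$, and let $\gamma$ be the edge geodesic through the side of $A$ opposite $v$. One hyperbolic angle-sum estimate shows that $\gamma$ misses $g_1$: in the triangle $vuz$ the angle at the far vertex $u$ of $A$ is $\pi-2\pi/q$. The paper then observes that every other edge geodesic through $v_1$ is shielded from $g_1$ by $\gamma$. Finally it inducts with $(v_1,\gamma)$ in place of $(v,g_1)$, using $\pi-2\pi/q\ge 6\pi/q$ (that is, $q\ge 8$) to keep the angle hypothesis alive.

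\textbf{Your route.} You replace this local induction by a single global count, and it checks out:
\begin{itemize}
\item the identity $\sum_{\mathrm{int}}(6-\deg)+\sum_{\partial}(4-\deg)=6$ holds for any triangulated disk;
\item your degrees $q$, $q/2+1$, $a+1,b+1,c+1$ are right;
\item the bound $(3-a)+(3-b)+(3-c)\le 4<6$ closes the argument.
\end{itemize}
Equivalently, by Gauss--Bonnet, $\Delta$ is a union of $F\ge 1$ tiles, each of area $\pi-6\pi/q$. Yet its area is $\pi-2\pi(a+b+c)/q\le \pi-10\pi/q$, which is a contradiction.

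\textbf{Comparison.} Your route gives a sharper and cleaner statement, with no informal ``betweenness'' step. It only needs $a\ge 2$, i.e.\ an angle of at least $4\pi/q$. In fact it shows that any triangle bounded by three edge geodesics of a $(3,q)$-tiling with $q$ even is a single tile. The paper's route uses only triangle angle sums, with no cell-structure bookkeeping. It also parallels the inductive proof of Lemma~\ref{lem:p4_nonintersecting}.

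\textbf{The orientation issue.} The worry you flag is not a separate case. Since $x$ lies on the ray $g_1$ and $w$ on the ray $g_2$, the angle of $\Delta$ at $v$ is exactly the convex angle between the rays. That is what the hypothesis must mean. Under a reflex reading the lemma is false: take rays at convex angle $2\pi/q$, and let $g_3$ be the edge geodesic through the third side of the tile between them.
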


We can then use these lemmas to prove the following result about tiling paths that end in far enough apart sectors.

\begin{lemma} Consider a hyperbolic $(p,q)$-tiling with $q$ even. Let $v$ be a vertex that defines $q$ sectors.
\label{lem:nonadjacent_sectors}
Suppose $\calA$ is an admissible bi-infinite tiling path that begins and ends in non-adjacent sectors when $p \geq 4$ or in sectors separated by at least two sectors when $p = 3$. Then $\calA$ is equivalent to a tiling path $\mathcal{B}$ which contains a subpath that traverses $v$ from the initial sector to the ending sector and is completely in the initial sector before and is completely in the final sector after the vertex traversal.
\end{lemma}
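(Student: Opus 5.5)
Let the initial sector be $S$ and the final sector $T$, and let $g_1,\dots,g_{q/2}$ be the edge geodesics through $v$. Since $\calA$ is admissible, every finite subpath is minimal by Proposition~\ref{prop:minpathimpliesadmissibleword}. Hence by Proposition~\ref{prop:minimal_tiling_path_characterization} it crosses each $g_i$ at most once. Because $S$ and $T$ are distinct sectors, the set of edge geodesics through $v$ separating $S$ from $T$ is nonempty, and $\calA$ crosses each of them exactly once. Each other $g_i$ is not crossed at all. The plan is to push all of these crossings to the vertex $v$ using vertex sequence equivalence moves, exactly as in the proofs of Proposition~\ref{prop:minimal_tiling_paths_word_equivalent} and Lemma~\ref{lem:necessarilyfellowtraveling}.

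Pick a finite subpath $\calA_0=(A_a,\dots,A_b)$ of $\calA$. Choose it so that it starts with a tile deep in $S$ and ends deep in $T$, all crossings of the $g_i$ occur inside it, and before $A_a$ and after $A_b$ the path stays in $S$ and $T$ respectively. Let $C_S$ be the tile of $S$ adjacent to $v$ and $C_T$ the tile of $T$ adjacent to $v$. The goal is to show that $\td(A_a,A_b)=\td(A_a,C_S)+\td(C_S,C_T)+\td(C_T,A_b)$. In other words, the concatenation of a minimal path from $A_a$ to $C_S$, the vertex traversal of $v$ from $C_S$ to $C_T$, and a minimal path from $C_T$ to $A_b$ is minimal.

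By Proposition~\ref{prop:minimal_tiling_path_characterization}, it suffices to check that no edge geodesic is crossed twice by this concatenation. Such a path is then minimal between $A_a$ and $A_b$. Proposition~\ref{prop:minimal_tiling_paths_word_equivalent} then makes it word equivalent to $\calA_0$, and splicing it into $\calA$ gives $\mathcal{B}$. We can also take the pieces from $A_a$ to $C_S$ and from $C_T$ to $A_b$ to lie inside $S$ and $T$ respectively. This is possible because minimal paths cross no $g_i$ that does not separate their endpoints, and $A_a$ and $C_S$ are not separated by any $g_i$. The vertex traversal crosses exactly the $g_i$ separating $S$ from $T$, once each, and the other two pieces cross no $g_i$. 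So the only possible double crossing is by an edge geodesic $h$ not through $v$ that is crossed both in $S$ and in $T$. Crossing $h$ on the $S$ side and again on the $T$ side forces $h$ to meet both boundary rays of the sector region between $S$ and $T$ on some side.

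This is the step where the hypothesis on the sectors enters, and I expect it to be the main obstacle. We must show that no edge geodesic $h\not\ni v$ can meet both an edge geodesic ray bounding $S$ and one bounding $T$, at least not in the way that would create a double crossing.

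For $p\ge4$, I would argue as follows. If $h$ meets a ray from $v$ bounding $S$, it meets that geodesic $g_2$ at a vertex $w\neq v$, since edge geodesics meet only at vertices. Because $S$ and $T$ are non-adjacent, the ray bounding $T$ lies on a different geodesic $g_1$ through $v$. Lemma~\ref{lem:p4_nonintersecting} then says that $h$ does not meet $g_1$. Hence $h$ cannot separate $A_a$ from $A_b$ while also being crossed on both sides, which is a contradiction.

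For $p=3$, the same argument works with Lemma~\ref{lem:p3_nonintersecting} in place of Lemma~\ref{lem:p4_nonintersecting}. The hypothesis that at least two sectors lie between $S$ and $T$ gives the required angle of at least $6\pi/q$ between the relevant rays.

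The delicate bookkeeping is making sure that a geodesic $h$ crossed by $\calA$ within $S$ and within $T$ really must intersect two such rays. I would handle this by noting that $h$ separates $A_a$ from $A_b$ only if it enters the wedge through the boundary rays, since it cannot pass through $v$.
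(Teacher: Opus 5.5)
Your argument is correct and is essentially the paper's proof. Both replace a finite stretch of $\calA$ by a path through a vertex traversal of $v$, certify its minimality via Proposition~\ref{prop:minimal_tiling_path_characterization} by using Lemma~\ref{lem:p4_nonintersecting} (resp.\ Lemma~\ref{lem:p3_nonintersecting}) to rule out an edge geodesic crossed on both sides of $v$, and then invoke Proposition~\ref{prop:minimal_tiling_paths_word_equivalent}. The paper cuts at the first and last crossings of the edge geodesics through $v$ and uses fellow-traveling paths. You instead allow arbitrary minimal paths inside $S$ and $T$, and supply the convexity/winding argument showing that a doubly crossed $h$ must meet the facing boundary rays of $S$ and $T$ on one side.

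One correction for $p=3$: two intervening sectors give an angle of only $4\pi/q$ between those rays, not $6\pi/q$, so Lemma~\ref{lem:p3_nonintersecting} as stated does not literally apply. Its proof, however, only uses an angle of at least $4\pi/q$: the first triangle needs the angle at $v$ to be at least $2\pi/q$, and the inductive angle $\pi-2\pi/q$ exceeds $4\pi/q$. The paper's own proof relies on the same point.
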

\begin{proof}
We suppose that $p \geq 4$, and that $\calA = (\ldots, A_{-1}, A_0, A_1, \ldots)$ is an admissible bi-infinite tiling path that begins and ends in non-adjacent sectors relative to a vertex $v$. Then, $\calA$ must have a first and last time that the path crosses the edge geodesics through $v$. Without loss of generality, we suppose that those crossings happen at the subpaths $(A_0,A_1)$ and $(A_{n-1},A_n)$. We label the edge geodesics crossed as $g_1$ and $g_2$ respectively. We claim then that $\calA$ is vertex equivalent to a tiling path $\calA'$ where $(A_0,A_1, \ldots, A_n)$ is replaced by a tiling path $(A_0' =A_0, A_1', \ldots, A_{n-1}', A_n' = A_n)$ that fellow travels $g_1$ toward $v$, does a vertex traversal of $v$, and then fellow travels $g_2$ back to $A_n'=A_n$. See Figure~\ref{fig:vertex_traversal_replacement} for a depiction of this new tiling path. 

\begin{figure}[h!]
\centering
\includegraphics[width=12cm]{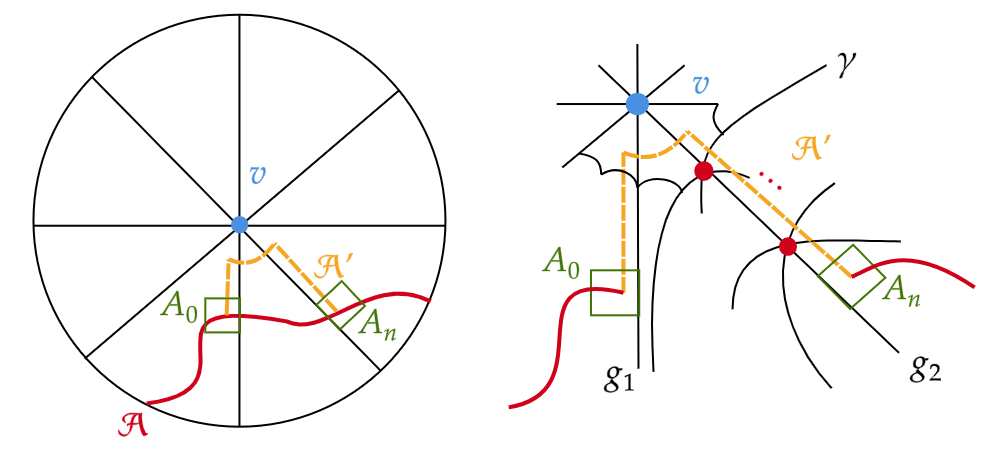}
\caption{A schematic for how a tiling path $\mathcal{A}$ that begins and ends in sufficiently far apart sectors relative to a vertex $v$ is vertex sequence equivalent to a tiling path that includes a vertex traversal of $v$.}
\label{fig:vertex_traversal_replacement}
\end{figure}

To prove that $\calA$ and $\calA'$ are vertex sequence equivalent, by Proposition~\ref{prop:minimal_tiling_paths_word_equivalent} it suffices to show that $(A_0', A_1', \ldots, A_n')$ is minimal and therefore is vertex sequence equivalent to $(A_0, A_1, \ldots,A_n)$. By Proposition~\ref{prop:minimal_tiling_path_characterization}, it suffices to show that $(A_0', A_1', \ldots, A_n')$ does not double cross any edge geodesics. The only candidate edge geodesics that the path could double cross are the edge geodesics separating sequential tiles. By construction, $(A_0', A_1', \ldots, A_n')$ does not double cross the edge geodesics throuh $v$ and by Lemma~\ref{lem:p4_nonintersecting}, it follows that the path does not double cross any edge geodesics passing through $g_1$ or $g_2$. Hence, the lemma in the $p \geq 4$ case follows. 

We note that the proof in the $p \geq 3$ case is similar but uses Lemma~\ref{lem:p3_nonintersecting} instead of Lemma~\ref{lem:p4_nonintersecting} in the last step. 
\end{proof}

\begin{proof}[Proof of Theorem~\ref{thm:biinfinite_realization}] 

Given a centered, bi-infinite, admissible word class, choose a centered element $w$ of the word class, which corresponds to a tiling path $(\ldots, A_{-1}, A_0, A_1,\ldots)$ centered at the tile $A_0$. By Proposition~\ref{prop:endpoints}, the associated forward and backward tiling paths $(A_0, A_1, A_2, \ldots)$ and $(\ldots, A_{-2}, A_{-1}, A_0)$  then have unique endpoints $\eta_\infty$ and $\eta_{-\infty}$ respectively. Let $g$ be the geodesic connecting $\eta_{-\infty}$ to $\eta_\infty$, and let $\mathcal{B}_g  = (\ldots, B_{-1}, B_0, B_1,\ldots)$ be the tiling path of $g$, which also gives rise to a centered word $w_g$. We will show that the centered word $w_g$ of $g$ is in the centered word class $[w]$. 

\textbf{Case 1 ($\mathcal{A}$ and $\mathcal{B}_g$ share a tile):} If the original tiling path $\mathcal{A}$ and the geodesic tiling path $\mathcal{B}_g$ ever share a tile $A_k$, then by Proposition~\ref{prop:minimal_same_endpoint}, the forward and backward subwords of $w_g$ and $w$, based at tile $A_k$, must then be equivalent. It follows then that $w_g$ and $w$ are equivalent.

\textbf{Case 2 ($\mathcal{A}$ and $\mathcal{B}_g$ do not share a tile, and $\mathcal{A}$ contains adjacent exits):}
We suppose $\mathcal{A}$ contains a tiling subpath $(A_{k-1}, A_{k}, A_{k+1})$ that exits out to adjacent sides of $A_{k}$ and therefore vertex traverses the vertex $v$ shared by $A_{k-1},A_k,$ and $A_{k+1}$. This vertex $v$ splits the tiling into $q$ sectors. Since $\calA$ does not double cross any of the edge geodesics through $v$, we see that $\mathcal{A}$ cannot begin and end in the same or adjacent sectors. 

\textbf{Case 2a:} We first suppose that both $\mathcal{A}$ and $\mathcal{B}_g$ start and end in sectors that are non-adjacent if $p \geq 4$ and separated by at least two sectors when $p = 3$. Then, by Lemma~\ref{lem:nonadjacent_sectors}, both $\mathcal{A}$ and $\mathcal{B}_g$ are vertex sequence equivalent to tiling paths that include a vertex traversal of $v$ starting and ending in their respective starting and ending sectors. Since the starting sectors of $\mathcal{A}$  and $\mathcal{B}_g$ must be the same or adjacent (and likewise for the ending sectors), it follows that their respective vertex traversals either share a tile or one can be modified by a vertex sequence equivalence so that they share a tile. Thus, in this case, $\mathcal{A}$ and $\mathcal{B}_g$ are vertex sequence equivalent to tiling paths that share a tile, and we can appeal to Case 1. 

\textbf{Case 2b:} Now we work on the first of two edge cases (see Figure~\ref{fig:biinfinite_realization}, left image). Assume $\mathcal{A}$ starts and ends in sectors that are non-adjacent when $p \geq 4$ and separated by at least two sectors when $p = 3$, but $\mathcal{B}_g$ does not. This can only happen if $\eta_{-\infty}$ or $\eta_\infty$ is at the endpoint of an edge geodesic through $v$, the center of the sectors. We suppose without loss of generality that this happens at $\eta_\infty$, which is the endpoint of a ray $g_\infty$ from $v$ to $\partial \mathbb{H}^2$. Let $A_j$ be the first tile in $\calA$ adjacent to $g_\infty$. By Theorem~\ref{thm:infinite_bijection}, $\calA$ is equivalent to a path $\calA'$ that matches $\calA$ until tile $A_j$ and then fellow travels $g_\infty$ without crossing it afterward. Likewise, after choosing a tile $B_k$ in the same sector as the forward tail of $\mathcal{B}_g$, we can apply Theorem~\ref{thm:infinite_bijection} to see that $\calB_g$ is vertex sequence equivalent to a tiling path $\calB'_g$ that matches $\calB_g$ until $B_k$ and then travels toward $g_\infty$ and fellow travels $g_\infty$ towards $n_\infty$ without crossing $g_\infty$. Now, $\calA'$ and $\calB'_g$ fellow travel $g_\infty$ on the same side, and so they must share a tile and we are back in Case 1. 


\begin{figure}[h!]
\centering
\includegraphics[width=5cm]{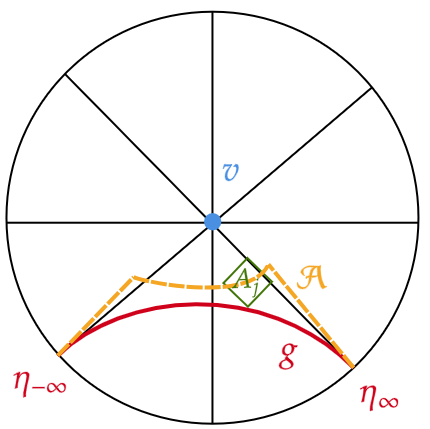}
\includegraphics[width=5.3cm]{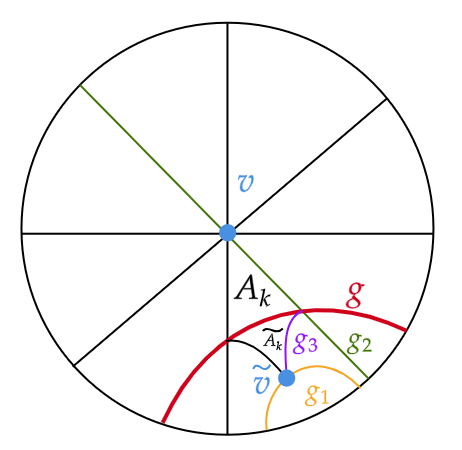}
\includegraphics[width=5cm]{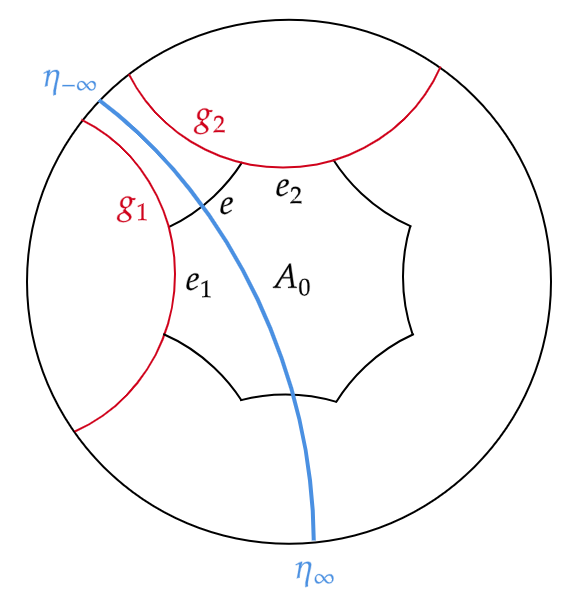}
\caption{Depictions of three of the cases in the proof of Theorem~\ref{thm:biinfinite_realization}: Case 2b (left), Case 2c (middle), and Case 3 (right)}
\label{fig:biinfinite_realization}
\end{figure}

\textbf{Case 2c:} We now consider the last edge case when $p=3$ but $\mathcal{A}$ ends in sectors that are only one apart (see Figure~\ref{fig:biinfinite_realization}, middle image). Let $g$ denote the edge geodesic through the edge opposite $v$ in $A_k$, and let $\tilde{A_k}$ denote the tile that is the reflection of $A_k$ through $g$. 

We will first show that any geodesic connecting the backward and forward limit points $\eta_{-\infty}$ and $\eta_\infty$ must pass through $A_k$ or $\tilde{A_k}$. To see this, let $g_1$ be an edge geodesic through the vertex $\tilde{v}$ on $\tilde{A_k}$ that is the reflection of $v$ through $g$, such that $g_1$ makes an angle of at least $\frac{2\pi}{q}$ with each edge of $\tilde{A_k}$ through $\tilde{v}$. Let $g_3$ be one of the edges of $\tilde{A_k}$, and let $g_2$ be the edge geodesic through $v$ that intersects $g_3$. We notice that $g_3$ and $g_2$ meet at an angle of $\frac{(q-2)\cdot 2\pi}{q}$. Thus, $g_1$ and $g_2$ cannot intersect because their intersection angle would be $\geq \frac{2\pi}{q}$ and thus the resulting triangle angle sum of the triangle with sides $g_1,g_2, g_3$ would be $\geq \pi$. Symmetrically, $g_1$ does not intersect the other edge geodesic through $v$ that is adjacent to $A_k$. It follows that for any endpoints $\eta_{-\infty}$ and $\eta_{\infty}$ in the two sectors adjacent to $A_k$, the geodesic connecting them must pass through either $A_k$ or $\tilde{A_k}$. In the former case, $\mathcal{B}_g$ and $\mathcal{A}$ share the tile $A_k$ and we are back in Case 1. In the latter case, arguing as in the end of Case 2b shows us that $\mathcal{A}$ is vertex sequence adjacent to a word that passes through $\tilde{A_k}$, which $\mathcal{B}_g$ also passes through, and we can again appeal to Case 1.

\textbf{Case 3 ($\mathcal{A}$ and $\mathcal{B}_g$ do not share a tile, and $\mathcal{A}$ does not contain adjacent exits):} Assume $\calA = (\dots, A_{-1}, A_0, A_1, \dots)$ does not contain any adjacent tile exists (see Figure~\ref{fig:biinfinite_realization}, right image). Note by Proposition~\ref{prop:minimal_tiling_path_characterization}, as $\calA$ is minimal. Let $e$ be the edge separating $A_{-1}$ and $A_0$, and let $g_1$ and $g_2$ be the two edge geodesics extending the edges $e_1$ and $e_2$ respectively on $A_0$ adjacent to $e$. Assume first $\mathcal{A}$ crosses $g_1$ or $g_2$. By construction it cannot do so at $e_1$ or $e_2$. Thus, if $\calA$ crosses $g_1$ or $g_2$, it is vertex sequence equivalent to a path that fellow travels $g_1$ or $g_2$ and then crosses, by Lemma~\ref{lem:necessarilyfellowtraveling}. Fellow traveling includes adjacent exits, so we appeal to Case 2 in this case.

We suppose now that $\mathcal{A}$ does not cross $g_1$ or $g_2$. This implies that the backward and forward limit points $\eta_{-\infty}$ and $\eta_\infty$ of $\mathcal{A}$ are between the backward and forward endpoints of $g_1$ and $g_2$. A geodesic connecting such endpoints must pass through $A_0$, and so the tiling path $\mathcal{B}_g$ associated and $\mathcal{A}$ would share a tile, putting us back in Case 1. 
\end{proof}

%% file: computations.tex
\section{Comparing various growth rates for the $q$ odd case}
\label{sec:comparison}
In this section we compare the different types of growth rates discussed in the earlier section. More specifically, we compare the upper and lower bounds to billiard language complexity (for billiards in a regular hyperbolic polygon with $p$ sides and internal angles $2\pi/q$) that can be deduced via Theorem~\ref{thm:GUodd} by Giannoni and Ullmo to the upper bound to the same quantity obtained via Theorem~\ref{thm:complexity_evenodd}. As in the $q$ even case, Giannoni and Ullmo did not include a proof of their theorem in their published paper and we found it difficult to independently verify that their proof methods would work. Proposition~\ref{prop:growthratecomparisons}  in this section proves that our growth rate upper bounds from Theorem~\ref{thm:complexity_evenodd} are better or at least as good as those given by the theorem of Giannoni and Ullmo, assuming their theorem is correct. For small $p$ and $q$, our computational results suggest that our growth rate upper bounds are indeed strictly better.

\subsection{A result of Giannoni and Ullmo}

In the case where $q$ is odd, Giannoni and Ullmo give approximate grammar rules that do not determine the complete set of achieved bi-infinite words, but allows them to say that certain bi-infinite words are achieved and certain other bi-infinite words are not achieved. 

\begin{restatable}[Language rules for $q$ odd (\cite{GiannoniUllmo})]{theorem}{GUodd}\label{thm:GUodd} 
In a hyperbolic polygonal billiard corresponding to a $(p, q)$-tiling with q odd, bi-infinite words coming from billiard trajectories must follow the following rules:

\begin{enumerate}
    \item[O1]\label{grule1} Any word containing a two-letter subword $kk$  for any $k \in \{1,\ldots,p\}$ is not allowed.
    \item[O2]\label{grule2} Any word containing a subword of length greater than or equal to $\frac{q}{2}+\frac{3}{2}$ consisting of alternating letters $k$ and $k+1$ (or $p$ and $1$) is not allowed. 
\end{enumerate}

Furthermore, any word containing no two-letter subwords as in O1 and no subwords of length $\frac{q}{2}-\frac{1}{2}$ consisting of alternating letters $k$ and $k+1$ (or $p$ and $1$) is realized by a billiard path. 
\end{restatable}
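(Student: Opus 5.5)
Unfolding a billiard trajectory gives a geodesic in the $(p,q)$-tiling with $q$ odd, and its word is the associated word of its tiling path (Definition~\ref{def:AssociatedWordOfGeodesic}). So I would state and prove everything for geodesics in the tiling, splitting into necessity (O1, O2) and sufficiency (realization of words avoiding the stricter forbidden patterns).

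\textbf{Necessity.} Rule O1 is immediate: a geodesic crossing edge $k$ enters the reflected tile and cannot cross the same edge back, since two distinct geodesics meet at most once. For O2, suppose a geodesic word contains $m$ alternating letters $k,k+1$. Then the geodesic passes through $m+1$ consecutive tiles around a common vertex $v$. Sweeping halfway around $v$ crosses $\frac{q-1}{2}$ or $\frac{q+1}{2}$ edges, depending on the starting edge, because $q$ is odd. I would use Lemma~\ref{lem:zigzag_intersections} together with the zigzag machinery to show that a geodesic can cross at most $\frac{q+1}{2}$ consecutive edges at a single vertex. Alternatively, one can check directly that the geodesic entering a vertex star through one edge and leaving through another must subtend an angle less than $\pi$ at $v$. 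The edges crossed lie inside an open angle of measure at most $\pi$ at $v$, and at most $\frac{q+1}{2}$ edges fit in such an angle. Hence $m\le \frac{q+1}{2}<\frac q2+\frac32$.

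\textbf{Sufficiency.} Take a bi-infinite word $w$ with no $kk$ and no alternating run of length $\frac{q-1}{2}$. Center it at $A_0$ and follow it to get a tiling path $\mathcal{A}$. I would show that $\mathcal{A}$ is minimal, meaning by Proposition~\ref{prop:zigzig_tiling} and Proposition~\ref{prop:tiling-distance-zigzag} that it crosses no zigzag twice. The argument is an analogue of Proposition~\ref{prop:minpathimpliesadmissibleword}. Take a shortest subpath that double crosses a zigzag $\zeta$. Using Proposition~\ref{prop:zigzag_crossing} and Lemma~\ref{lem:EdgeGeodDontIntersectTwoCases}-type non-intersection arguments, the interior of this subpath must fellow travel $\zeta$. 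That forces a backtrack or a run of at least $\frac{q-1}{2}$ alternating letters at some vertex of $\zeta$, which the hypothesis excludes.

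Unlike the even case, no equivalence moves are available here. The short alternating runs mean that fellow traveling genuinely requires such a run, so this step has to be done by hand.

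\textbf{Realization and the main obstacle.} Given minimality, Propositions~\ref{prop:zigzag_halfspace} and \ref{prop:zigzig_tiling} let me repeat the argument of Proposition~\ref{prop:endpoints}, with zigzags in place of edge geodesics. This shows that the forward and backward halves converge to boundary points $\eta_{\pm\infty}$. Let $g$ be the geodesic joining them. I would then show that $g$'s tiling path coincides with $\mathcal{A}$. Both cross exactly the zigzags separating $\eta_{-\infty}$ from $\eta_\infty$, except possibly for the excess crossings of $g$ bounded in Proposition~\ref{prop:q-oddCombboundsTiling}. The strictness of the alternating-run bound should rule out any ambiguity at a vertex, so that the path is determined.

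The hardest step is this last identification: showing that the geodesic does not pass on the other side of some vertex. I would first try a sector argument in the style of Case 3 in the proof of Theorem~\ref{thm:biinfinite_realization}. The idea is that consecutive non-adjacent exits confine $\eta_{\pm\infty}$ between zigzags that the geodesic is then forced to pass between, tile by tile.
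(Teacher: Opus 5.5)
The paper does not prove this statement. It is quoted from Giannoni--Ullmo and used only conditionally in Section~\ref{sec:comparison}, and the authors say they could not verify it. So there is no paper proof to match, and the question is whether your sketch works on its own.

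Your necessity half is correct. O1 holds because a geodesic meets the line through an edge at most once. O2 holds because the edges at $v$ crossed by a geodesic avoiding $v$ lie in an open angle of measure less than $\pi$. Such an angle contains at most $\frac{q+1}{2}$ edge directions spaced $\frac{2\pi}{q}$ apart, which is the same observation made at the start of Appendix~\ref{appdx:hyperbolictilings}. The sufficiency half has genuine gaps.

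\textbf{Main gap: identifying the tiling path of $g$ with $\mathcal{A}$.} This identification is the whole content of the realization claim, and you only assert it. In the $q$ even proof it rests on two facts:
\begin{itemize}
\item the tiling path of a geodesic is minimal (Proposition~\ref{prop:q-evenCombEqualsTiling});
\item minimal paths with common ends are word equivalent (Proposition~\ref{prop:minimal_same_endpoint}).
\end{itemize}
Neither is available for $q$ odd. A geodesic can cross $\frac{q+1}{2}$ consecutive edges at a vertex between two tiles that are only $\frac{q-1}{2}$ apart the other way around; see the sharpness remark after Theorem~\ref{thm:complexity_evenodd}. Consequently:
\begin{itemize}
\item $g$'s tiling path need not be minimal;
\item $g$ may cross zigzags that do not separate $\eta_{-\infty}$ from $\eta_\infty$;
\item even equal sets of crossed zigzags would not determine a path, since the order of crossings matters and you have not shown that minimal paths obeying the strict run bound are unique.
\end{itemize}
To close the gap you must prove three things: $g$ makes no excess zigzag crossings; $g$ avoids every vertex (otherwise it gives no billiard word); and $g$ enters and leaves each $A_k$ through the prescribed edges.

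The Case~3 sector argument from Theorem~\ref{thm:biinfinite_realization} does not transfer. For $q$ odd the line through an edge is not a union of edges; it cuts through tiles beyond its endpoints, so it does not confine tiling paths. The zigzags that replace edge geodesics are only quasigeodesics, so ``$g$ is forced between them tile by tile'' needs a genuine geometric estimate. For example, a Giannoni--Ullmo style nested-sets argument would require showing that the set of geodesics realizing each finite central subword is nonempty, and that nonemptiness is itself the key quantitative step.

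\textbf{Secondary gap: the minimality step.} You need a zigzag analogue of Lemma~\ref{lem:necessarilyfellowtraveling}. Its proof would in turn need a zigzag version of Lemma~\ref{lem:EdgeGeodDontIntersectTwoCases}: if an edge of a tile touching $\zeta$ has no vertex on $\zeta$, then some zigzag through that edge misses $\zeta$ and separates the far side of the edge from $\zeta$. The paper's lemma uses that $g$ is a geodesic made of edges, and nothing in the paper supplies the zigzag version. Once fellow traveling is granted, your count is right. After crossing $e_i$, going around $v_i$ on the far side already produces a run of at least $1+\frac{q-3}{2}=\frac{q-1}{2}$ alternating letters, and the hypothesis excludes this.
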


This is a semi-characterization of the billiard words since, unlike the case when $q$ is even, it is not true that all words not forbidden by rules O1 and O2 are achieved with billiard trajectories. For example, when $p = 4$ and $q = 7$, the theorem asserts that words that \emph{do not contain} subwords such as $010$ are definitely realized as billiard words and words that \emph{contain} subwords such as $01010$ cannot be realized as billiard words. However, the theorem neither forbids nor asserts the realization of words that \emph{contain} a subword such as $010$ or $0101$. In fact, in Figure~\ref{fig:incompletegrammar} we demonstrate both such words can appear from billiard paths. In the same figure we also demonstrate a word, $2121323$, that is not forbidden by the theorem but one that cannot be realized as a billiard word, by applying Proposition~\ref{prop:consecutivezigzagintersection}. 

We note that due to the semi-characterization of the billiard words, Theorem~\ref{thm:GUodd} yields non-sharp upper and lower bounds on the language complexity function of hyperbolic billiards associated to a $(p,q)$-tiling when $q$ is odd:
\begin{enumerate}
    \item (Upper Bound) We can obtain an upper bound on the language complexity of billiards by computing the growth of all words in $p$ letters except for those forbidden by rules O1 and O2. 

    \item (Lower Bound) We can obtain a lower bound on the language complexity of billiards by computing the growth of all finite words that are guaranteed to be realized in Theorem~\ref{thm:GUodd}. In other words, a lower bound is given by the growth of all words in $p$ letters that do not have a two-letter subword as in O1 and do not have any subword of length $\frac{q}{2}-\frac{1}{2}$ consisting of alternating letters $k$ and $k+1$ (or $p$ and $1$). 
\end{enumerate}

In the next subsection we explore these bounds further.

    \begin{figure}[h!]
    \centering
    \begin{subfigure}{.45\textwidth}
    \centering
    \includegraphics[scale=0.6]{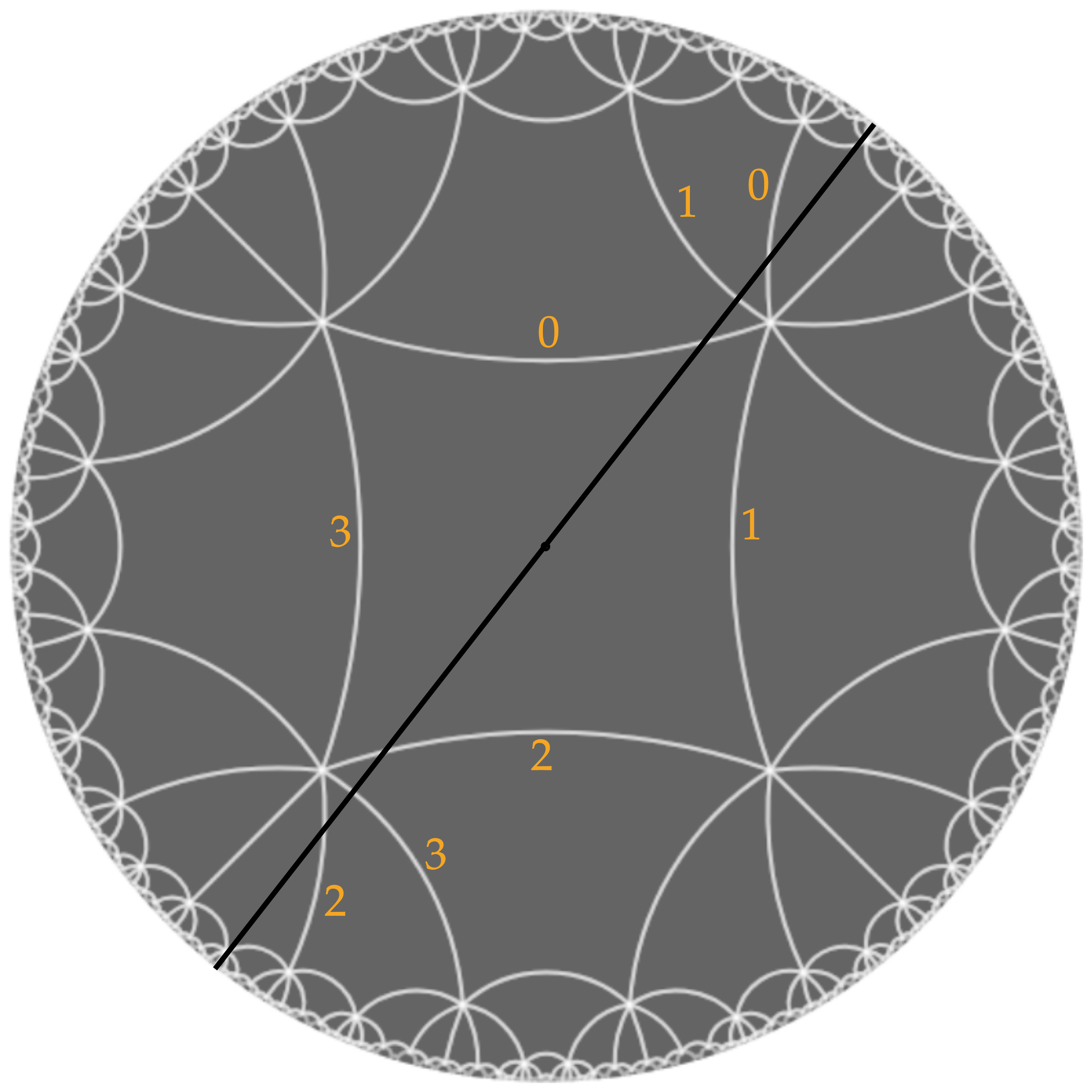}
        \caption{A billiard path whose billiard word contains 010 as a subword.}
        \label{fig:010word}
    \end{subfigure}
    \begin{subfigure}{.45\textwidth}
    \centering
    \includegraphics[scale=0.6]{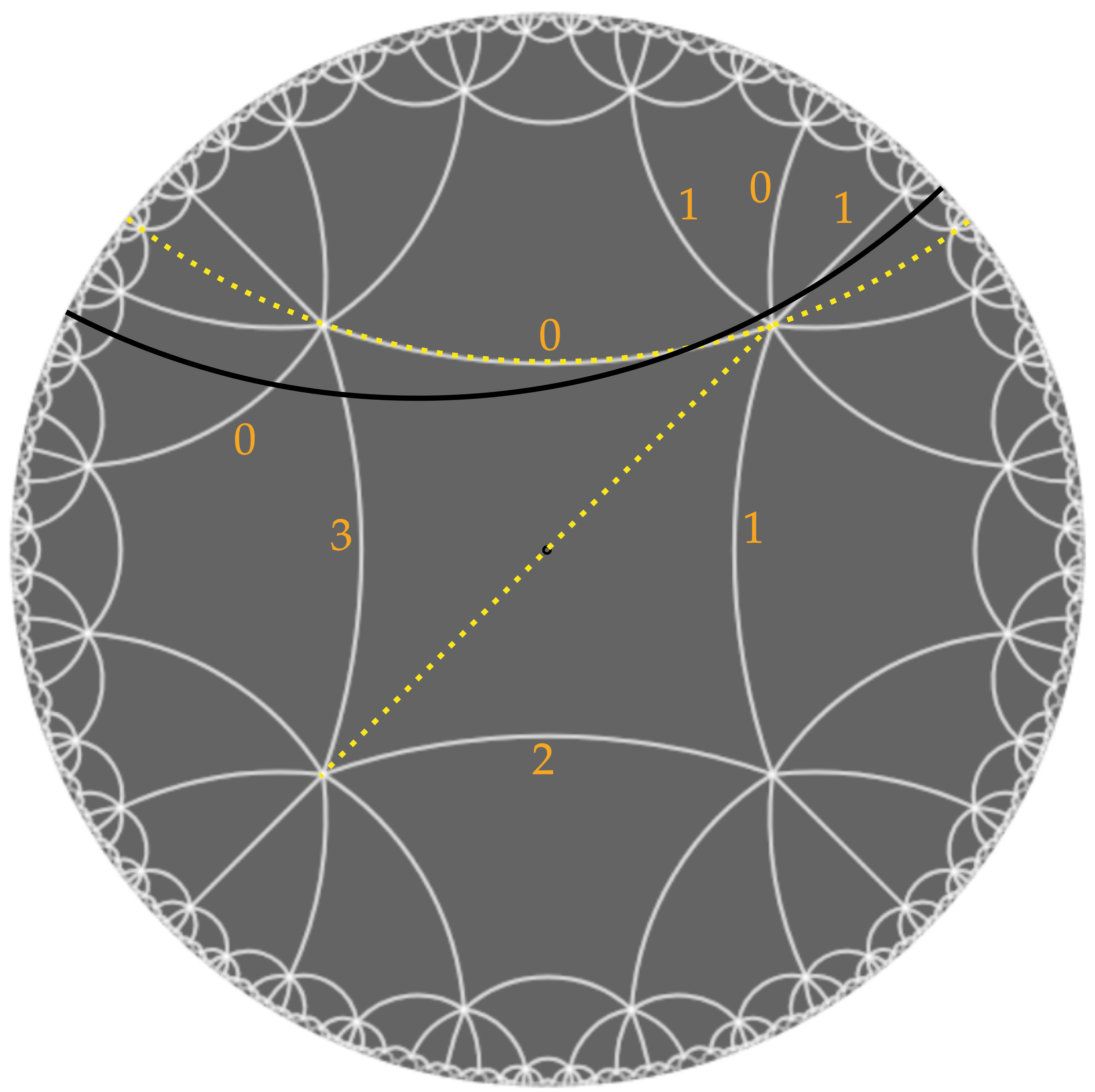}
        \caption{A billiard path whose billiard word contains 0101 as a subword.}
        \label{fig:0101word}
    \end{subfigure}\\
    \begin{subfigure}{.5\textwidth}
    \centering
    \includegraphics[scale=0.6]{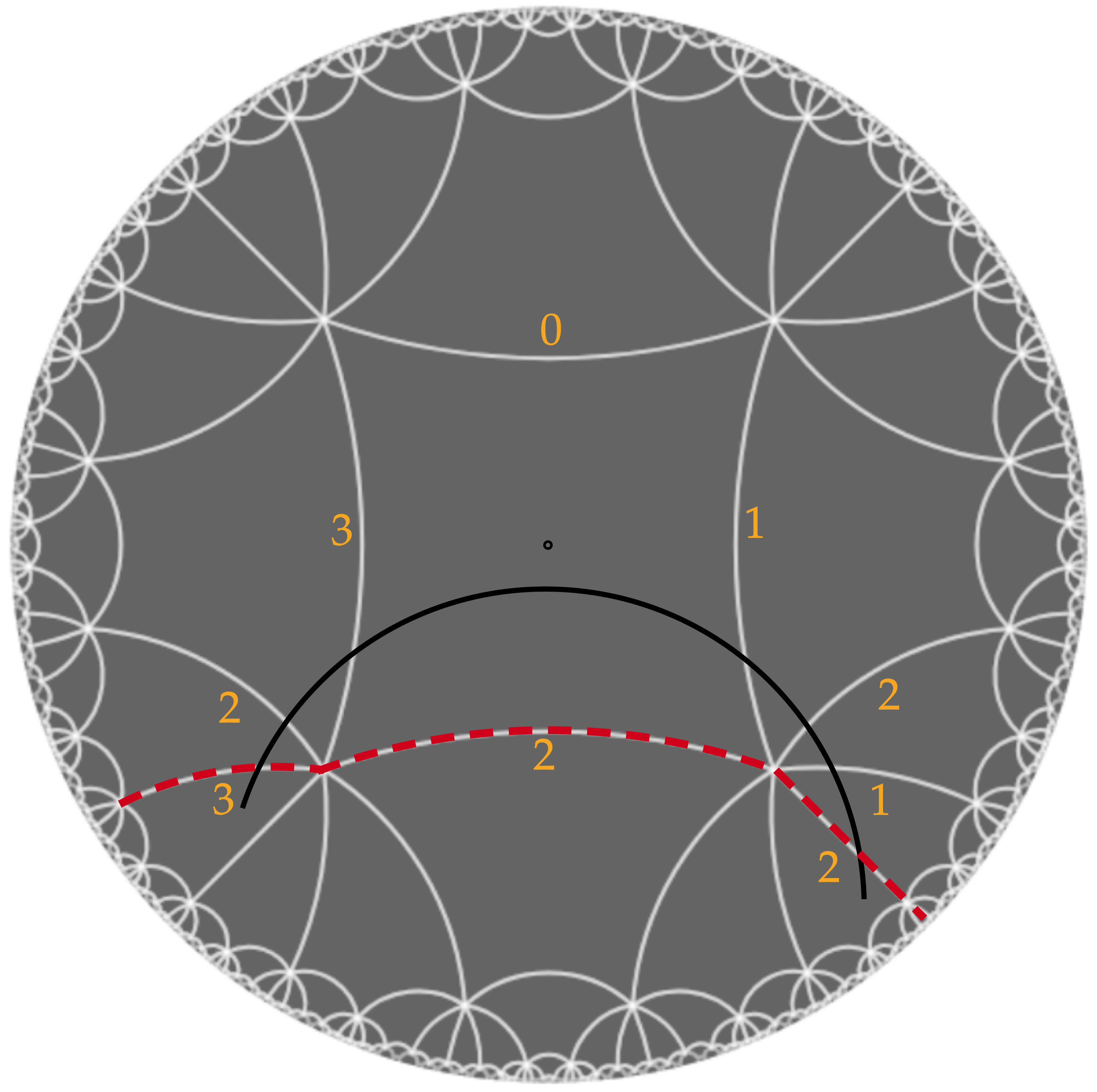}
        \caption{Word 2121323 that cannot be realized by a billiard trajectory since it would have to give rise to a geodesic segment that double intersects a zigzag (shown as dashed) non-consecutively violating Proposition~\ref{prop:consecutivezigzagintersection}.}
        \label{fig:0101word}
    \end{subfigure}

    \caption{The first two figures show examples of billiard trajectories in a regular hyperbolic polygon with 4 sides and $2\pi/7$ internal angle unfolded onto the hyperbolic plane. The third figure shows a word that is not forbidden by Theorem~\ref{thm:GUodd} but one that cannot be realized as a billiard trajectory.}
    \label{fig:incompletegrammar}
    \end{figure}

\subsection{Bounds on growth rates}

Consider a hyperbolic $(p,q)$-tiling with $q$ odd. Fix a base tile $A$ and recall $\ntd(k)$ denotes the number of tiles that are tiling distance $k$ away from $A$. Let $\nwlupper(n)$ be the number of words of length $n$ in the language with forbidden set given by O1 and O2 in Theorem~\ref{thm:GUodd}. 
Let $\nwllower(n)$ be the number of words of length $n$ containing no two-letter subwords as in O1 and no subwords of length $q/2-1/2$ of alternating letters $k$ and $k+1$ (or $p$ and 1).

Assuming Theorem~\ref{thm:GUodd}, the exponential growth rates of $\nwllower(n)$ and $\nwlupper(n)$ give lower and upper bounds for the exponential growth rate of $p(n)$. We will see that by using Theorem~\ref{thm:complexity_evenodd}, we can improve the upper bound and sometimes improve the lower bound by using the growth rate of $\ntd(n)$. 

\begin{proposition}\label{prop:growthratecomparisons}
Let $\ell, h_{top}, \alpha,$ and $u > 1$ be the exponential growth rates of $\nwllower(n)$, $p(n)$, $\ntd(n)$ and $\nwlupper(n)$ respectively. Then, $\ell \leq h_{top} \leq \alpha \leq u$, and $h_{top} \geq \alpha^{\frac{q-1}{q+1}}$.

\end{proposition}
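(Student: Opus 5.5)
The inequalities split into three groups, and I would treat each with what is already available.

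The relations $\alpha^{\frac{q-1}{q+1}} \leq h_{top} \leq \alpha$ are exactly part (2) of Theorem~\ref{thm:complexity_evenodd}, so they need no further argument. (Here, as in that theorem, growth rates are read as exponential bases.) The inequality $\ell \leq h_{top}$ comes from the realization half of Theorem~\ref{thm:GUodd}, which we are assuming. Any finite word avoiding O1 and avoiding alternating subwords of length $\frac{q}{2}-\frac12$ can be extended to a bi-infinite word with the same properties. One way is to append periodically a pattern such as $k(k+2)k(k+2)\cdots$ when $p\geq 4$, or another non-alternating pattern when $p=3$; I would check that such an extension exists uniformly. Theorem~\ref{thm:GUodd} then says the extended word is realized by a billiard path, so the original finite word lies in $\mathcal{L}(n)$. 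This gives $\nwllower(n) \leq p(n)$, and hence $\ell \leq h_{top}$.

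The remaining and main step is $\alpha \leq u$. I would first build an injection from tiles at tiling distance $n$ from the base tile $A$ into words counted by $\nwlupper(n)$. For each such tile, choose a minimal tiling path $(A=A_0,\dots,A_n)$ and take its associated word (Definition~\ref{def:associatedWordToTilingPath}). Distinct end tiles give distinct words, since a word together with the base tile determines the whole path. Next I would show this word avoids O1 and O2. A subword $kk$ means backtracking, which is impossible in a minimal path. An alternating subword of length $\geq \frac{q}{2}+\frac32$ means the path visits at least $\frac{q+5}{2}$ consecutive tiles around a single vertex $v$. But the $q$ tiles around $v$ are at mutual tiling distance at most $\lfloor q/2 \rfloor = \frac{q-1}{2}$, so going the other way around $v$ strictly shortens the path. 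This contradicts minimality. Proposition~\ref{prop:zigzig_tiling} gives an alternative route: the path would double cross a zigzag through $v$. Together these give $\ntd(n) \leq \nwlupper(n)$, and so $\alpha \leq u$.

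The delicate point is the counting around a vertex: I need to check that $\frac{q}{2}+\frac32$ alternating letters really exceeds $\frac{q-1}{2}+1$ steps, so the path genuinely goes more than halfway around $v$. There is one further issue. The billiard language and the tiling-path words are indexed by the labels of a single polygon, whereas for $q$ odd there is no globally consistent reflection-invariant labeling. So I would define path words using labels transported by unfolding, under which "alternating $k,k+1$" still corresponds to circling a vertex. This identification is the step I expect to need the most care.
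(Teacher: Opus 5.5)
Your proposal is correct and follows the paper's proof. The bounds $\alpha^{\frac{q-1}{q+1}} \le h_{top} \le \alpha$ are quoted from Theorem~\ref{thm:complexity_evenodd}, $\ell \le h_{top}$ comes from the realization half of Theorem~\ref{thm:GUodd}, and $\alpha \le u$ comes from sending each tile at distance $n$ to the word of a chosen minimal tiling path, which avoids O1 (no backtracking) and O2 (no going more than halfway around a vertex). You also extend finite words to bi-infinite ones before invoking Theorem~\ref{thm:GUodd}, and you use labels transported along the path, since for $q$ odd there is no reflection-consistent global labeling; both fill in details the paper leaves implicit.
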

\begin{proof}
Note that for each $n\in\NN$,  $\nwllower(n) \leq p(n)$ from Theorem~\ref{thm:GUodd}. So, $\ell \leq h_{top}$. By Theorem~\ref{thm:complexity_evenodd}, $\alpha^{\frac{q-1}{q+1}} \leq h_{top} \leq \alpha$. Finally, after fixing a base tile $A_0$, a tile $A_n$ that is counted in $\ntd(n)$ has tiling distance $n$ from $A_0$. Choosing a minimal tiling path from $A_0$ to $A_n$ gives us a word that satisfies O1 and O2 from Theorem~\ref{thm:GUodd} since otherwise, the tiling path could be made shorter by removing backtracking (if it violated O1) or by traversing a vertex the other way (if it violated O2). Two distinct tiles counted in $\ntd(n)$ must correspond to distinct words counted in $\nwlupper(n)$. Thus, we can find an injective map from $\ntd(n)$ to $\nwlupper(n)$, showing that $\ntd(n) \leq \nwlupper(n)$, which implies $\alpha \leq u$.    
\end{proof}

Next, to illustrate Proposition~\ref{prop:growthratecomparisons}, we computationally compare the growth rates of $\nwllower(n),\ntd(n)$ and $\nwlupper(n)$. We use a well known procedure to compute the growth rate of a language specified using a set of forbidden words. Given a language specified by a finite alphabet $\mathcal{A}$ and a finite set of forbidden words $\mathcal{F}$, let $m$ be the length of the longest forbidden word. To compute the growth rate of the set of all words $X$ in the language, we first create a directed graph $G$ where the vertex set is the set of all words of length at most $m-1$ that are not in $\mathcal{F}$ and there exists a (directed) edge from vertex $u$ to $v$ if and only if the $m-2$ letter suffix of $u$ matches the $m-2$ letter prefix of $v$ and the amalgamation of $u$ with $v$ along the matching suffix/prefix is not in $\mathcal{F}$. Directed walks in $G$ then represent allowed words in the language. It is known that the growth rate of such walks is given by the Perron eigenvalue (unique largest, real, positive eigenvalue guaranteed by the Perron-Frobenius theorem) of the transition matrix of $G$. See \cite{BrinStuck} for a full reference of this procedure.

The following example demonstrates this procedure in computing the growth of $\nwllower(n)$ and $\nwlupper(n)$ when $p=3$ and $q=7$.

\begin{example}\label{ex:OddGrowthRateComparisonExample}
When $(p,q) = (3,7)$, the alphabet is $\{0, 1, 2\}$. 

To compute the growth of $\nwlupper(\cdot)$, set the forbidden words is $$\mathcal{F^\mathrm{upper}} = \{00, 11, 22, 01010, 02020, 12121, 10101, 20202, 21212\}.$$ We create a graph $G$ with vertices 
\begin{align*}
    V = \{&0101, 0102, 0120, 0121, 0201, 0202, 0210, 0212, 1010, 1012, 1020, 1021,\\ &1201, 1202, 1210, 1212, 2010, 2012, 2020, 2021, 2101, 2102, 2120, 2121\}
\end{align*}
There exists a directed edge from $v$ to $w$ where $v, w \in V$ if and only if the 3-letter suffix of $v$ matches the 3-letter prefix of $w$ and the amalgamation of $v$ and $w$ over the matching suffix/prefix is not a forbidden word. For instance, there exists a directed edge from $0101$ to $1012$ since the 3-letter suffix/prefix of the words $101$ match and the amalgamation over this suffix/prefix $01012 \not \in \mathcal{F}$. However, no edge exists from $0101$ to $1010$ since the amalgamation $01010 \in \mathcal{F}$. The transition matrix $T$ of $G$ is then constructed, by setting
 $$T_{ij} = \begin{cases} 0 & \text{if no edge from }v_i\text{ to }v_j \\ 1 & \text{if there is an edge from }v_i\text{ to }v_j \end{cases} $$
The language growth rate $\approx 1.83928675521416$ is given by the top eigenvalue of the transition matrix $T$. 

For $\nwllower(\cdot)$, the set of forbidden words is  $\mathcal{F}^\mathrm{lower} = \{00, 11, 22, 010, 020, 121, 101, 202, 212\}$. The growth of the language is then similarly computed to be $1$.
\end{example}
In Table \ref{table:oddgrowthrates} we implement the prodecure from Example~\ref{ex:OddGrowthRateComparisonExample} for various $(p,q)$-tilings with $q$ odd and compare growth rates of $\nwllower(n)$ and $\nwlupper(n)$ with $\alpha$ and $\alpha^{\frac{q-1}{q+1}}$ where $\alpha$ is the exponential growth rate of $\ntd(n)$.

\begin{table}[h!]
\centering
\begin{tabular}{cc|p{3cm}|p{3cm}|p{3cm}|p{3cm}}
&  & \multicolumn{2}{c|}{Billiard Language Complexity Lower Bounds} & \multicolumn{2}{c}{Billiard Language Complexity Upper Bounds}\\
 \cline{3-6}
$p$ & $q$ & $\ell$ & $\alpha^{\frac{q-1}{q+1}}$& $\alpha$ & $u$ \\
\hline
3 &7 &1.00000000000000 & 1.39320015609277  &1.55603019132268 &1.83928675521416 \\
3 &9 &1.61803398874989 & 1.62242661883033  &1.83107582510231 &1.92756197548293 \\
4 &5 &1.00000000000000 & 1.74071386619816 &2.29663026288654 &2.83117720720834 \\
4 &7 &2.41421356237309 & 2.17799288640350  &2.82320193241387 &2.94771158684464 \\
4 &9 &2.83117720720834 & 2.37412444801062 &2.94699466977899 &2.98314000701146 \\
5 &5 &2.00000000000000 & 2.30788179437972 &3.50606805595024 &3.90057187491196 \\
5 &7 &3.56155281280883 & 2.77414842757632 &3.89797986736932 &3.97608928489373 \\
5 &9 &3.90057187491196 & 3.01683949575744 &3.97594397745373 &3.99410304410951 \\
6 &5 &3.00000000000000 & 2.77055394307089 &4.61158178930871 &4.93394490940640 \\
6 &7 &4.64575131106459 & 3.30995335049871 &4.93282638839610 &4.98708779528314 \\
6 &9 &4.93394490940640 & 3.61638522480061 &4.98704581211217 &4.99743424469419 \\
7 &3 &1.00000000000000 & 1.61803398874989 &2.61803398874989 &5.70156211871642 \\
7 &5 &4.00000000000000 & 3.18270263144865 &5.67798309021366 &5.95281778935296 \\
7 &7 &5.70156211871642 & 3.81075501946865 &5.95225287964244 &5.99224996607951 
\end{tabular}
\caption{Tiling growth rate, $\alpha$, in comparison to lower and upper bounds, $\ell$ and $u$ respectively, on the billiard language complexity.}
\label{table:oddgrowthrates}
\end{table}

From Theorem~\ref{thm:complexity_evenodd}, we know that in the odd $q$ case, the tiling exponential growth rate $\alpha$ is an upper bound for the billiard language complexity. The computational data in Table \ref{table:oddgrowthrates} illustrates our result from Proposition~\ref{prop:growthratecomparisons} by demonstrating that the upper bound for the language complexity growth that we obtain in Theorem~\ref{thm:complexity_evenodd} improves upon the upper bound deduced from Theorem~\ref{thm:GUodd}. Similarly, from Theorem~\ref{thm:complexity_evenodd}, $\alpha^\frac{q-1}{q+1}$ is a lower bound for the billiard language complexity. Table~\ref{table:oddgrowthrates} illustrates that this bound \emph{sometimes} improves upon the lower bound deduced from Theorem~\ref{thm:GUodd}.

%% file: appendix.tex
\newpage
\appendix

\section{Some facts about hyperbolic tilings}\label{appdx:hyperbolictilings}

In this section we prove some basic facts about hyperbolic $(p,q)$-tilings that we utilize in our proofs. The proofs are elementary and use classical results from hyperbolic geometry. 
\subsection{Facts about $(p,q)$-tilings when $q$ is odd}

\begin{wrapfigure}{r}{0.3\textwidth}
\centering
\includegraphics[width=4cm]{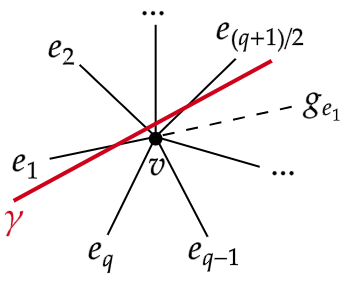}
\caption{A geodesic $\gamma$ and how it intersects edges adjacent to a vertex $v$. Here, $\gamma$ double-intersects the zigzag containing edge $e_1$ and $e_{(q+1)/2}$. }
\label{fig:q_odd_vertex_sequence}
\end{wrapfigure}

We first note that when a hyperbolic geodesic $\gamma$ intersects edges adjacent to a vertex $v$ of the tiling, it intersects edges $e_1, e_2, \ldots, e_{n}$ cyclically around $v$ with $n \leq \frac{q+1}{2}$ (since any more intersections would cause $\gamma$ to double intersect the geodesic extension $g_{e_1}$ of $e_1$ (see Figure \ref{fig:q_odd_vertex_sequence}). Furthermore, $\gamma$ cannot intersect any other edges of the tiling in between these intersections (with the edges adjacent to $v$), by the convexity of the tiles.

Now, we will prove a few results about zigzags in hyperbolic $(p,q)$-tilings with $q$ odd. We start by proving that zigzag paths (see Definition \ref{def:zigzag}) in a hyperbolic $(p,q)$-tiling with $q$ odd are quasigeodesics.

\begin{wrapfigure}[11]{r}{0.48\textwidth}
    \centering
        \includegraphics[width=8cm]{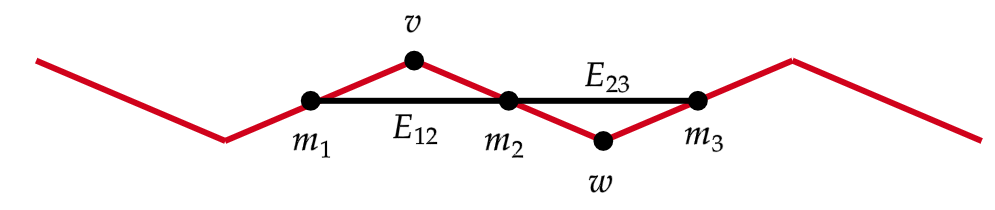}
        \caption{By the rotational symmetry of a zigzag around a midpoint $m_2$, two segments connecting successive midpoints of zigzag edges must meet at an angle of $\pi$. Thus, there is a geodesic connecting all midpoints of a zigzag.}
        \label{fig:zigzag_midpoint}
    \end{wrapfigure}

\zigzagQuasiGeod*

\begin{proof}
Let $m_1, m_2$ and $m_3$ denote the midpoints of three successive edges on a zigzag, and let $v$ and $w$ be the vertices of the zigzag between $m_1$ and $m_2$, and between $m_2$ and $m_3$ respectively (see Figure \ref{fig:zigzag_midpoint}). Let $E_{12}$ and $E_{23}$ denote the geodesic segments connecting $m_1$ and $m_2$, and $m_2$ and $m_3$ respectively. We will show that $E_{12}$ and $E_{23}$ meet at an angle of $\pi$. 

    We note that the tiling and therefore the zigzag is rotationally symmetric via a rotation of angle $\pi$ around the point $m_2$. Then, this rotation swaps $m_1$ and $m_3$, and therefore $E_{12}$ and $E_{23}$ are related by a rotation of $\pi$. It follows then that $E_{12}$ and $E_{23}$ meet at an angle of $\pi$ and therefore together form a larger geodesic segment. By the translational symmetry of the zigzag, the collection of edges connecting successive midpoints of the zigzag is a bi-infinite geodesic. Since the zigzag is bounded distance away from this geodesic through the midpoints, it is a quasigeodesic with the same endpoints and hence divides the hyperbolic plane into two unbounded halves. 
\end{proof}

In light of Proposition~\ref{prop:zigzag_halfspace}, we have the following definition.

\begin{definition}[Midpoint geodesic of a zigzag] Given a zigzag $\zeta$ in a hyperbolic $(p,q)$-tiling with $q$ odd, let $g_\zeta$ be the geodesic that intersects all midpoints of edges in the zigzag $\zeta$. We call $g_\zeta$ the \textbf{midpoint geodesic} of $\zeta$.
\end{definition}
The next proposition asserts that zigzags that intersect along an edge split the plane into 4 quarter spaces. Such zigzags are disjoint from each other, except along the edge they share.

\zigzagTransverse*
\begin{proof}
Let $\zeta_1$ and $\zeta_2$ be two zigzags that share an edge $e$ in a hyperbolic $(p,q)$-tiling with $q$ odd. By the reflectional symmetry of the tiling over the edge geodesic $g_e$ that contains the edge $e$, we have that $\zeta_2$ and $\zeta_1$ must be reflections of each other over $g_e$. 

Consider the midpoint geodesics $g_{\zeta_1}$ and $g_{\zeta_2}$, which pass through the midpoints of every edge on $\zeta_1$ and $\zeta_2$ respectively. Since $\zeta_1$ and $\zeta_2$ share the edge $e$ and hyperbolic geodesics intersect at most once, $g_{\zeta_1}$ and $g_{\zeta_2}$ intersect at the midpoint of $e$ and at no other point. Since the edges preceding and succeeding $e$ on both $\zeta_1$ and $\zeta_2$ are on opposite sides of $g_e$, it follows that the midpoint geodesics $g_{\zeta_i}$, and therefore the zigzags $\zeta_i$ and $\zeta_2$, each cross $g_e$ at the edge $e$ and therefore cannot share any edge except $e$ with $g_e$. 

Furthermore, if $\zeta_i$ intersected $g_e$ at another point not on $e$, then the intersection must be at a vertex $v$ that is on both $\zeta_1$ and $\zeta_2$. The $\zeta_i$ cannot cross $g_e$ at $v$ since the midpoints of the edges on $\zeta_i$ immediately preceding and succeeding $v$ would then be on opposite sides of $g_e$, contradicting that the midpoint geodesics $g_{\zeta_i}$ cannot double cross $g_e$. Similarly, the zigzag edges preceding and succeeding $v$ cannot be on $g_e$. Thus, by angle considerations of the zigzag and the symmetry of the tiling across $g_e$, it follows that the edges of each $\zeta_i$ preceding and succeeding $v$ each make an angle of $\frac{\pi}{q}$ with $g_e$. Thus the tiling is symmetric across a line of symmetry $g_{sym}$ that is perpendicular to $g_e$ at $v$, as depicted in Figure \ref{fig:zigzag_symmetry}. 

\begin{figure}
    \centering
    \includegraphics[width=13cm]{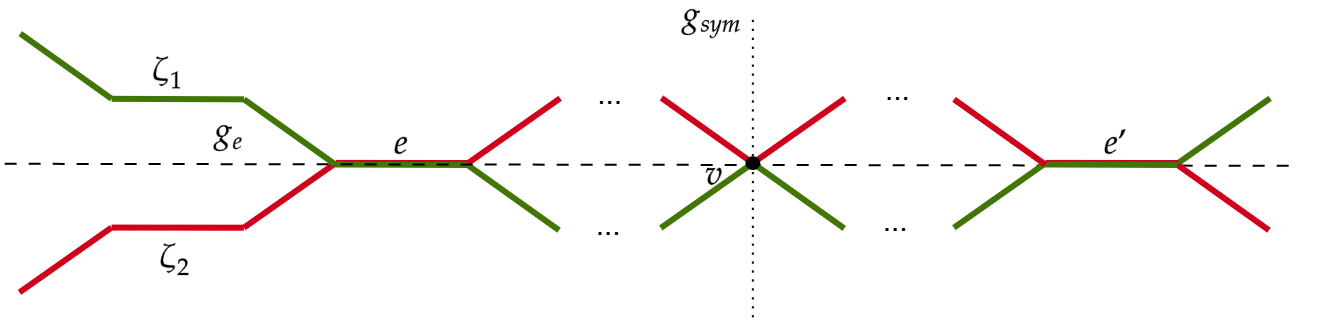}
    \caption{Two zigzags that intersect at an edge $e$ and a separate vertex $v$ must be symmetric across a line of symmetry $g_{sym}$ through $v$ and perpendicular to $g_e$.}
    \label{fig:zigzag_symmetry}
\end{figure}

A reflection of the tiling across $g_{sym}$ must then send each $\zeta_i$ to itself since this true locally near $v$. But then, the shared edge $e$ is reflected to another shared edge $e'$ on the other side of $g_{sym}$, contradicting that the zigzags can share at most one edge. 

It follows that the zigzags only share the edge $e$ and otherwise do not intersect. Since by Proposition \ref{prop:zigzag_halfspace} each zigzag divides the plane into two unbounded half spaces, we thus have that the two zigzags together divide the tiling up into four unbounded quarter spaces. 
\end{proof}

The next proposition reinforces the fact that zigzags behave like geodesics. It mimics the following hyperbolic geometry fact: if two geodesics $\gamma_1$ and $\gamma_2$ and two points $x$ and $y$ on $\gamma_1$ satisfy the property that $x$ and $y$ is within $\delta$ of $\gamma_2$, then every point on $\gamma_1$ between $x$ and $y$ is also within $\delta$ of $\gamma_2$.

\begin{proposition}\label{prop:consecutivezigzagintersection} Let $\zeta$ be a bi-infinite zigzag in a hyperbolic $(p,q)$-tiling with $q$ odd. If a geodesic (not necessarily an edge-geodesic) $\gamma$ intersects $\zeta$ at two of its edges, then $\gamma$ intersects $\zeta$ at every edge of $\zeta$ in-between the two intersection points. 
\end{proposition}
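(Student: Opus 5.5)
Let the consecutive edges of $\zeta$ be $\ldots,e_{-1},e_0,e_1,\ldots$, with $e_i$ and $e_{i+1}$ meeting at the vertex $v_i$. Suppose $\gamma$ meets $e_a$ and $e_b$ with $a<b$. The plan is to reduce to a statement about a single vertex and then propagate it along the zigzag. For the inductive step, assume $\gamma$ meets $e_a$ and $e_b$ with $b-a\ge 2$ and show that $\gamma$ also meets $e_{a+1}$. The whole proof rests on this step.

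The tool is the midpoint geodesic $g_\zeta$ of Proposition~\ref{prop:zigzag_halfspace}, together with the reflection and rotation symmetries of the tiling. Write $m_i$ for the midpoint of $e_i$. The segment of $g_\zeta$ from $m_i$ to $m_{i+1}$ lies inside the tile $T_i$ having $e_i$ and $e_{i+1}$ as edges; this holds because $e_i$ and $e_{i+1}$ are almost opposite at $v_i$. The zigzag $\zeta$ is the boundary between the two unbounded halves it bounds. Near $g_\zeta$, this boundary consists of the union of the closed wedges at the $v_i$, each cut off by $e_i$ and $e_{i+1}$, and these wedges sit alternately on the two sides of $g_\zeta$.

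A geodesic $\gamma$ meeting $e_a$ and $e_b$ passes through points $x\in e_a$ and $y\in e_b$. I will compare the geodesic segment $[x,y]$ with the piecewise path along $\zeta$ from $x$ to $y$. That path is a polygonal curve whose vertices $v_a,\ldots,v_{b-1}$ alternate sides of $g_\zeta$, and whose turning angles are $\pm(\pi-\pi/q)$ in alternating directions.

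The key claim is that $[x,y]$ separates the vertices $v_{a},\ldots,v_{b-1}$ according to which side of $g_\zeta$ they lie on. Equivalently, $[x,y]$ crosses every edge $e_i$ for $a<i<b$. I would prove this using the edge geodesics through each $v_i$. There are $q$ edges at $v_i$, and $e_i$ and $e_{i+1}$ are separated on one side by $\frac{q-1}{2}$ edges, with the angle between them less than $\pi$. By the argument in the first paragraph of this appendix (Figure~\ref{fig:q_odd_vertex_sequence}), $\gamma$ crosses edges around $v_i$ cyclically, at most $\frac{q+1}{2}$ of them.

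To run the induction, let $H$ be the closed half-plane bounded by $g_{e_{a+1}}$ that contains $v_a$ but not the far part of the zigzag. Here $g_{e_{a+1}}$ is the geodesic extension of $e_{a+1}$; it is not an edge geodesic, but it is a geodesic. I would show that $e_a$ lies on one side of $g_{e_{a+1}}$ and that $e_b$, for $b\ge a+2$, lies on the other. This uses the convexity argument of Proposition~\ref{prop:zigzag_crossing}: the zigzag crosses $g_{e_{a+1}}$ exactly at $e_{a+1}$, since $g_{e_{a+1}}$ contains $e_{a+1}$ and the reflection argument there shows $\zeta$ meets it nowhere else. The geodesic $\gamma$ therefore crosses $g_{e_{a+1}}$ exactly once. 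I then need this crossing to lie on the edge $e_{a+1}$ itself rather than on its extension. If it lay on the extension beyond $v_a$ or beyond $v_{a+1}$, then $\gamma$, running from $e_a$, would have to pass on the far side of $v_a$ or $v_{a+1}$. In that case $\gamma$ would separate $v_a$ from the tiles adjacent to $e_a$ in a way that forces it to meet $g_{e_a}$ or $g_{e_{a+2}}$ twice, which is impossible since these are geodesics.

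The main obstacle is making this last exclusion rigorous. It requires angle bookkeeping at $v_a$: the angle between $e_a$ and $e_{a+1}$ is $\pi\mp\pi/q$, and the extension of $e_{a+1}$ beyond $v_a$ lies inside the wedge on the other side. I expect to handle it by a triangle-angle-sum argument in the style of Case 2c of Theorem~\ref{thm:biinfinite_realization}: suppose $\gamma$ meets the extension of $e_{a+1}$ past $v_a$ as well as $e_a$, and derive a hyperbolic triangle whose angle sum is at least $\pi$. Once the one-step claim holds, induction on $b-a$ finishes the proof.
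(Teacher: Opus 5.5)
Your reduction is sound as far as it goes. Because $\zeta$ meets the geodesic extension $g_{e_{a+1}}$ only along $e_{a+1}$ (Proposition~\ref{prop:zigzag_crossing}), $\gamma$ crosses $g_{e_{a+1}}$ at exactly one point $z$, and the inductive step is equivalent to showing $z\in e_{a+1}$. But excluding the two alternatives is the entire content of the proposition, and your proposal leaves it unproved.

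The argument you sketch for the first alternative ($z$ past $v_a$) cannot work as stated. The edge $e_a$ and the extension of $e_{a+1}$ past $v_a$ bound a wedge of angle $\pi/q$ at $v_a$, and many geodesics cross both of its sides. So no triangle on $x$, $v_a$, $z$ alone is contradictory; you must use that $\gamma$ goes on to reach $e_b$. This case can in fact be settled locally. After $z$, $\gamma$ stays in the half-plane of $g_{e_{a+1}}$ containing $e_{a+2}$, so it must meet the ray of $g_{e_{a+2}}$ from $v_{a+1}$ through $e_{a+2}$ at some point $z'$. The triangles $x v_a z$ and $z v_{a+1} z'$ have angles $\pi/q$ and $\pi-\pi/q$ at $v_a$ and $v_{a+1}$, and supplementary angles at $z$. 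Their angle sums therefore total more than $2\pi$, a contradiction.

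The genuine gap is the second alternative, $z$ past $v_{a+1}$, which you do not address. Here $\gamma$ leaves $e_a$ into the angle $\pi-\pi/q$ at $v_a$ and passes on the reflex side of $v_{a+1}$. For $b=a+2$ the same two-triangle trick works, now with angles $\pi-\pi/q$ at $v_a$ and $\pi/q$ at $v_{a+1}$. For $b-a\ge 3$ it breaks down, and the difficulty is real. Take the irreducible situation where $\gamma$ meets no intermediate edge. Then $[x,y]$ and the zigzag bound a polygon whose angles at $v_a,\dots,v_{b-1}$ alternate $\pi-\pi/q,\ \pi+\pi/q,\dots$. When $b-a$ is odd these sum to $(b-a)\pi-\pi/q$, which leaves room for the angles at $x$ and $y$, so even the full angle count on this polygon gives no contradiction.

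This is exactly the case where the paper does extra work. It first passes to consecutive intersection points $p_0\in e_0$, $p_n\in e_n$ along $\gamma$ and dispatches the other parity and angle cases by the angle sum of $P$. It then introduces the sub-polygon $Q$ with vertices $p_0,v_1,v_3,\dots,v_{n-2},p_n$. The thin isosceles triangles cut off at the small-angle vertices have base angles less than $\pi/(2q)$. Hence $Q$ has interior angle exceeding $\pi$ at every $v_{\mathrm{odd}}$, contradicting the angle-sum bound for $Q$. You need an argument of this cumulative kind to close the inductive step.

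Separately, your claim that the segment $[m_i,m_{i+1}]$ lies in a tile having both $e_i$ and $e_{i+1}$ as edges is false for $q\ge 5$. Those edges are separated by $\frac{q-3}{2}$ other edges at $v_i$, so no tile contains both. You do not use the claim, but it should not be asserted.
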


\begin{proof}
Denote $\zeta = (\dots, e_{-1}, e_0, e_1, e_2, \dots )$ where $e_i$ are the edges of the tiling. Label the vertex between $e_i$ and $e_{i+1}$ by $v_i$. Assume $\gamma$ is a geodesic that intersects $\zeta$ at two non-consecutive edges. Without loss of generality call those edges $e_0$ and $e_n$ for some $n \geq 2$, and the intersection points $p_0$ and $p_n$. Assume towards a contradiction that there are no other points of intersection in between $p_0$ and $p_n$. 

Assume without loss of generality that $p_0$ and $p_n$ are not vertices $v_0$ and $v_{n-1}$. Otherwise, if $p_0$ is $v_0$ (or $p_n$ is $v_{n-1}$), we can re-run the argument below by replacing $e_0$ with $e_1$(or $e_n$ by $e_{n-1}$) by considering the intersection point $p_0$ to be part of edge $e_1$ (or $p_n$ to be a part of edge $e_{n-1}$).

Since there are no intersecting points in between $p_0$ and $p_n$, the segment of $\gamma$ between these two points lies completely on one side of the zigzag $\zeta$. 
We can then consider the $(n+2)$-sided polygon, $P$, formed by the segment of $\gamma$ between $p_0$ and $p_n$ along with the segments $p_0v_0, v_1v_{2}, \dots, v_{n-1}p_n$ along the zigzag. Alternately, this is the polygon bounded by the vertices $p_0, v_0, v_1, \dots, v_{n-1}, p_n$.

\begin{figure}[h!]
\centering
\begin{subfigure}{0.4\textwidth}
\centering
\includegraphics[scale=0.575]{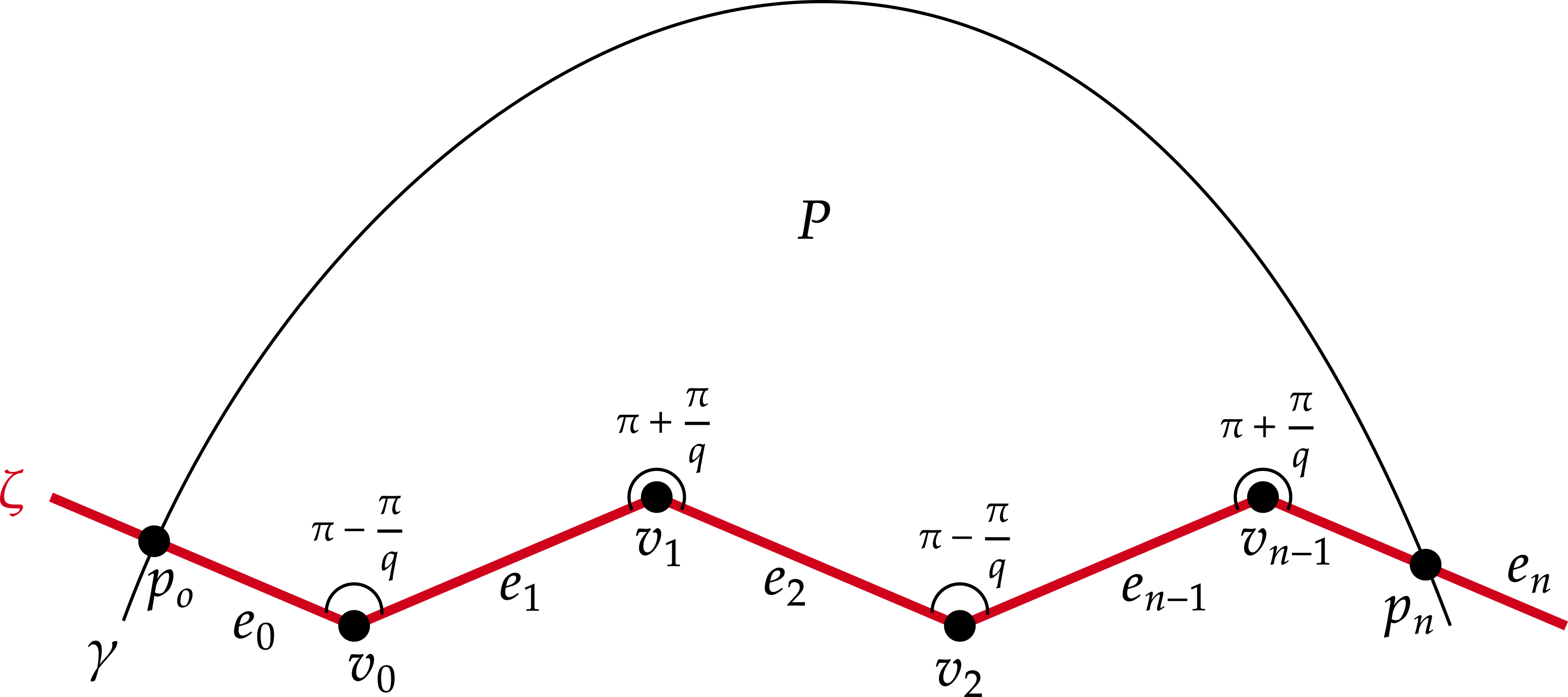}
\caption{The case when $n$ is even and the internal angle of $P$ at $v_0$ is $\pi-\pi/q$}
\end{subfigure}
\hspace{1cm}
\begin{subfigure}{0.4\textwidth}
\centering
\includegraphics[scale=0.575]{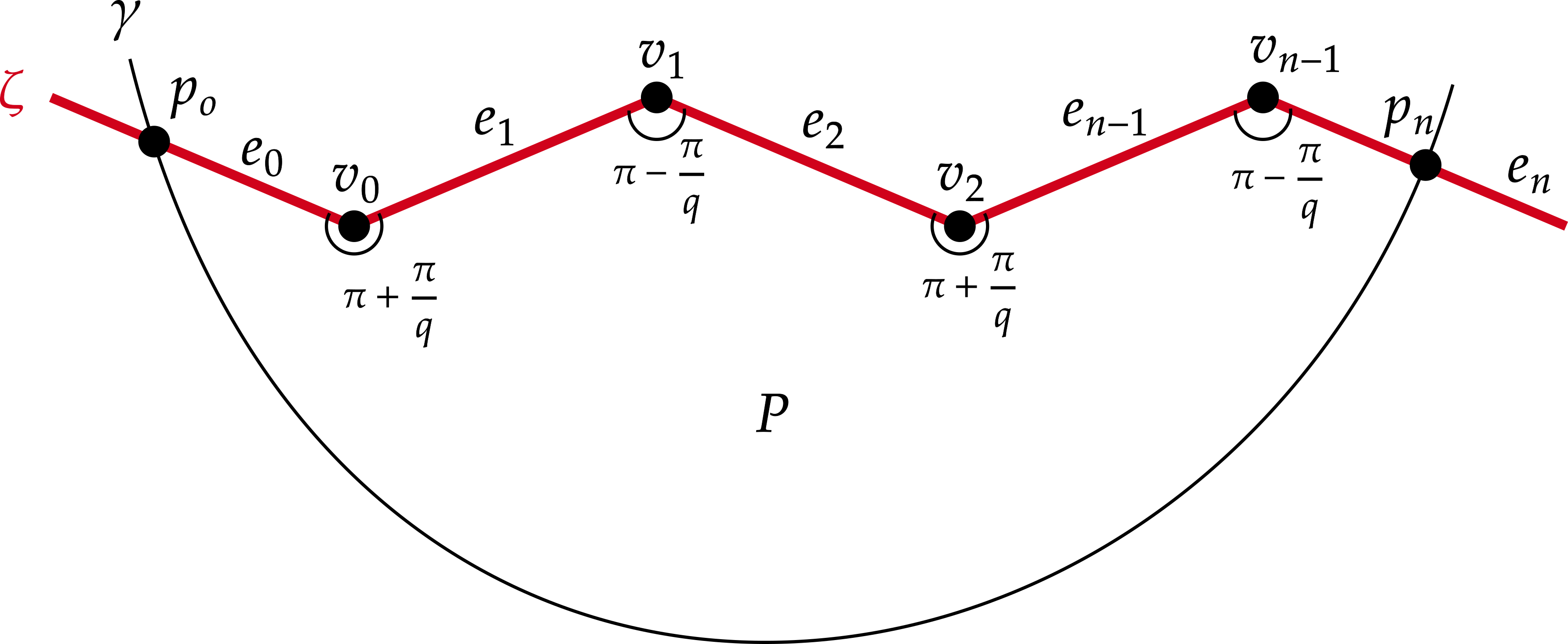}
\caption{The case when $n$ is even and the internal angle of $P$ at $v_0$ is $\pi+\pi/q$}
\end{subfigure}
\caption{When $n$ is even, the internal angles of $P$ at $v_0 \dots, v_{n-1}$ sum up to $n\pi$.}
\label{fig:nevenzigzag}
\end{figure}

First, we deal with the case that $n$ is even. By definition of a zigzag, the internal angle $\angle p_0 v_0 v_1$ of $P$ is either $\pi+\pi/q$ or $\pi-\pi/q$. See Figure \ref{fig:nevenzigzag}. Likewise, subsequent internal angles, $\angle v_0v_1v_2$, $\angle v_1v_2v_3$, $\dots \angle v_{n-2}v_{n-1}p_n$ alternate between $\pi-\pi/q$ and $\pi+\pi/q$. Since $n$ is even, the sum of the internal angles of $P$ at $v_0, v_1, \dots, v_{n-1}$ is $(\pi \pm \pi/q) + (\pi \mp \pi/q) + \dots +(\pi \mp \pi/q) = n\pi$. Hence, including the angles at $p_0$ and $p_n$, the sum of the internal angles of $P$ is greater than $n\pi$. As $P$ is a hyperbolic polygon with  $n+2$ sides, this is a contradiction. 

Now assume $n$ is odd. If $\angle p_0 v_0 v_1$, the internal angle of $P$ at $v_0$, is $\pi+ \pi/q$, then the sum of the internal angles of $P$ at $v_0, v_1, \dots v_{n-1}$ is $\pi+ \pi/q + \pi - \pi/q + \pi+ \pi/q + \dots + \pi+ \pi/q = n \pi + \pi/q$. Since $P$ has $n+2$ sides, this is again a contradiction as before.

\begin{figure}[h!]
\centering
\begin{subfigure}{0.8\textwidth}
\centering
\includegraphics[scale=0.8]{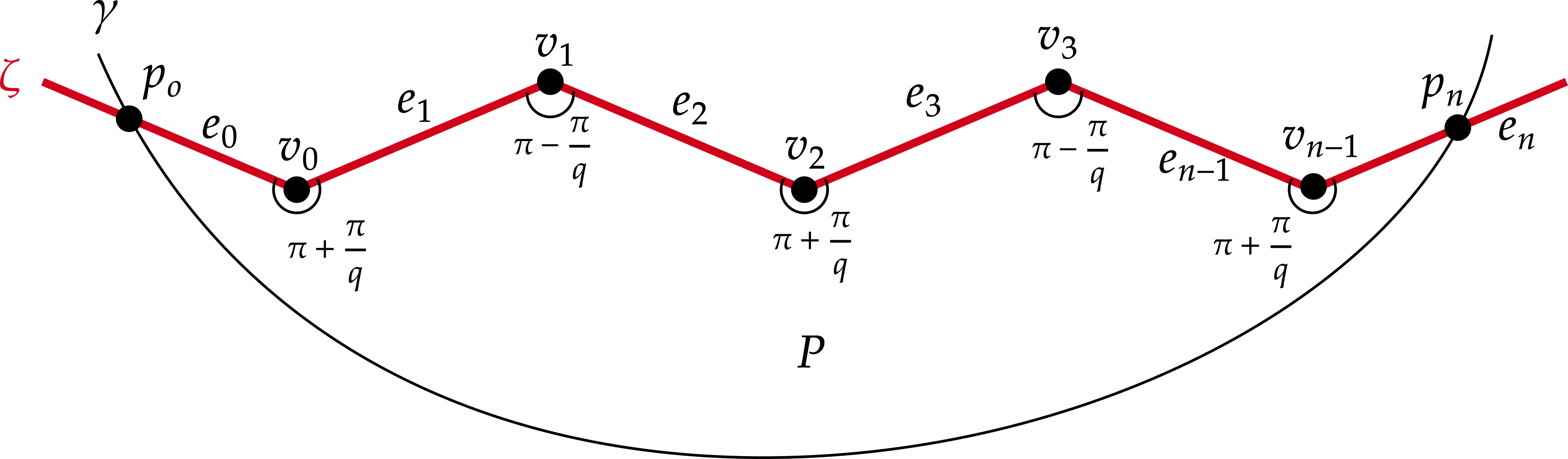}
\caption{The case when $n$ is odd and the internal angle of $P$ at $v_0$ is $\pi+\pi/q$. The sum of the internal angles of $P$ is $n \pi + \pi/q$.}    
\end{subfigure}
\begin{subfigure}{0.8\textwidth}
\centering
\includegraphics[scale=0.8]{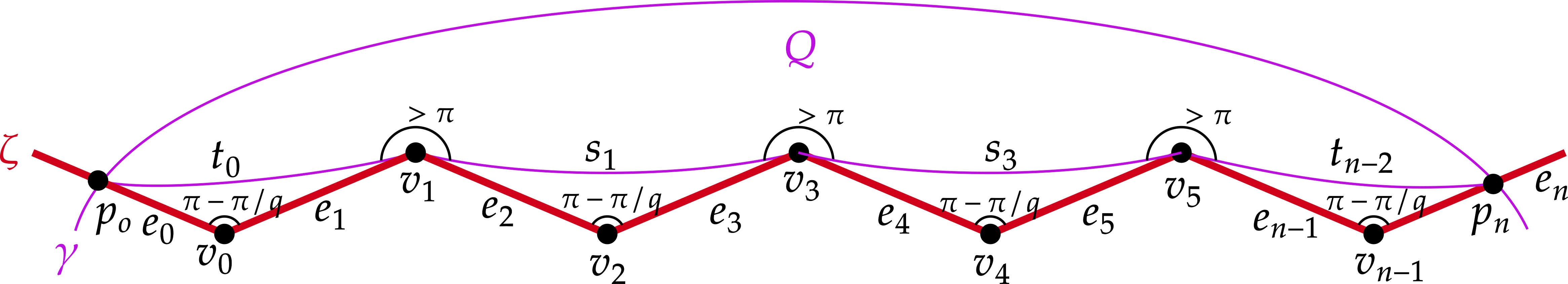}
\caption{The case when $n$ is odd and the internal angle of $P$ at $v_0$ is $\pi-\pi/q$. In this case we make use of a different polygon, $Q$, contained within $P$. The sum of the internal angles of $Q$ exceed $(\frac{n-1}{2})\pi$.}    
\end{subfigure}

\caption{When $n$ is odd, we use two polygons, $P$ or $Q$, depending on the angle at $v_0$ to obtain contradictions to hyperbolicity.}
\label{fig:noddzigzag}
\end{figure}

Finally, assume angle $\angle p_0 v_0 v_1$ is $\pi- \pi/q < \pi$. See Figure \ref{fig:noddzigzag}. Then in polygon $P$, the geodesic segment between $p_0$ and $v_1$ lies completely inside the polygon $P$. Construct this segment $t_0$.
Likewise, construct segments $s_i$ inside $P$ between vertices $v_i$ and $v_{i+2}$ for $i = 1, \dots, n-4$. Finally construct the segment $t_{n-2}$ between $v_{n-2}$ and $p_n$. 

Now consider the polygon $Q$ (containined within $P$) bounded by the vertices $p_0, v_1, v_3, v_5, \dots, v_{n-2}, p_n$ with sides $t_0, s_1, s_3 \dots, s_{n-4},  t_{n-2}.$ 

We first claim that the internal angles of $Q$ at $v_1, v_3, \dots, v_{n-2}$ are each greater than $\pi$. To see this, first let $3 \leq i \leq v_{n-4}$ and consider the isosceles triangle $\triangle v_{i-2}v_{i-1}v_{i}$ 
The internal angle to this triangle at $v_{i-1}$ is $\pi - \pi/q$ so that the sum of the other internal angles is $\angle v_{i-1}v_iv_{i-2} + \angle v_{i-1}v_{i-2}v_i < \pi/q$. As the triangle is isosceles, this means $\angle v_{i-1}v_iv_{i-2} = \angle v_{i-1}v_{i-2}v_i  < \frac{\pi}{2q}$.

The same argument in $\triangle v_{i}v_{i+1}v_{i+2}$ yields that  $\angle v_{i+1}v_{i}v_{i+2} < \frac{\pi}{2q}$. Note that triangles  $\triangle v_{i-2}v_{i-1}v_{i}$ and $\triangle v_{i}v_{i+1}v_{i+2}$ are outside $Q$, but inside $P$. Hence, 
\begin{align*}
\text{ the external angle to }Q\text{ at }v_{i} = \text{(external)} \angle v_{i-2}v_iv_{i+2} &= \angle v_{i-1}v_iv_{i-2} + \angle v_{i+1}v_{i}v_{i+2} + \angle v_{i-1}v_i v_{i+1} 
\\
&< \frac{\pi}{2q} + \frac{\pi}{2q}+ \pi - \frac{\pi}{q}   \\&= \pi
\end{align*}
So, the internal angle to $Q$ at $v_i$, $\angle v_{i-2} v_{i}v_{i+2} > \pi$. 
Note that since the segment $p_0v_0$ is shorter than $v_0v_1$ and segment $v_{n-1}p_n$ is shorter than $v_{n-2}v_{n-1}$, in triangles $\triangle p_0v_0v_1$ and $\triangle v_{n-2}v_{n-1}p_n$ respectively, we see that $\angle p_0v_1v_0 < \pi/(2q)$ and $\angle p_nv_{n-2}v_{n-1} < \pi/(2q)$. So, we also obtain that the internal angles to $Q$ at $v_1$ and $v_{n-2}$ are also each larger than $\pi$.

This yields,
$$\text{sum of interior angles of }Q > \angle p_0v_1v_3 + \angle v_1v_3v_5+ \dots +\angle v_{n-4}v_{n-2}p_n > \left(\frac{n-1}{2}\right)\pi$$ 
However, the number of sides of $Q$ is $\frac{n+1}{2}+1$ and as $Q$ is hyperbolic, the sum of the interior angles is bounded above by $(\frac{n+1}{2}-1)\pi = (\frac{n-1}{2})\pi$, a contradiction. 
\end{proof}

We can now prove the following lemma about intersections of zigzags by hyperbolic geodesics.

\ZigzigIntersections*
\begin{proof}  
    
\begin{figure}[h]
    \centering
        \includegraphics[width=10cm]{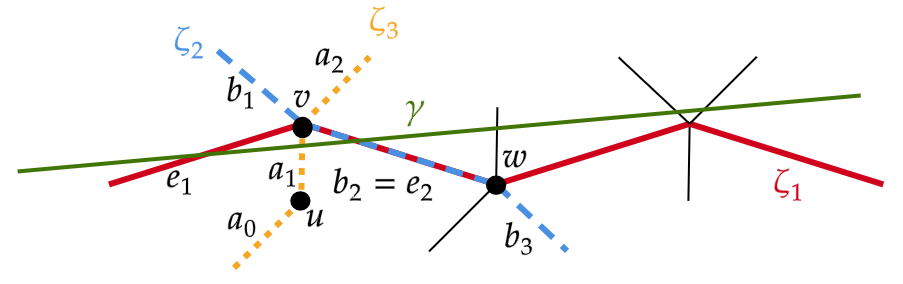}
        \caption{A diagram of consecutive zigzag intersections}
        \label{fig:zigzag_consecutive_intersections}
    \end{figure}
    
Following Figure \ref{fig:zigzag_consecutive_intersections}, suppose that $\gamma$ is a hyperbolic geodesic that intersects a zigzag $\zeta_1$ at two successive edges $e_1$ and $e_2$, meeting at a vertex $v$. Let $\zeta_2 \neq \zeta_1$ be a zigzag that passes through $v$. We will show that $\gamma$ intersects $\zeta_i$ exactly once. First suppose that $\zeta_2$ is the other zigzag containing $e_2 = b_2$, and let $b_1$ and $b_3$ be the two neighboring edges on $\zeta_2$, meeting $b_2$ at vertices $v$ and $w$ respectively. We note that $\gamma$ cannot cyclically cross (vertex traverse) more than $\frac{q-1}{2}$ edges adjacent to a vertex $v$ because this would result in $\gamma$ double crossing the edge geodesic of the first edge crossed. Then, $\gamma$ cannot intersect either $b_1$ or $b_3$ because such an intersection would cause a vertex  traversal that is too long at either vertex $v$ or $w$ respectively. By applying Proposition \ref{prop:consecutivezigzagintersection}, $\gamma$ cannot intersect $\zeta_2$ at any other edges except for $b_2$. A symmetric argument shows that $\gamma$ intersects the other zigzag $\zeta \neq \zeta_1$ containing $e_1$ exactly once as well.

Now, suppose that $\zeta_3$ is a zigzag that passes through $v$ but does not share an edge with $\zeta_1$. Let $a_0, a_1, a_2$ be three successive edges on this zigzag, with the latter two adjacent to $v$, and with $a_1$ ``between" (in the shorter vertex traversal between) edges $e_1$ and $e_2$. Then, $\gamma$ intersects $a_1$ because it is a part of the vertex traversal from $e_1$ to $e_2$. However, $\gamma$ could not intersect $a_2$ because it would cause a too long vertex traversal at vertex $v$, and $\gamma$ could not intersect $a_0$ because $\gamma$ cannot intersect any edges not adjacent to $v$ between its intersections with $e_1$ and $e_2$. Similarly, using the same argument for $u$, we conclude that $\gamma$ cannot intersect $a_0$ after having intersected with vertex $e_1$ and $e_2$. Applying \ref{prop:consecutivezigzagintersection}, $\gamma$ cannot intersect $\zeta_3$ at any other edges except for $a_1$, and we're done.\end{proof}

\subsection{Facts about $(p,q)$-tilings when $q$ is even}
In this subsection we focus on some facts about $(p,q)$-tilings when $q$ is even. We strart with a proposition that ensures consistency of edge labels in such hyperbolic tilings. We use this result in Section~\ref{subsec:qevenpreliminaries}.
\ConsistentLabelling*
\begin{proof}
Given a base tile with a cyclic labeling of the edges, every tile in the $(p,q)$-tiling is some finite tiling distance $n$ (see Definition \ref{def: tiling_length}) away from the base tile. We will prove this proposition by inducting on $n$. 

The only tiling distance $0$ tile is the base tile, which is labeled cyclically by assumption. We suppose that all tiling distance $n$ tiles are consistently labeled. For each tiling distance $n+1$ tile, we label it by reflecting the labeling of an adjacent tiling distance $n$ tile. The only worry is that a tiling distance $n+1$ tile may obtain two inconsistent labelings by two adjacent tiling distance $n$ edges. This cannot happen because the only way to have two distance $n$ tiles adjacent to a distance $n+1$ tile is if all three of these tiles share a vertex. Because $q$ is even, tiling labelings are consistent when tiles are reflected cyclically around a common vertex, since the edge labels around that vertex would reflect in an $abab...ab$ pattern with $q$ total edge labelings. Thus, the tiling distance $n+1$ tiles must also be consistently labeled, completing the induction. 
\end{proof}

The next three lemmas concern scenarios when edge geodesics of a hyperbolic $(p,q)$-tiling with $q$ even cannot intersect. The proofs hinge on the basic fact from hyperbolic geometry that the sum of angles of any hyperbolic triangle is less than $\pi$.

\EdgeGeodDontIntersectTwoCases*

\begin{proof}

Suppose that $A$, $g$ and $e$ are as in the statement. Since edge geodesics are comprised of edges of the tiling, either $g$ shares exactly one vertex with $A$ (and does not intersect $A$ otherwise) or $g$ shares exactly one edge with $A$. See Figure~\ref{fig:EdgeGeodNonIntersections} for the two cases. Suppose for contradiction that, the geodesic extension of $e$ intersects $g$. Then, we can create a hyperbolic polygon $P$ with $k$ vertices $v_1, \dots, v_k$ where $v_1$ is the intersection of the extension of $e$ and $g$, $v_2$ is the vertex of $A$ on $g$ that is closest to $v_1$, $v_3$ is the vertex of $e$ closest to $v_1$, and $v_4, \dots, v_k$ are vertices of $A$ between $v_2$ and $v_3$, not including the other vertex of $e$.  

\begin{figure}[h!]
\centering
\begin{subfigure}{0.35\textwidth}
\centering
    \includegraphics[scale=0.8]{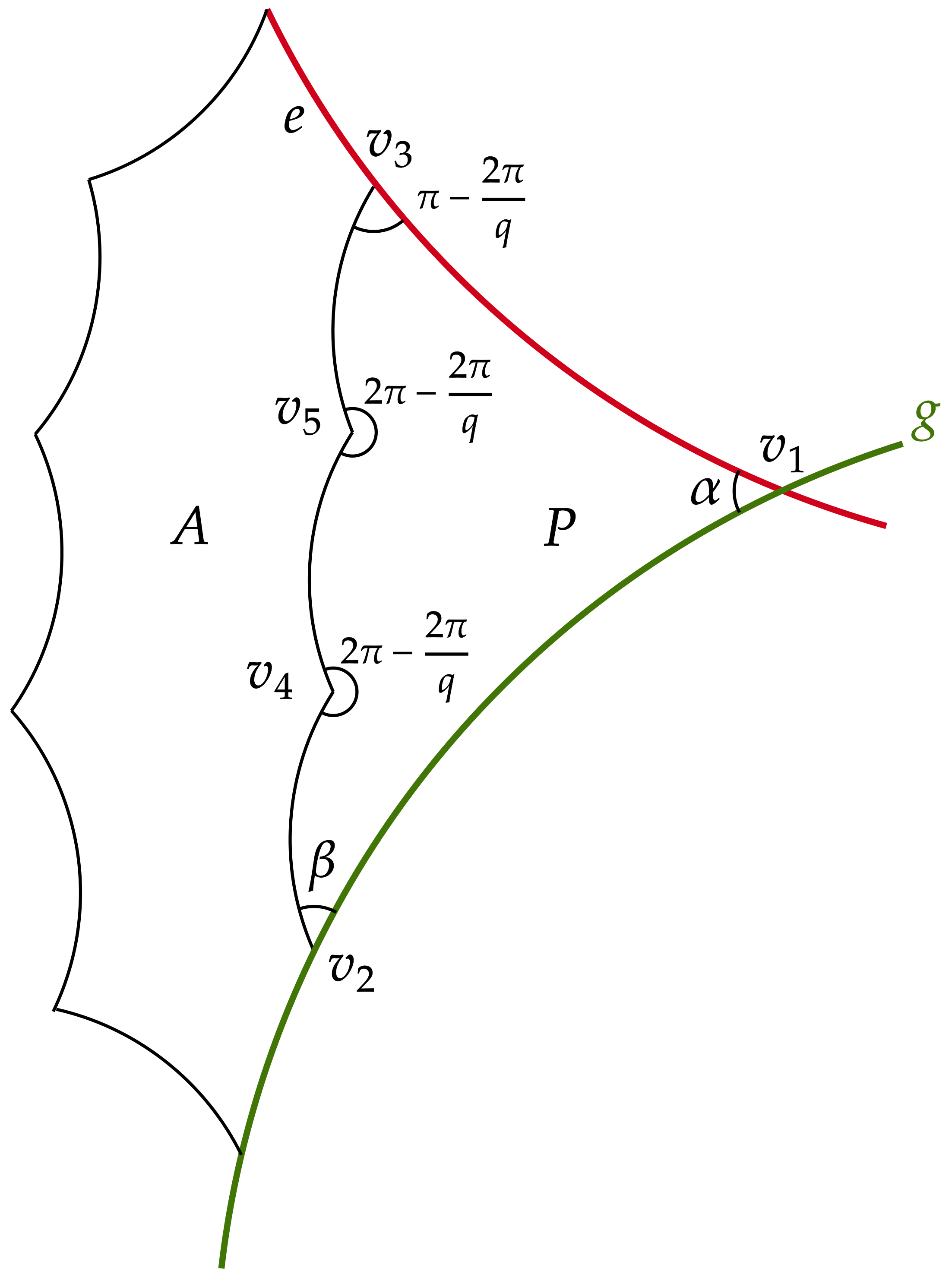}
    \caption{The edge geodesic $g$ shares an edge with tile $A$}
    \label{fig:geodnonintersectionA}
\end{subfigure}
\hspace{1cm}
\begin{subfigure}{0.35\textwidth}
\centering
\includegraphics[scale=0.8]{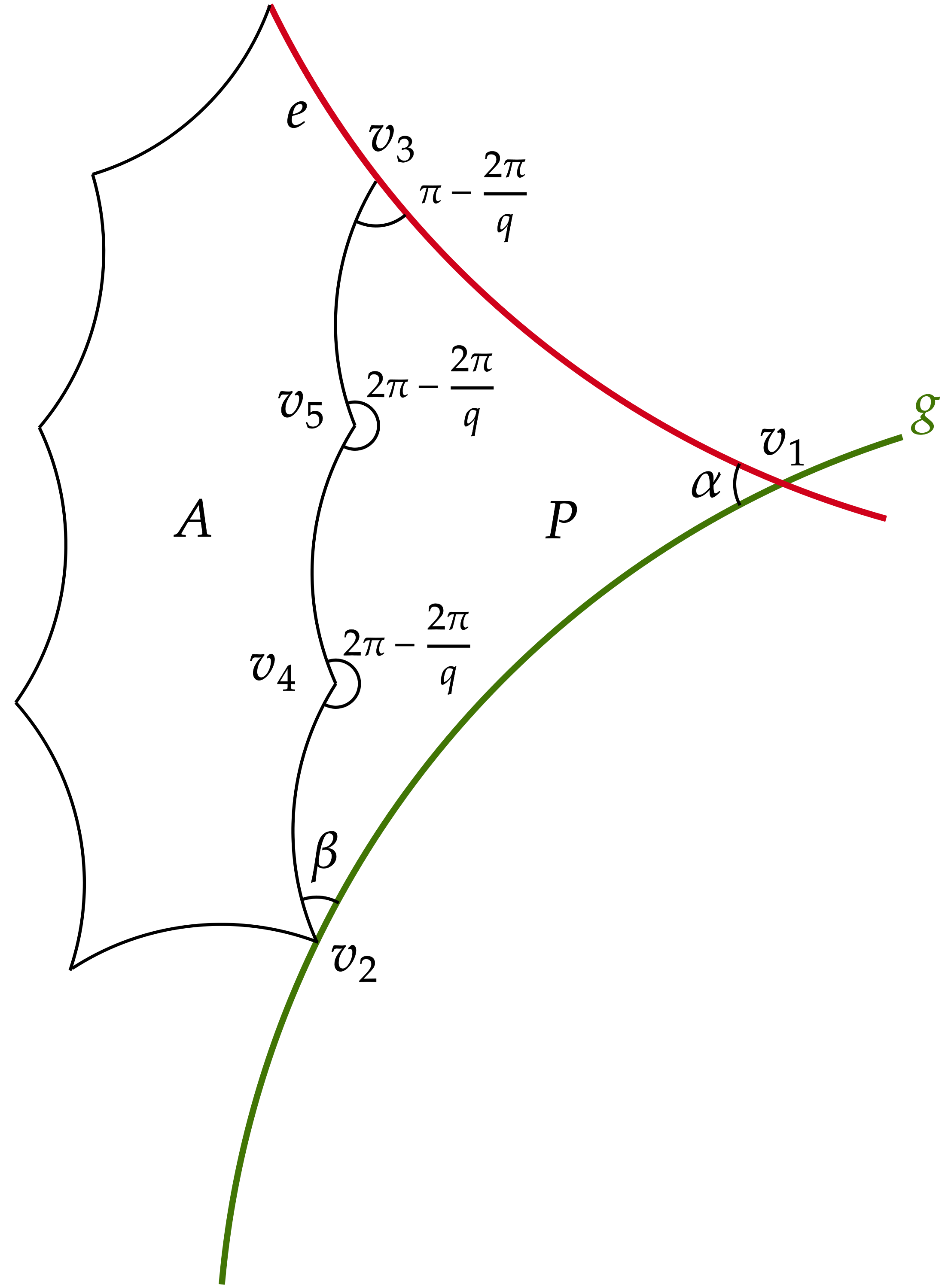}
    \caption{The edge geodesic $g$ shares a vertex with tile $A$}
    \label{fig:geodnonintersectionB}
    
\end{subfigure}
\caption{Two cases of non-intersecting edge geodesics illustrated by Lemma~\ref{lem:EdgeGeodDontIntersectTwoCases}. In both cases, assuming that $g$ intersects the geodesic extension of $e$ gives a contradiction to the hyperbolicity of the polygon $P$.}
\label{fig:EdgeGeodNonIntersections}
\end{figure}

Let $\alpha$ and $\beta$ be the measure in radians of the internal angles of $P$ at $v_1$ and $v_2$ respectively. Since the internal angles of any tile of the tiling are $2\pi/q$ each, $\alpha$ and $\beta$ are at least $2 \pi/q$. Moreover, the internal angles of polygon $P$ at $v_3$ is $\pi - \frac{2 \pi}{q}$ and at $v_4, \dots, v_k$ are $2\pi - \frac{2\pi}{q}$ each. Summing up these angles, we then get that the measure of the internal angles of $P$ satisfy,
$$\text{sum of internal angles of } P \geq 2\cdot \frac{2\pi}{q} + (\pi - \frac{2\pi}{q}) + (k-3) \left(2 \pi - \frac{2\pi}{q}\right) \geq (k-2)\pi + (k-3)\left(\pi - \frac{2\pi}{q}\right).$$ 
But for $P$ to be a polygon, $k \geq 3$, and therefore the right hand side is $\geq (k-2)\pi.$ Since the sum of the interior angles of a non-ideal hyperbolic $k$-gon must be strictly less than $(k-2)\pi$, this is a contradiction. Thus, edge geodesic extending $e$ cannot intersect $g$. \end{proof}

\fournonintersecting*
\begin{proof}
Given the conditions in the statement of the lemma, let $v=v_0, v_1, v_2, \ldots, v_n = w$ be the sequence of vertices of the tiling on $g_2$ between $v=v_0$ and $w=v_n$ inclusive (see Figure \ref{fig:p4_nonintersecting}). 

\begin{figure}[h!]
\centering
    \includegraphics[width=7cm]{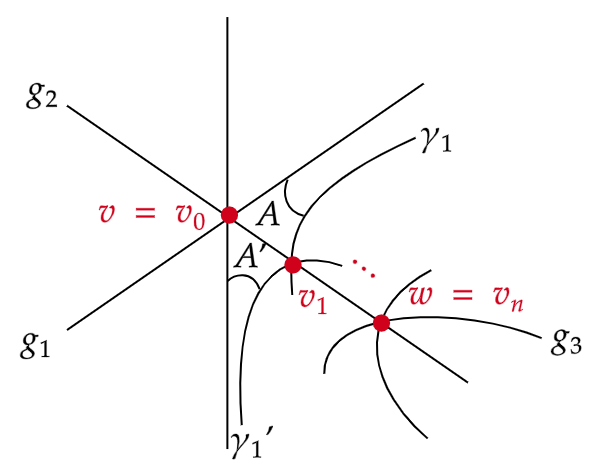}
    \caption{When $p\geq 4$ and $q$ is even, two edge geodesics $g_1$ and $g_3$ that intersect an edge geodesic $g_2$ at distinct vertices $v$ and $w$ will not intersect each other.}
    \label{fig:p4_nonintersecting}
\end{figure}

Let $\gamma_1, \gamma_1' \neq g_2$ be the edge geodesics passing through $v_1$ that shares an edge with a tile $A, A'$ adjacent to $v_0$. We note that when $q = 4, \gamma_1 = \gamma_1'$. Since $p \geq 4$, it follows that the conditions of Lemma \ref{lem:EdgeGeodDontIntersectTwoCases} are met and therefore $\gamma_1$ and $\gamma_1'$ do not intersect $g_1$. When $q \geq 8$, there are additional edge geodesics through $v_1$ that are not $\gamma_1, \gamma_1', g_2$. None of these additional edge geodesics intersect $g_1$ either because at $v_1$, they are ``between" $\gamma_1$ and $\gamma_1'$, which do not intersect $g_1$. 

At $v_2$, we can let $\gamma_1$ play the role of $g_1$ and we see that all of the edge geodesics $\neq g_2$ through $v_2$ do not intersect $\gamma_1$ and therefore do not intersect $g_1$. We repeat this process inductively until we get to $g_3$ at vertex $w=v_n$ to finish the proof of the lemma. 
\end{proof}

\threenonintersecting*
\begin{proof}
Consider a sector in $\HH^2$ defined by rays $g_1$ and $g_2$ emanating from $v$ with angle at $v$ at least $6 \pi/q$. Let $A$ be the tile in this sector that is adjacent to $v$ with one edge on $g_2$. Since $p = 3$, $A$ is a regular hyperbolic triangle with angles $2\pi/q$. Let $v = v_0, v_1, \dots, v_n = w$ be the sequence of vertices of the tiling on $g_2$ between $v=v_0$ and $w = v_n$ (see Figure \ref{fig:g1g3NonIntersectp3}).

\begin{figure}[h!]
\centering
\begin{subfigure}{0.35\textwidth}
\centering
    \includegraphics[scale=1]{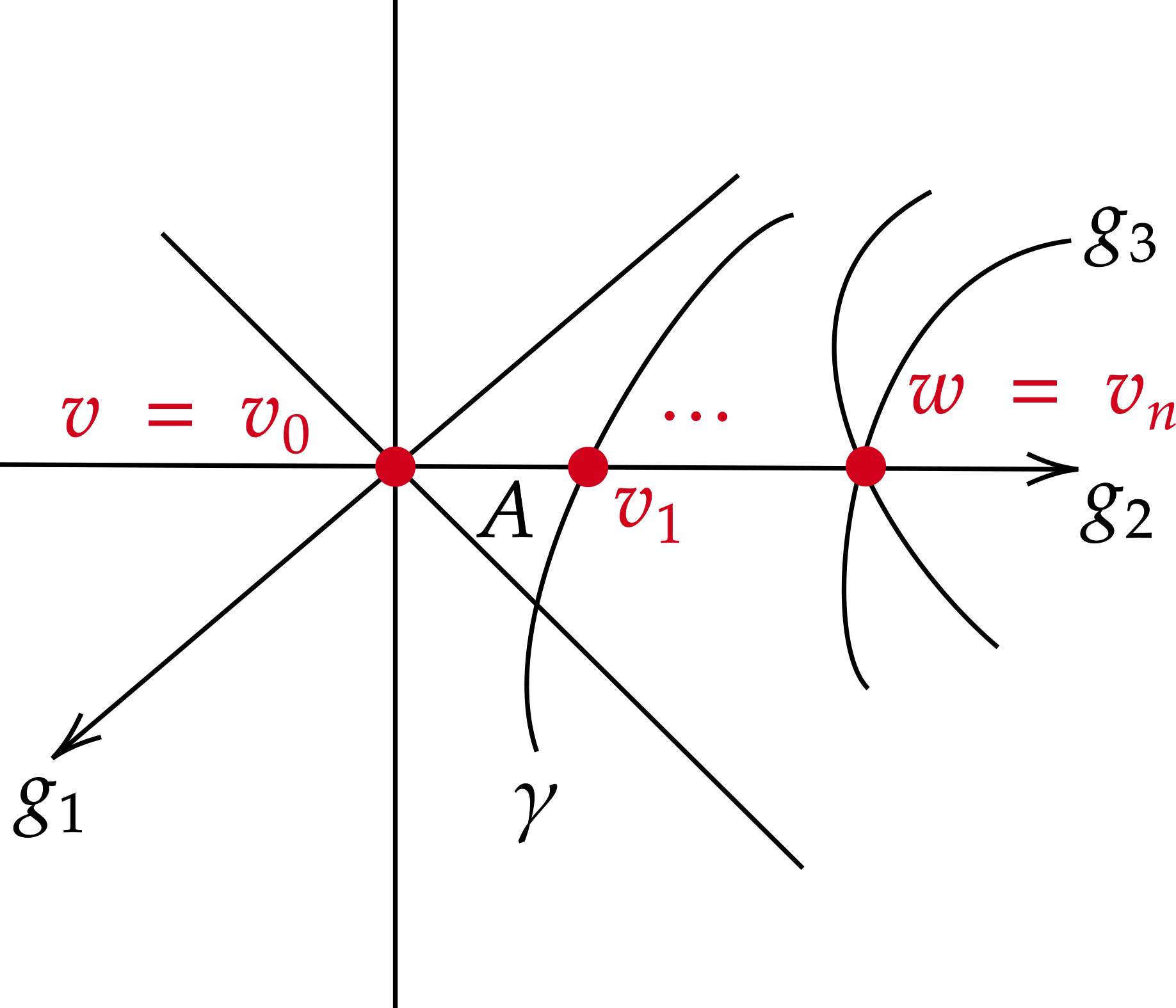}
    \caption{The geodesic $g_3$ cannot intersect the ray $g_1$.}
    \label{fig:g1g3NonIntersectp3}
\end{subfigure}
\hspace{2cm}
\begin{subfigure}{0.35\textwidth}
\centering
\includegraphics[scale=1]{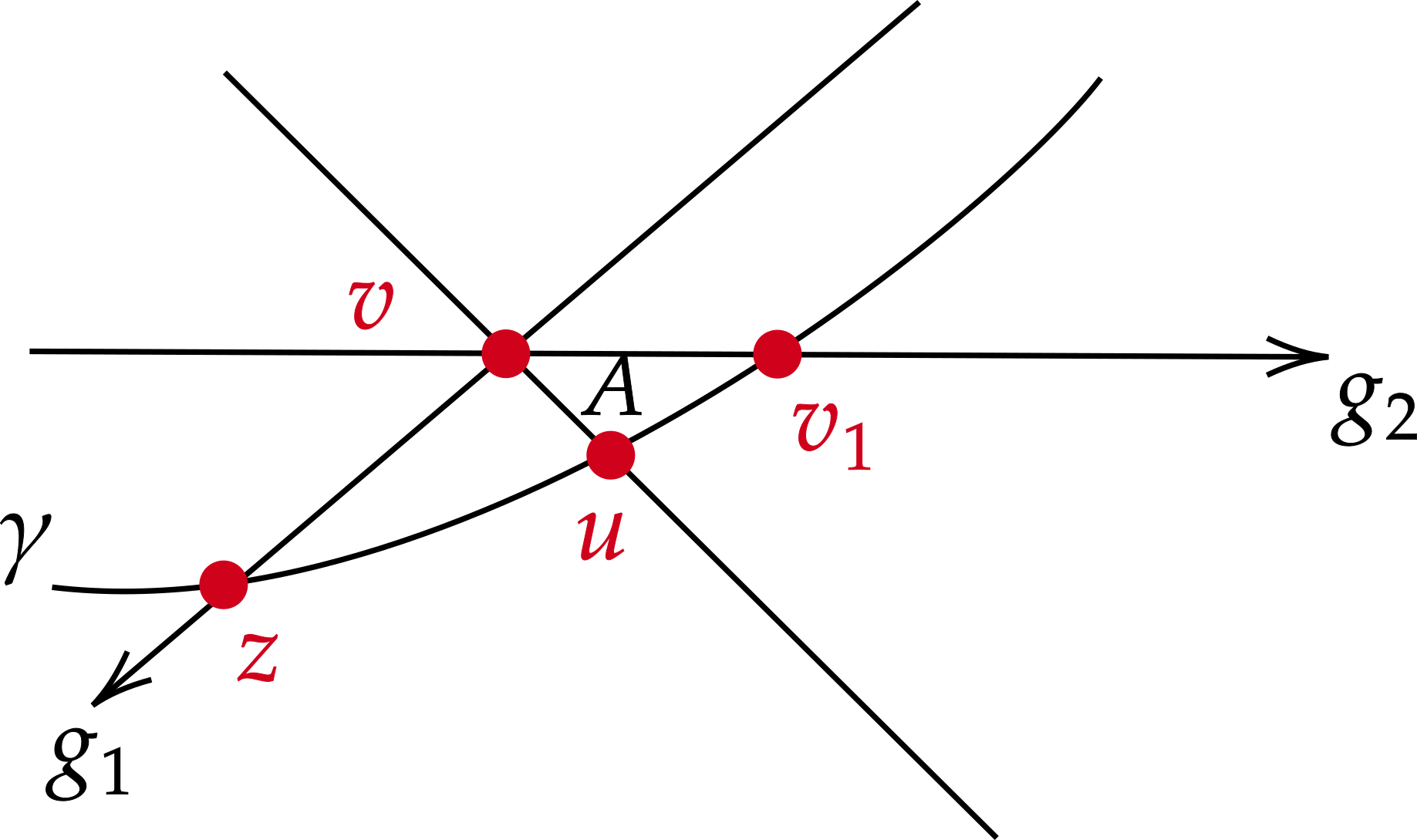}
    \caption{The triangle $\triangle v u z$ is impossible in hyperbolic geometry.}
    \label{fig:impossibletriangle}
\end{subfigure}
\caption{When $p=3$ and $q$ even, schematic depicting two edge geodesic rays $g_1$, $g_2$ emanating from vertex $v$ making angle at least $6\pi/q$. Any geodesic $g_3\neq g_2$ that intersects $g_2$ at a vertex $w$ other than $v$ cannot intersect $g_1$.}
\label{fig:Pequal3AppdxLemma}
\end{figure}

Let $\gamma$ be the edge geodesic corresponding to the edge of $A$ not containing $v$, passing through $v_1$. We first claim that $\gamma$ cannot intersect $g_1$. Assume towards contradiction that it does intersect $g_1$ at vertex $z$. Consider $\triangle v u z$ where $u$ is the vertex of $A$ on $\gamma_1$ closest to $z$. Then $\angle z u v = \pi - 2\pi/q$ and $\angle z v u  \geq 2\pi/q$. So, the sum of angles of $\triangle v u z \geq \pi - 2\pi/q + 2 \pi/q = \pi$, a contradiction (see Figure \ref{fig:impossibletriangle}). Hence, $\gamma$ cannot intersect $g_1$. Moreover, the ray of $\gamma$ contained in this sector is in between $g_1$ and any other geodesic ray (in the sector) of the tiling passing through $v_1$. Hence, these cannot intersect $g_1$ as well, otherwise they must double intersect $\gamma$. 

Note now that the angle between the half-ray of $\gamma$ and $g_2$ is $\pi-2\pi/q \geq 6 \pi/q$ as $q \geq 8$ (for $p=3$). Hence, letting $v_1$ play the role of $v$, $v_2$ play the role of $v_1$ and $\gamma$ play the role of $g_1$ we see that all the edge geodesics $\neq g_2$ through $v_2$ do not intersect $\gamma$ and therefore do not intersect $g_1$. Repeating this process inductively until we get to $g_3$ at vertex $w = v_n$, we conclude the proof. 
\end{proof}